\documentclass{ws-m3as}

\pdfpageattr{/CropBox [90 80 532 720]} 

\usepackage{pgfplots}
\usepackage{tikz}
\usepackage{amsmath}
\usepackage{siunitx}
\usepackage{booktabs}
\usepackage{subcaption}
\usepackage{multicol}
\usepackage{longtable}
\usepackage{aligned-overset}
\usepackage{pdflscape}

\definecolor{airforceblue}{rgb}{0.36, 0.54, 0.66}			    
\definecolor{aliceblue}{rgb}{0.94, 0.97, 1.0}				    
\definecolor{alizarin}{rgb}{0.82, 0.1, 0.26}				    
\definecolor{almond}{rgb}{0.94, 0.87, 0.8}					    
\definecolor{amaranth}{rgb}{0.9, 0.17, 0.31}				    
\definecolor{amber}{rgb}{1.0, 0.75, 0.0}						
\definecolor{amber(sae/ece)}{rgb}{1.0, 0.49, 0.0}	            
\definecolor{americanrose}{rgb}{1.0, 0.01, 0.24}	            
\definecolor{amethyst}{rgb}{0.6, 0.4, 0.8}	                    
\definecolor{anti-flashwhite}{rgb}{0.95, 0.95, 0.96}	        
\definecolor{antiquebrass}{rgb}{0.8, 0.58, 0.46}	            
\definecolor{antiquefuchsia}{rgb}{0.57, 0.36, 0.51}	            
\definecolor{antiquewhite}{rgb}{0.98, 0.92, 0.84}	            
\definecolor{ao}{rgb}{0.0, 0.0, 1.0}	                        
\definecolor{ao(english)}{rgb}{0.0, 0.5, 0.0}	                
\definecolor{applegreen}{rgb}{0.55, 0.71, 0.0}	                
\definecolor{apricot}{rgb}{0.98, 0.81, 0.69}	                
\definecolor{aqua}{rgb}{0.0, 1.0, 1.0}	                        
\definecolor{aquamarine}{rgb}{0.5, 1.0, 0.83}	                
\definecolor{armygreen}{rgb}{0.29, 0.33, 0.13}	                
\definecolor{arsenic}{rgb}{0.23, 0.27, 0.29}	                
\definecolor{arylideyellow}{rgb}{0.91, 0.84, 0.42}	            
\definecolor{ashgrey}{rgb}{0.7, 0.75, 0.71}	                    
\definecolor{asparagus}{rgb}{0.53, 0.66, 0.42}	                
\definecolor{atomictangerine}{rgb}{1.0, 0.6, 0.4}	            
\definecolor{auburn}{rgb}{0.43, 0.21, 0.1}	                    
\definecolor{aureolin}{rgb}{0.99, 0.93, 0.0}	                
\definecolor{aurometalsaurus}{rgb}{0.43, 0.5, 0.5}	            
\definecolor{awesome}{rgb}{1.0, 0.13, 0.32}	                    
\definecolor{azure(colorwheel)}{rgb}{0.0, 0.5, 1.0}	            
\definecolor{azure(web)(azuremist)}{rgb}{0.94, 1.0, 1.0}	    
\definecolor{babyblue}{rgb}{0.54, 0.81, 0.94}	                
\definecolor{babyblueeyes}{rgb}{0.63, 0.79, 0.95}	            
\definecolor{babypink}{rgb}{0.96, 0.76, 0.76}	                
\definecolor{ballblue}{rgb}{0.13, 0.67, 0.8}	                
\definecolor{bananamania}{rgb}{0.98, 0.91, 0.71}	            
\definecolor{bananayellow}{rgb}{1.0, 0.88, 0.21}	            
\definecolor{battleshipgrey}{rgb}{0.52, 0.52, 0.51}	            
\definecolor{bazaar}{rgb}{0.6, 0.47, 0.48}	                    
\definecolor{beaublue}{rgb}{0.74, 0.83, 0.9}	                
\definecolor{beaver}{rgb}{0.62, 0.51, 0.44}	                    
\definecolor{beige}{rgb}{0.96, 0.96, 0.86}	                    
\definecolor{bisque}{rgb}{1.0, 0.89, 0.77}	                    
\definecolor{bistre}{rgb}{0.24, 0.17, 0.12}	                    
\definecolor{bittersweet}{rgb}{1.0, 0.44, 0.37}	                
\definecolor{black}{rgb}{0.0, 0.0, 0.0}	                        
\definecolor{blanchedalmond}{rgb}{1.0, 0.92, 0.8}	            
\definecolor{bleudefrance}{rgb}{0.19, 0.55, 0.91}	            
\definecolor{blizzardblue}{rgb}{0.67, 0.9, 0.93}	            
\definecolor{blond}{rgb}{0.98, 0.94, 0.75}	                    
\definecolor{blue}{rgb}{0.0, 0.0, 1.0}	                        
\definecolor{blue(munsell)}{rgb}{0.0, 0.5, 0.69}	            
\definecolor{blue(ncs)}{rgb}{0.0, 0.53, 0.74}	                
\definecolor{blue(pigment)}{rgb}{0.2, 0.2, 0.6}	                
\definecolor{blue(ryb)}{rgb}{0.01, 0.28, 1.0}	                
\definecolor{bluebell}{rgb}{0.64, 0.64, 0.82}	                
\definecolor{bluegray}{rgb}{0.4, 0.6, 0.8}	                    
\definecolor{blue-green}{rgb}{0.0, 0.87, 0.87}	                
\definecolor{blue-violet}{rgb}{0.54, 0.17, 0.89}	            
\definecolor{blush}{rgb}{0.87, 0.36, 0.51}	                    
\definecolor{bole}{rgb}{0.47, 0.27, 0.23}	                    
\definecolor{bondiblue}{rgb}{0.0, 0.58, 0.71}	                
\definecolor{bostonuniversityred}{rgb}{0.8, 0.0, 0.0}	        
\definecolor{brandeisblue}{rgb}{0.0, 0.44, 1.0}	                
\definecolor{brass}{rgb}{0.71, 0.65, 0.26}	                    
\definecolor{brickred}{rgb}{0.8, 0.25, 0.33}	                
\definecolor{brightcerulean}{rgb}{0.11, 0.67, 0.84}	            
\definecolor{brightgreen}{rgb}{0.4, 1.0, 0.0}	                
\definecolor{brightlavender}{rgb}{0.75, 0.58, 0.89}	            
\definecolor{brightmaroon}{rgb}{0.76, 0.13, 0.28}	            
\definecolor{brightpink}{rgb}{1.0, 0.0, 0.5}	                
\definecolor{brightturquoise}{rgb}{0.03, 0.91, 0.87}	        
\definecolor{brightube}{rgb}{0.82, 0.62, 0.91}	                
\definecolor{brilliantlavender}{rgb}{0.96, 0.73, 1.0}	        
\definecolor{brilliantrose}{rgb}{1.0, 0.33, 0.64}	            
\definecolor{brinkpink}{rgb}{0.98, 0.38, 0.5}	                
\definecolor{britishracinggreen}{rgb}{0.0, 0.26, 0.15}	        
\definecolor{bronze}{rgb}{0.8, 0.5, 0.2}	                    
\definecolor{brown(traditional)}{rgb}{0.59, 0.29, 0.0}	        
\definecolor{brown(web)}{rgb}{0.65, 0.16, 0.16}	                
\definecolor{bubblegum}{rgb}{0.99, 0.76, 0.8}	                
\definecolor{bubbles}{rgb}{0.91, 1.0, 1.0}	                    
\definecolor{buff}{rgb}{0.94, 0.86, 0.51}	                    
\definecolor{bulgarianrose}{rgb}{0.28, 0.02, 0.03}	            
\definecolor{burgundy}{rgb}{0.5, 0.0, 0.13}	                    
\definecolor{burlywood}{rgb}{0.87, 0.72, 0.53}	                
\definecolor{burntorange}{rgb}{0.8, 0.33, 0.0}	                
\definecolor{burntsienna}{rgb}{0.91, 0.45, 0.32}	            
\definecolor{burntumber}{rgb}{0.54, 0.2, 0.14}	                
\definecolor{byzantine}{rgb}{0.74, 0.2, 0.64}	                
\definecolor{byzantium}{rgb}{0.44, 0.16, 0.39}	                
\definecolor{cadet}{rgb}{0.33, 0.41, 0.47}	                    
\definecolor{cadetblue}{rgb}{0.37, 0.62, 0.63}	                
\definecolor{cadetgrey}{rgb}{0.57, 0.64, 0.69}	                
\definecolor{cadmiumgreen}{rgb}{0.0, 0.42, 0.24}	            
\definecolor{cadmiumorange}{rgb}{0.93, 0.53, 0.18}	            
\definecolor{cadmiumred}{rgb}{0.89, 0.0, 0.13}	                
\definecolor{cadmiumyellow}{rgb}{1.0, 0.96, 0.0}	            
\definecolor{calpolypomonagreen}{rgb}{0.12, 0.3, 0.17}	        
\definecolor{cambridgeblue}{rgb}{0.64, 0.76, 0.68}	            
\definecolor{camel}{rgb}{0.76, 0.6, 0.42}	                    
\definecolor{camouflagegreen}{rgb}{0.47, 0.53, 0.42}	        
\definecolor{canaryyellow}{rgb}{1.0, 0.94, 0.0}	                
\definecolor{candyapplered}{rgb}{1.0, 0.03, 0.0}	            
\definecolor{candypink}{rgb}{0.89, 0.44, 0.48}	                
\definecolor{capri}{rgb}{0.0, 0.75, 1.0}	                    
\definecolor{caputmortuum}{rgb}{0.35, 0.15, 0.13}	            
\definecolor{cardinal}{rgb}{0.77, 0.12, 0.23}	                
\definecolor{caribbeangreen}{rgb}{0.0, 0.8, 0.6}	            
\definecolor{carmine}{rgb}{0.59, 0.0, 0.09}	                    
\definecolor{carminepink}{rgb}{0.92, 0.3, 0.26}	                
\definecolor{carminered}{rgb}{1.0, 0.0, 0.22}	                
\definecolor{carnationpink}{rgb}{1.0, 0.65, 0.79}	            
\definecolor{carnelian}{rgb}{0.7, 0.11, 0.11}	                
\definecolor{carolinablue}{rgb}{0.6, 0.73, 0.89}	            
\definecolor{carrotorange}{rgb}{0.93, 0.57, 0.13}	            
\definecolor{ceil}{rgb}{0.57, 0.63, 0.81}	                    
\definecolor{celadon}{rgb}{0.67, 0.88, 0.69}	                
\definecolor{celestialblue}{rgb}{0.29, 0.59, 0.82}	            
\definecolor{cerise}{rgb}{0.87, 0.19, 0.39}	                    
\definecolor{cerisepink}{rgb}{0.93, 0.23, 0.51}	                
\definecolor{cerulean}{rgb}{0.0, 0.48, 0.65}	                
\definecolor{ceruleanblue}{rgb}{0.16, 0.32, 0.75}	            
\definecolor{chamoisee}{rgb}{0.63, 0.47, 0.35}	                
\definecolor{champagne}{rgb}{0.97, 0.91, 0.81}	                
\definecolor{charcoal}{rgb}{0.21, 0.27, 0.31}	                
\definecolor{chartreuse(traditional)}{rgb}{0.87, 1.0, 0.0}	    
\definecolor{chartreuse(web)}{rgb}{0.5, 1.0, 0.0}	            
\definecolor{cherryblossompink}{rgb}{1.0, 0.72, 0.77}	        
\definecolor{chestnut}{rgb}{0.8, 0.36, 0.36}	                
\definecolor{chocolate(traditional)}{rgb}{0.48, 0.25, 0.0}	    
\definecolor{chocolate(web)}{rgb}{0.82, 0.41, 0.12}	            
\definecolor{chromeyellow}{rgb}{1.0, 0.65, 0.0}	                
\definecolor{cinereous}{rgb}{0.6, 0.51, 0.48}	                
\definecolor{cinnabar}{rgb}{0.89, 0.26, 0.2}	                
\definecolor{cinnamon}{rgb}{0.82, 0.41, 0.12}	                
\definecolor{citrine}{rgb}{0.89, 0.82, 0.04}	                
\definecolor{classicrose}{rgb}{0.98, 0.8, 0.91}	                
\definecolor{cobalt}{rgb}{0.0, 0.28, 0.67}	                    
\definecolor{cocoabrown}{rgb}{0.82, 0.41, 0.12}	                
\definecolor{columbiablue}{rgb}{0.61, 0.87, 1.0}	            
\definecolor{coolblack}{rgb}{0.0, 0.18, 0.39}	                
\definecolor{coolgrey}{rgb}{0.55, 0.57, 0.67}	                
\definecolor{copper}{rgb}{0.72, 0.45, 0.2}	                    
\definecolor{copperrose}{rgb}{0.6, 0.4, 0.4}	                
\definecolor{coquelicot}{rgb}{1.0, 0.22, 0.0}	                
\definecolor{coral}{rgb}{1.0, 0.5, 0.31}	                    
\definecolor{coralpink}{rgb}{0.97, 0.51, 0.47}	                
\definecolor{coralred}{rgb}{1.0, 0.25, 0.25}	                
\definecolor{cordovan}{rgb}{0.54, 0.25, 0.27}	                
\definecolor{corn}{rgb}{0.98, 0.93, 0.36}	                    
\definecolor{cornellred}{rgb}{0.7, 0.11, 0.11}	                
\definecolor{cornflowerblue}{rgb}{0.39, 0.58, 0.93}	            
\definecolor{cornsilk}{rgb}{1.0, 0.97, 0.86}	                
\definecolor{cosmiclatte}{rgb}{1.0, 0.97, 0.91}	                
\definecolor{cottoncandy}{rgb}{1.0, 0.74, 0.85}	                
\definecolor{cream}{rgb}{1.0, 0.99, 0.82}	                    
\definecolor{crimson}{rgb}{0.86, 0.08, 0.24}	                
\definecolor{crimsonglory}{rgb}{0.75, 0.0, 0.2}	                
\definecolor{cyan}{rgb}{0.0, 1.0, 1.0}	                        
\definecolor{cyan(process)}{rgb}{0.0, 0.72, 0.92}	            
\definecolor{daffodil}{rgb}{1.0, 1.0, 0.19}	                    
\definecolor{dandelion}{rgb}{0.94, 0.88, 0.19}	                
\definecolor{darkblue}{rgb}{0.0, 0.0, 0.55}	                    
\definecolor{darkbrown}{rgb}{0.4, 0.26, 0.13}	                
\definecolor{darkbyzantium}{rgb}{0.36, 0.22, 0.33}	            
\definecolor{darkcandyapplered}{rgb}{0.64, 0.0, 0.0}	        
\definecolor{darkcerulean}{rgb}{0.03, 0.27, 0.49}	            
\definecolor{darkchampagne}{rgb}{0.76, 0.7, 0.5}	            
\definecolor{darkchestnut}{rgb}{0.6, 0.41, 0.38}	            
\definecolor{darkcoral}{rgb}{0.8, 0.36, 0.27}	                
\definecolor{darkcyan}{rgb}{0.0, 0.55, 0.55}	                
\definecolor{darkelectricblue}{rgb}{0.33, 0.41, 0.47}	        
\definecolor{darkgoldenrod}{rgb}{0.72, 0.53, 0.04}	            
\definecolor{darkgray}{rgb}{0.66, 0.66, 0.66}	                
\definecolor{darkgreen}{rgb}{0.0, 0.2, 0.13}	                
\definecolor{darkjunglegreen}{rgb}{0.1, 0.14, 0.13}	            
\definecolor{darkkhaki}{rgb}{0.74, 0.72, 0.42}	                
\definecolor{darklava}{rgb}{0.28, 0.24, 0.2}	                
\definecolor{darklavender}{rgb}{0.45, 0.31, 0.59}	            
\definecolor{darkmagenta}{rgb}{0.55, 0.0, 0.55}	                
\definecolor{darkmidnightblue}{rgb}{0.0, 0.2, 0.4}	            
\definecolor{darkolivegreen}{rgb}{0.33, 0.42, 0.18}	            
\definecolor{darkorange}{rgb}{1.0, 0.55, 0.0}	                
\definecolor{darkorchid}{rgb}{0.6, 0.2, 0.8}	                
\definecolor{darkpastelblue}{rgb}{0.47, 0.62, 0.8}	            
\definecolor{darkpastelgreen}{rgb}{0.01, 0.75, 0.24}	        
\definecolor{darkpastelpurple}{rgb}{0.59, 0.44, 0.84}	        
\definecolor{darkpastelred}{rgb}{0.76, 0.23, 0.13}	            
\definecolor{darkpink}{rgb}{0.91, 0.33, 0.5}	                
\definecolor{darkpowderblue}{rgb}{0.0, 0.2, 0.6}	            
\definecolor{darkraspberry}{rgb}{0.53, 0.15, 0.34}	            
\definecolor{darkred}{rgb}{0.55, 0.0, 0.0}	                    
\definecolor{darksalmon}{rgb}{0.91, 0.59, 0.48}	                
\definecolor{darkscarlet}{rgb}{0.34, 0.01, 0.1}	                
\definecolor{darkseagreen}{rgb}{0.56, 0.74, 0.56}	            
\definecolor{darksienna}{rgb}{0.24, 0.08, 0.08}	                
\definecolor{darkslateblue}{rgb}{0.28, 0.24, 0.55}	            
\definecolor{darkslategray}{rgb}{0.18, 0.31, 0.31}	            
\definecolor{darkspringgreen}{rgb}{0.09, 0.45, 0.27}	        
\definecolor{darktan}{rgb}{0.57, 0.51, 0.32}	                
\definecolor{darktangerine}{rgb}{1.0, 0.66, 0.07}	            
\definecolor{darktaupe}{rgb}{0.28, 0.24, 0.2}	                
\definecolor{darkterracotta}{rgb}{0.8, 0.31, 0.36}	            
\definecolor{darkturquoise}{rgb}{0.0, 0.81, 0.82}	            
\definecolor{darkviolet}{rgb}{0.58, 0.0, 0.83}	                
\definecolor{dartmouthgreen}{rgb}{0.05, 0.5, 0.06}	            
\definecolor{davy\'sgrey}{rgb}{0.33, 0.33, 0.33}	            
\definecolor{debianred}{rgb}{0.84, 0.04, 0.33}	                
\definecolor{deepcarmine}{rgb}{0.66, 0.13, 0.24}	            
\definecolor{deepcarminepink}{rgb}{0.94, 0.19, 0.22}	        
\definecolor{deepcarrotorange}{rgb}{0.91, 0.41, 0.17}	        
\definecolor{deepcerise}{rgb}{0.85, 0.2, 0.53}	                
\definecolor{deepchampagne}{rgb}{0.98, 0.84, 0.65}	            
\definecolor{deepchestnut}{rgb}{0.73, 0.31, 0.28}	            
\definecolor{deepfuchsia}{rgb}{0.76, 0.33, 0.76}	            
\definecolor{deepjunglegreen}{rgb}{0.0, 0.29, 0.29}	            
\definecolor{deeplilac}{rgb}{0.6, 0.33, 0.73}	                
\definecolor{deepmagenta}{rgb}{0.8, 0.0, 0.8}	                
\definecolor{deeppeach}{rgb}{1.0, 0.8, 0.64}	                
\definecolor{deeppink}{rgb}{1.0, 0.08, 0.58}	                
\definecolor{deepsaffron}{rgb}{1.0, 0.6, 0.2}	                
\definecolor{deepskyblue}{rgb}{0.0, 0.75, 1.0}	                
\definecolor{denim}{rgb}{0.08, 0.38, 0.74}	                    
\definecolor{desert}{rgb}{0.76, 0.6, 0.42}	                    
\definecolor{desertsand}{rgb}{0.93, 0.79, 0.69}	                
\definecolor{dimgray}{rgb}{0.41, 0.41, 0.41}	                
\definecolor{dodgerblue}{rgb}{0.12, 0.56, 1.0}	                
\definecolor{dogwoodrose}{rgb}{0.84, 0.09, 0.41}	            
\definecolor{dollarbill}{rgb}{0.52, 0.73, 0.4}	                
\definecolor{drab}{rgb}{0.59, 0.44, 0.09}	                    
\definecolor{dukeblue}{rgb}{0.0, 0.0, 0.61}	                    
\definecolor{earthyellow}{rgb}{0.88, 0.66, 0.37}	            
\definecolor{ecru}{rgb}{0.76, 0.7, 0.5}	                        
\definecolor{eggplant}{rgb}{0.38, 0.25, 0.32}	                
\definecolor{eggshell}{rgb}{0.94, 0.92, 0.84}	                
\definecolor{egyptianblue}{rgb}{0.06, 0.2, 0.65}	            
\definecolor{electricblue}{rgb}{0.49, 0.98, 1.0}	            
\definecolor{electriccrimson}{rgb}{1.0, 0.0, 0.25}	            
\definecolor{electriccyan}{rgb}{0.0, 1.0, 1.0}	                
\definecolor{electricgreen}{rgb}{0.0, 1.0, 0.0}	                
\definecolor{electricindigo}{rgb}{0.44, 0.0, 1.0}	            
\definecolor{electriclavender}{rgb}{0.96, 0.73, 1.0}	        
\definecolor{electriclime}{rgb}{0.8, 1.0, 0.0}	                
\definecolor{electricpurple}{rgb}{0.75, 0.0, 1.0}	            
\definecolor{electricultramarine}{rgb}{0.25, 0.0, 1.0}	        
\definecolor{electricviolet}{rgb}{0.56, 0.0, 1.0}	            
\definecolor{electricyellow}{rgb}{1.0, 1.0, 0.0}	            
\definecolor{emerald}{rgb}{0.31, 0.78, 0.47}	                
\definecolor{etonblue}{rgb}{0.59, 0.78, 0.64}	                
\definecolor{fallow}{rgb}{0.76, 0.6, 0.42}	                    
\definecolor{falured}{rgb}{0.5, 0.09, 0.09}	                    
\definecolor{fandango}{rgb}{0.71, 0.2, 0.54}	                
\definecolor{fashionfuchsia}{rgb}{0.96, 0.0, 0.63}	            
\definecolor{fawn}{rgb}{0.9, 0.67, 0.44}	                    
\definecolor{feldgrau}{rgb}{0.3, 0.36, 0.33}	                
\definecolor{ferngreen}{rgb}{0.31, 0.47, 0.26}	                
\definecolor{ferrarired}{rgb}{1.0, 0.11, 0.0}	                
\definecolor{fielddrab}{rgb}{0.42, 0.33, 0.12}	                
\definecolor{firebrick}{rgb}{0.7, 0.13, 0.13}	                
\definecolor{fireenginered}{rgb}{0.81, 0.09, 0.13}	            
\definecolor{flame}{rgb}{0.89, 0.35, 0.13}	                    
\definecolor{flamingopink}{rgb}{0.99, 0.56, 0.67}	            
\definecolor{flavescent}{rgb}{0.97, 0.91, 0.56}	                
\definecolor{flax}{rgb}{0.93, 0.86, 0.51}	                    
\definecolor{floralwhite}{rgb}{1.0, 0.98, 0.94}	                
\definecolor{fluorescentorange}{rgb}{1.0, 0.75, 0.0}	        
\definecolor{fluorescentpink}{rgb}{1.0, 0.08, 0.58}	            
\definecolor{fluorescentyellow}{rgb}{0.8, 1.0, 0.0}	            
\definecolor{folly}{rgb}{1.0, 0.0, 0.31}	                    
\definecolor{forestgreen(traditional)}{rgb}{0.0, 0.27, 0.13}	
\definecolor{forestgreen(web)}{rgb}{0.13, 0.55, 0.13}	        
\definecolor{frenchbeige}{rgb}{0.65, 0.48, 0.36}	            
\definecolor{frenchblue}{rgb}{0.0, 0.45, 0.73}	                
\definecolor{frenchlilac}{rgb}{0.53, 0.38, 0.56}	            
\definecolor{frenchrose}{rgb}{0.96, 0.29, 0.54}	                
\definecolor{fuchsia}{rgb}{1.0, 0.0, 1.0}	                    
\definecolor{fuchsiapink}{rgb}{1.0, 0.47, 1.0}	                
\definecolor{fulvous}{rgb}{0.86, 0.52, 0.0}	                    
\definecolor{fuzzywuzzy}{rgb}{0.8, 0.4, 0.4}	                
\definecolor{gainsboro}{rgb}{0.86, 0.86, 0.86}	                
\definecolor{gamboge}{rgb}{0.89, 0.61, 0.06}	                
\definecolor{ghostwhite}{rgb}{0.97, 0.97, 1.0}	                
\definecolor{ginger}{rgb}{0.69, 0.4, 0.0}	                    
\definecolor{glaucous}{rgb}{0.38, 0.51, 0.71}	                
\definecolor{gold(metallic)}{rgb}{0.83, 0.69, 0.22}	            
\definecolor{gold(web)(golden)}{rgb}{1.0, 0.84, 0.0}	        
\definecolor{goldenbrown}{rgb}{0.6, 0.4, 0.08}	                
\definecolor{goldenpoppy}{rgb}{0.99, 0.76, 0.0}	                
\definecolor{goldenyellow}{rgb}{1.0, 0.87, 0.0}	                
\definecolor{goldenrod}{rgb}{0.85, 0.65, 0.13}	                
\definecolor{grannysmithapple}{rgb}{0.66, 0.89, 0.63}	        
\definecolor{gray}{rgb}{0.5, 0.5, 0.5}	                        
\definecolor{gray(html/cssgray)}{rgb}{0.5, 0.5, 0.5}	        
\definecolor{gray(x11gray)}{rgb}{0.75, 0.75, 0.75}	            
\definecolor{gray-asparagus}{rgb}{0.27, 0.35, 0.27}	            
\definecolor{green(colorwheel)(x11green)}{rgb}{0.0, 1.0, 0.0}	
\definecolor{green(html/cssgreen)}{rgb}{0.0, 0.5, 0.0}	        
\definecolor{green(munsell)}{rgb}{0.0, 0.66, 0.47}	            
\definecolor{green(ncs)}{rgb}{0.0, 0.62, 0.42}	                
\definecolor{green(pigment)}{rgb}{0.0, 0.65, 0.31}	            
\definecolor{green(ryb)}{rgb}{0.4, 0.69, 0.2}	                
\definecolor{green-yellow}{rgb}{0.68, 1.0, 0.18}	            
\definecolor{grullo}{rgb}{0.66, 0.6, 0.53}	                    
\definecolor{guppiegreen}{rgb}{0.0, 1.0, 0.5}	                
\definecolor{halayaube}{rgb}{0.4, 0.22, 0.33}	                
\definecolor{hanblue}{rgb}{0.27, 0.42, 0.81}	                
\definecolor{hanpurple}{rgb}{0.32, 0.09, 0.98}	                
\definecolor{hansayellow}{rgb}{0.91, 0.84, 0.42}	            
\definecolor{harlequin}{rgb}{0.25, 1.0, 0.0}	                
\definecolor{harvardcrimson}{rgb}{0.79, 0.0, 0.09}	            
\definecolor{harvestgold}{rgb}{0.85, 0.57, 0.0}	                
\definecolor{heartgold}{rgb}{0.5, 0.5, 0.0}	                    
\definecolor{heliotrope}{rgb}{0.87, 0.45, 1.0}	                
\definecolor{hollywoodcerise}{rgb}{0.96, 0.0, 0.63}	            
\definecolor{honeydew}{rgb}{0.94, 1.0, 0.94}	                
\definecolor{hooker\'sgreen}{rgb}{0.0, 0.44, 0.0}	            
\definecolor{hotmagenta}{rgb}{1.0, 0.11, 0.81}	                
\definecolor{hotpink}{rgb}{1.0, 0.41, 0.71}	                    
\definecolor{huntergreen}{rgb}{0.21, 0.37, 0.23}	            
\definecolor{iceberg}{rgb}{0.44, 0.65, 0.82}	                
\definecolor{icterine}{rgb}{0.99, 0.97, 0.37}	                
\definecolor{inchworm}{rgb}{0.7, 0.93, 0.36}	                
\definecolor{indiagreen}{rgb}{0.07, 0.53, 0.03}	                
\definecolor{indianred}{rgb}{0.8, 0.36, 0.36}	                
\definecolor{indianyellow}{rgb}{0.89, 0.66, 0.34}	            
\definecolor{indigo(dye)}{rgb}{0.0, 0.25, 0.42}	                
\definecolor{indigo(web)}{rgb}{0.29, 0.0, 0.51}	                
\definecolor{internationalkleinblue}{rgb}{0.0, 0.18, 0.65}	    
\definecolor{internationalorange}{rgb}{1.0, 0.31, 0.0}	        
\definecolor{iris}{rgb}{0.35, 0.31, 0.81}	                    
\definecolor{isabelline}{rgb}{0.96, 0.94, 0.93}	                
\definecolor{islamicgreen}{rgb}{0.0, 0.56, 0.0}	                
\definecolor{ivory}{rgb}{1.0, 1.0, 0.94}	                    
\definecolor{jade}{rgb}{0.0, 0.66, 0.42}	                    
\definecolor{jasper}{rgb}{0.84, 0.23, 0.24}	                    
\definecolor{jazzberryjam}{rgb}{0.65, 0.04, 0.37}	            
\definecolor{jonquil}{rgb}{0.98, 0.85, 0.37}	                
\definecolor{junebud}{rgb}{0.74, 0.85, 0.34}	                
\definecolor{junglegreen}{rgb}{0.16, 0.67, 0.53}	            
\definecolor{kellygreen}{rgb}{0.3, 0.73, 0.09}	                
\definecolor{khaki(html/css)(khaki)}{rgb}{0.76, 0.69, 0.57}	    
\definecolor{khaki(x11)(lightkhaki)}{rgb}{0.94, 0.9, 0.55}	    
\definecolor{lasallegreen}{rgb}{0.03, 0.47, 0.19}	            
\definecolor{languidlavender}{rgb}{0.84, 0.79, 0.87}	        
\definecolor{lapislazuli}{rgb}{0.15, 0.38, 0.61}	            
\definecolor{laserlemon}{rgb}{1.0, 1.0, 0.13}	                
\definecolor{lava}{rgb}{0.81, 0.06, 0.13}	                    
\definecolor{lavender(floral)}{rgb}{0.71, 0.49, 0.86}	        
\definecolor{lavender(web)}{rgb}{0.9, 0.9, 0.98}	            
\definecolor{lavenderblue}{rgb}{0.8, 0.8, 1.0}	                
\definecolor{lavenderblush}{rgb}{1.0, 0.94, 0.96}	            
\definecolor{lavendergray}{rgb}{0.77, 0.76, 0.82}	            
\definecolor{lavenderindigo}{rgb}{0.58, 0.34, 0.92}	            
\definecolor{lavendermagenta}{rgb}{0.93, 0.51, 0.93}	        
\definecolor{lavendermist}{rgb}{0.9, 0.9, 0.98}	                
\definecolor{lavenderpink}{rgb}{0.98, 0.68, 0.82}	            
\definecolor{lavenderpurple}{rgb}{0.59, 0.48, 0.71}	            
\definecolor{lavenderrose}{rgb}{0.98, 0.63, 0.89}	            
\definecolor{lawngreen}{rgb}{0.49, 0.99, 0.0}	                
\definecolor{lemon}{rgb}{1.0, 0.97, 0.0}	                    
\definecolor{lemonchiffon}{rgb}{1.0, 0.98, 0.8}	                
\definecolor{lightapricot}{rgb}{0.99, 0.84, 0.69}	            
\definecolor{lightblue}{rgb}{0.68, 0.85, 0.9}	                
\definecolor{lightbrown}{rgb}{0.71, 0.4, 0.11}	                
\definecolor{lightcarminepink}{rgb}{0.9, 0.4, 0.38}	            
\definecolor{lightcoral}{rgb}{0.94, 0.5, 0.5}	                
\definecolor{lightcornflowerblue}{rgb}{0.6, 0.81, 0.93}	        
\definecolor{lightcyan}{rgb}{0.88, 1.0, 1.0}	                
\definecolor{lightfuchsiapink}{rgb}{0.98, 0.52, 0.9}	        
\definecolor{lightgoldenrodyellow}{rgb}{0.98, 0.98, 0.82}	    
\definecolor{lightgray}{rgb}{0.83, 0.83, 0.83}	                
\definecolor{lightgreen}{rgb}{0.56, 0.93, 0.56}	                
\definecolor{lightkhaki}{rgb}{0.94, 0.9, 0.55}	                
\definecolor{lightmauve}{rgb}{0.86, 0.82, 1.0}	                
\definecolor{lightpastelpurple}{rgb}{0.69, 0.61, 0.85}	        
\definecolor{lightpink}{rgb}{1.0, 0.71, 0.76}	                
\definecolor{lightsalmon}{rgb}{1.0, 0.63, 0.48}	                
\definecolor{lightsalmonpink}{rgb}{1.0, 0.6, 0.6}	            
\definecolor{lightseagreen}{rgb}{0.13, 0.7, 0.67}	            
\definecolor{lightskyblue}{rgb}{0.53, 0.81, 0.98}	            
\definecolor{lightslategray}{rgb}{0.47, 0.53, 0.6}	            
\definecolor{lighttaupe}{rgb}{0.7, 0.55, 0.43}	                
\definecolor{lightthulianpink}{rgb}{0.9, 0.56, 0.67}	        
\definecolor{lightyellow}{rgb}{1.0, 1.0, 0.88}	                
\definecolor{lilac}{rgb}{0.78, 0.64, 0.78}	                    
\definecolor{lime(colorwheel)}{rgb}{0.75, 1.0, 0.0}	            
\definecolor{lime(web)(x11green)}{rgb}{0.0, 1.0, 0.0}	        
\definecolor{limegreen}{rgb}{0.2, 0.8, 0.2}	                    
\definecolor{lincolngreen}{rgb}{0.11, 0.35, 0.02}	            
\definecolor{linen}{rgb}{0.98, 0.94, 0.9}	                    
\definecolor{liver}{rgb}{0.33, 0.29, 0.31}	                    
\definecolor{lust}{rgb}{0.9, 0.13, 0.13}	                    
\definecolor{macaroniandcheese}{rgb}{1.0, 0.74, 0.53}	        
\definecolor{magenta}{rgb}{1.0, 0.0, 1.0}	                    
\definecolor{magenta(dye)}{rgb}{0.79, 0.08, 0.48}	            
\definecolor{magenta(process)}{rgb}{1.0, 0.0, 0.56}	            
\definecolor{magicmint}{rgb}{0.67, 0.94, 0.82}	                
\definecolor{magnolia}{rgb}{0.97, 0.96, 1.0}	                
\definecolor{mahogany}{rgb}{0.75, 0.25, 0.0}	                
\definecolor{maize}{rgb}{0.98, 0.93, 0.37}	                    
\definecolor{majorelleblue}{rgb}{0.38, 0.31, 0.86}	            
\definecolor{malachite}{rgb}{0.04, 0.85, 0.32}	                
\definecolor{manatee}{rgb}{0.59, 0.6, 0.67}	                    
\definecolor{mangotango}{rgb}{1.0, 0.51, 0.26}	                
\definecolor{maroon(html/css)}{rgb}{0.5, 0.0, 0.0}	            
\definecolor{maroon(x11)}{rgb}{0.69, 0.19, 0.38}	            
\definecolor{mauve}{rgb}{0.88, 0.69, 1.0}	                    
\definecolor{mauvetaupe}{rgb}{0.57, 0.37, 0.43}	                
\definecolor{mauvelous}{rgb}{0.94, 0.6, 0.67}	                
\definecolor{mayablue}{rgb}{0.45, 0.76, 0.98}	                
\definecolor{meatbrown}{rgb}{0.9, 0.72, 0.23}	                
\definecolor{mediumaquamarine}{rgb}{0.4, 0.8, 0.67}	            
\definecolor{mediumblue}{rgb}{0.0, 0.0, 0.8}	                
\definecolor{mediumcandyapplered}{rgb}{0.89, 0.02, 0.17}	    
\definecolor{mediumcarmine}{rgb}{0.69, 0.25, 0.21}	            
\definecolor{mediumchampagne}{rgb}{0.95, 0.9, 0.67}	            
\definecolor{mediumelectricblue}{rgb}{0.01, 0.31, 0.59}	        
\definecolor{mediumjunglegreen}{rgb}{0.11, 0.21, 0.18}	        
\definecolor{mediumlavendermagenta}{rgb}{0.8, 0.6, 0.8}	        
\definecolor{mediumorchid}{rgb}{0.73, 0.33, 0.83}	            
\definecolor{mediumpersianblue}{rgb}{0.0, 0.4, 0.65}	        
\definecolor{mediumpurple}{rgb}{0.58, 0.44, 0.86}	            
\definecolor{mediumred-violet}{rgb}{0.73, 0.2, 0.52}	        
\definecolor{mediumseagreen}{rgb}{0.24, 0.7, 0.44}	            
\definecolor{mediumslateblue}{rgb}{0.48, 0.41, 0.93}	        
\definecolor{mediumspringbud}{rgb}{0.79, 0.86, 0.54}	        
\definecolor{mediumspringgreen}{rgb}{0.0, 0.98, 0.6}	        
\definecolor{mediumtaupe}{rgb}{0.4, 0.3, 0.28}	                
\definecolor{mediumtealblue}{rgb}{0.0, 0.33, 0.71}	            
\definecolor{mediumturquoise}{rgb}{0.28, 0.82, 0.8}	            
\definecolor{mediumviolet-red}{rgb}{0.78, 0.08, 0.52}	        
\definecolor{melon}{rgb}{0.99, 0.74, 0.71}	                    
\definecolor{midnightblue}{rgb}{0.1, 0.1, 0.44}	                
\definecolor{midnightgreen(eaglegreen)}{rgb}{0.0, 0.29, 0.33}	
\definecolor{mikadoyellow}{rgb}{1.0, 0.77, 0.05}	            
\definecolor{mint}{rgb}{0.24, 0.71, 0.54}	                    
\definecolor{mintcream}{rgb}{0.96, 1.0, 0.98}	                
\definecolor{mintgreen}{rgb}{0.6, 1.0, 0.6}	                    
\definecolor{mistyrose}{rgb}{1.0, 0.89, 0.88}	                
\definecolor{moccasin}{rgb}{0.98, 0.92, 0.84}	                
\definecolor{modebeige}{rgb}{0.59, 0.44, 0.09}	                
\definecolor{moonstoneblue}{rgb}{0.45, 0.66, 0.76}	            
\definecolor{mordantred19}{rgb}{0.68, 0.05, 0.0}	            
\definecolor{mossgreen}{rgb}{0.68, 0.87, 0.68}	                
\definecolor{mountainmeadow}{rgb}{0.19, 0.73, 0.56}	            
\definecolor{mountbattenpink}{rgb}{0.6, 0.48, 0.55}	            
\definecolor{mulberry}{rgb}{0.77, 0.29, 0.55}	                
\definecolor{mustard}{rgb}{1.0, 0.86, 0.35}	                    
\definecolor{myrtle}{rgb}{0.13, 0.26, 0.12}	                    
\definecolor{msugreen}{rgb}{0.09, 0.27, 0.23}	                
\definecolor{nadeshikopink}{rgb}{0.96, 0.68, 0.78}	            
\definecolor{napiergreen}{rgb}{0.16, 0.5, 0.0}	                
\definecolor{naplesyellow}{rgb}{0.98, 0.85, 0.37}	            
\definecolor{navajowhite}{rgb}{1.0, 0.87, 0.68}	                
\definecolor{navyblue}{rgb}{0.0, 0.0, 0.5}	                    
\definecolor{neoncarrot}{rgb}{1.0, 0.64, 0.26}	                
\definecolor{neonfuchsia}{rgb}{1.0, 0.25, 0.39}	                
\definecolor{neongreen}{rgb}{0.22, 0.88, 0.08}	                
\definecolor{non-photoblue}{rgb}{0.64, 0.87, 0.93}	            
\definecolor{oceanboatblue}{rgb}{0.0, 0.47, 0.75}	            
\definecolor{ochre}{rgb}{0.8, 0.47, 0.13}	                    
\definecolor{officegreen}{rgb}{0.0, 0.5, 0.0}	                
\definecolor{oldgold}{rgb}{0.81, 0.71, 0.23}	                
\definecolor{oldlace}{rgb}{0.99, 0.96, 0.9}	                    
\definecolor{oldlavender}{rgb}{0.47, 0.41, 0.47}	            
\definecolor{oldmauve}{rgb}{0.4, 0.19, 0.28}	                
\definecolor{oldrose}{rgb}{0.75, 0.5, 0.51}	                    
\definecolor{olive}{rgb}{0.5, 0.5, 0.0}	                        
\definecolor{olivedrab(web)(olivedrab3)}{rgb}{0.42, 0.56, 0.14}	
\definecolor{olivedrab7}{rgb}{0.24, 0.2, 0.12}	                
\definecolor{olivine}{rgb}{0.6, 0.73, 0.45}	                    
\definecolor{onyx}{rgb}{0.06, 0.06, 0.06}	                    
\definecolor{operamauve}{rgb}{0.72, 0.52, 0.65}	                
\definecolor{orange(colorwheel)}{rgb}{1.0, 0.5, 0.0}	        
\definecolor{orange(ryb)}{rgb}{0.98, 0.6, 0.01}	                
\definecolor{orange(webcolor)}{rgb}{1.0, 0.65, 0.0}	            
\definecolor{orangepeel}{rgb}{1.0, 0.62, 0.0}	                
\definecolor{orange-red}{rgb}{1.0, 0.27, 0.0}	                
\definecolor{orchid}{rgb}{0.85, 0.44, 0.84}	                    
\definecolor{otterbrown}{rgb}{0.4, 0.26, 0.13}	                
\definecolor{outerspace}{rgb}{0.25, 0.29, 0.3}	                
\definecolor{outrageousorange}{rgb}{1.0, 0.43, 0.29}	        
\definecolor{oxfordblue}{rgb}{0.0, 0.13, 0.28}	                
\definecolor{oucrimsonred}{rgb}{0.6, 0.0, 0.0}	                
\definecolor{pakistangreen}{rgb}{0.0, 0.4, 0.0}	                
\definecolor{palatinateblue}{rgb}{0.15, 0.23, 0.89}	            
\definecolor{palatinatepurple}{rgb}{0.41, 0.16, 0.38}	        
\definecolor{paleaqua}{rgb}{0.74, 0.83, 0.9}	                
\definecolor{paleblue}{rgb}{0.69, 0.93, 0.93}	                
\definecolor{palebrown}{rgb}{0.6, 0.46, 0.33}	                
\definecolor{palecarmine}{rgb}{0.69, 0.25, 0.21}	            
\definecolor{palecerulean}{rgb}{0.61, 0.77, 0.89}	            
\definecolor{palechestnut}{rgb}{0.87, 0.68, 0.69}	            
\definecolor{palecopper}{rgb}{0.85, 0.54, 0.4}	                
\definecolor{palecornflowerblue}{rgb}{0.67, 0.8, 0.94}	        
\definecolor{palegold}{rgb}{0.9, 0.75, 0.54}	                
\definecolor{palegoldenrod}{rgb}{0.93, 0.91, 0.67}	            
\definecolor{palegreen}{rgb}{0.6, 0.98, 0.6}	                
\definecolor{palemagenta}{rgb}{0.98, 0.52, 0.9}	                
\definecolor{palepink}{rgb}{0.98, 0.85, 0.87}	                
\definecolor{paleplum}{rgb}{0.8, 0.6, 0.8}	                    
\definecolor{palered-violet}{rgb}{0.86, 0.44, 0.58}	            
\definecolor{palerobineggblue}{rgb}{0.59, 0.87, 0.82}	        
\definecolor{palesilver}{rgb}{0.79, 0.75, 0.73}	                
\definecolor{palespringbud}{rgb}{0.93, 0.92, 0.74}	            
\definecolor{paletaupe}{rgb}{0.74, 0.6, 0.49}	                
\definecolor{paleviolet-red}{rgb}{0.86, 0.44, 0.58}	            
\definecolor{pansypurple}{rgb}{0.47, 0.09, 0.29}	            
\definecolor{papayawhip}{rgb}{1.0, 0.94, 0.84}	                
\definecolor{parisgreen}{rgb}{0.31, 0.78, 0.47}	                
\definecolor{pastelblue}{rgb}{0.68, 0.78, 0.81}	                
\definecolor{pastelbrown}{rgb}{0.51, 0.41, 0.33}	            
\definecolor{pastelgray}{rgb}{0.81, 0.81, 0.77}	                
\definecolor{pastelgreen}{rgb}{0.47, 0.87, 0.47}	            
\definecolor{pastelmagenta}{rgb}{0.96, 0.6, 0.76}	            
\definecolor{pastelorange}{rgb}{1.0, 0.7, 0.28}	                
\definecolor{pastelpink}{rgb}{1.0, 0.82, 0.86}	                
\definecolor{pastelpurple}{rgb}{0.7, 0.62, 0.71}	            
\definecolor{pastelred}{rgb}{1.0, 0.41, 0.38}	                
\definecolor{pastelviolet}{rgb}{0.8, 0.6, 0.79}	                
\definecolor{pastelyellow}{rgb}{0.99, 0.99, 0.59}	            
\definecolor{patriarch}{rgb}{0.5, 0.0, 0.5}	                    
\definecolor{payne\'sgrey}{rgb}{0.25, 0.25, 0.28}	            
\definecolor{peach}{rgb}{1.0, 0.9, 0.71}	                    
\definecolor{peach-orange}{rgb}{1.0, 0.8, 0.6}	                
\definecolor{peachpuff}{rgb}{1.0, 0.85, 0.73}	                
\definecolor{peach-yellow}{rgb}{0.98, 0.87, 0.68}	            
\definecolor{pear}{rgb}{0.82, 0.89, 0.19}	                    
\definecolor{pearl}{rgb}{0.94, 0.92, 0.84}	                    
\definecolor{peridot}{rgb}{0.9, 0.89, 0.0}	                    
\definecolor{periwinkle}{rgb}{0.8, 0.8, 1.0}	                
\definecolor{persianblue}{rgb}{0.11, 0.22, 0.73}	            
\definecolor{persiangreen}{rgb}{0.0, 0.65, 0.58}	            
\definecolor{persianindigo}{rgb}{0.2, 0.07, 0.48}	            
\definecolor{persianorange}{rgb}{0.85, 0.56, 0.35}	            
\definecolor{peru}{rgb}{0.8, 0.52, 0.25}	                    
\definecolor{persianpink}{rgb}{0.97, 0.5, 0.75}	                
\definecolor{persianplum}{rgb}{0.44, 0.11, 0.11}	            
\definecolor{persianred}{rgb}{0.8, 0.2, 0.2}	                
\definecolor{persianrose}{rgb}{1.0, 0.16, 0.64}	                
\definecolor{persimmon}{rgb}{0.93, 0.35, 0.0}	                
\definecolor{phlox}{rgb}{0.87, 0.0, 1.0}	                    
\definecolor{phthaloblue}{rgb}{0.0, 0.06, 0.54}	                
\definecolor{phthalogreen}{rgb}{0.07, 0.21, 0.14}	            
\definecolor{piggypink}{rgb}{0.99, 0.87, 0.9}	                
\definecolor{pinegreen}{rgb}{0.0, 0.47, 0.44}	                
\definecolor{pink}{rgb}{1.0, 0.75, 0.8}	                        
\definecolor{pink-orange}{rgb}{1.0, 0.6, 0.4}	                
\definecolor{pinkpearl}{rgb}{0.91, 0.67, 0.81}	                
\definecolor{pinksherbet}{rgb}{0.97, 0.56, 0.65}	            
\definecolor{pistachio}{rgb}{0.58, 0.77, 0.45}	                
\definecolor{platinum}{rgb}{0.9, 0.89, 0.89}	                
\definecolor{plum(traditional)}{rgb}{0.56, 0.27, 0.52}	        
\definecolor{plum(web)}{rgb}{0.8, 0.6, 0.8}	                    
\definecolor{portlandorange}{rgb}{1.0, 0.35, 0.21}	            
\definecolor{powderblue(web)}{rgb}{0.69, 0.88, 0.9}	            
\definecolor{princetonorange}{rgb}{1.0, 0.56, 0.0}	            
\definecolor{prune}{rgb}{0.44, 0.11, 0.11}	                    
\definecolor{prussianblue}{rgb}{0.0, 0.19, 0.33}	            
\definecolor{psychedelicpurple}{rgb}{0.87, 0.0, 1.0}	        
\definecolor{puce}{rgb}{0.8, 0.53, 0.6}	                        
\definecolor{pumpkin}{rgb}{1.0, 0.46, 0.09}	                    
\definecolor{purple(html/css)}{rgb}{0.5, 0.0, 0.5}	            
\definecolor{purple(munsell)}{rgb}{0.62, 0.0, 0.77}	            
\definecolor{purple(x11)}{rgb}{0.63, 0.36, 0.94}	            
\definecolor{purpleheart}{rgb}{0.41, 0.21, 0.61}	            
\definecolor{purplemountainmajesty}{rgb}{0.59, 0.47, 0.71}	    
\definecolor{purplepizzazz}{rgb}{1.0, 0.31, 0.85}	            
\definecolor{purpletaupe}{rgb}{0.31, 0.25, 0.3}	                
\definecolor{radicalred}{rgb}{1.0, 0.21, 0.37}	                
\definecolor{raspberry}{rgb}{0.89, 0.04, 0.36}	                
\definecolor{raspberryglace}{rgb}{0.57, 0.37, 0.43}	            
\definecolor{raspberrypink}{rgb}{0.89, 0.31, 0.61}	            
\definecolor{raspberryrose}{rgb}{0.7, 0.27, 0.42}	            
\definecolor{rawumber}{rgb}{0.51, 0.4, 0.27}	                
\definecolor{razzledazzlerose}{rgb}{1.0, 0.2, 0.8}	            
\definecolor{razzmatazz}{rgb}{0.89, 0.15, 0.42}	                
\definecolor{red}{rgb}{1.0, 0.0, 0.0}	                        
\definecolor{red(munsell)}{rgb}{0.95, 0.0, 0.24}	            
\definecolor{red(ncs)}{rgb}{0.77, 0.01, 0.2}	                
\definecolor{red(pigment)}{rgb}{0.93, 0.11, 0.14}	            
\definecolor{red(ryb)}{rgb}{1.0, 0.15, 0.07}	                
\definecolor{red-brown}{rgb}{0.65, 0.16, 0.16}	                
\definecolor{red-violet}{rgb}{0.78, 0.08, 0.52}	                
\definecolor{redwood}{rgb}{0.67, 0.31, 0.32}	                
\definecolor{regalia}{rgb}{0.32, 0.18, 0.5}	                    
\definecolor{richblack}{rgb}{0.0, 0.25, 0.25}	                
\definecolor{richbrilliantlavender}{rgb}{0.95, 0.65, 1.0}	    
\definecolor{richcarmine}{rgb}{0.84, 0.0, 0.25}	                
\definecolor{richelectricblue}{rgb}{0.03, 0.57, 0.82}	        
\definecolor{richlavender}{rgb}{0.67, 0.38, 0.8}	            
\definecolor{richlilac}{rgb}{0.71, 0.4, 0.82}	                
\definecolor{richmaroon}{rgb}{0.69, 0.19, 0.38}	                
\definecolor{riflegreen}{rgb}{0.25, 0.28, 0.2}	                
\definecolor{robineggblue}{rgb}{0.0, 0.8, 0.8}	                
\definecolor{rose}{rgb}{1.0, 0.0, 0.5}	                        
\definecolor{rosebonbon}{rgb}{0.98, 0.26, 0.62}	                
\definecolor{roseebony}{rgb}{0.4, 0.3, 0.28}	                
\definecolor{rosegold}{rgb}{0.72, 0.43, 0.47}	                
\definecolor{rosemadder}{rgb}{0.89, 0.15, 0.21}	                
\definecolor{rosepink}{rgb}{1.0, 0.4, 0.8}	                    
\definecolor{rosequartz}{rgb}{0.67, 0.6, 0.66}	                
\definecolor{rosetaupe}{rgb}{0.56, 0.36, 0.36}	                
\definecolor{rosevale}{rgb}{0.67, 0.31, 0.32}	                
\definecolor{rosewood}{rgb}{0.4, 0.0, 0.04}	                    
\definecolor{rossocorsa}{rgb}{0.83, 0.0, 0.0}	                
\definecolor{rosybrown}{rgb}{0.74, 0.56, 0.56}	                
\definecolor{royalazure}{rgb}{0.0, 0.22, 0.66}	                
\definecolor{royalblue(traditional)}{rgb}{0.0, 0.14, 0.4}	    
\definecolor{royalblue(web)}{rgb}{0.25, 0.41, 0.88}	            
\definecolor{royalfuchsia}{rgb}{0.79, 0.17, 0.57}	            
\definecolor{royalpurple}{rgb}{0.47, 0.32, 0.66}	            
\definecolor{ruby}{rgb}{0.88, 0.07, 0.37}	                    
\definecolor{ruddy}{rgb}{1.0, 0.0, 0.16}	                    
\definecolor{ruddybrown}{rgb}{0.73, 0.4, 0.16}	                
\definecolor{ruddypink}{rgb}{0.88, 0.56, 0.59}	                
\definecolor{rufous}{rgb}{0.66, 0.11, 0.03}	                    
\definecolor{russet}{rgb}{0.5, 0.27, 0.11}	                    
\definecolor{rust}{rgb}{0.72, 0.25, 0.05}	                    
\definecolor{sacramentostategreen}{rgb}{0.0, 0.34, 0.25}	    
\definecolor{saddlebrown}{rgb}{0.55, 0.27, 0.07}	            
\definecolor{safetyorange(blazeorange)}{rgb}{1.0, 0.4, 0.0}	    
\definecolor{saffron}{rgb}{0.96, 0.77, 0.19}	                
\definecolor{st.patrick\'sblue}{rgb}{0.14, 0.16, 0.48}	        
\definecolor{salmon}{rgb}{1.0, 0.55, 0.41}	                    
\definecolor{salmonpink}{rgb}{1.0, 0.57, 0.64}	                
\definecolor{sand}{rgb}{0.76, 0.7, 0.5}	                        
\definecolor{sanddune}{rgb}{0.59, 0.44, 0.09}	                
\definecolor{sandstorm}{rgb}{0.93, 0.84, 0.25}	                
\definecolor{sandybrown}{rgb}{0.96, 0.64, 0.38}	                
\definecolor{sandytaupe}{rgb}{0.59, 0.44, 0.09}	                
\definecolor{sangria}{rgb}{0.57, 0.0, 0.04}	                    
\definecolor{sapgreen}{rgb}{0.31, 0.49, 0.16}	                
\definecolor{sapphire}{rgb}{0.03, 0.15, 0.4}	                
\definecolor{satinsheengold}{rgb}{0.8, 0.63, 0.21}	            
\definecolor{scarlet}{rgb}{1.0, 0.13, 0.0}	                    
\definecolor{schoolbusyellow}{rgb}{1.0, 0.85, 0.0}	            
\definecolor{screamin\'green}{rgb}{0.46, 1.0, 0.44}	            
\definecolor{seagreen}{rgb}{0.18, 0.55, 0.34}	                
\definecolor{sealbrown}{rgb}{0.2, 0.08, 0.08}	                
\definecolor{seashell}{rgb}{1.0, 0.96, 0.93}	                
\definecolor{selectiveyellow}{rgb}{1.0, 0.73, 0.0}	            
\definecolor{sepia}{rgb}{0.44, 0.26, 0.08}	                    
\definecolor{shadow}{rgb}{0.54, 0.47, 0.36}	                    
\definecolor{shamrockgreen}{rgb}{0.0, 0.62, 0.38}	            
\definecolor{shockingpink}{rgb}{0.99, 0.06, 0.75}	            
\definecolor{sienna}{rgb}{0.53, 0.18, 0.09}	                    
\definecolor{silver}{rgb}{0.75, 0.75, 0.75}	                    
\definecolor{sinopia}{rgb}{0.8, 0.25, 0.04}	                    
\definecolor{skobeloff}{rgb}{0.0, 0.48, 0.45}	                
\definecolor{skyblue}{rgb}{0.53, 0.81, 0.92}	                
\definecolor{skymagenta}{rgb}{0.81, 0.44, 0.69}	                
\definecolor{slateblue}{rgb}{0.42, 0.35, 0.8}	                
\definecolor{slategray}{rgb}{0.44, 0.5, 0.56}	                
\definecolor{smalt(darkpowderblue)}{rgb}{0.0, 0.2, 0.6}	        
\definecolor{smokeytopaz}{rgb}{0.58, 0.25, 0.03}	            
\definecolor{smokyblack}{rgb}{0.06, 0.05, 0.03}	                
\definecolor{snow}{rgb}{1.0, 0.98, 0.98}	                    
\definecolor{spirodiscoball}{rgb}{0.06, 0.75, 0.99}	            
\definecolor{splashedwhite}{rgb}{1.0, 0.99, 1.0}	            
\definecolor{springbud}{rgb}{0.65, 0.99, 0.0}	                
\definecolor{springgreen}{rgb}{0.0, 1.0, 0.5}	                
\definecolor{steelblue}{rgb}{0.27, 0.51, 0.71}	                
\definecolor{stildegrainyellow}{rgb}{0.98, 0.85, 0.37}	        
\definecolor{straw}{rgb}{0.89, 0.85, 0.44}	                    
\definecolor{sunglow}{rgb}{1.0, 0.8, 0.2}	                    
\definecolor{sunset}{rgb}{0.98, 0.84, 0.65}	                    
\definecolor{tan}{rgb}{0.82, 0.71, 0.55}	                    
\definecolor{tangelo}{rgb}{0.98, 0.3, 0.0}	                    
\definecolor{tangerine}{rgb}{0.95, 0.52, 0.0}	                
\definecolor{tangerineyellow}{rgb}{1.0, 0.8, 0.0}	            
\definecolor{taupe}{rgb}{0.28, 0.24, 0.2}	                    
\definecolor{taupegray}{rgb}{0.55, 0.52, 0.54}	                
\definecolor{teagreen}{rgb}{0.82, 0.94, 0.75}	                
\definecolor{tearose(orange)}{rgb}{0.97, 0.51, 0.47}	        
\definecolor{tearose(rose)}{rgb}{0.96, 0.76, 0.76}	            
\definecolor{teal}{rgb}{0.0, 0.5, 0.5}	                        
\definecolor{tealblue}{rgb}{0.21, 0.46, 0.53}	                
\definecolor{tealgreen}{rgb}{0.0, 0.51, 0.5}	                
\definecolor{tenne-tawny}{rgb}{0.8, 0.34, 0.0}	                
\definecolor{terracotta}{rgb}{0.89, 0.45, 0.36}	                
\definecolor{thistle}{rgb}{0.85, 0.75, 0.85}	                
\definecolor{thulianpink}{rgb}{0.87, 0.44, 0.63}	            
\definecolor{ticklemepink}{rgb}{0.99, 0.54, 0.67}	            
\definecolor{tiffanyblue}{rgb}{0.04, 0.73, 0.71}	            
\definecolor{tiger\'seye}{rgb}{0.88, 0.55, 0.24}	            
\definecolor{timberwolf}{rgb}{0.86, 0.84, 0.82}	                
\definecolor{titaniumyellow}{rgb}{0.93, 0.9, 0.0}	            
\definecolor{tomato}{rgb}{1.0, 0.39, 0.28}	                    
\definecolor{toolbox}{rgb}{0.45, 0.42, 0.75}	                
\definecolor{tractorred}{rgb}{0.99, 0.05, 0.21}	                
\definecolor{trolleygrey}{rgb}{0.5, 0.5, 0.5}	                
\definecolor{tropicalrainforest}{rgb}{0.0, 0.46, 0.37}	        
\definecolor{trueblue}{rgb}{0.0, 0.45, 0.81}	                
\definecolor{tuftsblue}{rgb}{0.28, 0.57, 0.81}	                
\definecolor{tumbleweed}{rgb}{0.87, 0.67, 0.53}	                
\definecolor{turkishrose}{rgb}{0.71, 0.45, 0.51}	            
\definecolor{turquoise}{rgb}{0.19, 0.84, 0.78}	                
\definecolor{turquoiseblue}{rgb}{0.0, 1.0, 0.94}	            
\definecolor{turquoisegreen}{rgb}{0.63, 0.84, 0.71}	            
\definecolor{tuscanred}{rgb}{0.51, 0.21, 0.21}	                
\definecolor{twilightlavender}{rgb}{0.54, 0.29, 0.42}	        
\definecolor{tyrianpurple}{rgb}{0.4, 0.01, 0.24}	            
\definecolor{uablue}{rgb}{0.0, 0.2, 0.67}	                    
\definecolor{uared}{rgb}{0.85, 0.0, 0.3}	                    
\definecolor{ube}{rgb}{0.53, 0.47, 0.76}	                    
\definecolor{uclablue}{rgb}{0.33, 0.41, 0.58}	                
\definecolor{uclagold}{rgb}{1.0, 0.7, 0.0}	                    
\definecolor{ufogreen}{rgb}{0.24, 0.82, 0.44}	                
\definecolor{ultramarine}{rgb}{0.07, 0.04, 0.56}	            
\definecolor{ultramarineblue}{rgb}{0.25, 0.4, 0.96}	            
\definecolor{ultrapink}{rgb}{1.0, 0.44, 1.0}	                
\definecolor{umber}{rgb}{0.39, 0.32, 0.28}	                    
\definecolor{unitednationsblue}{rgb}{0.36, 0.57, 0.9}	        
\definecolor{unmellowyellow}{rgb}{1.0, 1.0, 0.4}	            
\definecolor{upforestgreen}{rgb}{0.0, 0.27, 0.13}	            
\definecolor{upmaroon}{rgb}{0.48, 0.07, 0.07}	                
\definecolor{upsdellred}{rgb}{0.68, 0.09, 0.13}	                
\definecolor{urobilin}{rgb}{0.88, 0.68, 0.13}	                
\definecolor{usccardinal}{rgb}{0.6, 0.0, 0.0}	                
\definecolor{uscgold}{rgb}{1.0, 0.8, 0.0}	                    
\definecolor{utahcrimson}{rgb}{0.83, 0.0, 0.25}	                
\definecolor{vanilla}{rgb}{0.95, 0.9, 0.67}	                    
\definecolor{vegasgold}{rgb}{0.77, 0.7, 0.35}	                
\definecolor{venetianred}{rgb}{0.78, 0.03, 0.08}	            
\definecolor{verdigris}{rgb}{0.26, 0.7, 0.68}	                
\definecolor{vermilion}{rgb}{0.89, 0.26, 0.2}	                
\definecolor{veronica}{rgb}{0.63, 0.36, 0.94}	                
\definecolor{violet}{rgb}{0.56, 0.0, 1.0}	                    
\definecolor{violet(colorwheel)}{rgb}{0.5, 0.0, 1.0}	        
\definecolor{violet(ryb)}{rgb}{0.53, 0.0, 0.69}	                
\definecolor{violet(web)}{rgb}{0.93, 0.51, 0.93}	            
\definecolor{viridian}{rgb}{0.25, 0.51, 0.43}	                
\definecolor{vividauburn}{rgb}{0.58, 0.15, 0.14}	            
\definecolor{vividburgundy}{rgb}{0.62, 0.11, 0.21}	            
\definecolor{vividcerise}{rgb}{0.85, 0.11, 0.51}	            
\definecolor{vividtangerine}{rgb}{1.0, 0.63, 0.54}	            
\definecolor{vividviolet}{rgb}{0.62, 0.0, 1.0}	                
\definecolor{warmblack}{rgb}{0.0, 0.26, 0.26}	                
\definecolor{wenge}{rgb}{0.39, 0.33, 0.32}	                    
\definecolor{wheat}{rgb}{0.96, 0.87, 0.7}	                    
\definecolor{white}{rgb}{1.0, 1.0, 1.0}	                        
\definecolor{whitesmoke}{rgb}{0.96, 0.96, 0.96}	                
\definecolor{wildblueyonder}{rgb}{0.64, 0.68, 0.82}	            
\definecolor{wildstrawberry}{rgb}{1.0, 0.26, 0.64}	            
\definecolor{wildwatermelon}{rgb}{0.99, 0.42, 0.52}	            
\definecolor{wisteria}{rgb}{0.79, 0.63, 0.86}	                
\definecolor{xanadu}{rgb}{0.45, 0.53, 0.47}	                    
\definecolor{yaleblue}{rgb}{0.06, 0.3, 0.57}	                
\definecolor{yellow}{rgb}{1.0, 1.0, 0.0}	                    
\definecolor{yellow(munsell)}{rgb}{0.94, 0.8, 0.0}	            
\definecolor{yellow(ncs)}{rgb}{1.0, 0.83, 0.0}	                
\definecolor{yellow(process)}{rgb}{1.0, 0.94, 0.0}	            
\definecolor{yellow(ryb)}{rgb}{1.0, 1.0, 0.2}	                
\definecolor{yellow-green}{rgb}{0.6, 0.8, 0.2}	                
\definecolor{zaffre}{rgb}{0.0, 0.08, 0.66}	                    
\definecolor{zinnwalditebrown}{rgb}{0.17, 0.09, 0.03}	        


\definecolor{solarized-yellow}{HTML}	{B58900}
\definecolor{solarized-orange}{HTML}	{CB4B16}
\definecolor{solarized-red}{HTML}		{DC322F}
\definecolor{solarized-magenta}{HTML}	{D33682}
\definecolor{solarized-violet}{HTML}	{6C71C4}
\definecolor{solarized-blue}{HTML}		{268BD2}
\definecolor{solarized-cyan}{HTML}		{2AA198}
\definecolor{solarized-green}{HTML}		{859900}

\definecolor{solarized-base03}{HTML}	{002B36}
\definecolor{solarized-base02}{HTML}	{073642}
\definecolor{solarized-base01}{HTML}	{586E75}
\definecolor{solarized-base00}{HTML}	{657B83}
\definecolor{solarized-base0}{HTML}		{839496}
\definecolor{solarized-base1}{HTML}		{93A1A1}
\definecolor{solarized-base2}{HTML}		{EEE8D5}
\definecolor{solarized-base3}{HTML}		{FDF6E3}

\usepackage{colortbl}
\usepackage{array}
\newcolumntype{A}{>{\columncolor{silver!10}}c}
\newcolumntype{B}{>{\columncolor{black!10}}c}
\newcolumntype{C}{>{\columncolor{alizarin!10}}c}
\newcolumntype{D}{>{\columncolor{vividviolet!10}}c}
\newcolumntype{E}{>{\columncolor{aquamarine!10}}c}
\setlength{\columnsep}{0.001cm}
\pgfplotsset{compat=newest}
\usepackage{pgfplots}
\usepackage{tikz}
\usetikzlibrary{patterns, arrows.meta}  %
\usetikzlibrary{shapes,decorations.pathmorphing}

\usepackage{amssymb,graphicx,multirow,esint}

\usepackage{acronym}
\usepackage{hyperref}

\newtheorem{thm}{Theorem}[section]
\newtheorem{defn}{Definition}[section]
\newtheorem{lem}[thm]{Lemma} 
\newtheorem{rem}[thm]{Remark}

\newtheorem{cor}[thm]{Corollary}
\usepackage{comment}

\numberwithin{figure}{section}
\numberwithin{table}{section}
\numberwithin{equation}{section}

\newcommand{\bx}{\boldsymbol{x}}

\newcommand{\dx}{\,\mathrm{d}\bx}
\newcommand{\ds}{\,\mathrm{d}s}
\newcommand{\dt}{\,\mathrm{d}t}
\newcommand{\dz}{\,\mathrm{d}z}
\newcommand\norm[1]{\lVert#1\rVert}
\newcommand{\bn}{\boldsymbol{n}}
\newcommand{\trinl}{\ensuremath{|\!|\!|}}
\newcommand{\trinr}{\ensuremath{|\!|\!|}}
\definecolor{lightgreen}{rgb}{0.22,0.50,0.25}
\definecolor{lightblue}{rgb}{0.22,0.45,0.70}
\definecolor{darkred}{rgb}{0.82,0.15,0.12}
\newcommand{\cred}[1]{}
\newcommand{\ccyan}[1]{}
\newcommand{\cblue}[1]{}

\allowdisplaybreaks

\acrodef{pde}[PDE]{partial differential equation} 
\acrodefplural{pde}[PDEs]{partial differential equations} 
\acrodef{dof}[DOF]{degree of freedom} 
\acrodefplural{dof}[DOFs]{degrees of freedom} 
\acrodef{fe}[FE]{finite element} 
\acrodefplural{fe}[FEs]{finite elements} 
\acrodef{dg}[dG]{discontinuous Galerkin}

\acrodef{ted}[TED]{thermoelastic diffusion}
\acrodef{tpe}[TPE]{thermo-poroelasticity}

\captionsetup[figure]{name=Fig}

\newcommand{\rev}{}


\begin{document}
\markboth{Nataraj, Ruiz-Baier, Yousuf}{Thermoelastic diffusion and thermo-poroelasticity of thin plates}

\catchline{}{}{}{}{}

\title{\large Unified numerical analysis for thermoelastic diffusion \\ and thermo-poroelasticity of thin plates}

\author{\normalsize Neela Nataraj}
\address{Department of Mathematics, Indian Institute of Technology Bombay, Mumbai, Maharashtra
400076, India. Email: \email{neela@math.iitb.ac.in}} 
\author{\normalsize Ricardo Ruiz-Baier}
\address{Corresponding author. School of Mathematics, Monash University, 
9 Rainforest Walk, 3800 Melbourne, Australia; and \\
Universidad Adventista de Chile, Casilla 7-D, Chill\'an, Chile; and \\
Institute for Computer Science and Mathematical Modelling, Sechenov First Moscow State Medical University, Moscow, Russia.  Email: \email{ricardo.ruizbaier@monash.edu}}
\author{\normalsize Aamir Yousuf}
\address{IITB–Monash Research Academy, Indian Institute of Technology Bombay, Mumbai, Maharashtra 400076, India. Email: \email{aamir72@iitb.ac.in}}
\maketitle

\begin{history}
\revised{\today}
\end{history}

\begin{abstract}
We investigate a coupled hyperbolic-parabolic system modeling thermoelastic diffusion (resp. thermo-poro\-elast\-icity) in plates, consisting of a fourth-order hyperbolic partial differential equation for plate deflection and two second-order parabolic partial differential equations for the first moments of temperature and chemical potential (resp. pore pressure). The unique solvability of the system is established \rev{by means of a} Galerkin approach, and the additional regularity of the solution is obtained under appropriately strengthened data. For numerical approximation, we employ the Newmark method for time discretization of the hyperbolic term and a continuous interior penalty scheme for the spatial discretization of displacement. For the parabolic equations that represent the first moments of temperature and chemical potential (resp. pore pressure), we use the Crank--Nicolson method for time discretization and conforming finite elements for spatial discretization. The convergence of the fully discrete scheme with quasi-optimal rates in space and time is established. The numerical experiments demonstrate the effectiveness of the 2D Kirchhoff--Love plate model in capturing thermoelastic diffusion  and thermo-poroelastic behavior in specific materials. We illustrate that, as plate thickness decreases, the two-dimensional simulations closely approximate the results of the three-dimensional problem. Finally, the numerical experiments also validate the theoretical rates of convergence.
\end{abstract}

\keywords{\rev{Coupled thermo-poroelasticity, thin geometries, finite element methods, convergence analysis.}} 
\ccode{\rev{Mathematics Subject Classifications (2020): 65M30, 65M12.}}

\section{Introduction}\label{section-intro}
\noindent\textbf{Scope and presentation of the problem.} 
This study presents a unified  analysis of thin plate structures that describe \ac{ted} and \ac{tpe}. The coupled system comprises of a fourth-order hyperbolic \ac{pde} governing the plate deflection  with two second-order parabolic \acp{pde} describing the first moments of temperature and chemical potential (resp. pressure) in the case of \ac{ted} (resp. \ac{tpe}).
 A combination of the $C^0$ interior penalty ($C^0$IP) scheme and conforming \acp{fe} is used for spatial discretization. The temporal discretization utilizes the Newmark and Crank--Nicolson schemes for approximating the second and first-order terms, respectively. We establish optimal order theoretical rates of convergence and the numerical experiments validate them.

Thermodiffusion in an elastic solid results from the coupling of strain, temperature, and mass diffusion fields. {In the context of \ac{tpe}, this coupling replaces chemical potential by pore pressure, accounting for the interactions between mechanical deformation, thermal effects, and fluid flow within porous media. The \ac{ted} phenomena play a critical role in various engineering applications, for example, in satellite,  aircraft operations, and in the  manufacturing of integrated circuits, integrated resistors, semiconductor substrates, and transistors.  Additionally, \ac{ted} is a key part in the heat and mass transfer processes involved in enhancing oil extraction conditions from deposits. Understanding diffusion properties in thin thermoelastic plates is critical in the study of \rev{advanced} materials predicting stress distribution, material fatigue, and potential failure such as warping or cracking during operation. Also, determination of the flexural motion of fluid-saturated poroelastic plates is an important problem in structural and geotechnical engineering, bioengineering, and geodynamics.

\begin{figure}[ht]
    \centering
    \begin{minipage}{0.45\textwidth}
{\small \begin{tabular}{ll}
\hline
\textit{Coeff.}& \textit{Description} \\
\hline
\quad $\lambda$ & Lam\'e's first constant \\
\quad$\mu$ & Shear modulus  \\
\quad$\varrho$  & Measure of the diffusive effect \\
\quad$\alpha_t$ & Thermal expansion \\
\quad$\alpha_c$ & Diffusion expansion \\
\quad$\varpi$   & Measure of thermodiffusion effect \\
\quad$c_E$ & Specific heat at constant strain \\
\quad$\rho$& Mass density per unit volume \\
\quad$k_1$ &  Thermal conductivity\\
\quad$k_2$ & Diffusion conductivity\\
\quad $\beta^*$ & Biot--Willis constant\\
\quad $\gamma^*$& Thermal dilation coefficient\\
\quad $\varrho^*$& Biot modulus\\
\quad $k_2^*$& Permeability\\
\end{tabular}}
\captionof{table}{{Physical constants.}}
        \label{fig:subfig1}
    \end{minipage}\hfill
    \begin{minipage}{0.45\textwidth}
    \centering
\begin{tikzpicture}
\pgfdeclarepatternformonly
        {dense dots} 
        {\pgfpoint{-1pt}{-1pt}} 
        {\pgfpoint{2pt}{2pt}}   
        {\pgfpoint{0.8pt}{0.8pt}}   
        {
            \pgfpathcircle{\pgfpoint{0pt}{0pt}}{0.5pt} 
            \pgfusepath{fill}
        }

    \begin{axis}[
        hide axis, 
        view={80}{10}, 
        colormap/hot,
        clip=false, 
        axis x line=middle, 
        axis y line=middle, 
       axis z line=middle, 
        xmin=-2, xmax=16, 
        ymin=-2, ymax=10, 
        zmin=-0.2, zmax=0.2 
    ]
        \definecolor{lightblue}{rgb}{0.678, 0.847, 0.902}

        \draw[dashed,thick] (axis cs:0,0,0) -- (axis cs:14,0,0) -- (axis cs:14,8,0) -- (axis cs:0,8,0) -- cycle;
        \draw[thin] (axis cs:0,0,0.02) -- (axis cs:14,0,0.02) -- (axis cs:14,8,0.02) -- (axis cs:0,8,0.02) -- cycle;
        \draw[thin] (axis cs:0,0,-0.02) -- (axis cs:14,0,-0.02) -- (axis cs:14,8,-0.02) -- (axis cs:0,8,-0.02) -- cycle;

         \fill[pattern=dense dots, pattern color=lightblue] 
        (axis cs:0,0,0.02) -- 
        (axis cs:0,0,-0.02) -- 
        (axis cs:14,0,-0.02) -- 
        (axis cs:14,0,0.02) -- 
        cycle;
        \fill[pattern=dense dots, pattern color=lightblue] 
        (axis cs:0,0,0.02) -- 
        (axis cs:0,0,-0.02) -- 
        (axis cs:0,8,-0.02) -- 
        (axis cs:0,8,0.02) -- 
        cycle;
        \fill[pattern=dense dots, pattern color=lightblue] 
        (axis cs:0,8,0.02) -- 
        (axis cs:0,8,-0.02) -- 
        (axis cs:14,8,-0.02) -- 
        (axis cs:14,8,0.02) -- 
        cycle;
        \fill[pattern=dense dots, pattern color=lightblue] 
        (axis cs:13.8,0.1,0.0199) -- 
        (axis cs:13.8,0.1,-0.0199) -- 
        (axis cs:13.8,7.9,-0.0199) -- 
        (axis cs:13.8,7.9,0.0199) -- 
        cycle;
        \fill[pattern=dense dots, pattern color=lightblue] 
        (axis cs:0,0,0) -- (axis cs:14,0,0) -- (axis cs:14,8,0) -- (axis cs:0,8,0) -- cycle;

        \draw[thin] (axis cs:0,0,0) -- (axis cs:0,0,0.02);
        \draw[thin] (axis cs:14,0,0) -- (axis cs:14,0,0.02);
        \draw[thin] (axis cs:14,8,0) -- (axis cs:14,8,0.02);
        \draw[thin] (axis cs:0,8,0) -- (axis cs:0,8,0.02);
        \draw[thin] (axis cs:0,0,0) -- (axis cs:0,0,-0.02);
        \draw[thin] (axis cs:14,0,0) -- (axis cs:14,0,-0.02);
        \draw[thin] (axis cs:14,8,0) -- (axis cs:14,8,-0.02);
        \draw[thin] (axis cs:0,8,0) -- (axis cs:0,8,-0.02);
 \definecolor{darkblue}{rgb}{0.678, 3.847, 7.902}
\draw[thick,darkblue, decoration={snake, segment length=5mm, amplitude=1mm}, decorate, -{Stealth}] (axis cs:11,1,-0.01) -- (axis cs:11,7.2,-0.01);
                \node at (axis cs:0,7,-0.02) [align=center] {{\textcolor{darkblue}{Mass}}};

                  \node at (axis cs:4,5,-0.13) [align=center] {{\textcolor{red}{Heat Source}}};
\fill[orange] (2,4.7,-0.107) circle (1.4); 
\fill[yellow] (2,4.7,-0.107) circle (0.8);
\fill[red] (2,4.7,-0.107) circle (0.5);
\fill[blue] (2,4.7,-0.107) circle (0.2);
        \draw[-{Stealth}, thick] (axis cs:10,3,0.1) -- (axis cs:10,3,0.01);
         \draw[-{Stealth}, thick] (axis cs:10,4,0.1) -- (axis cs:10,4,0.01);
          \draw[-{Stealth}, thick] (axis cs:10,2,0.1) -- (axis cs:10,2,0.01);
        \node at (axis cs:10,3,0.115) [align=center] {\text{Load}};
      \draw[-{Stealth},-{Stealth}, thin] (axis cs:14,8.5,0.018) -- (axis cs:14,8.5,-0.022);
      \draw[-{Stealth},-{Stealth}, thin] (axis cs:14,8.5,-0.022) -- (axis cs:14,8.5,0.018);
      \node at (axis cs:13,9,-0.003) [align=left] {\text{\scriptsize{d}}};

    \end{axis}
\end{tikzpicture}
\vspace{-0.3cm}
   \caption{3D plate in reference configuration.}
 \vspace{0.3cm}
\begin{tikzpicture}
\begin{axis}[
 hide axis, 
        view={80}{10}, 
        colormap/hot,
        clip=false, 
        axis x line=middle, 
        axis y line=middle, 
       axis z line=middle, 
        xmin=-2, xmax=16, 
        ymin=-2, ymax=10, 
        zmin=-0.2, zmax=0.2 
       ]
  \addplot3[
            surf,
            shader=flat,
            opacity=0.7,
            samples=50,
            samples y=25,
            domain=0:14,
            y domain=0:8
        ] 
        { -0.07* sin(deg(pi * x / 14)) * sin(deg(pi * y / 8)) }; 
        \end{axis}
  \end{tikzpicture}

       \caption{ {Mid-surface  at current configuration.}}
        \label{fig:subfig2}
    \end{minipage}
    \label{fig:mainfigure}
\end{figure}
Let $\widehat{\Omega} \subset \mathbb{R}^3$ denote a thin, isotropic, flat plate with a uniform thickness $  d $.  Additionally, we define the time interval as $[0, T]$. We denote the mid-surface of the plate as $\Omega \subset \mathbb{R}^2$, which is assumed to lie in alignment with the $xy$-axis, forming a bounded domain with a Lipschitz continuous boundary $\Gamma$. The elastodynamics of the mid-surface of the plate is characterized by the deflection $$u(\bx,t)=\frac{1}{ d }\int_{ - d /2}^{  d /2} \hat{u}_3 \dz,$$ which represents the transverse displacement $\hat{u}_3(x,y,z,t)$ averaged through the thickness and is a scalar function of $\bx=(x, y)$ and $t$ only. The first moments of temperature $\hat{\theta}(x,y,z,t)$ and chemical potential (resp. pore pressure) $\hat{p}(x,y,z,t)$  (resp. $\hat{p}^*(x,y,z,t)$) are denoted by $$ \theta(\bx,t)= \int_{ - d /2}^{  d /2} z\hat{\theta} \dz,\; \text{and} \;\; p(\bx,t)=\int_{-  d /2}^{  d /2} z\hat{p} \dz \; (\text{ resp. }p(\bx,t)=\int_{-  d /2}^{  d /2} z\hat{p}^* \dz). $$  
The authors in Ref. \refcite{Aouadi} formulated a model from the 3D \eqref{3D1}-\eqref{3D3} for \ac{ted} in thin plates, under the assumption that body forces, external loads, and sources of heat and diffusion are absent. This model is based on the 2D Kirchhoff--Love hypotheses for thin plates, with classical Fourier's law for heat conduction and Fick's law for diffusion. An enhanced {\it novel} model considered in this article for \ac{ted} and \ac{tpe} that include external loads, heat source, and mass diffusion 
and is presented as follows: the {coupled} model aims to determine  {mid-surface deflection $u$, first moments of temperature $\theta$ and chemical potential {(resp. pore pressure) }  $p$} 
such that
\begin{subequations}
\label{eq:coupled}
\begin{align}
    u_{tt}-a_0 \Delta u_{tt}+d_0\Delta^2 u+\alpha \Delta \theta+\beta \Delta p&=f(\bx,t) \qquad  \text{ in }\Omega \times (0,T],\label{p2;model11}\\
    a_1\theta_t-\gamma p_t  {+b_1\theta} -c_1 \Delta \theta -\alpha \Delta u_t&=\phi(\bx,t) \qquad     \text{ in }\Omega \times (0,T],\label{model22}\\
    a_2 p_t-\gamma \theta_t-\kappa \Delta p-\beta \Delta  u_t&=g(\bx,t) \qquad \hspace{0.05cm} \text{ in }\Omega \times (0,T],\label{p2;model33}\\
    u=\partial_{\bn} u=0, \;\theta=0,\; p&=0 \qquad  \qquad\hspace{0.05cm}    \text{ on }\Gamma\times {[0,T]}, \label{bound-cond}\\
     u|_{t=0}=u^0, \; u_{t}|_{t=0}=u^{*0}, \; \theta |_{t=0}=\theta ^0, \; p|_{t=0}&=p^0 \qquad \qquad   \text{in }\Omega, \label{init-conds}
\end{align}\end{subequations}
where $\bn$ is the outward-pointing unit normal,
$\partial_{\bn} u  = \nabla u \cdot \bn $ is the outer normal derivative of $u$ on $\partial \Omega$,   $u_{t} ,\theta_t, p_t$ {(resp. $u_{tt}$)}  denote the first { (resp. second)}-order  derivatives with respect to {time.} {Here, the chemical potential (resp. pore pressure) across the plate is assumed to be linear}.  The coefficients in the { system \eqref{eq:coupled}} depend on the constants listed in the {Table \ref{fig:subfig1}}, with further details deferred to {Subsection~\ref{subsec-numerics-Verif-kirchoff}}.

As mentioned in {Subsection~\ref{subsec-numerics-Verif-kirchoff}}, it is possible to use models of thermoelastic  plates with voids or vacuous pores\cite{birsan2003bending,ghiba2013temporal,hussein2020mathematical,liu2017well}. 
In this fully coupled system the Kirchhoff--Love equations for the deflection interact with the dynamics of the total
amount of fluid, and the thermal energy conservation exhibits a dependence on the plate poromechanics through
the thermal stress and thermal dilation contributions. {The 3D system of equations \eqref{3d-poro1}–\eqref{3d-poro3} for \ac{tpe} closely resembles the 3D \ac{ted} system \eqref{3D1}–\eqref{3D3}, with the primary differences being the physical constants involved and the sign of the coupling constant  between the second and third equations. Therefore, by following the dimensional reduction approach and using Darcy's law for fluid flow (in contrast to Fick's law for diffusion) as done in eqns. (9)-(46) of Ref. \refcite{Aouadi}, one can derive the 2D \ac{tpe} model from \eqref{3d-poro1}–\eqref{3d-poro3}, which leads to the system \eqref{p2;model11}–\eqref{p2;model33}.}

In this paper, we assume that all coefficients, except for \(\gamma\), are positive. These assumptions are realistic because, for the \ac{ted} model (see an explicit representation in Subsection~\ref{subsec-numerics-Verif-kirchoff}) and the \ac{tpe} thin plate model, the coefficients remain positive provided the basic 3D constants listed in   {Table~\ref{fig:subfig1}} are positive. 
Regarding the parameter \(\gamma\), we allow \(\gamma \in \mathbb{R}\). For the \ac{ted} model the condition  \(a_1a_2 - \gamma^2 > 0\) is inherently satisfied due to the material's constitutive properties, as detailed in Table~\ref{table-coeefients}.  This condition is typically assumed for the \ac{tpe} model to ensure well-posedness and physical realism\cite{MR4146798,son-young}. This shows that, $|\gamma|/a_1 < a_2/|\gamma|${ and } hence there {exists} some $\gamma_0>0$ such that $|\gamma|/a_1<\gamma_0 < a_2/|\gamma|$, and consequently, 
 \begin{equation}
   a_1-|\gamma|/\gamma_0>0 \qquad \text{ and }\qquad  a_2-|\gamma|\gamma_0>0 . \label{gamma_0}
\end{equation}

\medskip 
\noindent\textbf{Literature overview.} 
The foundational \ac{ted} theory was initially proposed \rev{in} Ref. \refcite{Nowacki}. Rigorous derivations have been undertaken to establish the linear Kirchhoff--Love thermoelastic plate model, as shown in Ref. \refcite{Lagnese}, where the plate is assumed to be homogeneous, as well as elastically and thermally isotropic. Poroelastic models based on the Kirchhoff--Love plate theory and Biot’s theory of poroelasticity are discussed in Ref. \refcite{Taber} and Ref. \refcite{Cederbaum}. In these works, the pressure variation in the longitudinal section is neglected in the former, while a linear pressure distribution across the plate is considered in the latter. 
Refs. \refcite{Aouadi,deng2024stability} discuss hyperbolic problems; the well-posedness of the problem is analyzed using the semigroup theory approach, after transforming the system into an evolution equation by introducing velocity as a new variable with vertical displacement. 
The model discussed in this paper builds upon the derivations presented in Ref. \refcite{Aouadi}, which incorporate diffusion effects in homogeneous and isotropic thermoelastic thin plates. Our analysis uses a Galerkin method and compactness arguments for showing existence and uniqueness of weak and strong solutions\cite{brun2019well}.

Regarding numerical methods for  fully coupled multiphysics system, we mention that Ref. \refcite{Zhou} employs a mixed \rev{finite}  element method, the $H^1$-Galerkin method, and the interior penalty \ac{dg} method (IP-DG) for spatial discretization of the Kirchhoff--Love thermoelastic system, combined with the backward Euler method for temporal discretization. In Ref. \refcite{Iliev}, a quasi-static poroelastic model is considered, where the pressure moment is discretized using a standard FE approximation, while the biharmonic problem is addressed using a 
$C^0$IP method 
and a two-level scheme with weights for temporal discretization. 
In the three-dimensional setting, the Biot equations for poromechanics can be coupled with the thermal energy equation leading to a hyperbolic-parabolic system in fully dynamic  or elliptic-parabolic system in the quasi-static case. Galerkin methods for this problem are investigated in Ref. \refcite{zhang}, while mixed \ac{fe} and \ac{dg} discretizations are explored in, for example\cite{antonietti2023discontinuous,brun2019well,MR4146798}.
 Moreover,  fully discrete approximations using the conforming 
\rev{$\mathcal{P}_1$} \ac{fe} method and the implicit Euler scheme are studied for one-dimensional \ac{ted} problems in porous media\cite{bazarra2023numerical,jose}. In Ref. \refcite{Madureira_num}, semi- and fully discrete schemes for solving a one-dimensional \ac{ted} problem with a moving boundary and quadratic convergence in both time and space are established by employing conforming \acp{fe} for spatial discretization and Newmark’s time discretization. More recently, in  Ref. \refcite{khot_mc24}, the authors address the steady Biot--Kichhoff--Love problem with centered difference and backward Euler semi-discretization in time, and conforming and non-conforming virtual element methods for spatial discretization. They establish a priori error estimates in the best-approximation form, derive residual-based reliable and efficient a posteriori error estimates in appropriate norms, and demonstrate that these error bounds are robust with respect to the key model parameters.

\medskip 
\noindent\textbf{Main contributions.} 
In this paper, we analyze the unique solvability and numerical approximation for an asymptotic model for  \ac{ted}  and \ac{tpe} plate models consisting of a coupled PDE system of one hyperbolic fourth-order PDE for the plate's vertical  deflection, and two second-order parabolic PDEs for the thickness-averaged (first moment)  temperature distribution, and chemical potential/pore pressure. The unique solvability of the continuous formulation is based on the classical Galerkin approach (see, for example Refs. \refcite{MR3987945,MR3471659,oudaani2022existence,madureira2019global}). 
For the spatial discretization, {$C^0$IP  method}  and conforming 
$\mathcal{P}_1$ elements for {temperature and chemical potential (or pore pressure)} are employed. In terms of temporal discretization, we adopt Newmark's scheme for the first hyperbolic equation and apply the Crank--Nicolson method for the remaining parabolic equations, ensuring quadratic convergence in time.
  Following our recent work\cite{nry_preprint}, we utilize a modified Ritz projection for the analysis, based on the companion operator\cite{CC}.  In conjunction with this, we employ the standard 
$H^1$-conforming Ritz projections for temperature and chemical potential/pore pressure to obtain the error estimates.

The key contributions of this work are outlined below:
\begin{itemize}
\item The present analysis is robust with respect to the parameter \(\gamma\). Allowing \(\gamma\) to take values in \(\mathbb{R}\) enables a unified analytical framework that accommodates both the thermoelastic diffusion
and thermo-poroelastic thin plate models.

    \item The well-posedness of the fully coupled hyperbolic-parabolic thermoelastic diffusion
and thermo-poroelastic systems is demonstrated {in Subsection~\ref{subsec-existence} under reasonable} data regularity conditions. It should be noted that uniqueness for hyperbolic/parabolic coupled problems under the assumptions of Theorem~\ref{existence;thm} is not straightforward. It requires the use of  mollified test functions,  as explained later in the proof.
    \item  {A consistent and stable fully discrete scheme is developed in Section~\ref{sec:semi}. Due to the coupling of second-order terms, 
    special care must be taken in the choice of compatible \ac{fe} spaces that \rev{play} a crucial role in the choice of the test functions in the proofs of stability and error estimates. Moreover, similar care is required when approximating coupling terms involving time derivatives of different orders.}
    \item A novel concept of approximating the solution at the initial time step (see \eqref{3.10}), while incorporating the approximation properties established in Lemma~\ref{initial-error-lemm}, is introduced to the literature, facilitating the development of a fully discrete scheme for general hyperbolic-parabolic coupled systems without any assumptions regarding the solution and its approximation at this time step (see Ref. \refcite{Madureira_num}).
    \item A priori error estimates are derived in the best approximation form in both $L^2$, $H^1$ and energy norm for displacement {in Section~\ref{p2;fully-error-sec}.} \rev{Optimal}  error rates are also established in  $L^2$ and  $H^1$ norm for {temperature and chemical potential/pore pressure}. Also, the combination of Newmark--Crank--Nicolson time discretization schemes to approximate the second and first-order time derivatives, respectively, appearing in \eqref{eq:coupled}  \rev{yields} quadratic convergence rates.
    
    \item The superconvergence of the projected error in the energy norm is established (see Remark~\ref{rem-super}), in turn leading to lower  $H^s$- order estimates with $s=0,1$ (resp. $s=0$)  for $u$ (resp. $\theta$ and $p$) as established in Corollary~\ref{corr} (resp. Theorem~\ref{error-estimates thm}). While such superconvergence is expected in uncoupled problems, it is not straightforward in the current coupled problem since the polynomial degrees of the \ac{fe} spaces $V_h$ and $W_h$ used to approximate  first {\eqref{p2;model11} and last two equations \eqref{model22}-\eqref{p2;model33}} are different. Consequently, the Ritz projection defined in \eqref{p2;ritzprojection2} lacks orthogonality when the test function is chosen from a \ac{fe} space $V_h$.

\item  Subsection~\ref{subsec-numerics-Verif-kirchoff} demonstrates that the Kirchhoff--Love plate model is effective in capturing \ac{ted}  and \ac{tpe} behavior in specific materials (such as copper and flat layers of Berea sandstone, respectively). The findings indicate that as the plate thickness decreases, the two-dimensional simulations closely approximate the results from three-dimensional modeling, with a substantial reduction in computational time. This \rev{emphasizes} the efficiency and accuracy of 2D modeling for thin-plate structures.

\item Numerical results are provided in {Subsections~\ref{subsec-num-smooth}-\ref{subsec-num-lshape} to validate theoretical estimates and illustrate the effective performance of the proposed scheme with different values of $\gamma$}.
\end{itemize}

\medskip 
\noindent\textbf{Plan of the paper.} 
This paper is organized as follows. The remainder of this section introduces the common notation used throughout the manuscript. Section~\ref{sec:eq} provides definitions for solutions in the  weak sense, establishes the well-posedness of the system, and discusses regularity for weak solutions. {Section~\ref{sec:semi} details the spatial and temporal discretizations.  The fully discrete scheme, its unique solvability, and stability are presented in Section~\ref{sec:fully}. The  error analysis is discussed in  Section~\ref{p2;fully-error-sec}. In Section~\ref{sec:numer}, we present a few representative numerical examples that confirm the rates of convergence specified by the theoretical analysis. Subsection~\ref{subsec-numerics-Verif-kirchoff} discusses the detailed model description for the thermoelastic diffusion
and thermo-poroelastic systems.}

\medskip 
\noindent\textbf{Preliminaries.} 
For an open set $O \subset \mathbb{R}^2$,  we denote the Sobolev space $W^{m,2}(O)$ by $H^m(O)$  and equip it with the norm $\norm{w}_{H^m(O)}= (\underset{{|i| \le m}}{\sum} \norm{D^iw}_{L^2(O)}^2)^{1/2}$ and semi-norm $| w |^2_{H^m(O)}=(\underset{{|i| = m}}{\sum}\|D^iw \|_{L^2(O)}^2)^{1/2}$. For simplicity, we denote $L^2$ inner product by $(\bullet, \bullet)$ and norm by $\| \bullet \|$. Throughout this paper, $\mathcal{T}$ denotes a shape-regular triangulation of $\Omega$, $H^m({\cal T})$ denotes the Hilbert space $\underset {{K \in {\cal T}}}{\prod}H^m (K)$, and $\mathcal{P}_r({\cal T})$, the space of globally $L^2$ functions which are polynomials of degree at most $r$ in each $K$. The notation $\nabla$ (resp. $\nabla^2$)  denotes the   gradient (resp. Hessian). The piecewise energy norm is denoted by $\trinl \bullet \trinr_{\text{pw}}:=|\bullet|_{H^{2}({\cal T})}$ and $D^2_\text{pw}$ (resp. $\Delta_{\rm pw}$) stands for the piecewise Hessian (Laplacian).

Let $X$ be a normed space with norm $\|\bullet\|_X$ and $g:(0,T) \rightarrow X$ be a measurable function. Then for $1 \le p \le \infty$, we recall that  
\begin{align*}
\norm{g}_{L^p(0,T;X)}&=\norm{g}^p_{L^p(X)}:=\int_0^T \norm{g(t)}_X^p \dt, \ 1\le p< \infty, \quad \text{ and }\\
\ \norm{g}_{L^\infty(0,T;X)}&:=\underset{0 \le t \le T}{\text{ess sup}} \norm{g(t)}_X.
\end{align*}
Let  $L^p(0,T;X):=\left\{g:(0,T) \rightarrow X: \norm{g}_{L^p(X)}<\infty\right\}$. The space {$W^{1,p}(0,T;X)$  consists of all functions  $u \in L^p(0,T;X)$  such that  $u_t$  exists in the weak sense and belongs to  $L^p(0,T;X).$} For all non-negative integers $k$,   $C^k([0,T];X)$ denotes all $C^k$ functions $s :[0,T] \rightarrow X$ with $\norm {s}_{C^k([0,T];X)}=\underset{0 \le i \le k}{\sum} \underset{0 \le t\le T}{\max} {\norm{\frac{\partial^i s}{\partial t^i}}} < \infty$. 

For  real numbers $a > 0$, $b > 0$, and $\epsilon > 0$, we will make repeated use of the {Young's}  inequality $ab \leq \frac{\epsilon}{2}a^2 + \frac{1}{2\epsilon}b^2$. Finally, as usual, the notation $a \lesssim b$ represents $a \le Cb$, where the generic constant $C$ is independent of both mesh-size and time discretization parameter.

\begin{lem}[Gronwall's Lemma\cite{MR1638130}]\label{P1 gronwall}
Let $g$, $h$, and $r$ be non-negative integrable functions on $[0,T]$ and let $g$ satisfy 
 $\displaystyle g(t) \le h(t)+ \int_0^t r(s) g(s) \ds$  for all $t \in (0,T)$. Then 
\begin{equation*}
  g(t) \le h(t)+ \int_0^t h(s)r(s) e^{\int_s ^{t}{r(\tau )\;\text{d}\tau }}\ds  \quad \text{for all $t \in (0,T)$}.
   \end{equation*}
 \end{lem}

\section{Well-posedness and regularity results}\label{sec:eq}
In this section, we establish the well-posedness of the problem through the finite Galerkin approach, which follows these steps: (i) construct a sequence of approximate solutions to the continuous problem, (ii) derive a priori bounds on these approximations based on the initial data, (iii) use a compactness argument to show the existence of a limit for a subsequence in the weak topology, and (iv) prove that this limit is the weak solution. After this, we also prove the additional regularity of the continuous solution given the extra regularity conditions on given data, which is required in  later sections for  error analysis. 

\subsection{Existence and uniqueness of weak solution}\label{subsec-existence}
\begin{defn}[Weak solution]\label{def:weak}
   The triplet $(u,\theta,p)$ is a \textit{weak solution} to the problem \eqref{eq:coupled} if \eqref{init-conds} holds and 
   \begin{subequations}\label{new-reg-weak}
  \begin{align}
 & u \in C([0,T];H^1(\Omega))\cap L^{\infty}(0,T;H^2_0(\Omega)),
\;u_t  \in C([0,T];L^2(\Omega))\cap L^\infty(0,T;H^1_0(\Omega)),\label{w-reg1}\\
 &\qquad\qquad\qquad\qquad\qquad\theta , p \in C([0,T];L^2(\Omega))\cap L^2(0,T;H^1_0(\Omega)),\label{w-reg2}
  \end{align}\end{subequations}
   satisfy the relations
   \begin{subequations}
   \label{weak_a}
   \begin{align}
       &\int_0^T \big[- (u_t,v^t_t)-a_0(\nabla u_t, \nabla v^t_t)+d_0(\nabla^2 u,\nabla^2 v^t)-\alpha (\nabla \theta, \nabla v^t)-\beta (\nabla p, \nabla v^t)\big]\dt\nonumber\\
       &\qquad =\int_0^T (f,v^t) \dt+ (u^{*0},v^t(0))+a_0(\nabla u^{*0}, \nabla v^t(0)), \label{u}\\
    &\int_0^T \big[-a_1(\theta, \psi^t_t)+\gamma (p, \psi^t_t)+b_1(\theta,\psi^t) +c_1 (\nabla \theta,\nabla \psi^t) +\alpha (\nabla u_t, \nabla \psi^t)\big]\dt\nonumber\\
    &\qquad=\int_0^T(\phi,\psi^t)\dt +a_1(\theta^0, \psi^t(0))-\gamma (p^0, \psi^t(0)), \label{theta} \\
     &\int_0^T \big[-a_2 (p,q^t_t)+\gamma (\theta,q^t_t)+\kappa (\nabla p,\nabla q^t)+\beta (\nabla u_t,\nabla q^t)\big]\dt\nonumber\\
     &\qquad=\int_0^T(g,q^t) \dt+a_2 (p^0,q^t(0))-\gamma (\theta^0,q^t(0)) ,\label{p}
   \end{align}\end{subequations}
  for any $v^t \in C^2([0,T];H^2_0(\Omega))$ and both $\psi^t,q^t \in  C^1([0,T];H^1_0(\Omega)).$
\end{defn}

\noindent For any $u \in H^2_0(\Omega)$, $\theta  \in H^1_0(\Omega)$, and $p  \in H^1_0(\Omega)$, motivated by \eqref{weak_a} and an appropriate choice of the test functions, we define the \text{system energy} at any time $ 0 \le t \le T$ ~by
\begin{align}
 E(u,\theta,p;t) &:=\frac{1}{2}\big(\norm{u_{t}}^2+a_0\norm{\nabla u_{t}}^2+ d_0\norm{\nabla^2 u}^2+(a_1-|\gamma|/\gamma_0)\norm{\theta}^2\nonumber\\
    &\qquad +(a_2-|\gamma|\gamma_0)\norm{p}^2\big) +\int_0^t \big(b_1\norm{\theta}^2+c_1\norm{\nabla \theta}^2+\kappa \norm{\nabla p}^2 \big) \ds. \label{energy}
\end{align}

\noindent The following result states existence and uniqueness of solution to  \eqref{eq:coupled} in the sense of Definition 2.1 and establishes the boundedness of the energy  \eqref{energy}. The proof is  based on the approach outlined in p. 384 of Ref. \refcite{evans} (for second-order problems), and extended for coupled fourth- and second-order problems. Details are provided in   \ref{appendix}. 

\begin{thm}[Existence and uniqueness]\label{existence;thm}
    Let $f, \phi,g \in L^2(0,T;L^2(\Omega))$,  $u^0 \in H^2_0(\Omega)$, $u^{*0} \in H^1_0(\Omega)$,   and both $\theta^0,p^0 \in L^2(\Omega) $. Then, problem \eqref{eq:coupled} has a unique weak solution $(u,\theta,p)$ in the sense of Definition \ref{def:weak} and the solution satisfies
   \begin{align}
    \underset{t\in [0,T]}{\rm {ess \;sup}\;} E(u,\theta,p;t)
& \lesssim \norm{u^{*0}}^2+a_0\norm{u^{*0}}^2_{H^1(\Omega)}+d_0\norm{u^0}^2_{H^2(\Omega)}\nonumber\\
& \quad
+(a_1+|\gamma|/\gamma_0)\norm{\theta^0}^2 +(a_2+|\gamma|\gamma_0)\norm{p^0}^2+\norm{f}_{L^2(0,T;L^2(\Omega))}^2\nonumber \\
&\quad +\norm{\phi}^2_{L^2(0,T;L^2(\Omega))}+\norm{g}^2_{L^2(0,T;L^2(\Omega))}.\label{reg-ut}
\end{align}
\end{thm}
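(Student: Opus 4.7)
The plan is to apply a Faedo--Galerkin scheme with two compatible spectral bases: an $L^2(\Omega)$-orthonormal basis $\{\varphi_k\}_{k\ge 1}\subset H^2_0(\Omega)$ (for instance, eigenfunctions of the clamped biharmonic) and an $L^2(\Omega)$-orthonormal basis $\{\psi_k\}_{k\ge 1}\subset H^1_0(\Omega)$ (eigenfunctions of $-\Delta$ with Dirichlet data). Seek the $m$-th approximation
\begin{equation*}
u_m(t)=\sum_{k=1}^m \xi^m_k(t)\varphi_k,\qquad \theta_m(t)=\sum_{k=1}^m \eta^m_k(t)\psi_k,\qquad p_m(t)=\sum_{k=1}^m \zeta^m_k(t)\psi_k,
\end{equation*}
satisfying the coupled Galerkin system obtained by testing \eqref{p2;model11}--\eqref{p2;model33} against $\varphi_j,\psi_j,\psi_j$ for $j=1,\dots,m$, with initial data given by $L^2$-projections of $u^0,u^{*0},\theta^0,p^0$. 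This produces a linear system of ODEs (second order in $\xi^m$, first order in $\eta^m,\zeta^m$) with bounded coefficient matrices, and standard ODE theory yields a unique global solution on $[0,T]$.

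\textbf{A priori estimate.} Test the three Galerkin equations with $u_{m,t}$, $\theta_m$ and $p_m$ respectively, and add. After integrating the coupling terms by parts, the $\alpha$- and $\beta$-contributions cancel pairwise, while the $\gamma$-terms combine into $-\gamma\,\frac{\mathrm d}{\mathrm d t}(\theta_m,p_m)$. Integrating in $t$, the resulting cross term $-\gamma(\theta_m,p_m)$ is absorbed via the weighted Young inequality $|\gamma(\theta_m,p_m)|\le \frac{|\gamma|}{2\gamma_0}\norm{\theta_m}^2+\frac{|\gamma|\gamma_0}{2}\norm{p_m}^2$ with $\gamma_0$ as in \eqref{gamma_0}. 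This generates exactly the strictly positive coefficients $(a_1-|\gamma|/\gamma_0)$ and $(a_2-|\gamma|\gamma_0)$ appearing in the energy $E$ of \eqref{energy}. The forcing and initial terms on the right-hand side are controlled by Young's inequality, and closing with Gronwall's lemma delivers the $m$-independent bound \eqref{reg-ut} for $E(u_m,\theta_m,p_m;t)$.

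\textbf{Passage to the limit.} The uniform estimate furnishes, along a (non-relabelled) subsequence, weak-$\star$ limits $u\in L^\infty(0,T;H^2_0(\Omega))$, $u_t\in L^\infty(0,T;H^1_0(\Omega))$, together with weak limits $\theta,p\in L^2(0,T;H^1_0(\Omega))$. Because the system is linear, passage to the limit in each term of \eqref{weak_a} is immediate. The $C([0,T];\cdot)$ continuity stated in \eqref{new-reg-weak} is recovered a posteriori from the standard Lions--Magenes embedding after reading off bounds for the time derivatives in suitably weaker dual spaces from the equations themselves. The initial data are identified by testing with functions that do not vanish at $t=0$ and invoking the strong $L^2$-convergence of the projected initial values.

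\textbf{Uniqueness --- the main obstacle.} For the difference $(U,\Theta,P)$ of two weak solutions with vanishing data, one would naturally test \eqref{u}, \eqref{theta} and \eqref{p} against $U_t$, $\Theta$ and $P$. The latter two are admissible since $\Theta,P\in L^2(0,T;H^1_0(\Omega))$, but $U_t\in L^\infty(0,T;H^1_0(\Omega))$ falls short of the $C^2([0,T];H^2_0(\Omega))$ regularity required by \eqref{u}. The plan is a Ladyzhenskaya-type time-primitive device: fix $s\in(0,T]$, define $w(\tau):=\int_\tau^s U(\sigma)\,\mathrm{d}\sigma$ for $\tau\le s$ and $w\equiv 0$ otherwise (so that $w_\tau=-U$ on $[0,s]$), and regularise $w$ in $\tau$ by mollification at scale $\varepsilon$ to obtain an admissible test function $v^t\in C^\infty([0,T];H^2_0(\Omega))$. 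Inserting this into \eqref{u}, letting $\varepsilon\to 0$, and combining with the parabolic identities tested against $\Theta$ and $P$ produces an energy identity with the same algebraic structure as in the a priori calculation above; the Young/Gronwall argument then forces $U\equiv\Theta\equiv P\equiv 0$ on $[0,s]$, and since $s\in(0,T]$ is arbitrary, uniqueness on $[0,T]$ follows.
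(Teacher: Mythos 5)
Your existence argument (the two spectral Galerkin bases, testing with $u_{m,t}$, $\theta_m$, $p_m$, the cancellation of the $\alpha$- and $\beta$-couplings, the weighted Young inequality with $\gamma_0$ producing the coefficients $a_1-|\gamma|/\gamma_0$ and $a_2-|\gamma|\gamma_0$, Gronwall, and weak-$\star$ compactness) is exactly the paper's Steps 1--4 and is sound. The genuine gap is in the uniqueness step, which is precisely the part the paper flags as non-trivial.

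Two problems. First, $\Theta$ and $P$ are \emph{not} admissible test functions in \eqref{theta}--\eqref{p}: those identities contain $\psi^t_t$ and $q^t_t$ and therefore require $C^1([0,T];H^1_0(\Omega))$ test functions, whereas $\Theta,P$ are only in $L^2(0,T;H^1_0(\Omega))\cap C([0,T];L^2(\Omega))$; so the parabolic equations need regularised test functions too, not just the hyperbolic one. Second, and more seriously, the time-primitive device destroys the coupling cancellation on which the whole energy argument rests. In the a priori estimate the terms $-\alpha(\nabla\theta,\nabla u_t)$ (plate equation tested with $u_t$) and $+\alpha(\nabla u_t,\nabla\theta)$ (heat equation tested with $\theta$) cancel. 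If you test the plate equation with (a mollification of) $w(\tau)=\int_\tau^s U$, its coupling contribution becomes $-\alpha\int_0^s(\nabla\Theta,\nabla w)\,\mathrm{d}\tau$, while the heat equation tested with $\Theta$ still produces $+\alpha\int_0^s(\nabla U_t,\nabla\Theta)\,\mathrm{d}\tau$; these no longer cancel. The first can be absorbed into the dissipation, but the second cannot: the left-hand side generated by the primitive trick controls only $\norm{U(s)}^2$, $a_0\norm{\nabla U(s)}^2$, $d_0\norm{\nabla^2 w(0)}^2$ and $\int_0^s(b_1\norm{\Theta}^2+c_1\norm{\nabla\Theta}^2+\kappa\norm{\nabla P}^2)$, so after absorbing $\norm{\nabla\Theta}^2_{L^2(0,s;L^2(\Omega))}$ you are left with $C\norm{\nabla U_t}^2_{L^2(0,s;L^2(\Omega))}$ on the right, a quantity that is bounded but neither small nor dominated by anything on the left; Gronwall cannot close. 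Hence your claim that the resulting identity has ``the same algebraic structure as the a priori calculation'' fails. The paper avoids this by mollifying the \emph{solution} in time with a one-sided kernel ($\mathrm{supp}\,\rho\subset[-2,-1]$): the mollified triple $(u_\varepsilon,\theta_\varepsilon,p_\varepsilon)$ is $C^\infty$ in $t$ and satisfies the pointwise variational identities \eqref{uep}--\eqref{pep}, so the standard test $(v,\psi,q)=(u_{\varepsilon t},\theta_\varepsilon,p_\varepsilon)$ retains the exact cancellations, gives $E(u_\varepsilon,\theta_\varepsilon,p_\varepsilon;t)=0$, and one then lets $\varepsilon\to0$. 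You would need to replace your primitive device by an argument of this type.
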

Next we present an alternate weak formulation under higher regularity assumptions on the initial data. This formulation is utilized later on, to design the fully discrete scheme.
\begin{thm}[Alternate weak formulation]\label{weak-sol-thm}
If $f, \phi, g \in H^1(0,T;L^2(\Omega))$,  $u^0 \in H^3(\Omega) \cap H^2_0(\Omega), \: u^{*0} \in H^2(\Omega) \cap H^1_0(\Omega)$,  and both $\theta^0,p^0 \in H^2(\Omega) \cap H^1_0(\Omega) $, then for the weak solution $(u,\theta,p) $, we have that 
   \begin{align}
\underset{t\in [0,T]}{\rm {ess \;sup}\;}E(u_t,\theta_t,p_t;t)  \text{ is bounded.} \label{reg-utt}
\end{align}
Furthermore, for any $0 \le t \le T$, the tuple $(u,\theta,p)$ satisfies
\begin{subequations}\label{weak form}
\begin{align}
    (u_{tt},v)+a_0 (\nabla u_{tt},\nabla v)+d_0(\nabla ^2 u, \nabla^2 v) -\alpha (\nabla \theta, \nabla v)-\beta (\nabla p, \nabla v)&=(f,v) \nonumber \\
    \quad  \forall\; v \in H^2_0(\Omega),& \label{u-weak}\\
    a_1(\theta_t, \psi)-\gamma (p_t, \psi)+b_1(\theta,\psi) +c_1 (\nabla \theta,\nabla \psi) +\alpha (\nabla u_t, \nabla \psi)&=(\phi,\psi)  \nonumber \\ \quad \forall\; \psi \in H^1_0(\Omega), &\label{theta-weak} \\
    a_2 (p_t,q)-\gamma (\theta_t,q)+\kappa (\nabla p,\nabla q)+\beta (\nabla u_t,\nabla q)&=(g,q) \nonumber \\  \quad \forall\; q \in H^1_0(\Omega).&\label{p-weak}
    \end{align}\end{subequations}    
\end{thm}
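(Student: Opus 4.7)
The plan is to revisit the Galerkin scheme used in the proof of Theorem~\ref{existence;thm} and differentiate it once in time to extract the additional regularity \eqref{reg-utt}. Let $(u_m,\theta_m,p_m)$ denote the $m$-th Galerkin approximation. Since $f,\phi,g\in H^1(0,T;L^2(\Omega))$, the right-hand sides of the associated ODE system are absolutely continuous in $t$, so each Galerkin equation may be differentiated in $t$. The differentiated system has the same structural form as the original one, with unknowns $(\partial_t u_m,\partial_t\theta_m,\partial_t p_m)$ and forcing terms $(f_t,\phi_t,g_t)$; the energy calculation from Appendix~\ref{appendix}, applied verbatim and combined with Gronwall's Lemma~\ref{P1 gronwall}, yields a bound on $E(\partial_t u_m,\partial_t\theta_m,\partial_t p_m;t)$ in terms of the initial energy $E(\partial_t u_m,\partial_t\theta_m,\partial_t p_m;0)$ and the $L^2(0,T;L^2(\Omega))$-norms of $f_t,\phi_t,g_t$.

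The key technical point is a uniform-in-$m$ bound on $E(\partial_t u_m,\partial_t\theta_m,\partial_t p_m;0)$. The initial value $\partial_t u_m(0)$ is the projection of $u^{*0}$ onto $V_m$ and therefore inherits an $H^2$-bound from $u^{*0}\in H^2\cap H^1_0$. The values $\partial_{tt}u_m(0),\partial_t\theta_m(0),\partial_t p_m(0)$ are recovered by testing the \emph{original} Galerkin equations at $t=0$; in particular, $w_m:=\partial_{tt}u_m(0)\in V_m$ satisfies
\begin{equation*}
(w_m,v)+a_0(\nabla w_m,\nabla v) = -d_0(\nabla^2 u^0,\nabla^2 v) + \alpha(\nabla\theta^0,\nabla v)+\beta(\nabla p^0,\nabla v)+(f(0),v), \qquad v\in V_m.
\end{equation*}
Since $u^0\in H^3\cap H^2_0$ and $v\in H^2_0$ (so $\nabla v=0$ on $\partial\Omega$), integration by parts gives $(\nabla^2 u^0,\nabla^2 v)=-(\nabla\Delta u^0,\nabla v)$, and hence the right-hand side is a bounded functional on $H^1_0(\Omega)$ with norm controlled by $\|u^0\|_{H^3}+\|\theta^0\|_{H^1}+\|p^0\|_{H^1}+\|f(0)\|$. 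Coercivity of the left-hand side on $H^1_0$ then yields an $m$-independent bound $\|w_m\|_{H^1}\le C$. Analogously, using the invertibility of the $2\times 2$ system for $(\partial_t\theta_m(0),\partial_t p_m(0))$ guaranteed by $a_1a_2-\gamma^2>0$ and the regularity $u^{*0},\theta^0,p^0\in H^2\cap H^1_0$ (which allows one to integrate by parts to control terms like $c_1(\nabla\theta^0,\nabla\psi)$ by $\|\psi\|$), one obtains $m$-independent $L^2$-bounds on $\partial_t\theta_m(0)$ and $\partial_t p_m(0)$. Banach--Alaoglu compactness together with uniqueness from Theorem~\ref{existence;thm} then identify the weak-$*$ limit with $(u_t,\theta_t,p_t)$, establishing \eqref{reg-utt}.

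For the pointwise-in-time formulation \eqref{weak form}, I would insert test functions of separated form $v^t(\bx,t)=v(\bx)\eta(t)$, with $v\in H^2_0(\Omega)$ and $\eta\in C_c^\infty(0,T)$, into \eqref{u}. The enhanced regularity \eqref{reg-utt} supplies $u_{tt},\nabla u_{tt}\in L^\infty(0,T;L^2)$, so the integration-by-parts identity $\int_0^T[-(u_t,v)-a_0(\nabla u_t,\nabla v)]\eta_t\,dt=\int_0^T[(u_{tt},v)+a_0(\nabla u_{tt},\nabla v)]\eta\,dt$ is rigorous; since $\eta$ is arbitrary, the Du Bois--Reymond lemma gives \eqref{u-weak} for almost every $t$, and continuity in $t$ extends it to every $t\in[0,T]$. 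Analogous arguments applied to \eqref{theta} and \eqref{p} yield \eqref{theta-weak} and \eqref{p-weak}.

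The main obstacle is obtaining a uniform $H^1$-bound on $\partial_{tt}u_m(0)$: the term $d_0(\nabla^2 u^0,\nabla^2 v)$ is \emph{a priori} only controlled by $\|v\|_{H^2}$, whereas the left-hand side bilinear form $(\cdot,\cdot)+a_0(\nabla\cdot,\nabla\cdot)$ provides only $H^1_0$-coercivity. The required bound emerges only after using $u^0\in H^3$ to transfer the highest-order derivative from the test function back onto $u^0$; this is precisely why the strengthened initial regularity $u^0\in H^3\cap H^2_0$ (rather than merely $H^2_0$) is imposed in the hypothesis.
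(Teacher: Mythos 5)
Your proposal follows essentially the same route as the paper: differentiate the Galerkin system in time, run the energy/Gronwall estimate for $E(\partial_t u_m,\partial_t\theta_m,\partial_t p_m;t)$ with forcing $(f_t,\phi_t,g_t)$, and control the initial energy by testing the undifferentiated Galerkin system at $t=0$ and integrating by parts against the strengthened data --- which is exactly where the paper also exploits $u^0\in H^3(\Omega)\cap H^2_0(\Omega)$ to move the highest derivative off the test function. The only (immaterial) divergence is in recovering the pointwise-in-time identities \eqref{weak form}: you use separated test functions $v(\bx)\eta(t)$ and du Bois--Reymond, whereas the paper reuses the mollified test functions $\rho_\varepsilon(t-s)v(\bx)$ from its uniqueness argument and lets $\varepsilon\to 0$; both are valid.
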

\begin{proof}
   Given  that $f_t, \phi_t, g_t \in L^2(0,T;L^2(\Omega))$, following Step 1 of the proof of Theorem~\ref{existence;thm} presented in  \ref{appendix}, note that, $({\rm d}_m^1(t), {\rm d}_m^2(t), \cdots, {\rm d}_m^m(t))$ (respectively, $(\eta_m^1(t), \eta_m^2(t), \cdots, \eta_m^m(t))$ and $({\rm l}_m^1(t),
{\rm l}_m^2(t), \cdots,$ ${\rm l}_m^m(t))$), are $C^3$ (resp. $C^2 $) functions and
 satisfy \eqref{weak-mo}-\eqref{weak-m} for $0\le t \le T$. Next, we  differentiate $\eqref{weak-m}$ with respect to $t$, and multiply the resulting equations by ${{\rm d}^k_m}''(t)$, ${\eta^k_m}'(t)$, and ${{\rm l}^k_m}'(t)$, respectively. Summing over $k = 1, 2, \ldots, m$ (for all three equations), readily yields
    \begin{align*}
    \frac{1}{2}&\frac{d}{dt}\big(\norm{u^m_{tt}}^2+a_0\norm{\nabla u_{tt}^m}^2+d_0 \norm{\nabla^2 u_t^m}^2+a_1\norm{\theta^m_t}^2+a_2\norm{p^m_t}^2\big)\\
    &\quad +b_1\norm{\theta^m_t}^2+c_1\norm{\nabla \theta^m_t}^2+\kappa\norm{\nabla p^m_t}^2-\gamma \frac{d}{dt}(\theta ^m_t,p^m_t)\\
    & =(f_t,u^m_{tt})+(\phi_t, \theta^m_t)+(g_t,p^m_t).
\end{align*}
Then, integrating from $0$ to $t$, and using the Cauchy--Schwarz, Young, and Gronwall's inequalities (similar to the proof of existence in Theorem~\ref{existence;thm} in  \ref{appendix}), we arrive at 
\begin{align}
E(u_{mt},\theta_{mt},p_{mt};t) &\lesssim \norm{u^m_{tt}(0)}^2+a_0\norm{\nabla u_{tt}^m(0)}^2+ d_0\norm{\nabla^2 u_t^m(0)}^2 \nonumber \\
&\quad +(a_1+|\gamma|/\gamma_0)\norm{\theta^m_t(0)}^2
+(a_2+|\gamma|\gamma_0)\norm{p^m_t(0)}^2 \label{uttm} \\
&\quad +\norm{f_{t}}^2_{L^2(0,T;L^2(\Omega))}+\norm{\phi_{t}}^2_{L^2(0,T;L^2(\Omega))}+\norm{g_{t}}^2_{L^2(0,T;L^2(\Omega))}. \nonumber
\end{align}
We now wish to bound the right-hand side of \rev{\eqref{uttm}} by known data. Multiply the equations \eqref{um}, \eqref{thetam}, and \eqref{pm} by ${{\rm d}_m^k}''(t),{\eta_m^k}'(t)$ and ${{\rm l}_m^k}'(t)$, respectively.
Sum up the resulting equations of  the system for $k=1,2,\cdots, m$ and  $t=0$,  and utilize the definitions in \eqref{rep} to obtain  
    \begin{align*}
 (u^m_{tt}(0),u^m_{tt}(0))+a_0 (\nabla u^m_{tt}(0),\nabla u^m_{tt}(0)) -d_0( \nabla \Delta u^m(0), \nabla u^m_{tt}(0)) & \\
+\alpha (\Delta \theta^m(0), u^m_{tt}(0))  +\beta (\Delta p^m(0),  u^m_{tt}(0))&=(f(0),u^m_{tt}(0)),  \\
     a_1(\theta^m_{t}(0), \theta^m_{t}(0))-\gamma (p^m_{t}(0), \theta^m_{t}(0)) 
     +b_1(\theta^m(0),\theta^m_{t}(0))  &\\
  -c_1 (\Delta \theta^m(0), \theta^m_{t}(0))    -\alpha (\Delta u^m_{t}(0), \theta^m_{t}(0))&=(\phi(0),\theta^m_{t}(0)),  \\
    a_2 (p^m_{t}(0),p^m_{t}(0))-\gamma (\theta^m_{t}(0),p^m_{t}(0))-\kappa (\Delta p^m(0), p^m_{t}(0))&\\
    -\beta (\Delta u^m_{t}(0),p^m_{t}(0))&=(g(0),p^m_{t}(0)) , 
    \end{align*}  
    where in the last step we have also used integration by parts and the fact that  $u^m_{tt}(0) \in H^2_0(\Omega)$, $\theta^m_{t}(0) \in H^1_0(\Omega)$ and $p^m_{t}(0) \in H^1_0(\Omega)$. 

    Next, we apply once more  Cauchy--Schwarz and Young's inequalities together with some elementary manipulation, which gives the following bounds  
    \begin{align}
    &   \norm{u^m_{tt}(0)}^2+a_0 \norm{\nabla u^m_{tt}(0)}^2 \nonumber \\ &\quad \le \frac{{d_0}^2}{a_0}\norm{ \nabla \Delta u^m(0)}^2 +3{\alpha^2}\norm{\Delta  \theta^m(0)}^2 +3\beta^2\norm{\Delta p^m(0)}^2+3\norm{f(0)}^2 , \nonumber
       \\
      & (a_1-\frac{|\gamma|}{\gamma_0})\norm{\theta ^m_{t}(0)}^2+ (a_2-|\gamma|\gamma_0)\norm{p^m_{t}(0)}^2 \nonumber\\
      &
        \quad  \le \frac{4}{a_1-\frac{|\gamma|}{\gamma_0}}\big(\alpha^2\norm{ \Delta u ^m_{t}(0)}^2+b_1^2\norm{\theta^m(0)}^2+c_1^2\norm{\Delta \theta^m(0)}^2+\norm{\phi(0)}\big)\nonumber\\
         & \qquad
         +\frac{3}{a_2-|\gamma|\gamma_0}\big(\beta^2\norm{ \Delta u ^m_{t}(0)}^2+\kappa^2\norm{ \Delta p ^m(0)}^2 +\norm{g(0)}^2\big) . \label{ptt0}
    \end{align} 
Then, \eqref{uttm}-\eqref{ptt0} \rev{lead} to
\begin{align}
E(u_{mt},\theta_{mt},q_m;t) & \lesssim d_0\norm{u^{*0}}_{H^2(\Omega)}+({d_0}^2/a_0)\norm{u^0}^2_{H^3(\Omega)}+3{\alpha^2}\norm{\theta^0}^2_{H^2(\Omega)}\nonumber \\
&\  +3\beta^2\norm{p^0}^2_{H^2(\Omega)}+3\norm{f(0)}^2
 +\Big(\frac{a_1+|\gamma|/\gamma_0}{a_1-|\gamma|/\gamma_0}+\frac{a_2+|\gamma|\gamma_0}{a_2-|\gamma|\gamma_0}\Big)
 \nonumber \\
 & \  \times \Big[
 \frac{4}{a_1-\frac{|\gamma|}{\gamma_0}}\big(\alpha^2\norm{ u^{*0}}^2_{H^2(\Omega)}+b_1^2\norm{\theta^0}^2+c_1^2\norm{\theta^0}^2_{H^2(\Omega)}+\norm{\phi(0)}\big)\nonumber\\
 &\ 
 +\frac{3}{a_2-|\gamma|\gamma_0}\big(\beta^2\norm{  u^{*0}}^2_{H^2(\Omega)}+\kappa^2\norm{p^0}^2_{H^2(\Omega)} +\norm{g(0)}^2\big) \nonumber \\
&\ +\norm{f_{t}}^2_{L^2(0,T;L^2(\Omega))}+\norm{\phi_{t}}^2_{L^2(0,T;L^2(\Omega))}+\norm{g_{t}}^2_{L^2(0,T;L^2(\Omega))}\Big]. \label{int3}
\end{align}
And this, in combination with \eqref{c1} and \eqref{c2}, readily implies  that  
\begin{align*}
    ( u^{m}_t,u_{tt}^{m},\theta_t^{m},p_t^{m})  &\xrightarrow{\text{weak*}}(u_t,u_{tt},\theta_t,p_t) \quad  \text{in}\ L^\infty\big(0,T;H_0^2(\Omega) 
 \times H^1_0(\Omega) \times (L^2(\Omega) )^2\big), 
 \\
   (\theta_t^{m}, p_t^{m})  &\xrightarrow{\text{weak}} (\theta_t,p_t) \quad \qquad \; \;  \text{in}\ L^2 \big(0,T; (H_0^1(\Omega))^2\big).
\end{align*}
Finally, the bounds in \eqref{reg-utt} are established by taking the limit $m\to \infty$   in \eqref{int3}. To confirm that $(u,\theta,p)$ satisfies \eqref{u-weak}-\eqref{p-weak}, we proceed as in the uniqueness proof of Theorem~\ref{existence;thm}, to obtain \eqref{uep}-\eqref{pep}, but with $(f,v)$, $(\phi,\psi)$, and $(g,q)$ representing the respective right-hand sides, after which we take the limit as $\varepsilon \rightarrow 0$.
\end{proof}

\subsection{Additional regularity}\label{sec:reg}
The next theorem establishes a priori bounds of the solution and its higher-order time derivatives, provided that the initial and source data are sufficiently smooth. While the specific approach followed here is relatively standard (see, e.g.,  Ref. \refcite{Rivera} and the references therein), its adaptation to the present model is novel. A summary of the regularity results is displayed in Table~\ref{table:summary}.

\medskip 
\noindent
\textbf{Regularity estimate.} It is well known\cite{AGMON,Rannachar} that if $\Phi^* \in H^{-r}(\Omega
)$ (resp. $F^* \in H^{-s}(\Omega
)$) are such that $-\Delta \chi   =\Phi^*$ (resp.   $\Delta^2 w  =F^*$) then 
\begin{align}
 \norm{\chi}_{H^{2-r}(\Omega)} \le C_{\text{reg}}(r) \norm{\Phi^*}_{H^{-r}(\Omega)}\; (\text{ resp.} \norm{w}_{H^{4-s}(\Omega)} \le C_{\text{reg}}(s) \norm{F^*}_{H^{-s}(\Omega)}),\label{ellip-reg}
\end{align}
for all $1-\sigma^1_{\text{reg}} \le r \le 1$ (resp. $2-\sigma^2_{\text{reg}} \le s \le 2$),  where $\sigma_{\text{reg}}^1>0$ (resp. $\sigma_{\text{reg}}^2>0$), is the elliptic  regularity index of the Laplace  (resp. biharmonic) operator, and the constants $C_{\text{reg}}(r)$  (resp. $C_{\text{reg}}(s)$) depend only on $\Omega$ and $s$ (resp. $r$). Lowest-order \ac{fe} schemes typically  achieve at most linear convergence in energy norm, so it is reasonable to assume throughout the paper that $\sigma= \text{min}\{1,\sigma_{\text{reg}}^1,\sigma_{\text{reg}}^2\}$, whence $0 < \sigma \le 1$. Note that if $\Omega$ is a convex polygon, then $\sigma =1$, whereas for non-convex polygons we have $1/2< \sigma < 1$. The elliptic regularity index 
$\sigma$ plays an important role in determining the rate of convergence presented in Section~\ref{sec:fully}. 
 We are now in a position to state the regularity of the weak solution. From the estimates  \eqref{reg-utt}, we can write (also using \eqref{model22}-\eqref{p2;model33}):
\begin{align*}
   -c_1 \Delta \theta &=\phi - a_1\theta_t+\gamma p_t-b_1\theta+\alpha \Delta u_t:=\Phi^* \in L^2(\Omega),\; \qquad\quad\text{ for all } 0\le t \le T ,\nonumber\\
   -\kappa \Delta p&=g  -a_2 p_t+\gamma \theta_t +\beta \Delta  u_t:=G^* \in L^2(\Omega),\qquad\;\;\quad\qquad\;\text{ for all } 0\le t \le T,\nonumber\\
   d_0\Delta^2 u&= f- u_{tt}+a_0 \Delta u_{tt} -\frac{\alpha}{c_1}\Phi^*-\frac{\beta}{\kappa}G^*:=F^* \in H^{-1}(\Omega),\quad \text{ for all } 0\le t \le T ,
\end{align*}
where in the last equation we have used the fact that $\norm{\nabla u_{tt}} $ is bounded (cf. \eqref{reg-utt}), and hence $a_0 \Delta u_{tt} \in H^{-1}(\Omega)$, whence $F^* \in H^{-1}(\Omega).$ Then we utilize \eqref{ellip-reg} to see that, for all $1/2< \sigma \le 1$, there holds 
\begin{equation}
    u \in L^\infty(0,T;H^{2+\sigma}(\Omega)) \text{ and } \theta , p  \in L^\infty(0,T;H^{1+\sigma}(\Omega)). \label{ell-reg-sol}
\end{equation}
The next theorem guarantees higher regularity of weak solution (needed for the error estimates in   Section~\ref{p2;fully-error-sec}). The proof is based on the arguments used in    Prop. 2.5.2 of Ref. \refcite{Lasiecka}, and details are postponed to  \ref{appendix}.

\begin{thm}[Regularity]\label{thm;regularity}
(a)  Let $f,\phi , g \in H^2(0,T;L^2(\Omega))$
, $u^0,u^{*0}\in H^3(\Omega) \cap H^2_0(\Omega),  \theta^{0}\in H^2(\Omega) \cap H^1_0(\Omega), \text{ and }p^{0}\in H^2(\Omega) \cap H^1_0(\Omega).$
Assume that the compatibility conditions 
\begin{subequations}\label{compat1}
\begin{align}
 & u_{tt}(0)-a_0 \Delta u_{tt}(0)=f(0)-d_0\Delta^2 u^0-\alpha\Delta \theta^0-\beta \Delta p^0,\\
    &  a_1 \theta_{t}(0)  -\gamma p_{t}(0)=\phi(0)-b_1 \theta^0 +c_1 \Delta \theta^0 +\alpha \Delta u^{*0},  \\
   &    a_2  p_{t}(0)-\gamma \theta_{t}(0)=g(0) +\kappa \Delta p^0+\beta \Delta  u^{*0},
\end{align} 
    \end{subequations}
hold and  $(u_{tt}(0), \theta_t(0) , p_t(0) )$ 
 belongs to $
   (H^2(\Omega) \cap H^1_0(\Omega))^3.$
Then,  
\begin{equation}\label{reg-uttt}
 \underset{t\in [0,T]}{\rm {ess \;sup}}\big(\norm{u_{t}}^2_{H^{2+\sigma}(\Omega)}+\norm{\theta_{t}}^2_{H^{1+\sigma}(\Omega)}+\norm{p_{t}}^2_{H^{1+\sigma}(\Omega)}+E(u_{tt},\theta_{tt},p_{tt};t)\big)\end{equation}
 is bounded. 

\medskip  
\noindent (b) Let $f,\phi , g \in  H^3(0,T;L^2(\Omega)$, $u^0,u^{*0}, u_{tt}(0) \in H^3(\Omega) \cap H^2_0(\Omega)$,  $\theta^{0},\theta_t{(0)},p^{0},p_t(0)\in H^2(\Omega) \cap H^1_0(\Omega)$.  
Assume that  the compatibility conditions 
\begin{subequations}\label{compat2}
\begin{align}
  u_{ttt}(0)-a_0 \Delta u_{ttt}(0)&=f_t(0)-d_0\Delta^2 u_t(0)-\alpha\Delta \theta_t(0)-\beta \Delta p_t(0),\\
      a_1 \theta_{tt}(0)  -\gamma p_{tt}(0)&=\phi_{t}(0)-b_1 \theta_t(0) +c_1 \Delta \theta_t(0) +\alpha \Delta u_{tt}(0),  \\
      a_2  p_{tt}(0)-\gamma \theta_{tt}(0)&=g_t(0) +\kappa \Delta p_t(0)+\beta \Delta  u_{tt}(0),
\end{align}
\end{subequations}
hold and $(u_{ttt}(0), \theta_{tt}(0) , p_{tt}(0) )$ belongs to $
   (H^2(\Omega) \cap H^1_0(\Omega))^3$.  Then, 
\[ \underset{t\in [0,T]}{\rm{ess \; sup}}\big(\norm{u_{tt}}^2_{H^{2+\sigma}(\Omega)}+\norm{\theta_{tt}}^2_{H^{1+\sigma}(\Omega)}+\norm{p_{tt}}^2_{H^{1+\sigma}(\Omega)}+E(u_{ttt},\theta_{ttt},p_{ttt};t)\big)\]    is bounded.  
\end{thm}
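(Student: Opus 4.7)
The strategy is to mimic the energy arguments of Theorems~\ref{existence;thm} and \ref{weak-sol-thm}, but now applied to the time-differentiated Galerkin system. For part (a), I would first formally differentiate \eqref{u-weak}--\eqref{p-weak} (at the Galerkin level using the expansions $u^m,\theta^m,p^m$ from the appendix) once more in $t$, to obtain a coupled system governing $(u^m_{ttt},\theta^m_{tt},p^m_{tt})$ that is structurally identical to the original system with source terms $(f_t,\phi_t,g_t)$. Testing the first equation against $u^m_{ttt}$, the second against $\theta^m_{tt}$ and the third against $p^m_{tt}$, and summing, the coupling terms involving $\alpha,\beta,\gamma$ either cancel (the $\alpha\,\Delta,\beta\,\Delta$ pairs through integration by parts) or are absorbed using \eqref{gamma_0} exactly as in \eqref{energy}. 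Integration in $t\in(0,T)$ followed by Cauchy--Schwarz, Young and Gronwall inequalities then yields a bound on $E(u^m_{tt},\theta^m_{tt},p^m_{tt};t)$ in terms of $\|(u^m_{ttt}(0),\theta^m_{tt}(0),p^m_{tt}(0))\|$ and the $L^2(0,T;L^2)$-norms of $f_t,\phi_t,g_t$.

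The main obstacle, as in the proof of Theorem~\ref{weak-sol-thm}, lies in bounding the initial values of these higher-order time derivatives by the given data. To this end, I would evaluate the once-differentiated Galerkin equations at $t=0$ and use the compatibility conditions \eqref{compat1}, which express $u_{tt}(0)$, $\theta_t(0)$, $p_t(0)$ directly in terms of $u^0,u^{*0},\theta^0,p^0,f(0),\phi(0),g(0)$. Testing these compatibility relations (viewed as elliptic identities at $t=0$) by $u^m_{ttt}(0), \theta^m_{tt}(0), p^m_{tt}(0)$ respectively, and using the $H^2$-regularity hypothesis on $(u_{tt}(0),\theta_t(0),p_t(0))$ together with Cauchy--Schwarz and Young's inequalities, gives the required $L^2$-control of the initial triple. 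Passing to the limit $m\to\infty$ (using the uniform bounds established in Theorem~\ref{weak-sol-thm} and weak-$\ast$ lower semicontinuity) yields the second summand in \eqref{reg-uttt}.

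For the $H^{2+\sigma}$-bound on $u_t$ and the $H^{1+\sigma}$-bounds on $\theta_t, p_t$, I would differentiate \eqref{p2;model11}--\eqref{p2;model33} once in $t$, giving pointwise (in $t$) elliptic identities of the form $-c_1\Delta\theta_t = \phi_t - a_1\theta_{tt}+\gamma p_{tt}-b_1\theta_t + \alpha\Delta u_{tt}$, an analogous identity for $\kappa \Delta p_t$, and $d_0\Delta^2 u_t = f_t - u_{ttt}+a_0\Delta u_{ttt} - \alpha\Delta\theta_t - \beta\Delta p_t$. The boundedness of $u_{ttt},\Delta u_{ttt}$ in the appropriate weak spaces (from the energy estimate just derived) ensures the right-hand sides lie in $H^{-1}(\Omega)$ uniformly in $t$, and then \eqref{ellip-reg} applied to $\Delta$ (resp.\ $\Delta^2$) delivers the stated spatial regularity, exactly paralleling the derivation of \eqref{ell-reg-sol}.

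Part (b) is carried out by precisely the same mechanism one differentiation order higher: test the twice-differentiated Galerkin system with $(u^m_{tttt},\theta^m_{ttt},p^m_{ttt})$, arrive at an energy identity governing $E(u^m_{ttt},\theta^m_{ttt},p^m_{ttt};t)$, and use \eqref{compat2} together with the additional hypothesis $(u_{ttt}(0),\theta_{tt}(0),p_{tt}(0))\in(H^2\cap H^1_0)^3$ to bound the new initial data. Finally, differentiating \eqref{p2;model11}--\eqref{p2;model33} twice in time and invoking \eqref{ellip-reg} as above yields the $H^{2+\sigma}$ and $H^{1+\sigma}$ control of $u_{tt},\theta_{tt},p_{tt}$. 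I expect the sharpest technical point throughout to be the systematic use of the compatibility conditions to convert temporal data at $t=0$ into controllable elliptic right-hand sides; once this is handled, the coupled energy estimate is entirely analogous to the uncoupled hyperbolic and parabolic cases and closes through Gronwall's lemma.
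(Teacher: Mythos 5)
Your proposal is correct in substance but follows a genuinely different (more hands-on) route than the paper. You redo the Galerkin energy argument of Theorem~\ref{weak-sol-thm} one differentiation order higher: differentiate the Galerkin system in time, test with the next time derivatives, bound the initial values $u^m_{ttt}(0),\theta^m_{tt}(0),p^m_{tt}(0)$ by evaluating the differentiated equations at $t=0$ (as in the step leading to \eqref{ptt0}), pass to the limit, and finish with the elliptic bootstrap \eqref{ellip-reg}--\eqref{ell-reg-sol}. The paper instead avoids any new energy computation: it introduces the auxiliary problem with initial data $\tilde u(0)=u^{*0}$, $\tilde u_t(0)=u_{tt}(0)$, $\tilde\theta(0)=\theta_t(0)$, $\tilde p(0)=p_t(0)$ and time-differentiated sources, applies Theorem~\ref{weak-sol-thm} to it as a black box, and identifies $(\tilde u,\tilde\theta,\tilde p)$ with $(u_t,\theta_t,p_t)$ by integrating in time --- the compatibility conditions \eqref{compat1} are exactly what guarantees that the integrated functions solve the original problem, so uniqueness closes the identification. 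The shift argument buys two things your route must handle by hand: (i) the regularity of the new initial data is \emph{assumed} directly on $(u_{tt}(0),\theta_t(0),p_t(0))$, whereas at the Galerkin level you must control $u^m_t(0)$ in $H^3$ and $\theta^m_t(0),p^m_t(0)$ in $H^2$ uniformly in $m$, which requires an appropriate (e.g.\ eigenfunction) basis; and (ii) no re-derivation of the Gronwall estimate. One slip to fix in your write-up: to obtain $\frac{d}{dt}E(u^m_{tt},\theta^m_{tt},p^m_{tt};t)$ you must test the system differentiated \emph{twice} relative to \eqref{u-weak}--\eqref{p-weak}, so the sources are $(f_{tt},\phi_{tt},g_{tt})$, not $(f_t,\phi_t,g_t)$; this is precisely why part (a) assumes $f,\phi,g\in H^2(0,T;L^2(\Omega))$. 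With that correction your argument closes and yields \eqref{reg-uttt}, and part (b) follows identically one order higher.
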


\begin{table}[ht]
\renewcommand{\arraystretch}{1.4}
\scriptsize
\centering
\begin{tabular}{|p{0.18\textwidth}|p{0.32\textwidth}|p{0.37\textwidth}|}
\hline
\textit{Description} & \textit{Assumptions on data} & \textit{Conclusions} \\
\hline
\textbf{Theorem~\ref{existence;thm}}

(Existence, Uniqueness, \& Energy bound for solution) 
& 
$u^0 \in H^2_0(\Omega),\ u^{*0} \in H^1_0(\Omega)$ \newline
$\theta^0, p^0 \in L^2(\Omega)$ \newline
$f(t), \phi(t), g(t) \in L^2(0,T;L^2(\Omega))$
& 
$u \in L^\infty(0,T;H^2_0(\Omega))$ \newline
$u_t \in L^\infty(0,T;H^1_0(\Omega))$ \newline
$\theta,\, p \in L^\infty(0,T;L^2(\Omega))$\newline 
\hphantom{$\int_X\ \int_X$} $ \cap L^2(0,T;H^1_0(\Omega))$ \\[0.5ex]
\hline
\textbf{Theorem~\ref{weak-sol-thm}} 

(Weak formulation \eqref{weak form} \& Energy bound for time derivative of the solution) 
& 
$u^0 \in H^3(\Omega) \cap H^2_0(\Omega)$ \newline
$u^{*0}, \theta^0, p^0 \in H^2(\Omega) \cap H^1_0(\Omega)$ \newline
$f(t), \phi(t), g(t) \in H^1(0,T;L^2(\Omega))$ 
& 
$u \in L^\infty(0,T;H^{2+\sigma}(\Omega) \cap H^2_0(\Omega))$ \newline
$u_t \in L^\infty(0,T;H^2_0(\Omega))$ \newline 
$u_{tt} \in L^\infty(0,T;H^1_0(\Omega))$ \newline
$\theta,\, p \in L^\infty(0,T;H^{1+\sigma}(\Omega) \cap H^1_0(\Omega))$ \newline
$\theta_t,\, p_t \in L^\infty(0,T;L^2(\Omega)) $\newline 
\hphantom{$\int_X\ \int_X$} $\cap L^2(0,T;H^1_0(\Omega))$ \\[0.5ex]
\hline
\textbf{Theorem~\ref{thm;regularity}(a)} 

(Additional regularity of solution \& Energy bound for second-order time derivative) 
& 
$u^0,\, u^{*0} \in H^3(\Omega) \cap H^2_0(\Omega)$ \newline
$\theta(0),\, p(0) \in H^2(\Omega) \cap H^1_0(\Omega)$ \newline
$u_{tt}(0),\, \theta_t(0),\, p_t(0) \in H^2(\Omega) \cap H^1_0(\Omega)$ \newline
$f(t), \phi(t), g(t) \in H^2(0,T;L^2(\Omega))$
\newline
\eqref{compat1} holds
& 
$u_t \in L^\infty(0,T;H^{2+\sigma}(\Omega) \cap H^2_0(\Omega))$ \newline
$u_{tt} \in L^\infty(0,T;H^2_0(\Omega))$\newline 
$u_{ttt} \in L^\infty(0,T;H^1_0(\Omega))$ \newline
$\theta_t,\, p_t \in L^\infty(0,T;H^{1+\sigma}(\Omega) \cap H^1_0(\Omega))$ \newline
$\theta_{tt},\, p_{tt} \in L^\infty(0,T;L^2(\Omega)) $\newline 
\hphantom{$\int_X\ \int_X$} $\cap L^2(0,T;H^1_0(\Omega))$
\\
[0.5ex]
\hline
\textbf{Theorem~\ref{thm;regularity}(b)} 

(Energy bound for third-order time derivative. Sufficient conditions for error analysis in Lemma~\ref{initial-error-lemm} \& Theorem~\ref{error-estimates thm}) 
& 
$u^0, u^{*0}, u_{tt}(0) \in H^3(\Omega) \cap H^2_0(\Omega)$ \newline
$u_{ttt}(0), \theta^0, \theta_t(0), p^0, p_t(0),$ \newline
$\theta_{tt}(0), p_{tt}(0) \in H^2(\Omega) \cap H^1_0(\Omega)$ \newline
$f(t), \phi(t), g(t) \in H^3(0,T;L^2(\Omega))$
\newline
\eqref{compat2} holds
& 
$u_{tt} \in L^\infty(0,T;H^{2+\sigma}(\Omega) \cap H^2_0(\Omega))$ \newline
$u_{ttt} \in L^\infty(0,T;H^2_0(\Omega))$\newline 
$u_{tttt} \in L^\infty(0,T;H^1_0(\Omega))$ \newline
$\theta_{tt},\, p_{tt} \in L^\infty(0,T;H^{1+\sigma}(\Omega) \cap H^1_0(\Omega))$ \newline
$\theta_{ttt},\, p_{ttt} \in L^\infty(0,T;L^2(\Omega)) $\newline 
\hphantom{$\int_X\ \int_X$} $\cap L^2(0,T;H^1_0(\Omega))$ \\[0.5ex]
\hline
\end{tabular}
\caption{Summary of regularity assumptions and corresponding results.}
\label{table:summary}
\end{table}

\begin{rem}
In accordance with the above regularity result, if we define 
\begin{equation}
F(t,\bx):=f(t,\bx)-u_{tt}+a_0 \Delta u_{tt}-\alpha \Delta \theta-\beta \Delta p,\label{F}
\end{equation}
then, there exist positive constants $C_F$ and $C_F'$, such that
\begin{equation}
(i) \; \norm{F}_{L^\infty(0,T;L^2(\Omega)} \le C_F  \quad \text{ and } \quad (ii) \; \norm{F_t}_{L^2(0,T;L^2(\Omega))} \le C_F'. \label{norm-F}
\end{equation}
\end{rem}

\section{Fully discrete scheme and stability}\label{sec:semi}
{This section develops the numerical framework for the coupled hyperbolic-parabolic system \eqref{p2;model11}-\eqref{p2;model33}. Subsection~\ref{P1 semi_schemes} introduces the \ac{fe} spaces and projection operators, highlighting the need for a modified Ritz projection for the displacement variable. Subsection~\ref{sec:fully} presents the {\it first} fully discrete scheme with explicit initial error estimates, contrasting with previous works that begin at the second time step. Subsection~\ref{p2;stability_sec} establishes the unconditional stability of the scheme.}
\subsection{Space discretization}\label{P1 semi_schemes}
We now define the \ac{fe} spaces and projection operators, and highlight their approximation properties. Additionally, we discuss the necessity for a \textit{modified} Ritz projection, specifically for the displacement variable.

Let $K \in \mathcal{T}$ be any triangle in the shape-regular triangulation $\mathcal{T}$ of $\bar{\Omega}$. We denote its diameter by $h_K$, its area by $|K|$, and use $\bn_K$ to refer to the outward unit normal vector on $\partial K$. Define $h := \max_{K \in \mathcal{T}} h_K$. The sets of interior and boundary vertices of $\mathcal{T}$ are denoted by $\mathcal{V}(\Omega)$ and $\mathcal{V}(\partial \Omega)$, respectively, with the combined set represented as $\mathcal{V} = \mathcal{V}(\Omega) \cup \mathcal{V}(\partial \Omega)$. Similarly, we use $\mathcal{E}(\Omega)$ and $\mathcal{E}(\partial \Omega)$ for the sets of interior and boundary edges, and write $\mathcal{E} = \mathcal{E}(\Omega) \cup \mathcal{E}(\partial \Omega)$. For any edge $e \in \mathcal{E}$, the corresponding edge patch $\omega(e)$ is defined as $\text{int}(K_+ \cup K_-)$ if $e = \partial K_+ \cap \partial K_- \in \mathcal{E}(\Omega)$, and $\text{int}(K)$ when $e \in \mathcal{E}(\partial \Omega)$. Consider two neighboring triangles, $K_+$ and $K_-$, with the unit normal vector along $e$ satisfying $\bn_{K_+}|_{e} = \bn|_{e} = -\bn_{K_-}|{_e}$, directed outward from $K_+$ towards $K_-$. The jump of a function $\varphi$, written as $ \big[\!\!\big[\varphi\big]\!\!\big]$, is defined by $  \varphi |_{K_{+}}-  \varphi |_{K_{-}}\text{ if } e=\partial K_+ \cap \partial K_-  \in {\cal E}(\Omega)$ and $\varphi|_e  \text{ if } e \in {\cal E}(\partial \Omega)$. The average $\big\{\!\!\!\big\{\varphi\big\}\!\!\!\big\}$ is defined by $\frac{1}{2} (\varphi |_{K_{+}}+  \varphi|_{K_{-}} )\text{ if } e=\partial K_+ \cap \partial K_-  \in {\cal E}(\Omega)$ and  $\varphi|_e  \text{ if } e \in {\cal E}(\partial \Omega)$.

Let $V_h:=\mathcal{P}_2({\cal T}) \cap H^1_0(\Omega) \subset {\mathcal H}^2({\cal T})$ and $W_h:=\mathcal{P}_1({\cal T}) \cap H^1_0(\Omega) \subset  H^1_0(\Omega) $ be  finite-dimensional subspaces and define the bilinear form $a_h(\bullet, \bullet):V_h \times V_h \rightarrow \mathbb{R}$ by
\begin{align*}
     a_h(w_h,v_h) &:=\int_\Omega D^2_{{\rm pw}}w_h:D^2_{\rm{pw}}v_h \dx-\sum_{e \in {\cal E}} \int_e \big[\!\!\big[\nabla w_h\big]\!\!\big] \cdot \big\{\!\!\!\big\{D^2_{\rm{pw}}v_h\big\}\!\!\!\big\}{\bn }\ds\\
   &\quad
   -\sum_{e \in {\cal E}} \int_e \big[\!\!\big[\nabla v_h\big]\!\!\big] \cdot \big\{\!\!\!\big\{D^2_{\text{\rm pw}}w_h\big\}\!\!\!\big\}\bn \ds+\sum_{e \in {\cal E}} \frac{\sigma_{\rm{IP}}}{h_e} \int_e  \Big[\!\!\Big[\frac{\partial w_h}{{\partial {\bn}}} \Big]\!\!\Big] \Big[\!\!\Big[\frac{\partial v_h}{{\partial {\bn}}} \Big]\!\!\Big]\ds,
\end{align*}
with respect to a mesh-dependent (broken) norm on $V_h$ defined by 
\begin{equation*}
\norm{v_h}_{h}^2:= \norm{D^2_{{\rm pw}}v_h}^2+\sum_{e \in {\cal E}} \frac{\sigma_{\rm{IP}}}{h_e} \int_e  \Big[\!\!\Big[\frac{\partial v_h}{{\partial {\bn}}} \Big]\!\!\Big]^2 \ds, 
\end{equation*}
where, $D^2_{\rm{pw}}$ is the piecewise Hessian and  the penalty parameter $\sigma_{\rm{IP}}>0$ is chosen sufficiently large\cite{Brener-sung}. 

It is well-known that  $a_h(\bullet ,\bullet)$ is  symmetric, continuous, and elliptic, i.e., there exist $ C_{\rm{Coer}}, C_{\rm{Cont}}>0$ such that for all $w_h, v_h \in V_h$ (see, for e.g.,  Ref. \refcite{MR3371900})
\begin{gather}
    a_h(w_h,v_h)=a_h(v_h,w_h), \quad C_{\rm{Coer}} \|w_h\|_h^2 \le a_h(w_h,w_h), \nonumber\\ a_h(w_h,v_h) \le C_{\rm{Cont}} \|w_h\|_h\|v_h\|_h.\label{P1 a_h_properties}
\end{gather}  
\noindent
The nonconforming Morley FE space\cite{CC} is defined as follows:
\begin{align*}
{\rm M}({\cal T})&:=\{ {v_{\rm M}} \in \rev{\mathcal{P}_2}({\cal T}): 
 {v_{\rm M}} \text{{\small is continuous at interior vertices and its normal derivatives}}\\
&\text{{\small are continuous at the midpoints of interior edges,} }{v_{\rm M}} \text{{\small  vanishes at the vertices of }}\partial \Omega \\
&\text{{\small  and its normal derivatives vanish at the midpoints of boundary edges of } $\partial \Omega$}\}.
\end{align*}
\begin{defn}[Morley interpolation\cite{CC}]\label{bhmorley_lem}
  For all $v_{\rm{pw}}\in H^2(\mathcal T)$, the extended Morley interpolation operator $I_{\rm M}:H^2({\mathcal T} ) \rightarrow {\rm M}({\mathcal T} )$ is defined by 
\[ 
 (I_{\rm {M}}v_{\rm{pw}})(z):= 
 |\mathcal T(z)|^{-1}
\sum_{K \in {\mathcal T}(z)} (v_{\rm{pw}}|_K)(z)\ \text{and}\ \displaystyle \fint_e\frac{\partial (I_{\rm M} v_{\rm{pw}})}{\partial {\bn}} \,\ds :=  \fint_e \Big\{\!\!\!\Big\{\frac{\partial v_{\rm{pw}}}{\partial {\bn}}\Big\}\!\!\!\Big\} \ds .\]
In case of an interior vertex $z$, ${\mathcal T}(z)$ represents the collection of attached triangles, and $|{\mathcal T}(z)|$ indicates the number of such triangles connected to vertex $z$.  
\end{defn}
\begin{lem}[Companion operator and properties\cite{carsput2020,CC}]\label{bhcompanion_lem} 
Let ${\rm HCT}(\mathcal{T})$  denote the Hsieh--Clough--Tocher  element.
There exists a linear mapping $J: {\rm M}(\mathcal{T})\to ({\rm HCT}(\mathcal{T})+\mathcal{P}_8(\mathcal{T})) \cap H^2_0(\Omega)$ such that any $w_{\rm M}\in {\rm M}(\mathcal{T})$ satisfies
\begin{align*}
&\text{(i) } Jw_{\rm M}(z)=w_{\rm M}(z) \quad \text{ for } z\in\mathcal{V}, \nonumber \\ 
&\text{(ii) } \nabla ({J}w_{\rm M})(z)=
|\mathcal{T}(z)|^{-1}\sum_{K\in\mathcal{T}(z)}(\nabla w_{\rm M}|_K)(z)
\quad \text{ for }z\in\mathcal{V}(\Omega),  \nonumber\\
&\text{(iii) } \fint_e \frac{\partial J w_{\rm M}}{{\partial {\bn}}} \ds=\fint_e \frac{ \partial w_{\rm M}}{{\partial {\bn}}} \ds \text{ for any } e\in\mathcal{E},\nonumber\\ 
&\text{(iv) }  \trinl w_{\rm M}- J w_{\rm M} \trinr_{\rm pw} \lesssim  \min_{v\in  H^2_0(\Omega)}  \trinl w_{\rm M}- v \trinr_{\rm pw}, \nonumber \\
&\text{(v) } \|{v_h -Qv_h}\|_{H^s({\cal T})} \le C_1 h^{2-s}\!\!\!\min_{v\in  H^2_0(\Omega)}\norm{v- v_h}_h\ \text{ for }v_h \in V_h,\, C_1>0, \, 0 \le s \le 2. 
\end{align*} 
Here  $Q =JI_{\rm M}$ is a smoother operator defined from $ V_h $ to $ H^2_0(\Omega)$.
\end{lem} 
\noindent
\textbf{Ritz projection operators}

 The error control associated with the fully discrete approximation employs Ritz projection operators defined from $H^2_0(\Omega)$ (resp. $H^1_0(\Omega))$ into $V_h$ (resp. $W_h$) for $u$ (resp. $\theta \text{ and } p$). It should be noted that since $V_h $ is not a subspace of $H^2_0(\Omega)$,  the standard definition 
 \begin{equation*}
     a_h({\cal R}_h w, v_h) =(\nabla^2 w, \nabla^2 v_h) \qquad \text{for all $v_h \in V_{h}$}, 
     \end{equation*}
 does not hold for   $v_h \in V_h \subset H^2({\mathcal T})$ for the nonstandard $C^0$IP scheme proposed herein. 
 
 Alternative approaches that define Ritz projections for nonstandard methods (see, e.g., Refs. \refcite{danumjaya2021morley,gudi} for the fourth-order nonlinear parabolic extended Fisher--Kolmogorov equation) often require higher regularity $u \in H^3(\Omega) \cap H^2_0(\Omega)$, which might not hold for non-convex domains. See the discussion in Section~\ref{sec:reg} for 
 non-convex polygons. Our recent work in Ref. \refcite{nry_preprint} addresses this issue by means of the \textit{modified} Ritz projection (see Definition~\ref{P1 ritz_projection} below), which utilizes a smoother operator $Q: V_h \rightarrow H^2_0(\Omega)$ defined as $JI_{\rm M}$, where $J$ (resp. $I_{\rm M}$) denotes the companion (resp. extended Morley interpolation) operator from Lemma~\ref{bhcompanion_lem} (resp. Lemma~\ref{bhmorley_lem}). 
The {\it modified Ritz projection} $\mathcal{R}_h: H^2_0(\Omega)\rightarrow V_h $  for the displacement variable is defined as follows:
\begin{equation}
    a_h({\cal R}_hw,v_h )=(\nabla ^2 w, \nabla^2 Qv_h )\qquad  \text{ for all } v_h \in V_h,\, w \in H^2_0(\Omega)\label{P1 ritz_projection}.
\end{equation}

\begin{lem}[Approximation properties for ${\cal R}_h$, Appendix in Ref. \refcite{mahata2025lowest}]\label{P1 ritz_lemma}
Let $w \in H^2_0(\Omega) \cap H^{2+\sigma}(\Omega)$, where 
$\sigma \in (1/2, 1]$, and let $\mathcal{R}_h w$ be its Ritz projection defined in \eqref{P1 ritz_projection}. Then, there exists a constant $C_2 > 0$ such that
\begin{equation}
    \|{w-\mathcal{R}_hw}\| + \|{\nabla (w-\mathcal{R}_h w)}\| +  h^{\sigma}\norm{w-\mathcal{R}_hw}_h \le C_2 h^{2\sigma}\norm{w}_{H^{2+\sigma }(\Omega)}.  \label{P1 norm_ritz}
\end{equation}
\end{lem}
\noindent
Next, we define the $H^1$-\textit{conforming elliptic projection}\cite{ern2}  $\Pi_h\hspace{-0.1cm}:H^1_0(\Omega) \rightarrow W_h$  for the first moments of temperature and pressure as:
\begin{equation}
    (\nabla \Pi_h \chi,\nabla \chi_h)=(\nabla \chi,\nabla \chi_h) \qquad\text{ for all }\chi_h \in W_h. \label{p2;ritzprojection2}
\end{equation}
\begin{lem}[Approximation properties for $\Pi_h$, Theorem~32.15 of Ref. \refcite{ern2}]\label{P1_pi_lemma}
    Let $\chi  \in H^1_0(\Omega) \cap H^{1+\sigma}(\Omega)$ for some $\sigma \in (1/2, 1]$. Then, there exists a constant $C_3 > 0$ such that
    \begin{equation}
        \|{\chi - \Pi_h \chi}\| + h^{\sigma}\|\nabla(\chi - \Pi_h \chi)\| \le C_3 h^{2\sigma}\|\chi\|_{H^{1+\sigma}(\Omega)}. \label{P1 norm_ritz2}
    \end{equation}
\end{lem}
\subsection{Fully discrete scheme}\label{sec:fully}
This subsection discusses a fully discrete scheme for   \eqref{weak form}. To the best of our knowledge, this is the first fully discrete scheme for a hyperbolic-parabolic coupled problem with explicit initial error estimates. In contrast with  Ref. \refcite{Madureira_num}, where the initial error is assumed to be bounded with the required convergence and the formulation begins from the second time   step, our approach starts from the initial step and provides a general framework for defining the fully discrete formulation for any coupled hyperbolic-parabolic system. 

For a positive integer $N$, consider the partition $ 0=t_0 < t_1<t_2< \cdots<t_N=T$ of the interval $[0,T]$
 with $t_n=n\Delta t$, and $\Delta t=T/N$ being the time step.
For any function $\upsilon(\bx,t)$, the following notations are adopted:  
\begin{align*}
& \upsilon^n  := \upsilon(\bx,t_n)= \upsilon(t_n), \quad \upsilon^{n+1/2}:= \frac{1}{2}\left(\upsilon^{n+1}+\upsilon^n \right), \\
& \upsilon^{n,1/4} :=\frac{1}{4} \left(\upsilon^{n+1}+2\upsilon^n+  \upsilon^{n-1}\right)= {\frac{1}{2} \left(v^{n+1/2} +v^{n-1/2} \right)},\\
& \bar{\partial}_t \upsilon^{n+1/2} :=\frac{\upsilon^{n+1}-\upsilon^{n}}{\Delta t} ,\quad 
{\bar{\partial}}^2_t \upsilon^n := \frac{\upsilon^{n+1}-2\upsilon^n+\upsilon^{n-1}}{(\Delta t)^2} ,\quad 
\delta_t \upsilon^n := \frac{\upsilon^{n+1}-\upsilon^{n-1}}{2\Delta t}. 
\end{align*}
 Let $(U^n,\Theta^n,P^n)=(U(t_n),\Theta(t_n),P(t_n))$ denote the approximation of the continuous solution $(u,\theta,p)$ at time $t_n$.  Considering the following approximation of the initial solution 
\begin{equation}
    (U^0,\Theta^0,P^0)=({\mathcal{R}_h}u^0,\Pi_h \theta^0,\Pi_h p^0) ,\label{U0,THETA0,P0}
    \end{equation}
    we compute $(U^1,\Theta^1,P^1) \in V_h \times W_h \times W_h$ by solving the following elliptic system for all $(v_h,\psi_h, q_h) \in V_h \times W_h \times W_h$ 
    \begin{subequations}\label{3.10}
\begin{align}
    2(\Delta t)^{-1}\big[(\bar{\partial}_tU^{1/2}-u^{*0},v_h)+a_0(\nabla \bar{\partial}_tU^{1/2}-\nabla u^{*0},\nabla v_h)\big] & \label{int_u0}\\
 +d_0 a_h(U^{1/2}, v_h)  -  \alpha( \nabla \Theta^{1/2}, \nabla v_h)-\beta( \nabla P^{1/2}, \nabla v_h) 
  & =( f^{1/2},v_h)
 \nonumber    ,\\
    a_1(\bar{\partial}_t{\Theta}^{1/2}, \psi_h)-\gamma (\bar{\partial}_t{P^{1/2}}, \psi_h)
+b_1(\Theta^{1/2},\psi_h)
   +c_1(\nabla\Theta^{1/2},\nabla\psi_h) \label{int_theta0} &\\
   +\alpha(\nabla \bar{\partial}_tU^{1/2},\nabla\psi_h) & =(\phi^{1/2},\psi_h)\nonumber  , \\
    a_2 (\bar{\partial}_tP^{1/2},q_h)-\gamma (\bar{\partial}_t{\Theta}^{1/2},q_h)+\kappa (\nabla P^{1/2},\nabla q_h)+\beta (\nabla \bar{\partial}_tU^{1/2},\nabla q_h)&=(g^{1/2},q_h) .\label{int_p0}
    \end{align}\end{subequations}
    The solution is calculated at $t_1$ using \eqref{int_u0}-\eqref{int_p0} in order to align the two numerical schemes, since the Newmark scheme \eqref{un} requires solutions at $t_0$ and $t_1$ to compute the solution at $t_2$, while the Crank--Nicolson scheme begins its computation from $t_1$.
    The idea of the discrete equation \eqref{int_u0} is based on  the discretization of  \rev{the biharmonic wave equation (see  Ref. \refcite{nry_preprint}) applied to the} coupled system, and \eqref{int_theta0}-\eqref{int_p0} are based on the Crank--Nicolson method to determine the solution at $t_1$. The construction of \eqref{3.10} guarantees quadratic convergence in time, implying that also the fully discrete scheme is quadratically convergent. 
    
\noindent For $n=1,2, \cdots,N-1$, the fully discrete scheme consists in finding $(U^{n+1},\Theta^{n+1},P^{n+1}) \in V_h \times W_h \times W_h$ such that for all $(v_h,\psi_h, q_h) \in V_h \times W_h \times W_h$
\begin{subequations}
   \label{fully-discret-scheme}
    \begin{align}
   (\bar{\partial}_t^2U^{n},v_h)+a_0 (\nabla \bar{\partial}_t^2U^{n},\nabla v_h)+d_0 a_h(U^{n,1/4}, v_h)&\label{un}\\
    -  \alpha (\nabla \Theta^{n,1/4},\nabla v_h) -\beta ( \nabla P^{n,1/4}, \nabla v_h) 
   & =( f^{n,1/4},v_h),\nonumber  \\
    a_1(\bar{\partial}_t{\Theta}^{n+1/2}, \psi_h)-\gamma (\bar{\partial}_t{P^{n+1/2}}, \psi_h)+b_1(\Theta^{n+1/2},\psi_h)&  \label{thetan} \\
+c_1 (\nabla \Theta^{n+1/2},\nabla \psi_h) +\alpha (\nabla \bar{\partial}_t U^{n+1/2},\nabla \psi_h) & =(\phi^{n+1/2},\psi_h),\nonumber \\
\label{pn}    a_2 (\bar{\partial}_tP^{n+1/2},q_h)
   -\gamma (\bar{\partial}_t{\Theta}^{n+1/2},q_h)
 +\kappa (\nabla P^{n+1/2},\nabla q_h)&\\
 +\beta (\nabla\bar{\partial}_t U^{n+1/2},\nabla q_h)&=(g^{n+1/2},q_h).\nonumber 
    \end{align}\end{subequations}
 This section and the rest of the paper uses the discrete Gronwall Lemma that is stated below.
\begin{lem}[Discrete Gronwall Lemma\cite{MR3003381}]\label{P1 d-gronwall}
 Let $\{v_n\}$, $\{w_n\}$, and $\{y_n\}$ be three non-negative sequences, with $\{y_n\}$ monotone, that satisfy 
  $\displaystyle    v_m+w_m \le y_m + \nu \sum_{n=0}^{m-1} v_n, \quad \nu >0, \ v_0+w_0 \le y_0 $. Then for $m \ge 0,$ it holds that 
  $    v_m+w_m \le y_m e^{m \nu}.$
\end{lem}
\begin{rem}[\rev{Useful} identities]
  Before proceeding further, we state the following identities, which lead to telescopic sums and are used in Theorem~\ref{p2;stability-thm} and Theorem~\ref{error-estimates thm}. For any discrete functions $Q^n \in V_h$ and $S^n \in W_h$, $n=0,1,2,\cdots,N$ there hold
\begin{subequations}
\begin{align}
    2\Delta t (\bar{\partial}_t^2Q^{n},\delta_t Q^n)&=\norm{\bar{\partial}_t Q^{n+1/2}}^2-\norm{\bar{\partial}_t Q^{n-1/2}}^2, \label{p2;stb1}\\
    2\Delta t (\nabla \bar{\partial}_t^2Q^{n},\nabla \delta_t Q^n)&=\norm{\nabla \bar{\partial}_t Q^{n+1/2}}^2-\norm{\nabla \bar{\partial}_t Q^{n-1/2}}^2,\label{p2;stb2}\\
     2\Delta t a_h(Q^{n,1/4}, \delta_t Q^n)&=a_h(Q^{n+1/2},Q^{n+1/2})-a_h(Q^{n-1/2},Q^{n-1/2}), \label{p2;stb3}\\ 
 2 \Delta t(\bar{\partial}_t S^{n+1/2},S^{n+1/2})&= \norm{S^{n+1}}^2-\norm{S^{n}}^2.
    \label{p2;stab5}
    \end{align}
    \end{subequations}
\end{rem}
\subsection{Stability}\label{p2;stability_sec}
Here we demonstrate the stability of  the fully discrete scheme {in \eqref{fully-discret-scheme}} and establish a uniform bound of the solution \( (U^{m+1},\Theta^{m+1},P^{m+1}) \) at \( t_{m+1} \) for \( 1 \le m \le N-1 \) in terms of the solution at \( t_0 \), \( t_1 \), and the load/source functions. 
 For any $\chi_h^n, Q_h^n \in W_h  ; n\in\{1,2,\cdots,m\}$ with $1 \le m \le N-1$, define
\begin{equation}
   \norm{(\chi_h^{m},Q_h^{m})}^2_{H}:= \Delta t \sum_{n=1}^m\Big[ {b_1}\norm{\chi_h^{n+1/2}}^2
   +c_1 \norm{\nabla\chi_h^{n+1/2}}^2 +{\kappa}\norm{\nabla Q_h^{n+1/2}}^2\Big].\label{t-norm}
\end{equation}
 Also, we define
\begin{align*}
    ((U^0,U^1,f))&:=6\norm{\bar{\partial}_t U^{1/2}}^2+4a_0\norm{\nabla \bar{\partial}_t U^{1/2}}^2+4d_0C_{\rm{Cont}}\norm{U^{1/2}}_h^2\\
    &\qquad +4T^2\norm{f}_{L^\infty(0,T;L^2(\Omega))}^2,\\
    ((\Theta^0,\Theta^1,\phi))&:= 
  {\frac{1}{2}(3a_1+|\gamma|/\gamma_0)}\norm{\Theta^1}^2 +{c_1}\Delta t\norm{\nabla \Theta^{1/2}}^2\\
  &\qquad +\frac{T^2}{a_1-|\gamma|/\gamma_0}\norm{\phi}^2_{L^\infty(0,T;L^2(\Omega))},\\
  ((P^0,P^1,g))&:=\frac{1}{2}{(3a_2+|\gamma|\gamma_0)}\norm{P^1}^2+{\kappa}\Delta t\norm{\nabla P^{1/2}}^2\!
+ \!\frac{T^2}{a_2-|\gamma|\gamma_0}\norm{g}^2_{L^\infty(0,T;L^2(\Omega))}.
\end{align*}
In relation with  \eqref{energy}, we define the discrete energy of  \eqref{eq:coupled} at time $t_n$, for 
$m=1,\cdots,N$,  as 
\begin{align*}
E_h(U^{m+1},\Theta^{m+1},P^{m+1})&:= \norm{\bar{\partial}_t U^{m+1/2}}^2+a_0\norm{\nabla \bar{\partial}_t U^{m+1/2}}^2+d_0 C_{\rm{Coer}}\norm{U^{m+1/2}}^2_h
   \\
  &  \qquad+({a_1-|\gamma|/\gamma_0}) \norm{\Theta^{m+1}}^2+{(a_2-|\gamma|\gamma_0)}\norm{P^{m+1}}^2\\
  &\qquad + \norm{(\Theta^{m},P^{m})}^2_{H}.
\end{align*}
Then, as in Theorem~\ref{weak-sol-thm}, the next theorem leads to the well-posedness  of \eqref{fully-discret-scheme}.
    \begin{thm}[Stability]\label{p2;stability-thm}
 Let $f, \phi,g \in L^\infty(0,T;L^2(\Omega))$,  $u^0 \in H^2_0(\Omega) ,u^{*0} \in H^1_0(\Omega)$,  and both $\theta^0,p^0 \in H^1_0(\Omega) $. Then, the scheme \eqref{fully-discret-scheme} is unconditionally stable. Moreover, for $1 \le m \le N-1$, the following bound holds:
\[ 
 E_h(U^{m+1},\Theta^{m+1},P^{m+1}) \lesssim ((U^0,U^1,f))+((\Theta^0,\Theta^1,\phi))+((P^0,P^1,g)),
\]  
where the constant hidden in "$\lesssim$''   depends on $T$ and on the model coefficients $a_0,c_1, \alpha,\beta, \kappa$.
\end{thm}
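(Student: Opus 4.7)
The plan is to derive a discrete energy identity by testing each of the three discrete equations with the natural companion of its time differential structure, summing over $n=1,\ldots,m$ with weight $2\Delta t$, and then invoking the telescopic identities \eqref{p2;stb1}--\eqref{p2;stab5}, the coercivity \eqref{P1 a_h_properties} of $a_h$, the parameter $\gamma_0$ from \eqref{gamma_0}, and Lemma~\ref{P1 d-gronwall}. Specifically, I would choose $v_h=2\Delta t\,\delta_t U^n$ in \eqref{un}, $\psi_h=2\Delta t\,\Theta^{n+1/2}$ in \eqref{thetan}, and $q_h=2\Delta t\,P^{n+1/2}$ in \eqref{pn}, since these are the discrete analogues of $u_t$, $\theta$, and $p$ that drive the continuous energy identity associated with \eqref{energy}. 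By \eqref{p2;stb1}--\eqref{p2;stb3} the three leading terms of \eqref{un} telescope into $\|\bar\partial_t U^{m+1/2}\|^2+a_0\|\nabla\bar\partial_t U^{m+1/2}\|^2+d_0\,a_h(U^{m+1/2},U^{m+1/2})$ minus the same quantities at $n=0$; by \eqref{p2;stab5} the parabolic time derivatives yield $a_1(\|\Theta^{m+1}\|^2-\|\Theta^1\|^2)+a_2(\|P^{m+1}\|^2-\|P^1\|^2)$, while the Laplacian and zero-order terms produce precisely the $\|(\Theta^{m},P^{m})\|_H^2$ quantity defined in \eqref{t-norm}.

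The cross-coupling between $\Theta$ and $P$ via $\gamma$ telescopes cleanly: a discrete product-rule computation shows that $(\bar\partial_t P^{n+1/2},\Theta^{n+1/2})+(\bar\partial_t\Theta^{n+1/2},P^{n+1/2})=\Delta t^{-1}[(P^{n+1},\Theta^{n+1})-(P^n,\Theta^n)]$, so after summation the $\gamma$-contribution is $-2\gamma[(P^{m+1},\Theta^{m+1})-(P^1,\Theta^1)]$. Applying Young's inequality with weight $\gamma_0$ gives $2|\gamma|\,|(P^{m+1},\Theta^{m+1})|\le(|\gamma|/\gamma_0)\|\Theta^{m+1}\|^2+|\gamma|\gamma_0\|P^{m+1}\|^2$, and combining with the $a_1$ and $a_2$ telescopic contributions produces exactly $(a_1-|\gamma|/\gamma_0)\|\Theta^{m+1}\|^2+(a_2-|\gamma|\gamma_0)\|P^{m+1}\|^2$ on the left, both positive thanks to \eqref{gamma_0}. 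This is the cleanest part of the argument because the mismatch between the two temporal averages cancels automatically.

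The main obstacle is the $U$--$\Theta$ and $U$--$P$ cross-coupling, where the test-function choice forces a confrontation between the Newmark average $\Theta^{n,1/4}=\tfrac12(\Theta^{n+1/2}+\Theta^{n-1/2})$ together with $\delta_t U^n=\tfrac12(\bar\partial_t U^{n+1/2}+\bar\partial_t U^{n-1/2})$ coming from \eqref{un}, and the Crank--Nicolson pair $\bar\partial_t U^{n+1/2}$, $\Theta^{n+1/2}$ coming from \eqref{thetan}. Unlike the continuous case, where integration by parts makes the $\alpha$-contributions cancel pointwise, here the two time stencils are offset by a half-step and no discrete cancellation of this type is available; I expect to bound each contribution by Young's inequality in the form $2\Delta t\,\alpha|(\nabla\bar\partial_t U^{n+1/2},\nabla\Theta^{n+1/2})|\le \varepsilon\,\Delta t\,a_0\|\nabla\bar\partial_t U^{n+1/2}\|^2+(\alpha^2/(\varepsilon a_0))\Delta t\|\nabla\Theta^{n+1/2}\|^2$ (and similarly for $\beta$, $P$, and for the $(\nabla\Theta^{n,1/4},\nabla\delta_t U^n)$ pair after using $\|\nabla\Theta^{n,1/4}\|^2\le\tfrac12(\|\nabla\Theta^{n+1/2}\|^2+\|\nabla\Theta^{n-1/2}\|^2)$), then choose $\varepsilon$ so that the $\Theta$- and $P$-parts are absorbed into the $c_1$- and $\kappa$-contributions of $\|(\Theta^m,P^m)\|_H^2$ and the remaining $\Delta t\sum a_0\|\nabla\bar\partial_t U^{n+1/2}\|^2$ pieces feed the discrete Gronwall loop.

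Once the coupling is tamed, the forcings are handled by $2\Delta t(f^{n,1/4},\delta_t U^n)\le \Delta t\|\bar\partial_t U^{n+1/2}\|^2+\Delta t\|\bar\partial_t U^{n-1/2}\|^2+T\|f\|_{L^\infty(0,T;L^2(\Omega))}^2$ (after summing, the factor $T$ emerges from $m\Delta t\le T$), and analogously for $\phi$, $g$, producing the $T^2\|\cdot\|_{L^\infty(0,T;L^2(\Omega))}^2$ contributions in $((U^0,U^1,f))$, $((\Theta^0,\Theta^1,\phi))$, $((P^0,P^1,g))$; the remaining $n=0$ boundary pieces from the telescopes exactly match the other terms in those symbols. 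The resulting inequality has the form $E_h(U^{m+1},\Theta^{m+1},P^{m+1})\le \mathcal{D}+C\,\Delta t\sum_{n=1}^{m-1}E_h(U^{n+1},\Theta^{n+1},P^{n+1})$ with $\mathcal D=((U^0,U^1,f))+((\Theta^0,\Theta^1,\phi))+((P^0,P^1,g))$, provided the $n=m$ term on the right is absorbed by shrinking $\varepsilon$ (this is where unconditional stability---no CFL on $\Delta t$---enters). Lemma~\ref{P1 d-gronwall} then gives the stated bound and, in particular, existence and uniqueness of the discrete solution through standard arguments, since \eqref{fully-discret-scheme} is a linear square system whose associated matrix is nonsingular once stability controls the kernel.
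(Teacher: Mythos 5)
Your overall strategy coincides with the paper's: the same test functions $\delta_t U^n$, $\Theta^{n+1/2}$, $P^{n+1/2}$, the telescoping identities \eqref{p2;stb1}--\eqref{p2;stab5}, the $\gamma_0$-splitting of the $\gamma$-coupling (your discrete product-rule identity for the $\Theta$--$P$ cross term is exactly what the paper uses), the $L^\infty$-in-time treatment of the loads, and the discrete Gronwall lemma. The gap is in your treatment of the $U$--$\Theta$ and $U$--$P$ coupling. You assert that ``no discrete cancellation of this type is available'' and propose to bound the Newmark-side term $(\nabla\Theta^{n,1/4},\nabla\delta_t U^n)$ and the Crank--Nicolson-side term $(\nabla\bar{\partial}_t U^{n+1/2},\nabla\Theta^{n+1/2})$ separately by Young's inequality. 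In fact a cancellation \emph{is} available and the proof hinges on it: expanding $\Theta^{n,1/4}=\tfrac12(\Theta^{n+1/2}+\Theta^{n-1/2})$ and $2\delta_t U^n=\bar{\partial}_t U^{n+1/2}+\bar{\partial}_t U^{n-1/2}$, the top-index product $(\nabla\Theta^{n+1/2},\nabla\bar{\partial}_t U^{n+1/2})$ produced by \eqref{un} is cancelled exactly by the coupling term of \eqref{thetan} (this is identity \eqref{p2-stab-all}), leaving only products in which at least one factor carries index $n-1/2$.

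Without this cancellation your estimate does not close unconditionally. At $n=m$ you must bound $\Delta t\,\alpha\,(\nabla\bar{\partial}_t U^{m+1/2},\nabla\Theta^{m+1/2})$, where \emph{both} factors belong to the current energy: the left-hand side offers $a_0\|\nabla\bar{\partial}_t U^{m+1/2}\|^2$ (no $\Delta t$) and $\Delta t\,c_1\|\nabla\Theta^{m+1/2}\|^2$ (with $\Delta t$), and neither factor can be shifted into the Gronwall sum, which runs only over $n\le m-1$. Your own Young split forces $\varepsilon\gtrsim \alpha^2/(a_0c_1)$ so that the $\Theta$-part is absorbed by $c_1$, while absorbing $\varepsilon\,\Delta t\,a_0\|\nabla\bar{\partial}_t U^{m+1/2}\|^2$ into the left requires $\varepsilon\,\Delta t\lesssim 1$; these two constraints are compatible only if $\Delta t\lesssim a_0c_1/\alpha^2$, i.e.\ you prove conditional, not unconditional, stability (``shrinking $\varepsilon$'' destroys the first absorption). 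The paper avoids this entirely: after \eqref{p2-stab-all} the only $n=m$ term containing $\nabla\bar{\partial}_t U^{m+1/2}$ is paired with $\nabla\Theta^{m-1/2}$, so a single Young step with weight $a_0$ yields $2a_0\|\nabla\bar{\partial}_t U^{m+1/2}\|^2$ (absorbable into the $4a_0$ on the left) plus an $a_0^{-1}(\Delta t)^2\|\nabla\Theta^{m-1/2}\|^2$ remainder that, having a lower index, is over-bounded by the sum over $n\le m-1$ and fed to Gronwall. You should restore the cancellation step before the Young inequalities; the rest of your outline then goes through as in the paper.
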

\begin{proof}
The proof follows in six steps as outlined below. 

\medskip
\noindent\textit{{Step 1} (Key inequality).} We multiply \eqref{un} by $8 \Delta t $, then choose $v_h= \delta_tU^{n}$ in \eqref{un},  and utilize \eqref{p2;stb1}-\eqref{p2;stb3} to show that
\begin{align}
    \nonumber4 & \big[\norm{\bar{\partial}_t U^{n+1/2}}^2-\norm{\bar{\partial}_t U^{n-1/2}}^2+ a_0\norm{\nabla \bar{\partial}_t U^{n+1/2}}^2-a_0\norm{\nabla \bar{\partial}_t U^{n-1/2}}^2\big] \\
\nonumber & \quad   
   +4d_0 a_h(U^{n+1/2},U^{n+1/2})  -4d_0 a_h(U^{n-1/2},U^{n-1/2})  \\
  & =8\Delta t (\alpha \nabla \Theta^{n,1/4}+\beta\nabla P^{n,1/4}, \nabla \delta_t U^n)+ 8 \Delta t( f^{n,1/4}, \delta_t U^n) \nonumber\\
   &= 4 \Delta t(\alpha \nabla \Theta^{n,1/4}+\beta\nabla P^{n,1/4}, \nabla (\bar{\partial}_tU^{n+1/2}+ \bar{\partial}_tU^{n-1/2})) \nonumber \\
  & \quad + 4 \Delta t( f^{n,1/4},\bar{\partial}_tU^{n+1/2}+ \bar{\partial}_tU^{n-1/2} ), 
   \label{st1}
\end{align}
with the identity $2\delta_t U^n=\bar{\partial}_tU^{n+1/2}+ \bar{\partial}_tU^{n-1/2}$ in the last equality.
Next we choose $\psi_h=2\Delta t \Theta^{n+1/2}$ in \eqref{thetan},  $q_h=2\Delta t P^{n+1/2}$ in \eqref{pn}, employ the identity \eqref{p2;stab5} and add the two resulting  equations to  obtain
\begin{align}\nonumber
   &  a_1\Delta t \bar{\partial}_t\norm{\Theta^{n+1/2}}^2+2\Delta t\big[{b_1} \norm{\Theta^{n+1/2}}^2
   +c_1  \norm{\nabla\Theta^{n+1/2}}^2+\kappa  \norm{\nabla P^{n+1/2}}^2\big]\\
   & \qquad+2\gamma\big[(P^{n},\Theta^{n})-(P^{n+1},\Theta^{n+1})\big] + a_2\Delta t \bar{\partial}_t\norm{P^{n+1/2}}^2 \nonumber \\
   &=-2\Delta t\big[ (\alpha \nabla \Theta^{n+1/2}+\beta\nabla P^{n+1/2}, \nabla \bar{\partial}_tU^{n+1/2})\big] \nonumber \\
  & \qquad + 2 \Delta t \big[(\phi^{n+1/2},{\Theta}^{n+1/2})+(g^{n+1/2},{P}^{n+1/2})\big].\label{st2}
\end{align}
We also combine the coupling terms on the right-hand sides of \eqref{st1}-\eqref{st2}, and utilize the term $\Theta^{n,1/4}:= \frac{1}{2}\left(\Theta^{n+1/2}+\Theta^{n-1/2} \right) $ twice (analogously for $P^{n,1/4}$). Elementary manipulations lead to the cancellation of some  terms,  and we eventually arrive at
\begin{align}
&4 (\alpha \nabla \Theta^{n,1/4}+\beta\nabla P^{n,1/4}, \nabla (\bar{\partial}_tU^{n+1/2}+ \bar{\partial}_tU^{n-1/2})) \nonumber \\
&\quad   - 2(\alpha \nabla \Theta^{n+1/2}+\beta\nabla P^{n+1/2}, \nabla \bar{\partial}_tU^{n+1/2})\nonumber\\
   &
   =\big[4(\alpha \nabla \Theta^{n,1/4}+\beta\nabla P^{n,1/4}, \nabla \bar{\partial}_tU^{n-1/2})\big]\nonumber \\
   &\quad +\big[ 2(\alpha \nabla \Theta^{n-1/2}+\beta\nabla P^{n-1/2}, \nabla \bar{\partial}_t U^{n+1/2})\big]\nonumber \\
   & :=A^n+B^n
.\label{p2-stab-all}
\end{align}
Then we add \eqref{st1}-\eqref{st2}, utilize \eqref{p2-stab-all} and then sum the resulting equation for $n=1,2,\cdots,m$, {for any $m=1,\cdots,N-1$}, and in turn use \eqref{P1 a_h_properties} and \eqref{t-norm} to arrive at the {\it key inequality}
\begin{align}   & 4\Big[\norm{\bar{\partial}_t U^{m+1/2}}^2+a_0\norm{\nabla \bar{\partial}_t U^{m+1/2}}^2
   +d_0 C_{\rm{Coer}}\norm{U^{m+1/2}}^2_h\Big]+{a_1} \norm{\Theta^{m+1}}^2\nonumber \\
   &\quad +{a_2}\norm{P^{m+1}}^2+2\norm{(\Theta^{m},P^{m})}^2_{H} \nonumber\\
   & \le 4\Big[\norm{\bar{\partial}_t U^{1/2}}^2+a_0\norm{\nabla \bar{\partial}_t U^{1/2}}^2
   +d_0 C_{\rm{Cont}}\norm{U^{1/2}}^2_h\Big]+{a_1} \norm{\Theta^{1}}^2\nonumber \\
   &\quad +{a_2}\norm{P^{1}}^2 +2\gamma\big[(P^{m+1},\Theta^{m+1})-(P^{1},\Theta^{1})\big]
 \nonumber  \\
  &\quad  +\Delta t \sum_{n=1}^{m}\Big[ A^n+ B^n + 4( f^{n,1/4},\bar{\partial}_tU^{n+1/2}+ \bar{\partial}_tU^{n-1/2})+(\phi^{n+1/2},\Theta^{n+1}+\Theta^{n})
  \nonumber \\
  & \quad +(g^{n+1/2},P^{n+1}+P^{n})\Big].\label{eq:key0}
\end{align}

\medskip 
\noindent\textit{Step 2 (Bound for $\Delta t \sum_{n=1}^m A^n$).}
Using the definition  $\Theta^{n,1/4}:= \frac{1}{2}\left(\Theta^{n+1/2}+\Theta^{n-1/2} \right) $ (and an analogous expression for $P^{n,1/4}$) yields
\begin{align}
 \sum_{n=1}^m A^n& = \sum_{n=1}^m 4(\alpha \nabla \Theta^{n,1/4}+\beta\nabla P^{n,1/4}, \nabla \bar{\partial}_tU^{n-1/2})\nonumber\\
 & = 2\alpha \sum_{n=1}^m  ( \nabla \Theta^{n+1/2}+ \nabla \Theta^{n-1/2},\nabla \bar{\partial}_tU^{n-1/2}
 )\nonumber \\
 &\quad +2\beta \sum_{n=1}^m ( \nabla P^{n+1/2}+ \nabla P^{n-1/2}, \nabla \bar{\partial}_tU^{n-1/2}).\label{p2;1/4}
\end{align}
Next we can apply Cauchy--Schwarz inequality and  Young's inequality $(ab \le a^2/2
\epsilon + b^2 \epsilon/2 )$ with $\epsilon = 4/c_1$ to bound the first term on the right-hand side of \eqref{p2;1/4} by 
\begin{align*}
& 2\alpha \sum_{n=1}^m  ( \nabla \Theta^{n+1/2}+ \nabla \Theta^{n-1/2},\nabla \bar{\partial}_tU^{n-1/2}
 )\\
 &\quad  \le \frac{c_1}{2}
 \sum_{n=1}^m \left( \|\nabla \Theta^{n+1/2}\|^2 +\|\nabla \Theta^{n-1/2}\|^2  \right) + \sum_{n=1}^m  \frac{4\alpha^2}{c_1} \|\nabla \bar{\partial}_tU^{n-1/2} \|^2 \nonumber \\
 &\quad  \le c_1 \sum_{n=1}^m \|\nabla \Theta^{n+1/2}\|^2 + \frac{c_1}{2}
 \|\nabla \Theta^{1/2}\|^2 +  \sum_{n=1}^m
 \frac{4\alpha^2}{c_1} \|\nabla \bar{\partial}_tU^{n-1/2} \|^2
\end{align*}
with elementary manipulations and addition of $\frac{4\alpha^2}{c_1}  \|\nabla \Theta^{m+1/2}\|^2$ in the last step. Similar arguments \rev{can be used to bound} the second term on the right-hand side of \eqref{p2;1/4}. A combination of all this  in \eqref{p2;1/4} (after multiplying by $\Delta t$) shows
\begin{align}
  \Delta t \sum_{n=1}^{m} A^n&
  \le  \Delta t \Big({c_1}\sum_{n=1}^{m} \norm{\nabla \Theta^{n+1/2}}^2+{\kappa} \sum_{n=1}^{m}\norm{\nabla P^{n+1/2}}^2 \Big)\nonumber \\
  &\quad +\frac{\Delta t}{2} \Big( c_1 \norm{\nabla \Theta^{1/2}}^2+{\kappa} \norm{\nabla P^{1/2}}^2 \Big)\nonumber\\
  &\quad
  +4\Delta t\Big( \frac{\alpha^2}{c_1}+\frac{\beta^2}{\kappa}\Big)\sum_{n=1}^{m}\norm{\nabla \bar{\partial}_t U^{n-1/2}}^2.  \label{p2;nabla-pm+1-stab}
  \end{align}
 \noindent \textit{{Step 3} (Bound for $\Delta t \sum_{n=1}^{m}B^n$).}
First, we rewrite $\Delta t \sum_{n=1}^{m}B^n $ as 
 \begin{align*}& \Delta t \sum_{n=1}^{m-1} (2\alpha \nabla \Theta^{n-1/2}+2\beta\nabla P^{n-1/2}, \nabla \bar{\partial}_t U^{n+1/2})\\ &\quad + 2\Delta t  (\alpha \nabla \Theta^{m-1/2}+\beta\nabla P^{m-1/2}, \nabla \bar{\partial}_t U^{m+1/2}).  \end{align*}
 Then, it suffices to apply  Cauchy--Schwarz inequality and Young's inequality  with $\epsilon = 2/c_1$ (resp. $\epsilon = 2/\kappa$) to the first (resp. second) term in the summation on the right-hand side above,   to obtain
\begin{align*}
  &   \sum_{n=1}^{m-1} 2(\alpha \nabla \Theta^{n-1/2}, \nabla \bar{\partial}_t U^{n+1/2}) 
    \le \sum_{n=1}^{m-1} \left(\frac{c_1}{2}\| {\nabla}\Theta^{n-1/2}\|^2 + \frac{2 \alpha^2}{c_1} \| \nabla \bar{\partial}_t U^{n+1/2}\|^2  \right) \\
    &\qquad  \le \sum_{n=1}^{m-1} \frac{c_1}{2}\| {\nabla} \Theta^{n+1/2}\|^2 +\frac{c_1}{2}\| {\nabla}\Theta^{1/2}\|^2 + \frac{2 \alpha^2}{c_1} \sum_{n=1}^{m-1}  \| \nabla\bar{\partial}_t U^{n+1/2}\|^2 \\
&\biggl(\text{resp. }     \sum_{n=1}^{m-1} 2(\beta \nabla P^{n-1/2}, \nabla \bar{\partial}_t U^{n+1/2})  
    \le 
     \sum_{n=1}^{m-1} \frac{\kappa}{2}\| 
 {\nabla} P^{n+1/2}\|^2 +\frac{\kappa}{2}\|{\nabla} P^{1/2}\|^2 \\
 &\qquad \qquad \qquad \qquad \qquad \qquad \qquad \qquad \qquad + \frac{2 \beta^2}{\kappa} \sum_{n=1}^{m-1}  \| \nabla\bar{\partial}_t U^{n+1/2}\|^2  \biggr),
\end{align*}
with an addition of a non-negative term $\frac{c_1}{2}\|\nabla \Theta ^{m-1/2}\|^2$ (resp. $\frac{\kappa}{2}\|\nabla P ^{m-1/2}\|^2$) on the right-hand side. An analogous simplification (with $\epsilon= a_0$) in the Young's inequality   leads to 
\begin{align*}
 & 2 \Delta t (\alpha \nabla \Theta^{m-1/2}+\beta\nabla P^{m-1/2}, \nabla \bar{\partial}_t U^{m+1/2})  \\
 &\quad  \le a_0^{-1} (\Delta t)^2\left(\alpha^2 
  \|\nabla \Theta^{m-1/2}\|^2 + \beta^2 \|\nabla P^{m-1/2}\|^2\right) + 2a_0 \| \nabla\bar{\partial}_t U^{m+1/2}\|^2 \\
  & \quad \le a_0^{-1} (\Delta t)^2 \sum_{n=1}^{m-1}\left(\alpha^2 
  \|\nabla \Theta^{n+1/2}\|^2 + \beta^2\|\nabla P^{n+1/2}\|^2\right) + 2a_0 \| \nabla\bar{\partial}_t U^{m+1/2}\|^2,
\end{align*}
where there is an over bound by $a_0^{-1} (\Delta t)^2 \sum_{n=1}^{m-2}\left(\alpha^2 
  \|\nabla \Theta^{n+1/2}\|^2 +{ \beta^2} \|\nabla P^{n+1/2}\|^2\right)$ in the last step. 
  A combination of all this yields
  \begin{align}
 \Delta t \sum_{n=1}^{m}B^n 
 & \le  2{a_0} \norm{\nabla \bar{\partial}_t U^{m+1/2}}^2+ \frac{\Delta t}{2} \left(c_1 \norm{\nabla \Theta^{1/2}}^2+{\kappa} \norm{\nabla P^{1/2}}^2 \right) \nonumber\\
 &\quad +\frac{\Delta t}{2} \sum_{n=1}^{m-1}\Big({c_1}   \norm{\nabla \Theta^{n+1/2}}^2+{\kappa}  \norm{\nabla P^{n+1/2}}^2\Big)\nonumber \\
 &\quad +2 \Delta t\Big( \frac{\alpha^2}{c_1}+\frac{\beta^2}{\kappa}\Big) \sum_{n=1}^{m-1}\norm{\nabla \bar{\partial}_t U^{n+1/2}}^2\nonumber\\
  &\quad 
  + \frac{(\Delta t)^2}{a_0} \sum_{n=1}^{m-1}\Big(\alpha^2\norm{\nabla \Theta^{n+1/2}}^2 +\beta^2\norm{\nabla P^{n+1/2}}^2\Big) .
\label{-T2}
  \end{align}
\textit{{Step 4} (Bounds for load and source terms).}
One more application of Cauchy--Schwarz inequality and Young's inequality  with $\epsilon = 1/2T$, results in the following bound
\begin{align*}
 & 4\Delta t \sum_{n=1}^{m}( f^{n,1/4}, \bar{\partial}_t U^{n+1/2}+\bar{\partial}_t U^{n-1/2}) \\
 & \le  {4T}\Delta t \sum_{n=1}^{m} \norm{f^{n,1/4}}^2+\frac{\Delta t }{T}\sum_{n=1}^{m} \norm{\bar{\partial}_t U^{n+1/2}+\bar{\partial}_t U^{n-1/2}}^2.
\end{align*}
Note that $\Delta t \sum_{n=1}^{m} \norm{f^{n,1/4}}^2 \le m\Delta t  \norm{f}_{L^\infty(0,T;L^2(\Omega))}^2 \le T \norm{f}_{L^\infty(0,T;L^2(\Omega))}^2$.
Moreover, \rev{the following bound} $$\norm{\bar{\partial}_t U^{n+1/2}+\bar{\partial}_t U^{n-1/2}}^2 \le 2 \norm{\bar{\partial}_t U^{n+1/2}+}^2+2\norm{\bar{\partial}_t U^{n-1/2}}^2,$$ shows \rev{that}  
\begin{align*}
&    \frac{\Delta t}{T} \sum_{n=1}^{m} \norm{\bar{\partial}_t U^{n+1/2}+\bar{\partial}_t U^{n-1/2}}^2 \\
& \le  2\frac{\Delta t }{T}\norm{\bar{\partial}_t U^{1/2}}^2+2\frac{\Delta t}{T} \norm{\bar{\partial}_t U^{m+1/2}}^2+ 4\frac{\Delta t}{T} \sum_{n=1}^{m-1} \norm{\bar{\partial}_t U^{n+1/2}}^2.\end{align*}
\rev{Putting together all this} with   $\frac{\Delta t }{T} \le 1$  yields 
\begin{align*}
4\Delta t \sum_{n=1}^{m}( f^{n,1/4}, \bar{\partial}_t U^{n+1/2}+\bar{\partial}_t U^{n-1/2})&\le {4T}^{2}\norm{f}_{L^\infty(0,T;L^2(\Omega))}^2 +{2} \norm{\bar{\partial}_t U^{1/2}}^2\\
&\quad
+ {2}\norm{\bar{\partial}_t U^{m+1/2}}^2+4\frac{\Delta t}{T} \sum_{n=1}^{m} \norm{\bar{\partial}_t U^{n-1/2}}^2.
\end{align*}
 Moreover, the same arguments, with $\epsilon = \frac{1}{2T}(a_1 - |\gamma|/\gamma_0)$ (resp. $\epsilon = \frac{1}{2T}(a_2 - |\gamma|\gamma_0)$) used in Young's inequality, also lead to the following bounds 
\begin{subequations} \label{eq:a}
\begin{align}
&\Delta t\sum_{n=1}^{m}(\phi^{n+1/2},\Theta^{n+1}+\Theta ^n) \le \frac{T^2}{a_1-|\gamma|/\gamma_0}\norm{\phi}^2_{L^\infty(0,T;L^2(\Omega))}+\frac{a_1-|\gamma|/\gamma_0}{2}  \norm{ \Theta^{1}}^2\nonumber\\
&\qquad \qquad \qquad +\frac{a_1-|\gamma|/\gamma_0}{2}  \norm{ \Theta^{m+1}}^2+{(a_1-|\gamma|/\gamma_0)} \rev{\frac{\Delta t}{T}\sum_{n=1}^{m}\norm{ \Theta^{n}}^2,}\\
& \bigg(\text{resp. } \Delta t\sum_{n=1}^{m}(g^{n+1/2},P^{n+1}+P^n)
 \le \frac{T^2}{a_2-|\gamma|\gamma_0}\norm{g}^2_{L^\infty(0,T;L^2(\Omega))}+\frac{a_1-|\gamma|\gamma_0}{2}  \norm{ P^{1}}^2\nonumber\\
 &\qquad \qquad\qquad +\frac{a_1-|\gamma|\gamma_0}{2}  \norm{ P^{m+1}}^2+{(a_1-|\gamma|\gamma_0)}\frac{\Delta t}{T}\sum_{n=1}^{m}\norm{ P^{n}}^2\rev{\bigg).} \label{P2;Pm+1}
\end{align}
\end{subequations}
\textit{{Step 5} (bound for $2\gamma(P^{m+1},\Theta^{m+1}) -2\gamma(P^{1},\Theta^{1})$).} A triangle inequality plus Cauchy--Schwarz and Young's inequalities with $\epsilon=1/\gamma_0$ lead to 
\begin{align*}& 2\gamma(P^{m+1},\Theta^{m+1}) -2\gamma(P^{1},\Theta^{1})\\
& \qquad \le{|\gamma|\gamma_0}\norm{P^{m+1}}^2+{|\gamma|}/{\gamma_0}\norm{\Theta^{m+1}}^2+{|\gamma|}{\gamma_0} \norm{P^{1}}^2+{|\gamma|}/{\gamma_0}\norm{\Theta^{1}}^2.\end{align*}
\textit{{Step 6} (Consolidation).}
A combination  of \eqref{p2;nabla-pm+1-stab}-\eqref{eq:a} and \eqref{eq:key0} together with elementary manipulations (adding the non-negative term $2 \Delta t \Big( \frac{\alpha^2}{c_1}+\frac{\beta^2}{\kappa}\Big) \norm{\nabla \bar{\partial}_t U^{1/2}}^2 $ on the right-hand side), yields the bound: 
\begin{align}   &2\norm{\bar{\partial}_t U^{m+1/2}}^2+2a_0\norm{\nabla \bar{\partial}_t U^{m+1/2}}^2
   +4d_0 C_{\rm{Coer}}\norm{U^{m+1/2}}^2_h+2\norm{(\Theta^{m},P^{m})}^2_{H} \nonumber\\
   &\quad +\frac{1}{2}({a_1-|\gamma|/\gamma_0}) \norm{\Theta^{m+1}}^2+\frac{1}{2}({a_2-|\gamma|\gamma_0})\norm{P^{m+1}}^2  - (\Delta t)    {c_1}\norm{\nabla \Theta^{m+1/2}}^2 \nonumber\\
& \quad - (\Delta t)   {\kappa} \norm{\nabla P^{m+1/2}}^2  -\frac{3\Delta t}{2} \sum_{n=1}^{m-1}\Big({c_1}   \norm{\nabla \Theta^{n+1/2}}^2+{\kappa}  \norm{\nabla P^{n+1/2}}^2\Big)\nonumber\\
   &  \le  ((U^0,U^1,f))+  ((\Theta^0,\Theta^1,\phi))+  ((P^0,P^1,g)) \nonumber \\ & \quad   +2\frac{\Delta t}{T} \Big(\sum_{n=1}^{m} 2\norm{\bar{\partial}_t U^{n-1/2}}^2+\frac{1}{2}{(a_1-\frac{|\gamma|}{\gamma_0})} \sum_{n=1}^{m}\norm{ \Theta^{n}}^2  +\frac{1}{2}{(a_2-|\gamma|\gamma_0)} \sum_{n=1}^{m}\norm{ P^{n}}^2 \Big) \nonumber \\
  & \quad+ 6 \Delta t \Big( \frac{\alpha^2}{c_1}+\frac{\beta^2}{\kappa}\Big)\sum_{n=0}^{m-1}\norm{\nabla \bar{\partial}_t U^{n+1/2}}^2 \nonumber \\
 & \quad 
    +\frac{(\Delta t)^2}{a_0} \sum_{n=1}^{m-1}\Big(\alpha^2\norm{\nabla \Theta^{n+1/2}}^2 +\beta^2\norm{\nabla P^{n+1/2}}^2\Big). \label{consol}
\end{align}
Utilize the  definition \eqref{t-norm} to obtain
\begin{align*}
    &2\norm{(\Theta^{m},P^{m})}^2_{H} - \Delta t \Big( {c_1}  \norm{\nabla \Theta^{m+1/2}}^2+ {\kappa}  \norm{\nabla P^{m+1/2}}^2\Big)\\
    &\quad -\frac{3\Delta t}{2} \sum_{n=1}^{m-1}\Big({c_1}   \norm{\nabla \Theta^{n+1/2}}^2+{\kappa}  \norm{\nabla P^{n+1/2}}^2\Big)\\
    &=\frac{\Delta t}{2}\Big[{c_1} \norm{\nabla\Theta^{m+1/2}}^2+{\kappa} \norm{\nabla P^{m+1/2}}^2\\
    &\quad + \sum_{n=1}^m \big(4b_1\norm{\Theta^{n+1/2}}^2+c_1\norm{\nabla\Theta^{n+1/2}}^2+\kappa\norm{\nabla P^{n+1/2}}^2\big)\Big]
     \\
    & \ge \frac{\Delta t}{2}\Big({c_1} \norm{\nabla\Theta^{m+1/2}}^2+{\kappa} \norm{\nabla P^{m+1/2}}^2 \Big) + \frac{1}{2} \norm{(\Theta^{m},P^{m})}^2_{H},
\end{align*}
and the elementary manipulations 
$$6 \Delta t \Big( \frac{\alpha^2}{c_1}+\frac{\beta^2}{\kappa}\Big)\sum_{n=0}^{m-1}\norm{\nabla \bar{\partial}_t U^{n+1/2}}^2 = \frac{\Delta t}{T} \Big(\frac{3T\alpha^2}{a_0 c_1} +
\frac{3T\beta^2}{a_0\kappa}\Big)
\sum_{n=0}^{m-1} 2 a_0\norm{\nabla \bar{\partial}_t U^{n+1/2}}^2,
     $$
     \rev{and} 
\begin{align*}&\frac{(\Delta t)^2}{a_0} \sum_{n=1}^{m-1}\Big(\alpha^2\norm{\nabla \Theta^{n+1/2}}^2 +\beta^2\norm{\nabla P^{n+1/2}}^2\Big) \\
&\qquad \le \frac{\Delta t}{T} \Big(\frac{2T\alpha^2}{a_0 c_1} +
\frac{2T\beta^2}{a_0\kappa}\Big)
\frac{\Delta t}{2}\sum_{n=1}^{m-1}   \Big(c_1\norm{\nabla \Theta^{n+1/2}}^2 +\kappa\norm{\nabla P^{n+1/2}}^2\Big),\end{align*}
in \eqref{consol} to show that 
\begin{align*}
&2\norm{\bar{\partial}_t U^{m+1/2}}^2+2a_0\norm{\nabla \bar{\partial}_t U^{m+1/2}}^2
   +\frac{1}{2}({a_1-|\gamma|/\gamma_0}) \norm{\Theta^{m+1}}^2\\
   &\quad +\frac{1}{2}({a_2-|\gamma|\gamma_0})\norm{P^{m+1}}^2 +\frac{\Delta t}{2}\Big({c_1} \norm{\nabla\Theta^{m+1/2}}^2+{\kappa} \norm{\nabla P^{m+1/2}}^2\Big)\\
   &\quad + \frac{1}{2} \norm{(\Theta^{m},P^{m})}^2_{H}
     +4d_0 C_{\rm{Coer}}\norm{U^{m+1/2}}^2_h \nonumber \\
    & \le   ((U^0,U^1,f))+  ((\Theta^0,\Theta^1,\phi))+  ((P^0,P^1,g)) 
    \\
&\quad     +
    C\frac{\Delta t}{T} \sum_{n=0}^{m-1}\Big[2\norm{\bar{\partial}_t U^{n+1/2}}^2
  +2a_0\norm{\bar{\partial}_t \nabla U^{n+1/2}}^2 +\frac{1}{2}{(a_1-|\gamma|/\gamma_0)}\norm{ \Theta^{n+1}}^2\\
  &\qquad 
   +\frac{1}{2}{(a_2-|\gamma|\gamma_0)}\norm{ P^{n+1}}^2+\frac{\Delta t}{2}  \Big(c_1\norm{\nabla \Theta^{n+1/2}}^2 +\kappa\norm{\nabla P^{n+1/2}}^2\Big) \Big],
\end{align*}
where $C=\max\{2, \frac{3T\alpha^2}{a_0c_1}+ \frac{3T\beta^2}{a_0\kappa}\}$. 
Then we invoke Lemma~\ref{P1 d-gronwall} and \eqref{t-norm} to arrive at
\begin{align*}
&2\norm{\bar{\partial}_t U^{m+1/2}}^2+2a_0\norm{\nabla \bar{\partial}_t U^{m+1/2}}^2
  +\frac{1}{2}(a_1-\!\frac{|\gamma|}{\gamma_0}) \norm{\Theta^{m+1}}^2+\frac{1}{2}({a_2\!-\!|\gamma|\gamma_0})\norm{P^{m+1}}^2\quad& \nonumber\\
    &\quad\! +\frac{\Delta t}{2}\Big({c_1} \norm{\nabla\Theta^{m+1/2}}^2+{\kappa} \norm{\nabla P^{m+1/2}}^2\Big)\! + \frac{1}{2} \|(\Theta^m, P^m) \|^2_H \! +\! 4d_0C_{\rm{Coer}}\norm{U^{m+1/2}}^2_h 
    \nonumber \\
     & \le e^{m\frac{C\Delta t}{T}} \Big(((U^0,U^1,f))+  ((\Theta^0,\Theta^1,\phi))+  ((P^0,P^1,g))\Big)\\
    & \le  e^{C} \Big(((U^0,U^1,f))+  ((\Theta^0,\Theta^1,\phi))+  ((P^0,P^1,g))\Big),
\end{align*}
with  ${m\Delta t}\le T$ in the last step. Then, we 
 ignore the non-negative term $\frac12\Delta t[c_1 \norm{\nabla\Theta^{m+1/2}}^2+{\kappa} \norm{\nabla P^{m+1/2}}^2]$ on the left-hand side \rev{to conclude}.
\end{proof}
\begin{rem}
    For the existence of \rev{the} unique solution, it suffices to show that \( (0,0,0) \) is the only solution of the fully discrete problem \eqref{U0,THETA0,P0}-\eqref{fully-discret-scheme} with homogeneous initial conditions and load/source functions. From \eqref{U0,THETA0,P0}, it is evident that if \( u^0=\theta^0=p^0 = 0 \), then \( U^0=\Theta^0=P^0 = 0 \), which, together with \( u^{*0} = 0 \), leads to \( U^1=\Theta^1=P^1 = 0 \) from \eqref{3.10}. Then, we can utilize  $U^0=\Theta^0=P^0=U^1=\Theta^1=P^1= 0$ in Theorem~\ref{p2;stability-thm} to show that $U^{m+1}=\Theta^{m+1}=P^{m+1} = 0$ for all $1\le m \le N-1.$
\end{rem}
\section{Error analysis}\label{p2;fully-error-sec}
This section establishes the error estimates for the fully discrete scheme presented in the previous section. In Subsection~\ref{subsec-initial}, we prove the error estimates at the initial time steps $t_0$ and $t_1$ for the scheme \eqref{U0,THETA0,P0}-\eqref{3.10}. The subsequent subsection provides error estimates for the scheme \eqref{fully-discret-scheme} in different norms. 
Let us consider the following  decomposition of errors at time $t_n$, for $n=1,\ldots, N$ 
\begin{subequations}\label{spilit}
\begin{align} 
    &
    u(t_n)-U^n=\big(u(t_n)-\mathcal{R}_h u(t_n)\big)+\big(\mathcal{R}_hu(t_n)-U^n \big):=\rho^n+\zeta^n,\label{p2;splitting1}\\
    &
    \theta(t_n)-\Theta^n=\big(\theta(t_n)-\Pi_h\theta(t_n)\big)+\big(\Pi_h\theta(t_n)-\Theta^n\big):=\eta^n+\Psi^n,\label{p2;splitting2}\\
    &
     p(t_n)-P^n=\big(p(t_n)-\Pi_hp(t_n)\big)+\big(\Pi_hp(t_n)-P^n\big):=\varrho^n+\xi^n,\label{p2;splitting3}
\end{align}
\end{subequations}
where $\mathcal{R}_h$ and $\Pi_h$ are the projections defined in \eqref{P1 ritz_projection} and \eqref{p2;ritzprojection2}, respectively.
\subsection{Initial error bounds}\label{subsec-initial}
Since our discrete formulation is split into two parts,  solutions at $t_0$ and $t_1$ are determined using \eqref{U0,THETA0,P0}-\eqref{3.10}, whereas the solutions at $t_2, t_3, \dots, t_n$ are computed using \eqref{fully-discret-scheme}---it is thus necessary to estimate the initial error at time levels $t_0$ and $t_1$ before we proceed to derive the error estimates. To do so, we take the average of the equations in system \eqref{weak form} at $t_0$ and $t_1$ as
\begin{align*}
    (u_{tt}^{1/2},v)+a_0 (\nabla u_{tt}^{1/2},\nabla v)+d_0(\nabla ^2 u^{1/2}, \nabla^2 v) -\alpha (\nabla \theta^{1/2}, \nabla v)&\\-\beta (\nabla p^{1/2}, \nabla v)&=(f^{1/2},v), \\
    a_1(\theta_t^{1/2}, \psi)-\gamma (p_t^{1/2}, \psi)+b_1(\theta^{1/2},\psi) +c_1 (\nabla \theta^{1/2},\nabla \psi)&\\ +\alpha (\nabla u_t^{1/2}, \nabla \psi)&=(\phi^{1/2},\psi), \\
    a_2 (p_t^{1/2},q)-\gamma (\theta_t^{1/2},q)+\kappa (\nabla p^{1/2},\nabla q)&\\+\beta (\nabla u_t^{1/2},\nabla q)&=(g^{1/2},q),
    \end{align*}
   for all $v \in H^2_0(\Omega) $ and both $\psi,q \in H^1_0(\Omega).$
      Let us observe that for the smoother $Q$ defined in Section~\ref{sec:semi} there holds $\text{Range}\,(Q) \subset H^2_0(\Omega)$, and,  readily from the definitions, we have that $W_h \subset  H^1_0(\Omega)$. Then, for any $v_h \in V_h$  and $\psi_h,q_h \in W_h$, we can choose  $ Q v_h\in H^2_0(\Omega)$ and $\psi_h,q_h \in W_h \subset  H^1_0(\Omega)$ as test functions in the last system of equations and employ the definitions of the projections $\mathcal{R}_h $ and $\Pi_h$ from  \eqref{P1 ritz_projection} and  \eqref{p2;ritzprojection2}, respectively to arrive at 
\begin{subequations}\label{cont-int}
\begin{align}
    (u_{tt}^{1/2},Qv_h)+a_0(\nabla u_{tt}^{1/2},\nabla Qv_h)+d_0a_h(\mathcal{R}_h u^{1/2}, v_h)&\label{int-u}\\
  - \alpha(\nabla  \theta^{1/2}, \nabla Qv_h) +\beta (\nabla  p^{1/2}, \nabla Qv_h) & =(f^{1/2} ,Qv_h),\nonumber \\
    a_1(\theta_t^{1/2}, \psi_h)-\gamma ({p_t^{1/2}}, \psi_h)+b_1(\theta^{1/2},\psi_h) +c_1 (\nabla \Pi_h\theta^{1/2},\nabla \psi_h) &\label{int-t} \\
    +\alpha (\nabla  u_t^{1/2}, \nabla \psi_h)&=(\phi^{1/2},\psi_h ),\nonumber \\
    a_2 (p_t^{1/2},q_h)-\gamma ( \theta_t^{1/2},q_h)+\kappa (\nabla \Pi_hp^{1/2},\nabla q_h)+\beta (\nabla  u_t^{1/2},\nabla q_h)&=(g^{1/2},q_h)\label{int-p}.
    \end{align}
    \end{subequations}
Since all the terms $u_{tt}^{1/2}, \theta^{1/2} , p^{1/2}, \text{ and }(Q-I)v_h $  belong to  $ H^1_0(\Omega)$, an integration by parts leads to 
    \begin{align}
  \nonumber &  (a_0\nabla u_{tt}^{1/2}-\beta \nabla p^{1/2}-\alpha \nabla \theta^{1/2},\nabla (Q-I)v_h)\\
  &\quad =(-a_0\Delta  u_{tt}^{1/2}+\beta\Delta  p^{1/2}+\alpha \Delta \theta^{1/2},(Q-I)v_h). \label{p2;IBP}
    \end{align}
Next we recall  $F:= F(t,\bx)=f(t,\bx)-u_{tt}+a_0 \Delta u_{tt}-\alpha \Delta \theta-\beta \Delta p $ from \eqref{F}. This and 
    { \eqref{p2;IBP} in \eqref{int-u} with some basic manipulations yields}
\begin{align}
    &2 (\Delta t)^{-1}(\bar{\partial}_tu^{1/2},v_h)+2a_0  (\Delta t)^{-1}(\nabla \bar{\partial}_tu^{1/2},\nabla v_h)+d_0a_h(\mathcal{R}_h u^{1/2},  v_h)\\
    &\qquad-\alpha (\nabla \theta^{1/2}, \nabla v_h)-\beta (\nabla p^{1/2}, \nabla v_h)\nonumber\\ 
    &
   \;  =(F^{1/2},(Q-I)v_h)+2 (\Delta t)^{-1}(\bar{\partial}_tu^{1/2},v_h)
    -(u_{tt}^{1/2},v_h)\\
      &\qquad+2a_0 (\Delta t)^{-1}(\nabla \bar{\partial}_tu^{1/2},\nabla v_h)-a_0(\nabla u_{tt}^{1/2},\nabla v_h).\label{int-u0}
    \end{align}
   Let us now define the initial truncation terms $R^0, r_0, \tau_0$, and $s_0$ as follows:
   \begin{subequations}\label{tranc-terms}
   \begin{align}
  &R^0:={2}(\Delta t)^{-1}(\bar{\partial}_t u^{1/2}-u^{*0}) -u_{tt}^{1/2},\quad  r_0:=\bar{\partial}_tu^{1/2}-u_{t}^{1/2},\\
  &   \tau_0:=\bar{\partial}_t \theta^{1/2}-\theta_{t}^{1/2}, \quad s_0:=\bar{\partial}_t p^{1/2}-p_{t}^{1/2}.
    \end{align}
    \end{subequations}
Utilizing these definitions and subtracting the equation  \eqref{int_u0} form \eqref{int-u0}, \eqref{int_theta0} from \eqref{int-t}, and \eqref{int_p0} from \eqref{int-p}, 
we can obtain the following system   
    \begin{align*}
    &
    2 (\Delta t)^{-1}(\bar{\partial}_t(u^{1/2}-U^{1/2}),v_h)+2 (\Delta t)^{-1}a_0 (\nabla \bar{\partial}_t(u^{1/2}-U^{1/2}),\nabla v_h)\\
    &\quad  +d_0 a_h(\mathcal{R}_hu^{1/2}-U^{1/2}, v_h)
     -\alpha (\nabla (\theta^{1/2}-\Theta^{1/2}), \nabla v_h) -\beta (\nabla (p^{1/2}-P^{1/2}), \nabla v_h) \\
    &  
     =(F^{1/2},(Q-I)v_h)+(R_0,v_h)+a_0(\nabla R_0,\nabla v_h) ,\nonumber\\
   &a_1(\bar{\partial}_t(\theta^{1/2}-{\Theta}^{1/2}), \psi_h)-\gamma (\bar{\partial}_t(p^{1/2}-{P^{1/2}}), \psi_h)+b_1(\theta^{1/2}-\Theta^{1/2},\psi_h) \\
    & \quad +c_1 (\nabla (\Pi_h\theta^{1/2}-\Theta^{1/2}),\nabla \psi_h)
   +\alpha (\nabla \bar{\partial}_t(u^{1/2}-U^{1/2}) ,\nabla \psi_h)\\
    &  
    =a_1(\tau_0, \psi_h)-\gamma (s_0, \psi_h)+\alpha (\nabla r_0, \nabla \psi_h),\\
   &a_2 (\bar{\partial}_t(p^{1/2}-P^{1/2}),q_h)-\gamma (\bar{\partial}_t(\theta^{1/2}-{\Theta}^{1/2}),q_h) +\kappa (\nabla (\Pi_h p^{1/2}-P^{1/2}),\nabla q_h)\\
    &  
  \quad+\beta (\nabla \bar{\partial}_t (u^{1/2}-U^{1/2}),\nabla q_h)=a_2 (s_0,q_h)-\gamma(\tau_0,p_h)+\beta (\nabla r_0,\nabla q_h).
    \end{align*}
In turn, {for $(v_h,\psi_h,q_h)\in V_h\times W_h \times W_h$},  the error decomposition  described in \eqref{spilit} leads to
\begin{subequations}
    \begin{align}
         &2 (\Delta t)^{-1}(\bar{\partial}_t\zeta^{1/2},v_h)+2 (\Delta t)^{-1}a_0 (\nabla \bar{\partial}_t\zeta^{1/2},\nabla v_h)\nonumber\\
    &\quad  +d_0 a_h(\zeta^{1/2}, v_h) -\alpha (\nabla \Psi^{1/2}, \nabla v_h) -\beta (\nabla \xi^{1/2}, \nabla v_h)  \nonumber\\
    & 
    = -2 (\Delta t)^{-1}(\bar{\partial}_t\rho^{1/2},v_h)-2 (\Delta t)^{-1}a_0 (\nabla \bar{\partial}_t\rho^{1/2},\nabla v_h)+\alpha (\nabla \eta^{1/2}, \nabla v_h)\nonumber\\
    &
    \quad+\beta (\nabla \varrho^{1/2}, \nabla v_h) 
    +(F^{1/2},(Q-I)v_h)+(R_0,v_h)+a_0(\nabla R_0,\nabla v_h), \label{zeta1/2}\\
    & 
    a_1(\bar{\partial}_t \Psi^{1/2}, \psi_h)-\gamma (\bar{\partial}_t \xi^{1/2}, \psi_h)+b_1(\Psi^{1/2},\psi_h) \nonumber\\
   & \quad+c_1 (\nabla \Psi^{1/2},\nabla \psi_h)+\alpha (\nabla \bar{\partial}_t\zeta^{1/2} ,\nabla \psi_h)\nonumber\\
   &
    =-a_1(\bar{\partial}_t \eta^{1/2}, \psi_h)+\gamma (\bar{\partial}_t \varrho^{1/2}, \psi_h)-b_1(\eta^{1/2},\psi_h)  -\alpha (\nabla \bar{\partial}_t\rho^{1/2} ,\nabla \psi_h)\nonumber\\
    &\quad
    +a_1(\tau_0, \psi_h)-\gamma (s_0, \psi_h)+\alpha (\nabla r_0, \nabla \psi_h),\label{psi1/2}\\
   &
    a_2 (\bar{\partial}_t\xi^{1/2},q_h)-\gamma (\bar{\partial}_t \Psi^{1/2},q_h)+\kappa (\nabla \xi^{1/2},\nabla q_h)+\beta (\nabla \bar{\partial}_t \zeta^{1/2},\nabla q_h)\nonumber\\
    &
    =-a_2 (\bar{\partial}_t\varrho^{1/2},q_h)
 +\gamma (\bar{\partial}_t \eta^{1/2},q_h)-\beta (\nabla \bar{\partial}_t \rho^{1/2},\nabla q_h)
   \nonumber\\
    &\quad +a_2 ( s_0,q_h)-\gamma( \tau_0,q_h)+\beta (\nabla r_0, \nabla q_h).\label{xi1/2}
    \end{align}\end{subequations}
    

\noindent Note that   $\norm{\bar{\partial}_t\rho^{1/2}}=(1/ \Delta t)\norm{\int_{0}^{t_1}\rho_t(t)\ds}$ and $\norm{\nabla \bar{\partial}_t\rho^{1/2}}=(1/ \Delta t)\norm{\int_{0}^{t_1}\nabla \rho_t(t)\ds}$. Then, definition 
    \eqref{p2;splitting1}  and the approximation property in \eqref{P1 norm_ritz}  yield the bounds 
{ \begin{subequations}\begin{align}
\norm{\bar{\partial}_t\rho^{1/2}} +\norm{\nabla \bar{\partial}_t\rho^{1/2}} \le C_2 h^{2\sigma} \norm{ u_t}_{L^\infty (0,t_1;H^{2+\sigma}(\Omega))},
\label{rho-half}\\
\sqrt{\Delta t }\big(\norm{\bar{\partial}_t\rho^{1/2}} + 
 \norm{\nabla \bar{\partial}_t\rho^{1/2}} \big)
\le C_2h^{2\sigma} \norm{ u_t}_{L^2 (0,t_1;H^{2+\sigma}(\Omega))}.
\label{rho-half-l2}
\end{align}\end{subequations}}
Further, the definitions \eqref{p2;splitting2}–\eqref{p2;splitting3} and the approximation property in \eqref{P1 norm_ritz2} lead to
\begin{subequations}\label{eta-half}
\begin{align}
    \norm{ \eta^{1}-\eta^{0}}+\norm{\eta^{1/2}} +h^\sigma \norm{\nabla\eta^{1/2}} &\le 3C_3h^{2\sigma}\norm{\theta}_{L^\infty( 0,t_1; H^{1+\sigma}(\Omega))},\\
    \text{ and } \qquad \norm{ \varrho^{1}-\varrho^{0}}+\norm{ \varrho^{1/2}}+h^\sigma \norm{\nabla \varrho^{1/2}} &\le 3C_3h^{2\sigma}\norm{p}_{L^\infty(0,t_1;H^{1+\sigma}(\Omega))}.
\end{align}
\end{subequations}
      The following lemma, whose proof involves Taylor series expansion and the Cauchy--Schwarz inequality, provides truncation error estimates that will be utilized later in this section.
 \begin{lem}[Truncation error bounds\cite{MR3003381,deka}]\label{p2;trancation-lemma}
For $\varphi \in H^4(0,T;L^2(\Omega))$ , the following inequalities hold
\begin{subequations}
 \begin{align}
(a) & \quad \norm{2\Delta t^{-1}(\bar{\partial}_t \varphi^{1/2}-\varphi_t(0))-\varphi_{tt}^{1/2}} \le  \Delta t\norm{\varphi_{ttt}}_{L^\infty(0,t_1;L^2(\Omega))}, \label{p2;R^0}\\
(b)& \quad 
\norm{\bar{\partial}_t \varphi^{n+1/2}- \varphi_t^{n+1/2}}\lesssim (\Delta t)^{3/2} \norm{\varphi_{ttt}}_{L^2(t_{n},t_{n+1};L^2(\Omega))} \\
&\qquad\qquad\qquad\qquad\qquad\qquad \text{ for }n=0,1,\ldots, N-1 ,\nonumber\\
(c) & \quad \sum_{n=1}^{N-1} 
\norm{\bar{\partial}^2_t \varphi^n -\varphi_{tt}^{n,1/4} }^2 \lesssim (\Delta t)^{3} \norm{\varphi_{tttt}}^2_{L^2(0,T;L^2(\Omega))}.\label{p2;Rn}
 \end{align}\end{subequations}
 \end{lem}
     \noindent Next, we present the initial error estimates for \eqref{3.10}, which will be used to prove the  next theorem. Before proceeding we define the following quantities, all bounded thanks to  Theorems~\ref{weak-sol-thm} and  \ref{thm;regularity}
    \begin{align*}  L_{(u,\theta,p,t_1)}&:= \norm{ u_t}^2_{L^2 (0,t_1;H^{2+\sigma}(\Omega))}+\norm{u_t}^2_{L^\infty(0,t_1;H^{2+\sigma}(\Omega))}+\norm{\theta}_{L^\infty( 0,t_1; H^{1+\sigma}(\Omega))}^2\\
    &\qquad +\norm{p}^2_{L^\infty(0,t_1;H^{1+\sigma}(\Omega))}, \\  M_{(u,\theta,p,t_1)}&:=\norm{u_{ttt}}_{L^\infty(0,t_1;H^1(\Omega))}^2+\norm{\theta_{ttt}}^2_{L^2(0,t_1; L^2(\Omega))}+\norm{p_{ttt}}^2_{L^2(0,t_1; L^2(\Omega))}.\end{align*}
\begin{lem}[Initial error bounds]\label{initial-error-lemm}
 Under the regularity assumptions on given data as stated in Theorems~\ref{existence;thm}-\ref{thm;regularity}, the following estimates are satisfied:
\begin{align*}
&\norm{\bar{\partial}_t\zeta^{1/2}}^2+  a_0\norm{\nabla \bar{\partial}_t\zeta^{1/2}}^2+d_0C_{\rm{Coer}}\norm{\zeta^{1/2}}_h^2+(a_1-|\gamma|/\gamma_0)\norm{\Psi^{1/2}}^2 \\&
      \quad+(a_2-|\gamma|\gamma_0)\norm{\xi^{1/2}}^2
      +\Delta t \big[b_1\norm{\Psi^{1/2}}^2+ c_1 \norm{\nabla \Psi^{1/2}}^2
       + \kappa \norm{\nabla \xi^{1/2}}^2\big]\\
       &
        \lesssim h^{4\sigma} +L_{(u,\theta,p,t_1)} h^{4\sigma}+ M_{(u,\theta,p,t_1)} (\Delta t)^4,
\end{align*}
where the absorbed constant in "$\lesssim$" depends on $C_F$, $C_{\rm{Coer}}$, $C_{\rm{Cont}}$, $C_1$,$C_2$, $C_3$,  $T$, and the model coefficients.
\end{lem}
\begin{proof}
The proof follows in five steps below. \\
\textit{{Step 1} (Key inequality).}
       From the choice of test function  \( v_h = \zeta^{1/2} \in V_h \) in \eqref{zeta1/2} and  the identity  $\zeta^{1/2}=\frac{\zeta^1+\zeta^0}{2}=\frac{
    \zeta^1-\zeta^0}{2}=\frac{\Delta t}{2} \bar{\partial}_t \zeta^{1/2}$ that follows from $\zeta^0=0$ (see \eqref{U0,THETA0,P0}), we obtain 
    \begin{align}
&\norm{\bar{\partial}_t\zeta^{1/2}}^2+a_0  \norm{\nabla \bar{\partial}_t\zeta^{1/2}}^2+d_0a_h(\zeta^{1/2},\zeta^{1/2})- (\nabla(\alpha  \Psi^{1/2}+\beta \xi^{1/2}), \nabla \zeta^{1/2}) \nonumber\\
     &
    \qquad\; = -(\bar{\partial}_t\rho^{1/2},\bar{\partial}_t\zeta^{1/2})-a_0 (\nabla \bar{\partial}_t\rho^{1/2},\nabla \bar{\partial}_t\zeta^{1/2})+ (\nabla (\alpha \eta^{1/2}+\beta \varrho^{1/2}), \nabla \zeta^{1/2}) \nonumber\\
   &
   \qquad\quad+(F^{1/2},(Q-I)\zeta^{1/2})+\frac{1}{2}\Delta t (R_0,\bar{\partial}_t\zeta^{1/2})+\frac{a_0}{2}\Delta t(\nabla R_0,\nabla \bar{\partial}_t\zeta^{1/2}). \label{eq:1}
   \end{align}
    We can then proceed to multiply \eqref{psi1/2}  by $\Delta t/2$ and  choose \( \psi_h =  \Psi^{1/2} \in W_h \) as test function,  and utilize $$\zeta^{1/2}=\frac{\Delta t}{2} \bar{\partial}_t \zeta^{1/2}, \quad \Psi^{1/2}=\frac{\Delta t}{2} \bar{\partial}_t \Psi^{1/2}, \quad \xi^{1/2}=\frac{\Delta t}{2} \bar{\partial}_t \xi^{1/2},$$ (from \eqref{U0,THETA0,P0}) to get 
    \begin{align}
    &a_1\norm{\Psi^{1/2}}^2-\gamma ( \xi^{1/2},  \Psi^{1/2})+\frac{b_1}{2} \Delta t\norm{\Psi^{1/2}}^2+ \frac{c_1}{2}\Delta t \norm{\nabla \Psi^{1/2}}^2+\alpha (\nabla \zeta^{1/2} ,\nabla \Psi^{1/2})\nonumber\\
   &=-\frac{a_1}{2}( \eta^{1}-\eta^0, \Psi^{1/2})+\frac{\gamma}{2} (\varrho^{1}-\varrho^{0}, \Psi^{1/2})-\frac{b_1}{2} \Delta t (\eta^{1/2}, \Psi^{1/2}) \nonumber\\
   &\quad-\frac{\alpha}{2}\Delta t (\nabla \bar{\partial}_t\rho^{1/2} ,\nabla
    \Psi^{1/2})+\frac{a_1}{2}\Delta t(\tau_0, \Psi^{1/2}) \nonumber\\
   &\quad-\frac{\gamma}{2} \Delta t(s_0,  \Psi^{1/2})+\frac{\alpha}{2} \Delta t (\nabla r_0, \nabla  \Psi^{1/2}). \label{eq:2}
     \end{align}
    Similarly, we multiply \eqref{xi1/2}  by $\Delta t/2$,  choose  \( { q_h} =  \xi^{1/2} \in W_h \), { and use $\Psi^{1/2}=\frac{\Delta t}{2} \bar{\partial}_t \Psi^{1/2}, \; \xi^{1/2}= \frac{\Delta t}{2} \bar{\partial}_t \xi^{1/2}$} to arrive at 
     \begin{align}
   & a_2 \norm{\xi^{1/2}}^2-\gamma ( \Psi^{1/2},\xi^{1/2})+\frac{\kappa}{2}\Delta t \norm{\nabla \xi^{1/2}}^2 +\beta (\nabla  \zeta^{1/2},\nabla \xi^{1/2})\nonumber\\
   &=-\frac{a_2}{2} (\varrho^{1}-\varrho^{0},\xi^{1/2})+\frac{\gamma}{2}  ( \eta^{1}-\eta^{0},\xi^{1/2})-\frac{\beta}{2} \Delta t (\nabla \bar{\partial}_t \rho^{1/2},\nabla \xi^{1/2})\nonumber\\
   &
    \quad+ \frac{\Delta t}{2} ( a_2s_0-\gamma\tau_0,\xi^{1/2})+\frac{\beta}{2} \Delta t  (\nabla  r_0, \nabla \xi^{1/2}).\label{eq:3}
    \end{align}
   A summation of  \eqref{eq:1}--\eqref{eq:3} leads to the cancellation of the term $(\nabla(\alpha  \Psi^{1/2}+\beta \xi^{1/2}), \nabla \zeta^{1/2})$. This, the coercivity of \( a_h(\bullet, \bullet) \) from \eqref{P1 a_h_properties}, and an appropriate regrouping of the terms lead to
    \begin{align}
    &\norm{\bar{\partial}_t\zeta^{1/2}}^2+a_0  \norm{\nabla \bar{\partial}_t\zeta^{1/2}}^2+d_0C_{\rm Coer} \|\zeta^{1/2}\|^2_h+{a_1}\norm{\Psi^{1/2}}^2  +{a_2}\norm{\xi^{1/2}}^2 
    \nonumber\\
    &\quad  
   +\frac{\Delta t}{2} \big[ b_1\norm{\Psi^{1/2}}^2+ {c_1} \norm{\nabla \Psi^{1/2}}^2+\kappa \norm{\nabla \xi^{1/2}}^2\big] \nonumber\\
   &\le  (F^{1/2},(Q-I)\zeta^{1/2}) +(\nabla (\alpha \eta^{1/2}+\beta \varrho^{1/2}), \nabla \zeta^{1/2})
    \nonumber\\
   &\quad
    +\big[-(\bar{\partial}_t\rho^{1/2},\bar{\partial}_t\zeta^{1/2}) -a_0 (\nabla (\bar{\partial}_t\rho^{1/2},\nabla \bar{\partial}_t\zeta^{1/2})\nonumber \\
   &\qquad\qquad 
   +\frac{1}{2}\Delta t (R_0,\bar{\partial}_t\zeta^{1/2})+\frac{1}{2}\Delta t (R_0,\nabla \bar{\partial}_t\zeta^{1/2})\big]\nonumber\\
   &\quad
   +\frac{\Delta t}{2}\big[ -\alpha (\nabla \bar{\partial}_t\rho^{1/2},\nabla \Psi^{1/2}) -{\beta} (\nabla \bar{\partial}_t \rho^{1/2},\nabla \xi^{1/2})\nonumber\\
   &\qquad\qquad +{\alpha}(\nabla r_0, \nabla  \Psi^{1/2})+{\beta} (\nabla  r_0, \nabla \xi^{1/2}) \big]
  \nonumber\\
    & \quad
    +\frac{1}{2}\big[-{a_1}( \eta^{1}-\eta^0,  \Psi^{1/2})+{a_1}\Delta t(\tau_0, \Psi^{1/2})+{\gamma}( \varrho^{1}-\varrho^{0}, \Psi^{1/2}) \nonumber\\
   &\qquad\qquad  -{\gamma}\Delta t(s_0,  \Psi^{1/2})-{b_1} \Delta t (\eta^{1/2}, \Psi^{1/2}) \big]\nonumber\\
&\quad
+\frac{1}{2}\big[ -{a_2}(\varrho^{1}-\varrho^{0},\xi^{1/2})+{\gamma} ( \eta^{1}-\eta^{0},\xi^{1/2})+{a_2} \Delta t ( s_0,\xi^{1/2})\nonumber\\
   &\qquad\qquad -{\gamma}\Delta t (\tau_0,\xi^{1/2}) \big] + 2{\gamma} ( \Psi^{1/2},\xi^{1/2})\nonumber\\
  &\quad
:=T_1+T_2+T_3+T_4+T_5+T_6+T_7.\label{p2;sum22}
    \end{align}
\textit{{Step 2} (Bound for $T_1$).}
   An application of Cauchy--Schwarz inequality and the bounds from Lemma~\ref{bhcompanion_lem}(v) yields
    \begin{align*}
        T_1:=(F^{1/2},(Q-I)\zeta^{1/2})\le \norm{F^{1/2}}\norm{(Q-I)\zeta^{1/2}}& \le C_1h^2\norm{F^{1/2}}\norm{\zeta^{1/2}}_h.
    \end{align*}
Then we can utilize Young's inequality with $\epsilon=2 ({d_0C_{\rm{Coer}}})^{-1}$ to show that 
   \begin{align*}
       T_1& \le {C_1^2}({d_0C_{\rm{Coer}}})^{-1}h^4\norm{F^{1/2}}^2+\frac{d_0}{4}C_{\rm{Coer}}\norm{\zeta^{1/2}}^2_h \\&\le {C_1^2C_F^2}({d_0C_{\rm{Coer}}})^{-1}h^4+\frac{d_0}{4}C_{\rm{Coer}}\norm{\zeta^{1/2}}^2_h,
   \end{align*}
   with the bound $\norm{F^{1/2}}\le C_F$ from the regularity result \eqref{norm-F} in the last step.
   
    \noindent \textit{{Step 3} (Bound for $T_2$).}
{Note that $\eta^{1/2}, \varrho^{1/2} \in H^1_0(\Omega)$, and $Q\zeta^{1/2} \in H^2_0(\Omega)$.
     Some elementary manipulations and an integration by parts show 
     \begin{align}
       T_2&:=\alpha (\nabla \eta^{1/2}, \nabla\zeta^{1/2})+\beta (\nabla \varrho^{1/2}, \nabla \zeta^{1/2}) 
       \nonumber \\
       & =
    \alpha (\nabla \eta^{1/2}, \nabla(I-Q)\zeta^{1/2})+\beta (\nabla \varrho^{1/2}, \nabla (I-Q)\zeta^{1/2})\nonumber\\
    &\quad-\alpha ( \eta^{1/2}, \Delta (Q \zeta^{1/2}))-\beta ( \varrho^{1/2}, \Delta (Q\zeta^{1/2})). \label{ist-T2}
     \end{align}
Using Cauchy--Schwarz's  inequality, $\norm{\nabla(I-Q)\zeta^{1/2}} \le C_1 h \|\zeta^{1/2}\|_h$ from  Lemma~\ref{bhcompanion_lem}(v) (with $s=1$ and $v=0$), and  Young's inequality (applied twice with $\epsilon = 8(d_0 C_{\rm Coer})^{-1}$), we can readily bound the first two terms on the right-hand side  of \eqref{ist-T2} as
     \begin{align}
         &\alpha (\nabla \eta^{1/2}, \nabla(I-Q)\zeta^{1/2})+\beta (\nabla \varrho^{1/2}, \nabla (I-Q)\zeta^{1/2}) \nonumber\\
         &\le C_1 h \big(\alpha \norm{\nabla\eta^{1/2}}\norm{\zeta^{1/2}}_h +\beta\norm{\nabla \varrho^{1/2}}\norm{\zeta^{1/2}}_h\big)\nonumber\\
         &\le 4C^2_1h^2({d_0C_{\rm Coer}})^{-1}\big(\alpha^2 \norm{\nabla\eta^{1/2}}^2 +\beta^2\norm{\nabla \varrho^{1/2}}^2\big)+\frac{d_0}{8} C_{\rm Coer}\norm{\zeta^{1/2}}^2_h
        \nonumber\\
        &\le Ch^{2+2\sigma}\big(\norm{\theta}^2_{L^\infty(0,t_1;H^{1+\sigma}(\Omega))}+\norm{p}^2_{L^\infty(0,t_1;H^{1+\sigma}(\Omega))}\big)+\frac{d_0}{8}C_{\rm Coer} \norm{\zeta^{1/2}}^2_h,
        \label{2nd-T2}
     \end{align}
     where we have utilized \eqref{eta-half} in the last inequality.
     
     Note that $\norm{\Delta (Q\zeta^{1/2})} \le \|Q\zeta^{1/2} \|_h $ and  a triangle inequality with Lemma~\ref{bhcompanion_lem}(v) shows
     $\norm{\Delta (Q\zeta^{1/2})} \le\Lambda \|\zeta^{1/2}\|_h$ for $\Lambda >0$.     Therefore, combining this with the Cauchy--Schwarz  and  Young's inequality (as in the last step) lead to
    \begin{align} 
     &-\alpha ( \eta^{1/2}, \Delta (Q\zeta^{1/2}))-\beta ( \varrho^{1/2}, \Delta (Q\zeta^{1/2}))
     \le   \alpha\Lambda\norm{\eta^{1/2}} \norm{\zeta^{1/2}}_h +\beta\Lambda\norm{\varrho^{1/2}} \norm{\zeta^{1/2}}_h \nonumber\\
     &\le 4\Lambda^2 {(d_0C_{\rm Coer})^{-1}}\big(\alpha^2\norm{\eta^{1/2}}^2+\beta^2\norm{\varrho^{1/2}}^2\big)+\frac{d_0}{8}C_{\rm Coer}\norm{\zeta^{1/2}}^2_h\nonumber\\
     &\le Ch^{4\sigma}\big(\norm{\theta}^2_{L^\infty(0,t_1;H^{1+\sigma}(\Omega))}+\norm{p}^2_{L^\infty(0,t_1;H^{1+\sigma}(\Omega))}\big)+\frac{d_0}{8}C_{\rm Coer}\norm{\zeta^{1/2}}^2_h,
     \label{3rd-T2}
      \end{align}
      with estimates in the last step  from \eqref{eta-half}. In addition, 
      a combination of \eqref{2nd-T2}-\eqref{3rd-T2} in \eqref{ist-T2} yields
      \begin{align}
         T_2 
         &\le C (h^{2+2\sigma}+h^{4\sigma})\big( \norm{\theta}^2_{L^\infty(0,t_1;H^{{1+\sigma}}(\Omega))}+\norm{p}^2_{L^\infty(0,t_1;H^{1+{\sigma}}(\Omega))}\big)+\frac{d_0}{4}C_{\rm{Coer}}\norm{\zeta^{1/2}}^2_h. 
     \end{align}
  }
  
  \noindent  \textit{{Step 4} (Bounds for $T_3$--$T_7$).}
The estimates for $T_3$--$T_7$ follow a similar approach.  {We apply the Cauchy--Schwarz and Young's inequalities (with $\epsilon=2,2,1,1$ for the four terms, respectively, in $T_3$}) to reveal the bound 
    \begin{align*}
    T_3&:=-(\bar{\partial}_t\rho^{1/2},\bar{\partial}_t\zeta^{1/2})-a_0 (\nabla \bar{\partial}_t\rho^{1/2},\nabla \bar{\partial}_t\zeta^{1/2})\\
    &\quad+\frac{1}{2}\Delta t(R_0,\bar{\partial}_t\zeta^{1/2})+\frac{a_0}{2}\Delta t(\nabla R_0,\nabla \bar{\partial}_t\zeta^{1/2})\nonumber\\
          &\le C_{T_3} \big( \norm{\bar{\partial}_t\rho^{1/2}}^2+ \norm{\nabla \bar{\partial}_t\rho^{1/2}}^2+{(\Delta t)^2 }\norm{R_0}^2+ {(\Delta t)^2 }\norm{\nabla R_0}^2 \big)  \\
          &\quad+\frac{1}{2}\norm{\bar{\partial}_t\zeta^{1/2}}^2+\frac{a_0}{2}\norm{\nabla \bar{\partial}_t\zeta^{1/2}}^2,\nonumber
          \end{align*}
           where $C_{T_3}=\max\{1,a_0\}$.  
         We then employ \eqref{rho-half} to bound the first two terms and \eqref{tranc-terms} and Lemma~\ref{p2;trancation-lemma} to bound the third and fourth terms on the right-hand side above to obtain
          \begin{align*}
          T_3 &\le C \big( h^{4\sigma} \norm{u_t}^2_{L^\infty(0,t_1;H^{2+\sigma}(\Omega))}+(\Delta t)^4 \norm{u_{ttt}}_{L^\infty(0,t_1;H^1(\Omega))}^2\big)\\
          &\quad
         +\frac{1}{2}\norm{\bar{\partial}_t\zeta^{1/2}}^2+\frac{a_0}{2}\norm{\nabla \bar{\partial}_t\zeta^{1/2}}^2.
         \end{align*}
      \noindent   (Here $C$  denotes a generic constant   independent of the discretization parameters).
       \noindent   Now, similar arguments ({with $\epsilon= 2/c_1, 2/\kappa, 2/c_1, 2/\kappa$ }in the Young's inequalities for  the terms in $T_4$) lead to  
          \begin{align*}
    T_4& :=\frac{\Delta t}{2}\big[ -\alpha (\nabla \bar{\partial}_t\rho^{1/2},\nabla \Psi^{1/2}) -{\beta} (\nabla \bar{\partial}_t \rho^{1/2},\nabla \xi^{1/2})\nonumber\\
    &\quad+{\alpha}(\nabla r_0, \nabla  \Psi^{1/2})+{\beta} (\nabla  r_0, \nabla \xi^{1/2}) \big]\nonumber\\
    &\le C_{T_4} \Big(\Delta t\norm{\nabla \bar{\partial}_t\rho^{1/2}}^2+\Delta t\norm{\nabla r_0}^2\Big)+\frac{c_1}{4}\Delta t \norm{\nabla \Psi^{1/2}}^2+\frac{\kappa}{4} \Delta t \norm{\nabla \xi^{1/2}}^2,\nonumber
    \end{align*}
    with {$C_{T_4}=\max\{\frac{1}{2}\alpha^2c_1^{-1},\frac{1}{2}\beta^2\kappa^{-1}\}$}. An application of \eqref{rho-half-l2} leads to 
    \begin{align*}
    T_4 
    &\le C \Big( h^{4\sigma}\norm{ u_t}^2_{L^2 (0,t_1;H^{2+\sigma}(\Omega))}+(\Delta t)^4 \norm{ u_{ttt}}^2_{L^\infty (0,t_1;H^{1}(\Omega))}\Big)\\
    &\quad+\frac{c_1}{4}\Delta t \norm{\nabla \Psi^{1/2}}^2+\frac{\kappa}{4} \Delta t \norm{\nabla \xi^{1/2}}^2.
    \end{align*}
     A further application of  Cauchy--Schwarz and Young's inequalities (details on the choice of $\epsilon$ in the rest of the proof are skipped for brevity) leads to 
          \begin{align*}
        T_5&:=\frac{1}{2}\big[-{a_1}( \eta^{1}-\eta^0,  \Psi^{1/2})+{a_1}\Delta t(\tau_0, \Psi^{1/2})+{\gamma}( \varrho^{1}-\varrho^{0}, \Psi^{1/2}) \\
        &\qquad\qquad-{\gamma}\Delta t(s_0,  \Psi^{1/2})-{b_1} \Delta t (\eta^{1/2}, \Psi^{1/2})\big]  
       %
        \\
        &\le  C\big(\norm{ \eta^{1}-\eta^{0}}^2+\norm{\eta^{1/2}}^2+\norm{\varrho^{1}-\varrho^{0}}^2+(\Delta t)^2\big[\norm{\tau_0}^2+\norm{s_0}^2\big]\big)\\
        &\quad
        +\frac{a_1-|\gamma|/\gamma_0}{2}\norm{ \Psi^{1/2}}^2+\frac{b_1}{4} \Delta t\norm{\Psi^{1/2}}^2\\
       &\le C\big(h^{4\sigma}\norm{\theta}_{L^\infty( 0,t_1; H^{1+\sigma}(\Omega))}^2 +h^{4\sigma} \norm{p}^2_{L^\infty(0,t_1;H^{1+\sigma}(\Omega))}\\
       &\quad+(\Delta t)^4\norm{\theta_{ttt}}^2_{L^2(0,t_1;L^2(\Omega))}+(\Delta t)^4\norm{p_{ttt}}^2_{L^2(0,t_1;L^2(\Omega))}\big)\nonumber\\
       &
       \quad +\frac{a_1-|\gamma|/\gamma_0}{2}\norm{ \Psi^{1/2}}^2+\frac{b_1}{4} \Delta t\norm{\Psi^{1/2}}^2.
        \end{align*}
       In the last inequality, the bounds for the first three terms on the right-hand side are obtained from \eqref{eta-half}, and the last two from Lemma~\ref{p2;trancation-lemma}, respectively.
       Then, using  similar arguments as above also show that
           \begin{align*}
        T_6&
    := \frac{1}
    {2}\big[ -a_2(\varrho^{1}-\varrho^0,\xi^{1/2})+{\gamma}  ( \eta^{1}-\eta^0,\xi^{1/2})+{a_2} \Delta t ( s_0,\xi^{1/2})-{\gamma}\Delta t (\tau_0,\xi^{1/2}) \big] \nonumber\\
    &\le C\Big(h^{4\sigma}\norm{\theta}_{L^\infty(0,t_1;H^2(\Omega))}^2 +h^{4\sigma}\norm{p}^2_{L^\infty(0,t_1; H^2(\Omega))}\\
    &\quad\quad+(\Delta t)^4\norm{\theta_{ttt}}^2_{L^2(0,t_1; L^2(\Omega))}+(\Delta t)^4\norm{p_{ttt}}^2_{L^2(0,t_1; L^2(\Omega))}\Big)
     +\frac{a_2-|\gamma|\gamma_0}{2}\norm{ \xi^{1/2}}^2.
    \end{align*}
The  Cauchy--Schwarz and Young's inequalities
     lead to
$
T_7:=2{\gamma} ( \Psi^{1/2},\xi^{1/2})\le \frac{|\gamma|}{\gamma_0}\norm{\Psi^{1/2}}^2+|\gamma|\gamma_0\norm{\xi^{1/2}}^2.
    $

   \noindent \textit{{Step 5} (Conclusion). } 
  It suffices to put together the bounds for $T_1$-$T_7$ in 
     \eqref{p2;sum22} and the fact that $h^2 \le |\Omega|^{1-\sigma} h^{2\sigma} \lesssim  h^{2\sigma} $, 
to finish the proof.
      \end{proof}
\begin{rem}\label{rem-super}
The decomposition \eqref{ist-T2} and the analysis in Step 3 of Lemma~\ref{initial-error-lemm} are aimed at proving the superconvergence ($h^{4\sigma}$ rates) of the projected errors $\zeta^{1/2}$ and $\Psi^{1/2}$, $\xi^{1/2}$ in the norms $\norm{\zeta^{1/2}}^2_h$  and $\norm{(\Psi^1, \xi^1)}^2_H$, respectively. This is achieved using the approximation properties of the smoother $Q$ from Lemma~\ref{bhcompanion_lem}(v). The same arguments are also applied in Step 2 of Theorem~\ref{error-estimates thm} to obtain the superconvergence of the projected errors $\zeta^{m+1/2}$ and $\Psi^{m+1/2}$, $\xi^{m+1/2}$ for all $1 \le m \le N-1$ in $\norm{\zeta^{m+1/2}}^2_h$  and $\norm{(\Psi^m, \xi^m)}^2_H$, respectively; and is achieved in \eqref{proj;estimate}. This superconvergence also yields  more elegant lower  $H^s-$ order estimates with $s=0,1$ (resp. $s=0$)  for $u$ (resp. $\theta$ and $p$) established in Corollary~\ref{corr} (resp. Theorem~\ref{error-estimates thm}).
\end{rem}

\subsection{Error estimates}
We present the error estimates for \eqref{fully-discret-scheme}. To do this, we first derive the error equations, which will subsequently be used in Theorem~\ref{error-estimates thm}, with appropriate choices of test functions, to establish the estimates. 

\medskip
\noindent {\bf Error equations.} 
A linear combination of the equations in the system \eqref{weak form}}, evaluated at $t=t_{n-1}$, $t=t_n,$ and $t=t_{n+1}$, for $n=1,2,\cdots, N-1$,  yields
\begin{subequations}
\begin{align}
    & (u_{tt}^{n,1/4},Qv_h)+a_0 (\nabla u_{tt}^{n,1/4},\nabla Qv_h)+d_0(\nabla ^2 u^{n,1/4}, \nabla^2 Qv_h)\nonumber  \\
    & \quad-\alpha (\nabla \theta^{n,1/4}, \nabla Qv_h)
    -\beta (\nabla p^{n,1/4}, \nabla Qv_h) =(f^{n,1/4},Qv_h), \label{u^n}\\
    &a_1(\theta_t^{n+1/2}, \psi_h)-\gamma (p_t^{n+1/2}, \psi_h)+b_1(\theta^{n+1/2},\psi_h) \nonumber
\\&\quad+c_1 (\nabla \theta^{n+1/2},\nabla \psi_h) +\alpha (\nabla u_t^{n+1/2}, \nabla \psi_h)=(\phi^{n+1/2},\psi_h), \label{theta^n} \\
  &  a_2 (p_t^{n+1/2},q_h)-\gamma (\theta_t^{n+1/2},q_h)+\kappa (\nabla p^{n+1/2},\nabla q_h)\nonumber\\
   &\quad +\beta (\nabla u_t^{n+1/2},\nabla q_h)=(g^{n+1/2},q_h),\label{p^n}
    \end{align}
    \end{subequations}
    $\text{for all } v_h \in V_h $ and $\psi_h,q_h \in  W_h$, 
   where as earlier we have used  Range$(Q) \subset H^2_0(\Omega)$ and $W_h \subset H^1_0(\Omega)$ .
   
\noindent   Next we recall the definition of $F =f(t,\bx)-u_{tt}+a_0 \Delta u_{tt}-\alpha \Delta \theta-\beta \Delta p $ from \eqref{F} and define the truncation terms  as follows:
   \begin{subequations}\label{Tr-n}
 \begin{align}
  &R^n:= \bar{\partial}_t^2 u^{n}- u_{tt}^{n,1/4},\; r^n:=\bar{\partial}_tu^{n+1/2}- u_t^{n+1/2},\\
  &\;\tau^n:=\bar{\partial}_t{\theta^{n+1/2}}-\theta_t^{n+1/2}, \text{ and }s^n:=\bar{\partial}_t{p^{n+1/2}}-p_t^{n+1/2} .
    \end{align}
    \end{subequations}
 Subtracting   \eqref{un}  from \eqref{u^n} and employing $a_h(\mathcal{R}_hu^{n,1/4}, v_h) =(\nabla^2 u^{n,1/4}, \nabla^2 Qv_h )$ from \eqref{P1 ritz_projection}, we readily obtain
 \begin{align*}
    &(\bar{\partial}_t^2 u^n-\bar{\partial}^2_tU^n,v_h)+a_0 (\nabla (\bar{\partial}_t^2 u^n-\bar{\partial}^2_tU^n),\nabla v_h)
    +d_0 a_h(\mathcal{R}_hu^{n,1/4}-U^{n,1/4}, v_h)
     \\
    &-\alpha (\nabla (\theta^{n,1/4}-\Theta^{n,1/4}), \nabla v_h)-\beta (\nabla (p^{n,1/4}-P^{n,1/4}), \nabla v_h) \\
    &     =(F^{n,1/4},(Q-I)v_h) +(R^n,v_h)+a_0(\nabla R^n,\nabla v_h).
   \end{align*}
An appeal to the splitting in \eqref{spilit} leads to the  first relation of the error equation  of the system as
\begin{align}
    &
    (\bar{\partial}_t^2 \zeta^n,v_h)+a_0 (\nabla \bar{\partial}_t^2 \zeta^n,\nabla v_h)+d_0 a_h(\zeta^{n,1/4}, v_h)-\alpha (\nabla \Psi^{n,1/4}, \nabla v_h)\nonumber\\
    & -\beta (\nabla \xi^{n,1/4}, \nabla v_h)
    =-(\bar{\partial}_t^2 \rho^n,v_h)-a_0 (\nabla \bar{\partial}_t^2 \rho^n,\nabla v_h)+\alpha (\nabla \eta^{n,1/4}, \nabla v_h)\nonumber\\
    &
   +\beta (\nabla \varrho^{n,1/4}, \nabla v_h)
    +(F^{n,1/4},(Q-I)v_h)+( R^n,v_h)+a_0(\nabla  R^n,\nabla v_h).\label{errror-eqn1}
    \end{align}
We can then subtract \eqref{thetan} from  \eqref{theta^n}  and utilize the definition of $\Pi_h$ from 
\eqref{p2;ritzprojection2} to obtain
 \begin{align*}
    &a_1(\bar{\partial}_t(\theta^{n+1/2}-{\Theta}^{n+1/2}), \psi_h)\\
    &\quad-\gamma (\bar{\partial}_t(p^{n+1/2}-{P^{n+1/2}}), \psi_h)+b_1(\theta^{n+1/2}-\Theta^{n+1/2},\psi_h)
   \\
   & \quad+c_1 (\nabla (\Pi_h\theta^{n+1/2}-\Theta^{n+1/2}),\nabla \psi_h)+\alpha (\nabla \bar{\partial}_t(u^{n+1/2}-U^{n+1/2}) ,\nabla \psi_h)  \\
   & =a_1(\tau^n, \psi_h)-\gamma (s^n, \psi_h)+\alpha (\nabla r^n, \nabla \psi_h).
   \end{align*}
The splitting from \eqref{spilit} reveals the  second  relation in the error equation as
\begin{align}
    &a_1(\bar{\partial}_t\Psi^{n+1/2}, \psi_h)-\gamma (\bar{\partial}_t{\xi^{n+1/2}}, \psi_h)+b_1(\Psi^{n+1/2},\psi_h)\nonumber\\
    &\quad+c_1 (\nabla \Psi^{n+1/2},\nabla \psi_h)+\alpha (\nabla \bar{\partial}_t\zeta^{n+1/2} ,\nabla \psi_h)\nonumber\\
    &\nonumber
   =-a_1(\bar{\partial}_t\eta^{n+1/2}, \psi_h) +\gamma (\bar{\partial}_t{\varrho^{n+1/2}}, \psi_h) -b_1(\eta^{n+1/2},\psi_h)
   \\
   &\quad-\alpha (\nabla \bar{\partial}_t \rho^{n+1/2} ,\nabla \psi_h)+a_1 (\tau^n, \psi_h)-\gamma (s^n, \psi_h)+\alpha (\nabla r^n, \nabla \psi_h).\label{errror-eqn2}
    \end{align}
Furthermore, we subtract \eqref{pn} from \eqref{p^n}, and utilize the same arguments as above along with  \eqref{spilit} to obtain  the third error relation of the error equation as
\begin{align}
&
    a_2 (\bar{\partial}_t \xi^{n+1/2},q_h)-\gamma (\bar{\partial}_t\Psi^{n+1/2},q_h)  +\kappa (\nabla \xi^{n+1/2},\nabla q_h)+\beta (\nabla \bar{\partial}_t \zeta^{n+1/2},\nabla q_h)\nonumber\\
    &
    =-a_2 (\bar{\partial}_t \varrho^{n+1/2},q_h)+\gamma (\bar{\partial}_t \eta^{n+1/2},q_h)
    -\beta (\nabla \bar{\partial}_t \rho^{n+1/2},\nabla q_h)\nonumber\\
    &\quad
    +a_2 (s^n,q_h)-\gamma(\tau^n,q_h)+\beta (\nabla r^n,\nabla q_h). \label{errror-eqn3}
    \end{align}

\noindent{\bf Some useful bounds.} 
Next we present some bounds that will be useful in the proof of Theorem~\ref{error-estimates thm}. 
The estimates from \eqref{P1 norm_ritz} for \( \rho(t) \), \eqref{P1 norm_ritz2} for \( \eta(t) \) and $\varrho(t)$ are used to demonstrate {the bounds}, for any $1 \le m \le N$ 
\begin{subequations}\label{one}
\begin{align}
& \norm{ \eta^{m,1/4}}+h^{ \sigma} \norm{\nabla \eta^{m,1/4}} \le C_3 h^{2\sigma}\norm{\theta}_{L^\infty(t_{m-1},t_{m+1};H^{1+\sigma}(\Omega))},\label{m-1}\\
&\norm{ \varrho^{m,1/4}}+h^{ \sigma} \norm{\nabla \varrho^{m,1/4}} \le C_3h^{2\sigma}\norm{p}_{L^\infty(t_{m-1},t_{m+1};H^{1+\sigma}(\Omega))}, \label{m-2}\\
&\Big(\Delta t \sum_{n=1}^m \norm{\nabla\bar{\partial}_t \rho^{n+1/2}}^2\Big)^{1/2}+\Big(\Delta t\sum_{n=1}^m \norm{\eta^{n+1/2}}^2\Big)^{1/2}\nonumber \\
     &\quad+\Big(\Delta t\sum_{n=1}^m \norm{\bar{\partial}_t 
     \eta^{n+1/2}}^2\Big)^{1/2} +\Big(\Delta t \sum_{n=1}^m  \norm{\bar{\partial}_t \varrho^{n+1/2}}^2\Big)^{1/2}\nonumber\\
     & \lesssim T^{1/2} h^{2\sigma} \big[ \norm{u_t}_{L^\infty(0,T;H^{2+\sigma}(\Omega))}+\norm{\theta}_{L^\infty(0,T;H^{1+\sigma}(\Omega))}\nonumber\\
     &\qquad\qquad+\norm{\theta_t}_{L^\infty(0,T;H^{1+\sigma}(\Omega))}+\norm{p_t}_{L^\infty(0,T;H^{1+\sigma}(\Omega))}\big],\label{m-3}
\end{align}
\end{subequations}
where in last  inequality we have used $m \Delta t \le T$.
The Taylor series {estimate }$\norm{\bar{\partial}_t^2 \rho^n}^2  \le \frac{2}{3}(\Delta t)^{-1}\int_{t_{n-1}}^{t_{n+1}} \norm{\rho_{tt}(t)}^2 \dt$ (resp. $\norm{\nabla \bar{\partial}_t^2 \rho^n}^2  \le \frac{2}{3}(\Delta t)^{-1}\int_{t_{n-1}}^{t_{n+1}} \norm{\nabla \rho_{tt}(t)}^2 \dt$) along with \eqref{P1 norm_ritz} reveals that 
\begin{align}
  \Big(\Delta t \sum_{n=1}^m\big(\norm{\bar{\partial}_t^2 \rho^n}^2+a_0\norm{\nabla \bar{\partial}_t^2 \rho^n}^2\big)\Big)^{1/2}
  &\lesssim
h^{2\sigma}\norm{u_{tt}}_{L^2(0,T;H^{2+\sigma}(\Omega))}.  \label{rho-n}
\end{align}
{Also,} the definition \eqref{Tr-n}, and the truncation  estimates from Lemma~\ref{p2;trancation-lemma}  are given by
\begin{subequations}\label{truncation}
\begin{align}
&  \Big(\Delta t\sum_{n=1}^m \norm{\ R^n}^2\Big)^{1/2} \lesssim (\Delta t)^2  \norm{u_{tttt}} _{L^2(0,T;L^2(\Omega))}, \nonumber \\
& \Big(\Delta t \sum_{n=1}^m\norm{\nabla R^n}^2\Big)^{1/2} \lesssim  (\Delta t)^2  \norm{\nabla u_{tttt}}_{L^2(0,T;L^2(\Omega))}, \quad \text{and} \label{R-n}\\
  &\Big(\Delta t \sum_{n=1}^m \norm{\nabla r^n}^2\Big)^{1/2}+\Big(\Delta t \sum_{n=1}^m \norm{\tau^n}^2\Big)^{1/2}+\Big(\Delta t \sum_{n=1}^m \norm{s^n}^2\Big)^{1/2} \nonumber \\
&\lesssim (\Delta t)^2\Big[\norm{u_{ttt}}_{L^2(0,T;H^1(\Omega))}+\norm{\theta_{ttt}}_{L^2(0,T;L^2(\Omega))}+\norm{p_{ttt}}_{L^2(0,T;L^2(\Omega))}\Big].\label{rnsn}
 \end{align}
 \end{subequations}
 Note that $\sum_{k=1}^\ell\norm{b^{k-1}+b^k}^2 \le 2 \sum_{k=1}^\ell\norm{b^{k-1}}^2+2 \sum_{k=1}^\ell\norm{b^k}^2 \le 4 \sum_{k=1}^\ell\norm{b^{k-1}}^2+2\norm{b^\ell}^2$, with addition of $2\norm{b^0}^2$ on the right-hand side in the last expression.
This and an application of Cauchy--Schwarz and Young's inequalities lead to
\begin{align}
     \pm \sum_{k=1}^\ell  (a^k,b^{k-1}+b^k)& \le \frac{\epsilon}{2}
    \sum_{k=1}^\ell \norm{a^k}^2+\frac{1}{2\epsilon}\sum_{k=1}^\ell\norm{b^{k-1}+b^k}^2\nonumber\\
     & \le \frac{\epsilon}{2}
    \sum_{k=1}^\ell \norm{a^k}^2+ \frac{2}{\epsilon}\sum_{k=1}^\ell \norm{b^{k-1}}^2+ \frac{1}{\epsilon}\norm{b^{\ell}}^2.\label{an-bn}
\end{align}


\noindent{\bf Main result.} Before proceeding to establish the error estimates at $t=t_2,t_3, \cdots, t_N$, we first  note that the following quantities are bounded, thanks to Table~\ref{table:summary}
\begin{subequations}  \label{lm}
  \begin{align} L_{(u,\theta,p,T)}&:=\norm{u}^2_{L^\infty(0,T;H^{2+\sigma}(\Omega))}+\norm{u_t}^2_{L^\infty(0,T;H^{2+\sigma}(\Omega))}+\norm{u_{tt}}^2_{L^2(0,T;H^{2+\sigma}(\Omega))}\nonumber\\
    &\quad+\norm{\theta}^2_{L^\infty(0,T;H^{1+\sigma}(\Omega))}
    + \norm{\theta_t}^2_{L^\infty(0,T;H^{1+\sigma}(\Omega))}\nonumber\\
    &\quad+\norm{p}^2_{L^\infty(0,T;H^{1+\sigma}(\Omega))}+ \norm{p_t}^2_{L^\infty(0,T;H^{1+\sigma}(\Omega))},\\   
M_{(u,\theta,p,T)}&:=\norm{u_{ttt}}^2_{L^2(0,T;H^1(\Omega))}+\norm{u_{tttt}}^2_{L^2(0,T;H^1(\Omega))}\nonumber\\
&\quad+\norm{\theta_{ttt}}^2_{L^2(0,T;L^2(\Omega))}+\norm{p_{ttt}}^2_{L^2(0,T;L^2(\Omega))}.\label{last}
    \end{align}
    \end{subequations}

\begin{thm}[Error estimates] \label{error-estimates thm}
Under the regularity assumptions on given data as stated in Theorem \ref{thm;regularity},  for $1 \le m \le N-1$, the following estimates are satisfied:
   \begin{align*}
   &\norm{\bar{\partial}_t (u^{m+1/2}-U^{m+1/2})}^2+a_0\norm{\nabla \bar{\partial}_t (u^{m+1/2}-U^{m+1/2})}^2\\
   &\quad+{(a_1-|\gamma|/\gamma_0)} \norm{\theta^{m+1}-\Theta^{m+1}}^2
    +{(a_2-|\gamma|\gamma_0)}\norm{p^{m+1}-P^{m+1}}^2\nonumber\\ 
    & \quad+{d_0}h^{2\sigma}\norm{u^{m+1/2}-U^{m+1/2}}^2_h+h^{2\sigma}\norm{(\theta^{m}-\Theta^{m},p^{m}-P^{m})}^2_{H}  \\
    &\lesssim h^{4\sigma}+\big[L_{(u,\theta,p,t_1)}+L_{(u,\theta,p,T)}\big]h^{4\sigma} +M_{(u,\theta,p,T)}(\Delta t)^4\nonumber,
   \end{align*}
   where the absorbed constant in "$\lesssim$" depends on $C_{\rm{Coer}}$, $C_{\rm{Cont}}$, $C_1$,$C_2$, $C_3$,  $T$, and the model coefficients.
\end{thm}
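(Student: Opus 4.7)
The plan is to mimic the stability analysis of Theorem~\ref{p2;stability-thm}, but applied to the error equations \eqref{errror-eqn1}--\eqref{errror-eqn3}, and then to combine the resulting bound with the initial error estimate from Lemma~\ref{initial-error-lemm}. First I would choose the test functions $v_h = 2\Delta t\,\delta_t\zeta^n$ in \eqref{errror-eqn1}, $\psi_h = 2\Delta t\,\Psi^{n+1/2}$ in \eqref{errror-eqn2}, and $q_h = 2\Delta t\,\xi^{n+1/2}$ in \eqref{errror-eqn3}. Using the telescoping identities \eqref{p2;stb1}--\eqref{p2;stab5} together with the coercivity of $a_h$, this yields the discrete energy of $(\zeta,\Psi,\xi)$ at time $t_{m+1/2}$ on the left-hand side and, after summing for $n=1,\dots,m$, leaves three types of contributions on the right-hand side: (i) projection error terms in $\rho,\eta,\varrho$; (ii) truncation terms $R^n, r^n, \tau^n, s^n$; and (iii) the companion-operator residual $(F^{n,1/4},(Q-I)\delta_t\zeta^n)$. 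The coupling terms $-\alpha(\nabla\Psi^{n,1/4},\nabla\delta_t\zeta^n)-\beta(\nabla\xi^{n,1/4},\nabla\delta_t\zeta^n)$ from \eqref{errror-eqn1} and their counterparts $\alpha(\nabla\bar\partial_t\zeta^{n+1/2},\nabla\Psi^{n+1/2})+\beta(\nabla\bar\partial_t\zeta^{n+1/2},\nabla\xi^{n+1/2})$ from \eqref{errror-eqn2}--\eqref{errror-eqn3} will be regrouped exactly as in the key identity \eqref{p2-stab-all} so that most of them cancel, and the surviving ``boundary-in-time'' pieces are bounded by Cauchy--Schwarz and Young with $\varepsilon\sim c_1,\kappa$, absorbing the resulting $\|\nabla\Psi^{n+1/2}\|^2,\|\nabla\xi^{n+1/2}\|^2$ into the left-hand side dissipation.

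Second, the superconvergent contribution of type (iii) is handled by the smoother-based trick from Step~3 of Lemma~\ref{initial-error-lemm} (cf.\ Remark~\ref{rem-super}): combining $\nabla\eta^{n,1/4}, \nabla\varrho^{n,1/4}$ with $\nabla\delta_t\zeta^n$, splitting $\delta_t\zeta^n = (I-Q)\delta_t\zeta^n + Q\delta_t\zeta^n$, integrating by parts on the $Q$-part (which belongs to $H^2_0(\Omega)$), and using Lemma~\ref{bhcompanion_lem}(v) to pick up the extra factor $h^{\sigma}$. This yields $\mathcal{O}(h^{4\sigma})$ contributions after invoking \eqref{one} and \eqref{P1 norm_ritz2}. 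The term $(F^{n,1/4},(Q-I)\delta_t\zeta^n)$ is controlled by Cauchy--Schwarz, Lemma~\ref{bhcompanion_lem}(v), and the uniform bound \eqref{norm-F}(i), contributing $h^4\|F\|^2_{L^\infty}$. Contributions from $\rho_{tt}, R^n, r^n, \tau^n, s^n$ and the $\bar\partial_t$ of $\eta,\varrho$ are controlled by the inequality~\eqref{an-bn} together with the bounds \eqref{m-3}, \eqref{rho-n} and \eqref{truncation}, yielding the $h^{4\sigma}$ and $(\Delta t)^4$ powers through the quantities $L_{(u,\theta,p,T)}$ and $M_{(u,\theta,p,T)}$ defined in \eqref{lm}.

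Third, to treat the coupling $2\gamma[(\xi^{m+1},\Psi^{m+1})-(\xi^{1},\Psi^{1})]$ arising after the telescopic sum, I would use $2|\gamma(a,b)|\le |\gamma|/\gamma_0\|a\|^2+|\gamma|\gamma_0\|b\|^2$, exactly as in Step~5 of Theorem~\ref{p2;stability-thm}, so that the coefficients on the left-hand side reduce to the positive $(a_1-|\gamma|/\gamma_0)$ and $(a_2-|\gamma|\gamma_0)$ guaranteed by \eqref{gamma_0}. After these bounds the residual terms that cannot be absorbed take the Gronwall-compatible form $C\tfrac{\Delta t}{T}\sum_{n=0}^{m-1}\bigl[\|\bar\partial_t\zeta^{n+1/2}\|^2+a_0\|\nabla\bar\partial_t\zeta^{n+1/2}\|^2+(a_1-|\gamma|/\gamma_0)\|\Psi^{n+1}\|^2+(a_2-|\gamma|\gamma_0)\|\xi^{n+1}\|^2\bigr]$, so Lemma~\ref{P1 d-gronwall} closes the estimate, absorbing $e^{CT}$ into the hidden constant. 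Finally, the starting data at $n=1/2$ (i.e.\ $\|\bar\partial_t\zeta^{1/2}\|$, $\|\zeta^{1/2}\|_h$, $\|\Psi^{1/2}\|$, $\|\xi^{1/2}\|$) are bounded directly by Lemma~\ref{initial-error-lemm}, contributing $h^{4\sigma}+L_{(u,\theta,p,t_1)}h^{4\sigma}+M_{(u,\theta,p,t_1)}(\Delta t)^4$, which together with the dominant estimate gives the claimed bound.

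The main obstacle is organising the cross-coupling: the $h^{\sigma}$-weighting of $\|u^{m+1/2}-U^{m+1/2}\|_h^2$ and $\|(\theta^m-\Theta^m,p^m-P^m)\|_H^2$ (rather than the natural order-one weighting that stability would give) forces a careful use of the smoother decomposition on \emph{both} the gradient and the Laplace-dual sides of the coupling, in order to avoid overshooting the projection error rates. Once the companion-operator identity is applied consistently to $\nabla\eta^{n,1/4}, \nabla\varrho^{n,1/4}$ and to the $F^{n,1/4}$ residual—as done in Step~3 of Lemma~\ref{initial-error-lemm}—the rest of the argument is a direct extension of Theorem~\ref{p2;stability-thm}. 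The final bound then follows by triangle inequality with the projection approximation properties in \eqref{P1 norm_ritz}--\eqref{P1 norm_ritz2} (needed only to recover $u-U,\theta-\Theta,p-P$ from $\zeta,\Psi,\xi$ if desired, though the statement is already in the projected form).
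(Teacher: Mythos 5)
Your overall architecture matches the paper's: test with $\delta_t\zeta^n$, $\Psi^{n+1/2}$, $\xi^{n+1/2}$, telescope, regroup the cross-coupling exactly as in \eqref{p2-stab-all}, handle the $\gamma$-term and Gronwall as in Theorem~\ref{p2;stability-thm}, and feed in Lemma~\ref{initial-error-lemm} for the starting data. However, there is a genuine gap in your Step 2, precisely at the point you identify as "the main obstacle". You propose to apply the smoother decomposition directly to $\delta_t\zeta^n$, i.e.\ to split $\delta_t\zeta^n=(I-Q)\delta_t\zeta^n+Q\delta_t\zeta^n$ in the terms $(\nabla\eta^{n,1/4},\nabla\delta_t\zeta^n)$, $(\nabla\varrho^{n,1/4},\nabla\delta_t\zeta^n)$ and $(F^{n,1/4},(Q-I)\delta_t\zeta^n)$. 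Lemma~\ref{bhcompanion_lem}(v) then produces bounds in terms of $\norm{\delta_t\zeta^n}_h$, and this quantity is \emph{not} controlled by the discrete energy: the left-hand side only carries $\norm{\zeta^{m+1/2}}_h^2$ (from the telescoped $a_h$), $\norm{\bar{\partial}_t\zeta^{n+1/2}}^2$ and $a_0\norm{\nabla\bar{\partial}_t\zeta^{n+1/2}}^2$, never the broken $H^2$-norm of a discrete time increment of $\zeta$. Falling back on the crude bound $\norm{\nabla\delta_t\zeta^n}\le\frac12(\norm{\nabla\bar{\partial}_t\zeta^{n+1/2}}+\norm{\nabla\bar{\partial}_t\zeta^{n-1/2}})$ would only yield $\Delta t\sum_n\norm{\nabla\eta^{n,1/4}}^2\lesssim T\,h^{2\sigma}$, i.e.\ the superconvergent rate $h^{4\sigma}$ is lost; an inverse estimate to recover $\norm{\delta_t\zeta^n}_h$ costs back the factor $h$ you gained.

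The missing ingredient is a summation by parts in time (Abel summation) \emph{before} the smoother decomposition. Writing $2\Delta t\,\delta_t\zeta^n=\zeta^{n+1/2}-\zeta^{n-1/2}$ and using $\sum_{n=1}^m g^{n,1/4}(h^{n+1/2}-h^{n-1/2})=g^{m,1/4}h^{m+1/2}-g^{1,1/4}h^{1/2}-\sum_{n=1}^{m-1}(g^{n+1,1/4}-g^{n,1/4})h^{n+1/2}$, the problematic sums become boundary terms paired with $\zeta^{m+1/2}$ and $\zeta^{1/2}$ plus interior terms in which the time difference has been shifted onto the data ($F_t$, $\eta_t$, $\varrho_t$). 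Only then is the companion-operator splitting applied to $\zeta^{m+1/2}$ and $\zeta^{n+1/2}$, whose $\norm{\cdot}_h$-norms are absorbed by $d_0C_{\rm Coer}\norm{\zeta^{m+1/2}}_h^2$ on the left and by the Gronwall sum $\frac{\Delta t}{T}\sum_n\norm{\zeta^{n+1/2}}_h^2$, respectively; this is what requires $F_t\in L^2(0,T;L^2(\Omega))$ (i.e.\ \eqref{norm-F}(ii)) and $\theta_t,p_t\in L^\infty(0,T;H^{1+\sigma}(\Omega))$, regularity you never invoke for these terms. With this modification the rest of your argument goes through as in the paper. A minor further point: since the theorem is stated for $u-U$, $\theta-\Theta$, $p-P$ rather than for the projected errors, the final triangle inequality with \eqref{P1 norm_ritz}--\eqref{P1 norm_ritz2} is not optional.
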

\begin{proof}
The proof is divided into six steps-the first step derives a key inequality, this is followed by bounds for the terms in the key inequality in Steps 2-5, and Step 6 consolidates the proof.

\noindent \textit{{Step 1} (Key inequality).}
Let us multiply \eqref{errror-eqn1} by $2 \Delta t$, choose $v_h= \delta_t \zeta^n$ and utilize the identities 
\eqref{p2;stb1}-\eqref{p2;stb3}. 
This yields 
\begin{align*}
     &
    \norm{\bar{\partial}_t \zeta^{n+1/2}}^2-\norm{\bar{\partial}_t \zeta^{n-1/2}}^2+a_0\norm{\nabla \bar{\partial}_t \zeta^{n+1/2}}^2-a_0\norm{\nabla\bar{\partial}_t \zeta^{n-1/2}}^2\\
    &\quad
    +d_0 a_h(\zeta^{n+1/2},\zeta^{n+1/2})-d_0 a_h(\zeta^{n-1/2},\zeta^{n-1/2})\nonumber\\
    &
    =2 \Delta t \Big[(F^{n,1/4},(Q-I)\delta_t \zeta^n)\\
    & \qquad \qquad  
     +(\alpha \nabla \eta^{n,1/4} +\beta \nabla \varrho^{n,1/4}, \nabla \delta_t \zeta^n)+  (\alpha\nabla \Psi^{n,1/4} +\beta \nabla \xi^{n,1/4}, \nabla \delta_t \zeta^n) \nonumber\\
    &
 \qquad \qquad     -(\bar{\partial}_t^2 \rho^n,\delta_t \zeta^n)-a_0 (\nabla \bar{\partial}_t^2 \rho^n,\nabla \delta_t \zeta^n)
     +( R^n,\delta_t \zeta^n)+a_0(\nabla  R^n,\nabla \delta_t \zeta^n)\Big] \\
     & = 2  (F^{n,1/4},(Q-I)(\zeta^{n+1/2}-\zeta^{n-1/2}))\\
    &
 \qquad \qquad  +2 (\alpha \nabla \eta^{n,1/4} +\beta \nabla \varrho^{n,1/4},\nabla (\zeta^{n+1/2}-\zeta^{n-1/2}) )
     \nonumber \\
  & \qquad \qquad
+  \Delta t(\alpha\nabla \Psi^{n,1/4} +\beta \nabla \xi^{n,1/4},\bar{\partial}_t (\zeta^{n+1/2} +  \zeta^{n-1/2}) )\\
    &
 \qquad \qquad  
 - \Delta t (\bar{\partial}_t^2 \rho^n - R^n ,\bar{\partial}_t (\zeta^{n+1/2} +  \zeta^{n-1/2})) \nonumber \\
    &  \qquad \qquad - \Delta t a_0 (\nabla( \bar{\partial}_t^2 \rho^n - R^n),\nabla (\bar{\partial}_t (\zeta^{n+1/2} +  \zeta^{n-1/2})))
\end{align*}
with $\Delta t \delta_t \zeta^n=\zeta^{n+1/2}-\zeta^{n-1/2}$ for the first two terms on the right-hand side and
$\delta_t \zeta^n = \frac{1}{2} (\bar{\partial}_t \zeta^{n+1/2} + \bar{\partial}_t \zeta^{n-1/2})$ for the remaining terms.
Next, we again multiply the equation \eqref{errror-eqn2} by $2 \Delta t$ and choose   $\psi_h= \Psi^{n+1/2}$ as the test function. Then utilize \eqref{p2;stab5}(i) 
to obtain
\begin{align*}
 & {a_1} \norm{\Psi^{n+1}}^2-{a_1}\norm{\Psi^{n}}^2\\
 &\quad+2\Delta t \Big[{b_1}\norm{\Psi^{n+1/2}}^2+{c_1} \norm{\nabla \Psi^{n+1/2}}^2+\alpha   (\nabla \bar{\partial}_t\zeta^{n+1/2} ,\nabla \Psi^{n+1/2})\Big]\nonumber\\
    &
    =2\Delta t \Big[-a_1(\bar{\partial}_t\eta^{n+1/2}, \Psi^{n+1/2})+\gamma (\bar{\partial}_t{\xi^{n+1/2}}, \Psi^{n+1/2})+\gamma (\bar{\partial}_t{\varrho^{n+1/2}}, \Psi^{n+1/2}) \nonumber \\
    &
    \qquad\qquad-b_1(\eta^{n+1/2},\Psi^{n+1/2})
    -\alpha (\nabla \bar{\partial}_t\rho^{n+1/2} ,\nabla \Psi^{n+1/2})
   +a_1 (\tau^n, \Psi^{n+1/2})\nonumber \\
    &
    \qquad\qquad-\gamma (s^n, \Psi^{n+1/2})+\alpha (\nabla r^n, \nabla \Psi^{n+1/2})\Big].
    \end{align*}
   Similarly, we multiply \eqref{errror-eqn3} by $2 \Delta t$, select the test function $q_h= \xi^{n+1/2}$, and employ \eqref{p2;stab5}(ii) to get
    \begin{align*}
    &{a_2} \norm{\xi^{n+1}}^2-{a_2}\norm{\xi^{n}}^2+2\Delta t\Big[{\kappa} \norm{\nabla \xi^{n+1/2}}^2+\beta  (\nabla \bar{\partial}_t \zeta^{n+1/2},\nabla \xi^{n+1/2})
   \Big]\nonumber\\
    &
   =2\Delta t \Big[-a_2 (\bar{\partial}_t \varrho^{n+1/2},\xi^{n+1/2})+\gamma   (\bar{\partial}_t{\Psi}^{n+1/2},\xi^{n+1/2}) +\gamma (\bar{\partial}_t \eta^{n+1/2},\xi^{n+1/2})\nonumber\\
    &
   \quad
-\beta (\nabla \bar{\partial}_t \rho^{n+1/2},\nabla \xi^{n+1/2})
    +a_2 (s^n,\xi^{n+1/2})-\gamma(\tau^n,\xi^{n+1/2})+\beta (\nabla r^n,\nabla \xi^{n+1/2})\Big].
\end{align*}
After adding the last three displayed equations (after multiplying the first equation by 4)  and summing  for $n = 1, 2, \cdots, m$, where $1 \le m \le N-1$,  we can then  use \eqref{P1 a_h_properties} and \eqref{t-norm} to produce
\begin{align}
   & 4\norm{\bar{\partial}_t \zeta^{m+1/2}}^2+4a_0\norm{\nabla \bar{\partial}_t \zeta^{m+1/2}}^2+   4d_0 C_{\rm Coer}\norm{\zeta^{m+1/2}}^2_h\nonumber\\
   &\quad+{a_1} \norm{\Psi^{m+1}}^2+{a_2}\norm{\xi^{m+1}}^2 
 +2 \norm{(\Psi^{m},\xi^{m})}_H^2\nonumber\\
  &\le
  8 \sum_{n=1}^m \Big[( F^{n,1/4}, (Q-I)(\zeta^{n+1/2} -\zeta^{n-1/2}))\nonumber\\
  &\qquad\qquad\qquad+(\nabla (\alpha \eta^{n,1/4}+\beta \varrho^{n,1/4}), \nabla ( \zeta^{n+1/2}- \zeta^{n-1/2}))\Big]
  \nonumber\\
  &\quad
   + \Delta t \sum_{n=1}^m\Big[4(\alpha \nabla \Psi^{n,1/4}+\beta \nabla \xi^{n,1/4}, \nabla( \bar{\partial}_t\zeta^{n+1/2}+\bar{\partial}_t\zeta^{n-1/2})\nonumber\\
  &\qquad\qquad\qquad-  2( \alpha \nabla \Psi^{n+1/2}+\beta \nabla \xi^{n+1/2},\nabla \bar{\partial}_t\zeta^{n+1/2}) \Big]\nonumber\\
  &\quad
  -4\Delta t \sum_{n=1}^m  \Big[ (\bar{\partial}_t^2 \rho^n - R^n ,\bar{\partial}_t (\zeta^{n+1/2} +  \zeta^{n-1/2}))\nonumber\\
  &\qquad\qquad\qquad + a_0 (\nabla( \bar{\partial}_t^2 \rho^n - R^n),\nabla (\bar{\partial}_t (\zeta^{n+1/2} +  \zeta^{n-1/2})))\Big]\nonumber\\
 &\quad+ 2\Delta t \sum_{n=1}^m\Big[-a_1(\bar{\partial}_t\eta^{n+1/2}, \Psi^{n+1/2})
    +\gamma (\bar{\partial}_t{\varrho^{n+1/2}}, \Psi^{n+1/2})\nonumber\\
  &\qquad\qquad\qquad-b_1(\eta^{n+1/2},\Psi^{n+1/2}) 
   -\alpha (\nabla \bar{\partial}_t  \rho^{n+1/2} ,\nabla \Psi^{n+1/2}) +a_1 (\tau^n, \Psi^{n+1/2})\nonumber\\
  &\qquad\qquad\qquad-\gamma (s^n, \Psi^{n+1/2})+\alpha (\nabla r^n, \nabla \Psi^{n+1/2})\Big]\nonumber\\
    &\quad +2\Delta t \sum_{n=1}^m\Big[-a_2 (\bar{\partial}_t \varrho^{n+1/2},\xi^{n+1/2})\nonumber\\
    &\qquad\qquad\qquad+\gamma (\bar{\partial}_t\eta ^{n+1/2},\xi^{n+1/2})-\beta (\nabla\bar{\partial}_t \rho^{n+1/2},\nabla \xi^{n+1/2})\nonumber\\
        &\qquad\qquad\qquad
    +a_2 (s^n,\xi^{n+1/2})-\gamma(\tau^n,\xi^{n+1/2})+\beta (\nabla r^n,\nabla \xi^{n+1/2})\Big]\nonumber\\
    &\quad+2\gamma\sum_{n=1}^m \Big[ (\Psi^{n+1}, \xi^{n+1})-(\Psi^{n},\xi^{n} )\Big]\nonumber\\
    &\quad + \Big[4\norm{\bar{\partial}_t \zeta^{1/2}}^2+4a_0\norm{\nabla\bar{\partial}_t \zeta^{1/2}}^2+4d_0C_{\rm Cont} \norm{\zeta^{1/2}}^2_h+{a_1}\norm{\Psi^{1}}^2+{a_2}\norm{\xi^{1}}^2 \Big]\nonumber\\
    &\quad =: T_1+T_2+T_3+T_4+T_5+T_6+T_7.\label{p2;error-all-terms}
   \end{align}
\textit{{Step 2} (Bound for $T_1$).}
 The summation by parts formula 
 $\sum_{n=1}^m g^{n,1/4}(h^{n+1/2}-h^{n-1/2})= g^{m,1/4}h^{m+1/2}-g^{1,1/4}h^{1/2}-\sum_{n=1}^{m-1} (g^{n+1,1/4}-g^{n,1/4})h^{n+1/2},$
{together with the observation} $g^{n+1,1/4}-g^{n,1/4}=1/4\int_{t_{n-1}}^{t_{n+2}}g_t \dt+1/4 \int_{t_{n}}^{t_{n+1}}g_t \dt$ reveals  
\begin{align}
T_1 & =   8 ( F^{m,1/4}, (Q-I) \zeta^ {m+1/2})- 8(  F^{1,1/4}, (Q-I) \zeta^ {1/2} ) \nonumber\\
&\;\;
-2\sum_{n=1}^{m-1}\int_{t_{n-1}}^{t_{n+2}} (F_{t}(t) ,
   (Q-I)\zeta^{n+1/2})\dt-2\sum_{n=1}^{m-1}\int_{t_{n}}^{t_{n+1}}(F_{t}(t) ,
   (Q-I)\zeta^{n+1/2})\dt
   \nonumber\\
 &\;\;
+8(\nabla (\alpha \eta^{m,1/4}+\beta\varrho^{m,1/4}), \nabla  \zeta^{m+1/2}) -8(\nabla(\alpha  \eta^{1,1/4} +\beta \varrho^{1,1/4}), \nabla  \zeta^{1/2})
\nonumber\\
         &\;\;
        -2\sum_{n=1}^{m-1}\int_{t_{n-1}}^{t_{n+2}}(\nabla (\alpha \eta_t(t)+\beta \varrho_t(t)), \nabla \zeta^{n+1/2})\dt\nonumber\\
         &\;\;
         -2\sum_{n=1}^{m-1}\int_{t_{n}}^{t_{n+1}}(\nabla(\alpha \eta_t(t)+\beta \varrho_t(t)), \nabla \zeta^{n+1/2})\dt.\label{eq:t1}
     \end{align}
     An application of Cauchy--Schwarz inequality, {the bounds form Lemma~\ref{bhcompanion_lem}(v),  the}  Young inequality (with $\epsilon = 4(d_0C_{\rm{Coer}})^{-1},4(d_0C_{\rm{Coer}})^{-1}$, respectively for the first two terms), and \eqref{norm-F}(i) result in
\begin{align*} 
    &8( F^{m,1/4}, (Q-I) \zeta^ {m+1/2}) \le 8C_1h^2\norm{F^{m,1/4}}\norm{ \zeta^ {m+1/2}}_h\\
 & \qquad\qquad\qquad\qquad\qquad\quad\;\le {16C_1^2}C_F^2(d_0C_{\rm{Coer}})^{-1}h^4 
    +{{d_0}}C_{\rm{Coer}}\norm{ \zeta^ {m+1/2}}^2_h,\\
    &
     8(  F^{1,1/4}, (I-Q) \zeta^ {1/2} )\le 8C_1h^2 \norm{F^{1,1/4}}\norm{ \zeta^ {1/2}}_h\\
     & \qquad\qquad\qquad\qquad\quad\;\;\le {16C_1^2}C_F^2(d_0C_{\rm{Coer}})^{-1}h^4 
     +{{d_0}}C_{\rm{Coer}}\norm{ \zeta^ {1/2}}^2_h . 
     \end{align*}
     Analogous arguments with $\epsilon = 6T(d_0C_{\rm{Coer}})^{-1},2T(d_0C_{\rm{Coer}})^{-1}$, respectively for the third and fourth terms give 
     \begin{align*}
    &
    2 \int_{t_{n-1}}^{t_{n+2}}(F_{t}(t) ,
    (I-Q)\zeta^{n+1/2})\dt\\
    &\le 6{{TC_1^2}}(d_0C_{\rm{Coer}})^{-1}h^4 \norm{F_t }^2_{L^2(t_{n-1},t_{n+2} ;L^2(\Omega))}+\frac{{d_0 }}{2T}C_{\rm{Coer}}\Delta t\norm{ \zeta^ {n+1/2}}^2_h,\\
    &  
     2\int_{t_{n}}^{t_{n+1}}(F_{t}(t) ,
     (I-Q)\zeta^{n+1/2})\dt\\
     &\le {{2 TC_1^2}}(d_0C_{\rm{Coer}})^{-1}h^4 \norm{F_t }^2_{L^2(t_{n},t_{n+1} ;L^2(\Omega))}+\frac{{d_0 }}{2T}C_{\rm{Coer}}\Delta t\norm{ \zeta^ {n+1/2}}^2_h.
\end{align*}
In the last two displayed inequalities, we have used $\int_{t_{n-1}}^{t_{n+2}}\norm{ \zeta^ {n+1/2}}^2_h\dt =3\Delta t \norm{ \zeta^ {n+1/2}}^2_h$ and $\int_{t_{n}}^{t_{n+1}}\norm{ \zeta^ {n+1/2}}^2_h\dt =\Delta t \norm{ \zeta^ {n+1/2}}^2_h$, respectively. Hence, it follows from \eqref{norm-F}(ii) that
\begin{align*}
  & 2 \sum_{n=1}^{m-1} \int_{t_{n-1}}^{t_{n+2}}(F_{t}(t) ,
    (I-Q)\zeta^{n+1/2})\dt\\
    &\le 6{{TC_1^2}}(C_F')^2(d_0C_{\rm{Coer}})^{-1}h^4 +\frac{{d_0 }}{2T}C_{\rm{Coer}}\Delta t
    \sum_{n=1}^{m-1}\norm{ \zeta^ {n+1/2}}^2_h,\\
    &  
     2\sum_{n=1}^{m-1} \int_{t_{n}}^{t_{n+1}}(F_{t}(t) ,
     (I-Q)\zeta^{n+1/2})\dt\\
     &\le {{2 TC_1^2}}(C_F')^2(d_0C_{\rm{Coer}})^{-1}h^4 +\frac{{d_0 }}{2T}C_{\rm{Coer}}\Delta t\sum_{n=1}^{m-1}\norm{ \zeta^ {n+1/2}}^2_h.
\end{align*}
Elementary manipulations analogous to \eqref{ist-T2} show
\begin{align}
      &8(\nabla ( \alpha\eta^{m,1/4}+\beta \varrho^{m,1/4}), \nabla  \zeta^{m+1/2})\nonumber\\
      & =
    8 (\nabla ( \alpha\eta^{m,1/4}+\beta \varrho^{m,1/4}), \nabla(I-Q)\zeta^{m+1/2})\nonumber\\ &\qquad-8 (  \alpha\eta^{m,1/4}+\beta \varrho^{m,1/4}, \Delta (Q \zeta^{m+1/2})). \label{ist-T2-ful}
     \end{align}
Then, similar to \eqref{2nd-T2}, utilize Cauchy--Schwarz's  inequality, $\norm{\nabla(Q-I)\zeta^{m+1/2}} \le C_1 h \|\zeta^{m+1/2}\|_h$ from  Lemma~\ref{bhcompanion_lem}(v) (with $s=1$ and $v=0$), and  Young's inequality for first two terms on the right-hand side  of \eqref{ist-T2-ful} as
     \begin{align}
       &  8 (\nabla ( \alpha\eta^{m,1/4}+\beta \varrho^{m,1/4}), \nabla(I-Q)\zeta^{m+1/2}) \nonumber\\
        & \le 8C_1 h \big(\alpha \norm{\nabla\eta^{m,1/4}}\norm{\zeta^{m+1/2}}_h +\beta\norm{\nabla \varrho^{m,1/4}}\norm{\zeta^{m+1/2}}_h\big)\nonumber\\
        &\le C h^2\big(\norm{\nabla \eta^{m,1/4}}^2 + \norm{\nabla \varrho^{m,1/4}}^2\big) +\frac{d_0}{2}C_{\rm{Coer}}\norm{\zeta^{m+1/2}}_h^2.\label{T2-ist-ful}
     \end{align}
     For third and fourth terms of right-hand side of \eqref{ist-T2-ful}, we note that $\norm{\Delta (Q\zeta^{m+1/2})} \le \|Q\zeta^{m+1/2} \|_h $ and  a triangle inequality with Lemma~\ref{bhcompanion_lem}(v) shows
     $\norm{\Delta (Q\zeta^{m+1/2})} \le\Lambda \|\zeta^{m+1/2}\|_h$ for $\Lambda >0$.
     Therefore, combining this with the Cauchy--Schwarz  and  Young's inequality (as in \eqref{3rd-T2}) lead to
    \begin{align} 
     &-8(  \alpha\eta^{m,1/4}+\beta \varrho^{m,1/4}, \Delta (Q\zeta^{m+1/2}))\nonumber\\
     &
     \le   8\alpha\Lambda\norm{\eta^{m,1/4}} \norm{\zeta^{m+1/2}}_h +8\beta\Lambda\norm{\varrho^{m,1/4}} \norm{\zeta^{m+1/2}}_h\nonumber\\
     &\le 64\Lambda^2 {(d_0C_{\rm Coer})^{-1}}\big(\alpha^2\norm{\eta^{m,1/4}}^2+\beta^2\norm{\varrho^{m,1/4}}^2\big)+\frac{d_0}{2}C_{\rm Coer}\norm{\zeta^{m+1/2}}^2_h.\label{T2-2nd-ful}
      \end{align}
      A combination of \eqref{T2-ist-ful}-\eqref{T2-2nd-ful} in \eqref{ist-T2-ful} and bounds from \eqref{m-1}-\eqref{m-2} yields
\begin{align*}
&    8(\nabla ( \alpha\eta^{m,1/4}+\beta \varrho^{m,1/4}), \nabla  \zeta^{m+1/2})\\
 & \le C(h^{2+2\sigma}+h^{4\sigma}) \big(\norm{\theta}^2_{L^\infty(t_{m-1},t_{m+1};H^{1+\sigma}(\Omega))} +\norm{p}^2_{L^\infty(t_{m-1},t_{m+1};H^{1+\sigma}(\Omega))}  \big)\\
 &\qquad+{d_0}C_{\rm{Coer}}\norm{\zeta^{m+1/2}}^2_h.
\end{align*}
Analogous arguments lead to
\begin{align*}
   &      -8 (\nabla (\alpha \eta^{1,1/4}+\beta \varrho^{1,1/4}), \nabla  \zeta^{1/2})\\
   &
          \le C(h^{2+2\sigma}+h^{4\sigma}) \big(\norm{\theta}^2_{L^\infty(0,t_2;H^{1+\sigma}(\Omega))} +\norm{p}^2_{L^\infty(0,t_2;H^{1+\sigma}(\Omega))}  \big)
          +{d_0}C_{\rm{Coer}}\norm{\zeta^{1/2}}^2_h,\\
&-\sum_{n=1}^{m-1}\int_{t_{n-1}}^{t_{n+2}}\Big[(\nabla(\alpha \eta_t(t)+\beta \varrho_t(t)), \nabla \zeta^{n+1/2})\Big] \dt\\
&\quad-\sum_{n=1}^{m-1}\int_{t_{n}}^{t_{n+1}}\Big[(\nabla(\alpha \eta_t(t)+\beta \varrho_t(t)), \nabla \zeta^{n+1/2})\Big] \dt\\
&\le  C(h^{2+2\sigma}+h^{4\sigma})\big(\norm{\theta_t}^2_{L^2(0,T;H^{1+\sigma}(\Omega))}+\norm{p_t}^2_{L^2(0,T;H^{1+\sigma}(\Omega))}\big) \\
&\quad+\frac{d_0}{T}C_{\rm{Coer}}\Delta t \sum_{n=1}^{m-1}\norm{\zeta^{n+1/2}}^2_h.
\end{align*}
\rev{Putting together the previous bounds back into} \eqref{eq:t1} establishes
\begin{align}
    T_1&\le C\Big(h^4+(h^{2+2\sigma}+h^{4\sigma})\big(\norm{\theta}^2_{L^\infty(t_{0},t_{2};H^{1+\sigma}(\Omega))} +\norm{p}^2_{L^\infty(t_{0},t_{2};H^{1+\sigma}(\Omega))}\nonumber
\\
&\qquad+\norm{\theta}^2_{L^\infty(t_{m-1},t_{m+1};H^{1+\sigma}(\Omega))} +\norm{p}^2_{L^\infty(t_{m-1},t_{m+1};H^{1+\sigma}(\Omega))}\nonumber \\
&\qquad +\norm{\theta_t}^2_{L^2(0,T;H^{1+\sigma}(\Omega))}+\norm{p_t}^2_{L^2(0,T;H^{1+\sigma}(\Omega))}\big)\Big)\nonumber\\
&\qquad+2{d_0C_{\rm{Coer}}}\norm{\zeta^{1/2}}^2_h+2{d_0C_{\rm{Coer}}}\norm{\zeta^{m+1/2}}^2_h \nonumber\\
&\qquad+ \frac{2d_0 }{T}C_{\rm{Coer}}\Delta t\sum_{n=1}^{m-1}\norm{\zeta^{n+1/2}}^2_h,\label{Tm1-Tm8}
\end{align}
where the generic constant $C$ depends on $C_1, C_F, C_F', d_0^{-1}$, and $C_{\rm Coer}^{-1}$.

\noindent\textit{{Step 3} (Bound for $T_2$).}  Utilize the arguments similar to \eqref{p2-stab-all} (with  $\Theta^n$,   $U^n$, $P^n$ replaced by $\Psi^n$, $\zeta^n$, $\xi^n$, respectively) to obtain
\begin{align*}
    T_2&:= \Delta t \sum_{n=1}^m\Big[4(\alpha \nabla \Psi^{n,1/4}+\beta \nabla \xi^{n,1/4}, \nabla( \bar{\partial}_t\zeta^{n+1/2}+\bar{\partial}_t\zeta^{n-1/2}))\\
    &\qquad\qquad\qquad- 2 ( \alpha \nabla \Psi^{n+1/2}+\beta \nabla \xi^{n+1/2},\nabla \bar{\partial}_t\zeta^{n+1/2})\Big]\\
    &\;\;= 4\Delta t \sum_{n=1}^m (\alpha \nabla \Psi^{n,1/4}+\beta \nabla \xi^{n,1/4}, \nabla \bar{\partial}_t \zeta^{n-1/2})\\
    &\qquad+2\Delta t \sum_{n=1}^m (\alpha \nabla \Psi^{n-1/2}+\beta \nabla \xi^{n-1/2},\nabla \bar{\partial}_t\zeta^{n+1/2}).
\end{align*}
{Follow the approach used in Steps 2-3 of Theorem~\ref{p2;stability-thm} (more precisely see the bounds \eqref{p2;nabla-pm+1-stab}-\eqref{-T2})  to show} 
\begin{align}
T_2&\le 
  {c_1} \Delta t \norm{\nabla \Psi^{1/2}}^2+{\kappa} \Delta t\norm{\nabla \xi^{1/2}}^2
  +\Delta t \sum_{n=1}^{m}\Big[{c_1} \norm{\nabla \Psi^{n+1/2}}^2+\kappa\norm{\nabla \xi^{n+1/2}}^2\Big]\nonumber\\
 &\qquad+\frac{\Delta t}{2} \sum_{n=1}^{m-1}\Big[{c_1} \norm{\nabla \Psi^{n+1/2}}^2+\kappa\norm{\nabla \xi^{n+1/2}}^2\Big]\nonumber\\
 &\qquad
 + \frac{(\Delta t)^2}{a_0} \sum_{n=1}^{m-1}\Big[ \alpha^2\norm{\nabla \Psi^{n-1/2}}^2 +\beta^2\norm{\nabla \xi^{n-1/2}}^2\Big]\nonumber\\
 &\qquad+ 2{a_0} \norm{\nabla \bar{\partial}_t \zeta ^{m+1/2}}^2
+4\Delta t \bigg(\frac{\alpha^2}{c_1}+\frac{\beta^2}{\kappa}\bigg)\sum_{n=1}^{m}\norm{\nabla \bar{\partial}_t \zeta^{n-1/2}}^2\nonumber\\
 &\qquad+2\Delta t\bigg(\frac{\alpha^2}{c_1}+\frac{\beta^2}{\kappa}\bigg)\sum_{n=1}^{m-1}\norm{\nabla \bar{\partial}_t \zeta^{n+1/2}}^2.
  \end{align}
\textit{{Step 4} (Bound for $T_3-T_5$).}
A repeated application of \eqref{an-bn}  with $\epsilon= 8T$ yields
\begin{align*}
    \nonumber
    T_3&:=-4\Delta t \sum_{n=1}^m  \Big( (\bar{\partial}_t^2 \rho^n - R^n ,\bar{\partial}_t (\zeta^{n+1/2} +  \zeta^{n-1/2}))  \nonumber\\
    &
    \qquad+ a_0 (\nabla( \bar{\partial}_t^2 \rho^n - R^n),\nabla (\bar{\partial}_t (\zeta^{n+1/2} +  \zeta^{n-1/2})))\Big] \nonumber\\
   &\le 16{T}\Delta t \sum_{n=1}^m\Big[\norm{\bar{\partial}_t^2 \rho^n}^2+a_0\norm{\nabla  \bar{\partial}_t^2 \rho^n}^2+\norm{R^n}^2
    +a_0\norm{\nabla R^n}^2\Big]\nonumber\\
    &
    \qquad
    + 2\frac{\Delta t}{T}\sum_{n=1}^m \Big[\norm{\bar{\partial}_t\zeta^{n-1/2}}^2+a_0\norm{\nabla\bar{\partial}_t\zeta^{n-1/2}}^2\Big]\nonumber\\
    &
    \qquad+\frac{\Delta t}{T}\Big[\norm{\bar{\partial}_t\zeta^{m+1/2}}^2+a_0 \norm{\nabla\bar{\partial}_t\zeta^{m+1/2}}^2\Big]\nonumber
    \\
    & \le C\left(h^{4\sigma} \norm{u_{tt}}^2_{L^2(0,T;H^{2+\sigma}(\Omega))} +
    (\Delta t)^4 \norm{\nabla u_{tttt}}^2_{L^2(0,T;L^2(\Omega))}\right) \nonumber\\
    &\qquad+ 2\frac{\Delta t}{T}\sum_{n=1}^m \Big[\norm{\bar{\partial}_t\zeta^{n-1/2}}^2+a_0\norm{\nabla\bar{\partial}_t\zeta^{n-1/2}}^2\Big]\nonumber\\
    &
    \qquad+\Big[\norm{\bar{\partial}_t\zeta^{m+1/2}}^2+a_0 \norm{\nabla\bar{\partial}_t\zeta^{m+1/2}}^2\Big]
\end{align*}
with \eqref{rho-n},\eqref{R-n} and $\Delta t/T \le 1$ applied for the last term (in the last line).
  Another repeated application of \eqref{an-bn}  with $\epsilon = 8T(a_1-|\gamma|/\gamma_0)^{-1} $ and $\Psi^{n+1/2}=\frac{1}{2}(\Psi^{n}+\Psi^{n+1})$ leads to a bound for four terms in $T_4$ as 
    \begin{align*}
         & 2 \Delta t \sum_{n=1}^m\big[-a_1(\bar{\partial}_t\eta^{n+1/2}, \Psi^{n+1/2})
    +\gamma (\bar{\partial}_t{\varrho^{n+1/2}}, \Psi^{n+1/2})\\
    &\qquad\qquad\qquad
    +a_1 (\tau^n, \Psi^{n+1/2})-\gamma (s^n, \Psi^{n+1/2})\big]\\
    & \le 4 \Delta t T(a_1-|\gamma|/\gamma_0)^{-1} \sum_{n=1}^m\big[a_1^2\norm{\bar{\partial}_t\eta^{n+1/2}}^2+\gamma^2\norm{\bar{\partial}_t{\varrho^{n+1/2}}}^2
    +a_1^2\norm{\tau^n}^2+\gamma^2\norm{s^n}^2 \big]\\
    &\qquad\quad+\frac{ \Delta t }{T}(a_1-|\gamma|/\gamma_0) \sum_{n=1}^m \norm{\Psi^{n}}^2+\frac{ \Delta t }{2T}(a_1-|\gamma|/\gamma_0)\norm{\Psi^{m+1}}^2.
    \end{align*}
On the other hand,  Cauchy--Schwarz and Young's inequalities (with $\epsilon = 1/4,c_1/8,c_1/8$) bound the remaining terms of $T_4$ as
\begin{align*}
    &2 \Delta t \sum_{n=1}^m \big[-b_1(\eta^{n+1/2},\Psi^{n+1/2}) -\alpha (\nabla \bar{\partial}_t  \rho^{n+1/2} ,\nabla \Psi^{n+1/2})+\alpha (\nabla r^n, \nabla \Psi^{n+1/2})\big]\\
    &\quad \le
    \frac{ \Delta t }{4}\sum_{n=1}^m \big[b_1 \norm{\Psi^{n+1/2}}^2+c_1 \norm{\nabla \Psi^{n+1/2}}^2\big]\\
    &\quad+4 \Delta t \sum_{n=1}^m \big[ b_1\norm{\eta^{n+1/2}}^2+2\alpha^2 c_1^{-1}\norm{\nabla \bar{\partial}_t \rho^{n+1/2}}^2
    +2\alpha^2 c_1^{-1}\norm{\nabla r^n}^2\big].
\end{align*}
\rev{Then, thanks to the} last two inequalities, the following bound holds 
    \begin{align*}
         T_4&:=2\Delta t \sum_{n=1}^m\big[-a_1(\bar{\partial}_t\eta^{n+1/2}, \Psi^{n+1/2})
    +\gamma (\bar{\partial}_t{\varrho^{n+1/2}}, \Psi^{n+1/2}) \nonumber\\
    &\qquad\quad\quad\qquad-b_1(\eta^{n+1/2},\Psi^{n+1/2}) 
   -\alpha (\nabla \bar{\partial}_t  \rho^{n+1/2} ,\nabla \Psi^{n+1/2}) +a_1 (\tau^n, \Psi^{n+1/2}) \nonumber\\
    &\qquad\quad\quad\qquad-\gamma (s^n, \Psi^{n+1/2})+\alpha (\nabla r^n, \nabla \Psi^{n+1/2})\big]\nonumber\\
   & \le  \Delta t \sum_{n=1}^m\Big[\frac{(a_1-|\gamma|/\gamma_0)}{T}\norm{\Psi^{n}}^2+\frac{b_1}{4} \norm{\Psi^{n+1/2}}^2+\frac{c_1}{4} \norm{\nabla \Psi^{n+1/2}}^2 \Big]\nonumber\\
   &\quad+\frac{\Delta t}{2T}(a_1-|\gamma|/\gamma_0)\norm{\Psi^{m+1}}^2 
  +C\Delta t\sum_{n=1}^m\Big[\norm{\bar{\partial}_t\eta^{n+1/2}}^2+\norm{\bar{\partial}_t{\varrho^{n+1/2}}}^2\nonumber\\
  &\qquad\quad\quad\qquad+\norm{\eta^{n+1/2}}^2+\norm{\nabla \bar{\partial}_t \rho^{n+1/2}}^2
    +\norm{\tau^n}^2+\norm{s^n}^2+\norm{\nabla r^n}^2\Big],
    \end{align*}
    with a generic constant that depends on the material parameters.
  Analogous steps are now employed to bound 
    $ T_5:=2 \Delta t \sum_{n=1}^m\big[ (-a_2\bar{\partial}_t \varrho^{n+1/2} +\gamma \bar{\partial}_t\eta ^{n+1/2},\xi^{n+1/2})
        -\beta (\nabla\bar{\partial}_t \rho^{n+1/2},\nabla \xi^{n+1/2})
    + (a_2s^n-\gamma\tau^n,\xi^{n+1/2})+\beta (\nabla r^n,\nabla \xi^{n+1/2})\big]$~as
    \begin{align*}
        T_5&\le  \Delta t \sum_{n=1}^m\Big[\frac{(a_2-|\gamma|\gamma_0)}{T}\norm{\xi^{n}}^2+\frac{\kappa}{4} \norm{\nabla \xi^{n+1/2}}^2 \Big]+\frac{\Delta t}{2T}(a_2-|\gamma|\gamma_0)\norm{\xi^{m+1}}^2\nonumber\\
   &\; \quad+ C \Delta t\sum_{n=1}^m\Big[\norm{\bar{\partial}_t \varrho^{n+1/2}}^2+\norm{\bar{\partial}_t \eta^{n+1/2}}^2+\norm{\nabla\bar{\partial}_t \rho^{n+1/2}}^2\\
   &\qquad\qquad\qquad\qquad+\norm{\tau^n}^2+\norm{s^n}^2+\norm{\nabla r^n}^2\Big].
    \end{align*}
 \noindent
Next we put together  the bounds for $T_3-T_5$, use \eqref{m-3} and \eqref{rnsn} for controlling the terms in the last lines of $T_4$ and $T_5$, recall  the definitions  \eqref{t-norm} and \eqref{lm} to arrive at
 \begin{align}
     &T_3+T_4+T_5 \nonumber\\
     &\le C\big( L_{(u,\theta,p,T)}h^{4\sigma}+ M_{(u,\theta,p,T)}(\Delta t)^4\big)\nonumber\\
     &\qquad+\norm{\bar{\partial}_t\zeta^{m+1/2}}^2+a_0\norm{\nabla\bar{\partial}_t\zeta^{m+1/2}}^2+\frac{1}{4}\norm{(\Psi^m,\xi^m)}^2_{H}\nonumber\\
     &\qquad
     +\frac{1}{2}(a_1-|\gamma|/\gamma_0)\norm{\Psi^{m+1}}^2+\frac{1}{2}(a_2-|\gamma|\gamma_0)\norm{\xi^{m+1}}^2\nonumber\\
     &\qquad+2\frac{\Delta t}{T}\sum_{n=1}^m \Big[\norm{\bar{\partial}_t\zeta^{n-1/2}}^2+a_0\norm{\nabla\bar{\partial}_t\zeta^{n-1/2}}^2\Big]\nonumber
\\&\qquad+\frac{\Delta t}{T} \sum_{n=1}^m\Big[{(a_1-|\gamma|/\gamma_0)}\norm{\Psi^{n}}^2+{(a_2-|\gamma|\gamma_0)}\norm{\xi^{n}}^2\Big],
 \end{align}
with $\Delta t/T \le 1$ utilized in two terms above that involve $\|\Psi^{m+1}\|^2$ and $\|\xi^{m+1}\|^2$.

\medskip
\noindent\textit{{Step 5} (Bounds for $T_6$ and $T_7$).}
Elementary manipulations show
\begin{align}
T_6&:=2\gamma\sum_{n=1}^m \Big[ (\Psi^{n+1}, \xi^{n+1})-(\Psi^{n},\xi^{n} )\Big]\nonumber\\
&\;\le \frac{|\gamma|}{\gamma_0}\norm{\Psi^{m+1}}^2+ {|\gamma|\gamma_0}\norm{\xi^{m+1}}^2+{|\gamma|}\Big(\norm{\Psi^{1}}^2+\norm{\xi^{1}}^2\Big).\nonumber
\end{align}
This, $\Psi^0=0$ and $\xi^0=0$, and the definition $T_7:=4\norm{\bar{\partial}_t \zeta^{1/2}}^2+4a_0\norm{\nabla\bar{\partial}_t \zeta^{1/2}}^2+4d_0C_{\rm Cont} \norm{\zeta^{1/2}}^2_h+{a_1}\norm{\Psi^{1}}^2+{a_2}\norm{\xi^{1}}^2 $ lead to
\begin{align}
&T_6+T_7 \nonumber\\
&\le 4\norm{\bar{\partial}_t \zeta^{1/2}}^2+4a_0\norm{\nabla\bar{\partial}_t \zeta^{1/2}}^2+4d_0C_{\rm Cont} \norm{\zeta^{1/2}}^2_h\nonumber\\
&\qquad+2\Big({a_1}+{|\gamma|}\Big)\norm{\Psi^{1/2}}^2+2\Big({a_2}+{|\gamma|}\Big)\norm{\xi^{1/2}}^2+\frac{|\gamma|}{\gamma_0}\norm{\Psi^{m+1}}^2+ {|\gamma|\gamma_0}\norm{\xi^{m+1}}^2\nonumber\\
& \le C\big(h^{4 \sigma} +L_{(u,\theta,p,t_1)}h^{4\sigma}+M_{(u,\theta,p,t_1)}(\Delta t)^4\big)\nonumber\\
&\qquad+\frac{|\gamma|}{\gamma_0}\norm{\Psi^{m+1}}^2+ {|\gamma|\gamma_0}\norm{\xi^{m+1}}^2,
    \label{Tm1623}
\end{align}
where elementary manipulations and Lemma~\ref{initial-error-lemm} were used in the last step. 

\noindent\textit{{Step 6} (Consolidation).}
First note that, by definition  \eqref{t-norm} and some basic manipulations, we can assert that 
\begin{align*}
&\frac{7}{4}\norm{(\Psi^{m},\xi^{m})}_H^2-\Delta t \sum_{n=1}^{m}\Big[{c_1} \norm{\nabla \Psi^{n+1/2}}^2+\kappa\norm{\nabla \xi^{n+1/2}}^2\Big]\\
&\quad-\frac{\Delta t}{2} \sum_{n=1}^{m-1}\Big[{c_1} \norm{\nabla \Psi^{n+1/2}}^2+\kappa\norm{\nabla \xi^{n+1/2}}^2\Big]\\
&= \frac{\Delta t}{4} \sum_{n=1}^m\Big[7 b_1\norm{\Psi^{n+1/2}}^2 +c_1 \norm{\nabla\Psi^{n+1/2}}^2+\kappa \norm{\nabla \xi^{n+1/2}}^2 \Big]\nonumber\\
&\quad+ \frac{\Delta t}{2} \Big[c_1 \norm{\nabla\Psi^{m+1/2}}^2+\kappa \norm{\nabla \xi^{m+1/2}}^2 \Big]\\
&\ge \frac{\Delta t}{2} \Big[c_1 \norm{\nabla\Psi^{m+1/2}}^2+\kappa \norm{\nabla \xi^{m+1/2}}^2 \Big]+\frac{1}{4}\norm{(\Psi^{m},\xi^{m})}_H^2.
\end{align*}
This, a combination of \eqref{Tm1-Tm8}-\eqref{Tm1623} in \eqref{p2;error-all-terms}  with 
$2d_0 C_{\rm Coer}\|\zeta^{1/2} \|_h^2 + {c_1} \Delta t \norm{\nabla \Psi^{1/2}}^2+{\kappa} \Delta t\norm{\nabla \xi^{1/2}}^2 \lesssim h^{4\sigma} +L_{(u,\theta,p,t_1)}h^{4\sigma}+M_{(u,\theta,p,t_1)}(\Delta t)^4$ from Lemma~\ref{initial-error-lemm} to bound the terms that involve the initial bounds in $T_1$ and $T_2$, and some elementary manipulations of the constants yield 
\begin{align*}
   &3\norm{\bar{\partial}_t \zeta^{m+1/2}}^2+a_0\norm{\nabla \bar{\partial}_t \zeta^{m+1/2}}^2+2{d_0C_{\rm{Coer}}}\norm{\zeta^{m+1/2}}^2_h\nonumber\\
    & \quad +\frac{1}{2}({a_1-|\gamma|/\gamma_0}) \norm{\Psi^{m+1}}^2
  +\frac{1}{2}({a_2-|\gamma|\gamma_0})\norm{\xi^{m+1}}^2 \nonumber\\
  &\quad + \frac{\Delta t}{2} c_1 \norm{\nabla\Psi^{m+1/2}}^2+ \frac{\Delta t}{2}\kappa \norm{\nabla \xi^{m+1/2}}^2 +\frac{1}{4}\norm{(\Psi^{m},\xi^{m})}_H^2\\
  &\lesssim  h^{4\sigma}+ \big[L_{(u,\theta,p,t_1)}+L_{(u,\theta,p,T)}\big]h^{4\sigma}+\big[M_{(u,\theta,p,t_1)}+M_{(u,\theta,p,T)}\big](\Delta t)^4\nonumber\\
  \qquad
  &\quad+  \frac{\Delta t }{T}\nu \sum_{n=0}^{m-1}\Big[3\norm{\bar{\partial}_t\zeta^{n+1/2}}^2+a_0\norm{\nabla\bar{\partial}_t\zeta^{n-1/2}}^2\\
  &\quad+2d_0C_{\rm{Coer}}\norm {\zeta^{n+1/2}}^2_h +\frac{1}{2}(a_1-|\gamma|/\gamma_0)\norm{\Psi^{n+1}}^2\nonumber  \\
  &\quad  +\frac{1}{2}(a_2-|\gamma|\gamma_0)\norm{\xi^{n+1}}^2+\frac{\Delta t}{2} c_1\norm{\nabla \Psi^{n+1/2}}^2+\frac{\Delta t}{2}{\kappa} \Delta t \norm{\nabla \xi^{n+1/2}}^2\big)\Big].
   \end{align*}
The constant $\nu$ \rev{on} the right-hand side of the above expression is manipulated for an easy application of  Gronwall's Lemma \ref{P1 d-gronwall}. Now, we apply  Lemma~\ref{P1 d-gronwall} to arrive at 
\begin{align}
 &  \norm{\bar{\partial}_t \zeta^{m+1/2}}^2+a_0\norm{\nabla \bar{\partial}_t \zeta^{m+1/2}}^2+{d_0}\norm{\zeta^{m+1/2}}^2_h+({a_1-|\gamma|/\gamma_0}) \norm{\Psi^{m+1}}^2\nonumber\\
   &\quad +({a_2-|\gamma|\gamma_0})\norm{\xi^{m+1}}^2+
   \norm{(\Psi^{m},\xi^{m})}_H^2\nonumber \\
   &\quad + \frac{\Delta t}{2} \Big[c_1 \norm{\nabla\Psi^{m+1/2}}^2+\kappa \norm{\nabla \xi^{m+1/2}}^2 \Big] \nonumber\\
  &\lesssim  h^{4\sigma}+\big[L_{(u,\theta,p,t_1)}+L_{(u,\theta,p,T)}\big]h^{4\sigma}+\big[M_{(u,\theta,p,t_1)}+M_{(u,\theta,p,T)}\big](\Delta t)^4.\label{proj;estimate}
   \end{align}
   We ignore the non-negative term $\frac{\Delta t}{2} \left[c_1 \|\nabla\Psi^{m+1/2}\|^2 + \kappa \|\nabla \xi^{m+1/2}\|^2 \right]$ from the left-hand side and apply the definitions \eqref{p2;splitting1}-\eqref{p2;splitting3},   triangle inequality and the projections estimates from \eqref{P1 norm_ritz}-\eqref{P1 norm_ritz2},   eventually leading to the desired estimates. 
\end{proof}
\noindent
The $L^2$-estimates for $\theta$ and $p$ have already been derived in the above {theorem, while  for  $u$, we present the following result.}
\begin{cor}[{$L^2$ and $H^1$-estimates for deflection}]\label{corr}
   Suppose that $(u,\theta,p)$ and $(U^n,\Theta^n, P^n)$ solve \eqref{u-weak}-\eqref{p-weak} and \eqref{un}-\eqref{pn}, respectively. Then, under the assumptions of Theorem~\ref{error-estimates thm}, for $1\le m\le N-1$, the following error estimate holds
\begin{align*}
  \norm{ u^{m+1}- U^{m+1} }^2+a_0\norm{\nabla(u^{m+1}- U^{m+1})}^2 \lesssim  h^{4\sigma} +(\Delta t)^4 .
\end{align*}
\end{cor}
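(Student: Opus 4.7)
The strategy is to exploit the error decomposition \eqref{p2;splitting1}, namely $u^{m+1}-U^{m+1}=\rho^{m+1}+\zeta^{m+1}$, and to bound each piece by the quantities that Theorem~\ref{error-estimates thm} and Lemma~\ref{P1 ritz_lemma} already control. The part $\rho^{m+1}=u^{m+1}-\mathcal{R}_h u^{m+1}$ is handled directly by the Ritz-projection approximation property in \eqref{P1 norm_ritz}, giving $\|\rho^{m+1}\|^2+\|\nabla\rho^{m+1}\|^2\lesssim h^{4\sigma}\|u^{m+1}\|_{H^{2+\sigma}(\Omega)}^2\lesssim h^{4\sigma}$ under the regularity from Table~\ref{table:summary}. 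Thus by triangle inequality it suffices to control $\zeta^{m+1}$ in both $L^2$ and $H^1$ by $h^{4\sigma}+(\Delta t)^4$.

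The key observation is that, by construction in \eqref{U0,THETA0,P0}, the initial projected error vanishes: $\zeta^0=\mathcal{R}_h u^0-U^0=0$. Therefore, telescoping in time,
\begin{equation*}
\zeta^{m+1}=\zeta^0+\sum_{n=0}^{m}(\zeta^{n+1}-\zeta^{n})=\Delta t\sum_{n=0}^{m}\bar{\partial}_t\zeta^{n+1/2}.
\end{equation*}
A triangle inequality together with $(m+1)\Delta t\le T$ then yields
\begin{equation*}
\|\zeta^{m+1}\|\le \Delta t\sum_{n=0}^{m}\|\bar{\partial}_t\zeta^{n+1/2}\|\le T\,\max_{0\le n\le m}\|\bar{\partial}_t\zeta^{n+1/2}\|,
\end{equation*}
and an identical argument with $\nabla$ in place of the identity produces $\|\nabla\zeta^{m+1}\|\le T\max_n\|\nabla\bar{\partial}_t\zeta^{n+1/2}\|$. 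Both maxima on the right-hand side are bounded by $h^{2\sigma}+(\Delta t)^2$: the estimate for the index $0\le n\le m-1$ arising from Theorem~\ref{error-estimates thm} (and precisely \eqref{proj;estimate}), whereas the initial-time contribution $n=0$ is covered by Lemma~\ref{initial-error-lemm}. Squaring and absorbing the constant $T^2$ into the generic $\lesssim$ gives $\|\zeta^{m+1}\|^2+a_0\|\nabla\zeta^{m+1}\|^2\lesssim h^{4\sigma}+(\Delta t)^4$.

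Combining the two pieces via $\|u^{m+1}-U^{m+1}\|^2\le 2\|\rho^{m+1}\|^2+2\|\zeta^{m+1}\|^2$ (and the analogous inequality for gradients) yields the claimed bound. There is no genuine obstacle here, since all heavy lifting has already been done in Lemma~\ref{initial-error-lemm} and Theorem~\ref{error-estimates thm}; the only thing to verify carefully is that the initial error $\zeta^0=0$ (which is immediate from the choice $U^0=\mathcal{R}_h u^0$ in \eqref{U0,THETA0,P0}) so that no unwanted $O(1)$ residual enters the telescoping sum.
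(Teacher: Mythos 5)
Your argument is correct, and it reaches the conclusion by a slightly different final step than the paper. Both proofs use the same splitting $u^{m+1}-U^{m+1}=\rho^{m+1}+\zeta^{m+1}$, dispatch $\rho^{m+1}$ with \eqref{P1 norm_ritz}, and feed off the bound \eqref{proj;estimate}. Where you differ is in converting the discrete time-derivative bounds into a bound on $\zeta^{m+1}$ itself: you telescope $\zeta^{m+1}=\Delta t\sum_{n=0}^{m}\bar{\partial}_t\zeta^{n+1/2}$ from $\zeta^0=0$ and pay a harmless factor $T$, which forces you to invoke Lemma~\ref{initial-error-lemm} explicitly for the $n=0$ term. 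The paper instead uses the algebraic identity $\zeta^{m+1}=\zeta^{m+1/2}+\tfrac{1}{2}\Delta t\,\bar{\partial}_t\zeta^{m+1/2}$ together with the discrete Poincar\'e inequalities $\|\zeta^{m+1/2}\|\le\|\zeta^{m+1/2}\|_h$ and $\|\nabla\zeta^{m+1/2}\|\le\|\zeta^{m+1/2}\|_h$, so it only needs quantities at the single level $m+1/2$ (in particular the superconvergent broken-norm bound $\|\zeta^{m+1/2}\|_h^2\lesssim h^{4\sigma}+(\Delta t)^4$, which your route does not use at all). Your version is marginally more self-contained in that it avoids the discrete Poincar\'e inequality; the paper's version is local in time and does not accumulate over time steps. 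One small index slip: the bound on $\|\bar{\partial}_t\zeta^{n+1/2}\|$ coming from Theorem~\ref{error-estimates thm} covers $1\le n\le m$, not $0\le n\le m-1$ as you wrote; the $n=0$ case is exactly the one supplied by Lemma~\ref{initial-error-lemm}, as you correctly state in the next clause.
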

\begin{proof}
        Ignoring the last four non-negative terms on the left-hand side of \eqref{proj;estimate}, we can obtain
\[
    \norm{\bar{\partial}_t \zeta^{m+1/2}}^2+a_0\norm{\nabla \bar{\partial}_t \zeta^{m+1/2}}^2+\norm{\zeta^{m+1/2}}^2_h \lesssim h^{{4\sigma} } +\Delta t ^4,  \qquad\text{ for } 1\le m\le N-1.\] 
Note that $\zeta^{m+1}= \zeta^{m+1/2}+\frac{1}{2}\Delta t \bar{\partial}_t\zeta^{m+1/2}$ (resp. $\nabla \zeta^{m+1}=\nabla \zeta^{m+1/2}+\frac{1}{2}\Delta t \nabla \bar{\partial}_t\zeta^{m+1/2}$). 
Then, { by the} discrete Poincar\'e   inequality we have $ \norm{\zeta^{m+1/2} } \le \norm{\zeta^{m+1/2} }_h$ (resp.$\norm{\nabla \zeta^{m+1/2} } \le \norm{\zeta^{m+1/2} }_h$), and hence  
\begin{align*}&\norm{ \zeta^{m+1}}^2+a_0\norm{\nabla \zeta^{m+1}}\\
& \lesssim \norm{\bar{\partial}_t \zeta^
{m+1/2}}^2+ \norm{\zeta^{m+1/2} }^2_h+a_0\norm{\nabla \bar{\partial}^2_t \zeta^
{m+1/2}}+ a_0\norm{\zeta^{m+1/2} }_h\\ &\lesssim (1+a_0)( h^{4\sigma}+(\Delta t)^4).\end{align*}
Therefore, simply using the triangle inequality we can obtain 
\begin{align*}&\norm{ u^{m+1}- U^{m+1} }^2+a_0\norm{\nabla(u^{m+1}- U^{m+1})}^2 \\ &  \lesssim \| \rho^{m+1}  \|^2 +\| \zeta^{m+1}  \|^2+ a_0\|\nabla \rho^{m+1}  \|^2 +a_0\| \nabla \zeta^{m+1}  \|^2 . \end{align*}
\rev{The} last three inequalities  \rev{together with}  \eqref{P1 norm_ritz} \rev{applied to} $\| \rho^{m+1}  \|^2+ a_0\|\nabla \rho^{m+1}  \|^2$ lead to the desired result. 
\end{proof}

\section{Numerical results}\label{sec:numer}
In this section, we investigate the application of the Kirchhoff–Love plate model in Subsection~\ref{subsec-numerics-Verif-kirchoff} to capture \ac{ted} in copper and \ac{tpe}  in flat Berea sandstone.
Subsections~\ref{subsec-num-smooth}-\ref{subsec-num-lshape} provide numerical results that validate theoretical estimates and illustrate the effective performance of the proposed scheme across different values of the parameter $\gamma$. The penalty parameter $\sigma_{\rm IP}$ is chosen according to  Ref. \refcite{MR4796047}. All simulations were conducted with the finite element library FEniCS\cite{Alns2015}, and executed on a desktop machine equipped with an Intel$^{\text{\textregistered{}}}$ Core\texttrademark{} i5-7500 CPU (Kaby Lake architecture), featuring 4 cores and 4 threads, operating at a base frequency of \SI{3.4}{\giga\hertz}.

\subsection{Example 1: Verification of Kirchhoff's model:  2D vs 3D   TED and TPE plate models}\label{subsec-numerics-Verif-kirchoff}
In this subsection, we illustrate Kirchhoff's hypothesis by comparing the solution of the  three-dimensional (3D) model in \eqref{diffusion-3dmodel} (resp. \eqref{poro-3dmodel}) for \ac{ted} (resp. \ac{tpe}) with two-dimensional (2D) model in \eqref{eq:coupled}, while systematically varying the plate thickness $d$. 
The plots in Figures~\ref{fig:TED}-\ref{fig:TPE} demonstrate that as the plate becomes thinner, the solution curves (plotted against time) from the 3D model approximates those for the 2D model. 

Building upon  classical theory as described in Ref. \refcite{Nowacki} (see also  Eq. (9) in Ref. \refcite{Aouadi}), given a space-time dependent loading $(\hat{f}_1(t),\hat{f}_2(t),\hat{f}_3(t))=\boldsymbol{f} (t): \widehat{\Omega}  \to \mathbb{R}^3$ ($\widehat{\Omega} \subset \mathbb{R}^3$), prescribed heat source  $\hat{\phi}(t): \widehat{\Omega}  \to \mathbb{R}$, and total amount of mass source/sink $\hat{g}(t):\widehat{\Omega}  \to \mathbb{R}$, the 3D \ac{ted}  model seeks the displacement vector $\boldsymbol {u}=(\hat{u}_1(t),\hat{u}_2(t),\hat{u}_3(t))$, the small temperature increment $\hat{\theta}=T_{\text{abs}}-T_0$ (with  $T_{\text{abs}},T_0$ as the absolute and reference temperature, respectively), and the chemical potential $\hat{p}$ such that
\begin{subequations}\label{diffusion-3dmodel}
\begin{align}
   \rho\boldsymbol{u}_{tt} -\boldsymbol{\nabla} \cdot \boldsymbol{\sigma}&=\rho\boldsymbol{f}  \qquad \text{in } \widehat{\Omega} \times [0,T],\label{3D1}\\ 
     \big(\frac{\rho c_E}{T_0}+\frac{\varpi^2}{\varrho}\big) \hat{\theta}_t+ \frac{\varpi}{\varrho}\hat{p}_t + \nabla \cdot \boldsymbol{q} +\gamma_1 \nabla \cdot\boldsymbol{u}_t&=\hat{\phi} \qquad\; \text{ in } \widehat{\Omega} \times [0,T],\label{3D2}\\
   \frac{1}{\varrho}\hat{p}_t+\frac{\varpi}{\varrho} \hat{\theta}_t+ \nabla \cdot \boldsymbol{p}+ \gamma_2 \nabla \cdot\boldsymbol{u}_t&=\hat{g} \qquad \;\; \text{ in } \widehat{\Omega} \times [0,T]. \label{3D3}
\end{align}
\end{subequations}
The $\boldsymbol{\sigma}=2\mu\nabla_{\mathrm{sym}}\boldsymbol{u} + [\lambda_0\nabla\cdot\boldsymbol{u}-\gamma_1\hat{\theta}-\gamma_2\hat{p}]\mathbf{I}$ above is the total Cauchy stress tensor, $\boldsymbol{q}=-{k_1}\nabla\hat{\theta}$ is the heat flux (Fourier’s law) and $\boldsymbol{p}=-k_2\nabla\hat{p}$ the diffusive flux (Fick’s law). The constants involved in the definition of $\boldsymbol{\sigma}$  are given by
$$\lambda_0= \lambda - \frac{(3\lambda+2\mu)^2\alpha^2_c}{\varrho},  \;\; \gamma_1 =(3\lambda+2\mu)\big(\alpha_t  +\frac{\varpi}{\varrho}\alpha_c\big),\;\; \gamma_2=\frac{(3\lambda+2\mu)\alpha_c}{\varrho},$$
with the basic parameters
from  Table~\ref{fig:subfig1}. The surfaces at $z=-d/2,d/2$
  are subject to traction-free and zero-flux boundary conditions, while the remaining boundaries are governed by homogeneous Dirichlet conditions. 
  
A dimensional reduction analysis in  eqns. (9)-(46) of Ref. \refcite{Aouadi} derives the  2D model  \eqref{eq:coupled} from the 3D model \eqref{diffusion-3dmodel} which seeks
   transverse displacement, first moments of temperature and chemical potential
   \begin{align}
   u=\frac{1}{ d }\int_{ - d /2}^{  d /2} \hat{u}_3 \dz,\; \theta= \int_{ - d /2}^{  d /2} z\hat{\theta} \dz,\; \text{and} \;\; p=\int_{-  d /2}^{  d /2} z\hat{p} \dz.\label{momonts-ted}
   \end{align}
  The transformation of the model coefficients in this process is given in Table~\ref{table-coeefients} and that of moments of the right-hand side functions (forces and sources)  by 
   \begin{align}
       &f=\frac{1}{  d }\int_{ - d /2}^{  d /2} \hat{f}_3\dz ,\;\; \phi= \frac{12}{\rho  d ^4}\int_{ - d /2}^{  d /2} z\hat{\phi}\dz,\; \text{and} \;\; g=\frac{12}{\rho  d ^4}\int_{-  d /2}^{  d /2} z\hat{g}\dz.      \label{load4}
   \end{align}
It is very important to note that the constant $\lambda_0$ is assumed to satisfy  $\lambda_0+\mu >0$\cite{paul2023novel} and this condition makes all the coefficients except $\gamma$ in the 2D model \eqref{eq:coupled} positive, see Table~\ref{table-coeefients}.
  
   Another  example of diffusion in porous media is the phenomenon of \ac{tpe}.  Consider now that  the domain $\widehat{\Omega} \subset \mathbb{R}^3$ is fully saturated with a viscous fluid.  The  flow occurs also in the $xy$ plane and the poroelastic material is subject to thermal energy effects. Given the mechanical load $(\hat{f}_1(t),\hat{f}_2(t),\hat{f}_3(t))=\boldsymbol{f} (t): \widehat{\Omega}  \to \mathbb{R}^3$ ($\widehat{\Omega} \subset \mathbb{R}^3$), prescribed heat source  $\hat{\phi}(t): \widehat{\Omega}  \to \mathbb{R}$, and  fluid mass source $\hat{g}^*(t):\widehat{\Omega}  \to \mathbb{R}$,
the three-dimensional \ac{tpe} equations\cite{MR4146798,son-young,Brun-Mats}  seeks displacement  $\boldsymbol{u}$, the small temperature increment $\hat{\theta}$, and  pore pressure $\hat{p}^*$ such that
\begin{subequations}\label{poro-3dmodel}
\begin{align}
   \rho\boldsymbol{u}_{tt} -\boldsymbol{\nabla} \cdot \boldsymbol{\sigma}&=\rho \boldsymbol{f}  \quad\;\; \text{ in } \widehat{\Omega} \times [0,T],\label{3d-poro1}\\ 
    \frac{\rho c_E}{T_0} \hat{\theta}_t- 3\gamma^*\hat{p}_t + \nabla \cdot \boldsymbol{q} +\gamma_1^* \nabla \cdot\boldsymbol{u}_t&=\hat{\phi} \qquad\; \text{ in } \widehat{\Omega} \times [0,T],\\
   \frac{1}{\varrho^*}\hat{p}_t^*-3\gamma^* \hat{\theta}_t+ \nabla \cdot \boldsymbol{p}^*+ \gamma_2^* \nabla \cdot\boldsymbol{u}_t&=\hat{g}^* \qquad  \text{ in } \widehat{\Omega} \times [0,T],\label{3d-poro3}
\end{align}
\end{subequations}
The surfaces at $z=-d/2,d/2$
  are subject to traction-free and zero-flux boundary conditions, while the remaining boundaries are governed by homogeneous Dirichlet conditions. Here, $\boldsymbol{\sigma}=2\mu\nabla_{\mathrm{sym}}\boldsymbol{u} + [\lambda\nabla\cdot\boldsymbol{u}-\gamma_1^*\hat{\theta}-\gamma_2^*\hat{p}^*]\mathbf{I}$, $\boldsymbol{q}=-{k_1}\nabla\hat{\theta}$ ( Fourier’s law) and $\boldsymbol{p}=-k_2^*\nabla\hat{p}$ ( Darcy’s law) represent the total stress tensor, heat and fluid flux respectively. Also $\gamma_1^*= \alpha_t(3\lambda+2\mu)$, $\gamma_2^*=\beta^*$, and the other constants are defined in Table~\ref{fig:subfig1}.
  
The three-dimensional \ac{tpe} model \eqref{poro-3dmodel} exhibits structural similarities to the three-dimensional \ac{ted} model \eqref{diffusion-3dmodel}. By employing a dimensional reduction approach analogous to that in Ref. \refcite{Aouadi}, we derive a two-dimensional \ac{tpe} model \eqref{eq:coupled} which seeks
   transverse displacement, first moments of temperature and pore pressure 
\begin{align}
   u=\frac{1}{ d }\int_{ - d /2}^{  d /2} \hat{u}_3 \dz,\; \theta= \int_{ - d /2}^{  d /2} z\hat{\theta} \dz,\; \text{and} \;\; p=\int_{-  d /2}^{  d /2} z\hat{p}^* \dz.\label{momonts-poro}
   \end{align}
The moments of the right-hand side functions (forces and sources) of this 2D plate model are given as 
   \begin{align}
       &f=\frac{1}{  d }\int_{ - d /2}^{  d /2} \hat{f}_3\dz ,\;\; \phi= \frac{12}{\rho  d ^4}\int_{ - d /2}^{  d /2} z\hat{\phi}\dz,\; \text{and} \;\; g=\frac{12}{\rho  d ^4}\int_{-  d /2}^{  d /2} z\hat{g}^*\dz,      \label{load-tpe}
   \end{align}
   and the parametrization of the coefficients  given in Table~\ref{table-coeefients}.

\begin{table}[t]
\setlength{\tabcolsep}{2.pt}
\renewcommand{\arraystretch}{1.7}
\centering
\begin{tabular}{|c|c|c|}
\hline
\textit{Coeff.} & \textit{2D-TED} & \textit{2D-TPE} \\
\hline
$a_0$& $\frac{  d ^2}{12}$ & $\frac{  d ^2}{12}$ \\
\hline
$d_0$ & $ \frac{4\mu  d ^2(\lambda_0+\mu)}{12\rho (\lambda_0+2\mu)}$& $\frac{4\mu  d ^2(\lambda+\mu)}{12\rho (\lambda+2\mu)}$ \\
\hline
$\alpha$ & $\frac{2\mu\gamma_1}{\rho   d (\lambda_0+2\mu)}$ &$\frac{2\mu\gamma_1}{\rho   d (\lambda+2\mu)}$ \\
\hline
$\beta$ & $\frac{2\mu\gamma_2}{\rho   d (\lambda_0+2\mu)}$ &$\frac{2\mu\gamma_2}{\rho   d (\lambda+2\mu)}$ \\
\hline
$a_1$ & $\frac{12}{ \rho d ^4} \bigg(\frac{\rho c_E}{T_0}+\frac{\varpi^2}{\varrho}+\frac{\gamma_1^2}{\lambda_0+2\mu}\bigg)$ &$\frac{12}{ \rho d ^4} \bigg(\frac{\rho c_E}{T_0}+\frac{\gamma_1^2}{\lambda+2\mu}\bigg)$ \\
\hline
$\gamma$&$-\frac{12}{\rho  d ^4}\big( \frac{\varpi}{\varrho}+\frac{\gamma_1\gamma_2}{\lambda_0+2\mu} \big)$&$\frac{12}{\rho  d ^4}\big(  3\gamma^* -\frac{\gamma_1\gamma_2}{\lambda+2\mu} \big)$ \\
\hline
$b_1$&$\frac{12k_1 }{\rho  d ^3}$ &$\frac{12k_1 }{\rho  d ^3}$ \\
\hline
$c_1$ & $\frac{12k_1}{\rho  d ^4}$&$\frac{12k_1}{\rho  d ^4}$ \\
\hline
$a_2$&$\frac{12}{   \rho d ^4}  \bigg(\frac{1}{\varrho}+\frac{\gamma_2^2}{\lambda_0+2\mu}\bigg)$&$\frac{12}{   \rho d ^4}  \bigg(\frac{1}{\varrho^{*}}+\frac{\gamma_2^2}{\lambda+2\mu}\bigg)$\\
\hline
$\kappa$&$\frac{12k_2}{\rho  d ^4}$& $ \frac{12k_2^{*}}{\rho  d ^4}$\\
\hline
\end{tabular}
\caption{Coefficients in the 2D model \eqref{p2;model11}-\eqref{p2;model33} for thermoelastic diffusion and thermo-poroelastic cases.}
\label{table-coeefients}
\end{table}

\begin{table}[h!]
\setlength{\tabcolsep}{3pt}
\centering
\begin{minipage}[t]{0.48\textwidth}
\caption{3D-TED model.}
\begin{tabular}{lll}
\toprule
\textbf{Coeff.} & \textbf{Value} & \textbf{SI Unit} \\
\midrule
$\quad \lambda$   & \num{7.76e10}   & \si{\kilogram\metre^{-1}\second^{-2}} \\
$\quad \mu$       & \num{3.36e10}  & \si{\kilogram\metre^{-1}\second^{-2}} \\
$\quad \varrho$   & \num{9.0e5}    & \si{\metre^5 \kilogram^{-1} \second^{-2}} \\
$\quad \alpha_t$  & \num{1.78e-5}  & \si{\per\kelvin} \\
$\quad \alpha_c$  & \num{1.98e-4}  & \si{\metre^4 \kilogram^{-1}} \\
$\quad \varpi$    & \num{1.2e4}    & \si{\metre^2\second^{-2}\kelvin^{-1}} \\
$\quad \rho$      & \num{8954}     & \si{\kilogram\metre^{-3}} \\
$\quad c_E$       & \num{383.1}    & \si{\joule\per\kilogram\per\kelvin} \\
$\quad T_0$       & \num{293}      & \si{\kelvin} \\
$\quad k_1$       & \num{386}      & \si{\watt\per\metre\per\kelvin} \\
$\quad k_2$       & \num{8.5e-9}   & \si{\kilogram\second\metre^{-3}} \\
\bottomrule
\bottomrule
\end{tabular}\label{Table-TED-constants}
\end{minipage}
\hfill
\begin{minipage}[t]{0.48\textwidth}
\centering
\caption{3D-TPE model.}
\begin{tabular}{lll}
\toprule
\textbf{Coeff.} & \textbf{Value} & \textbf{SI Unit} \\
\midrule
$\quad \lambda$   & \num{10.22e9} \cite{elastic-sandstone}  & \si{\kilogram\per\metre\per\second\squared} \\
$ \quad \mu$      & \num{4.09e9} \cite{elastic-sandstone}   & \si{\kilogram\per\metre\per\second\squared} \\
$\quad \alpha_t$  & \num{3e-5}  \cite{hotfluid}& \si{\per\kelvin} \\
$\quad \varrho^*$  & \num{12e9} \cite{biot-mod} & \si{\kilogram\per\metre\per\second\squared} \\
$\quad \beta^*$    & \num{0.79} \cite{biot-mod}   & --\\
$\quad \rho$      & \num{2280} \cite{elastic-sandstone}    & \si{\kilogram\metre^{-3}} \\
$\quad c_E$       & \num{800} \cite{hotfluid} & \si{\joule\per\kilogram\per\kelvin} \\
$\quad T_0$       & \num{293}  \cite{hotfluid}    & \si{\kelvin} \\
\quad $\gamma^*$  & \num{5e-5}    & \si{\per\kelvin} \\
$\quad k_1$       & \num{1e-6}  \cite{hotfluid}    & \si{\watt\per\metre\per\kelvin} \\
$\quad k_2$       & \num{1.9e-13} \cite{hotfluid}  & \si{\metre^2} \\
\bottomrule
\bottomrule
\end{tabular}\label{Table-TPE-constants}
\end{minipage}
\caption*{Example 1. 3D model coefficients for copper\cite{sherief2005half} (left) and   Berea sandstone\cite{elastic-sandstone,hotfluid,biot-mod} (right) plate.}
\end{table}

\medskip 
\noindent \textbf{Thermoelastic diffusion plate model verification:} 
Our objective is to illustrate that the 2D \ac{ted} model~\eqref{eq:coupled}, effectively approximates the 3D \ac{ted} model described by~\eqref{diffusion-3dmodel} in the sense that if $(U^n, \Theta^n , P^n)$ is the approximation of the solution $( {u} ,{\theta}, {p})$ of the 2D model \eqref{eq:coupled} at time $t=t_n$ computed with the discrete formulation \eqref{fully-discret-scheme} and $(\hat{\mathbf{U}}^n, \hat{\Theta}^n , \hat{P}^n)$ is the discrete solution of \eqref{diffusion-3dmodel} at $t=t_n$, then $(U^n,\Theta^n , P^n)$ approximates the triplet $(\int_{-d/2}^{d/2}\hat{U}_3^n \dz,\int_{-d/2}^{d/2}z\hat{\Theta}^n  \dz,\int_{-d/2}^{d/2}z\hat{P}^n  \dz)$ with $\hat{\mathbf{U}}^n=(\hat{U}_1^n,\hat{U}_2^n,\hat{U}_3^n)$, as motivated by \eqref{momonts-ted}. 

\begin{figure}[ht]
\begin{center}
\includegraphics[width=\linewidth, height=0.4\linewidth]{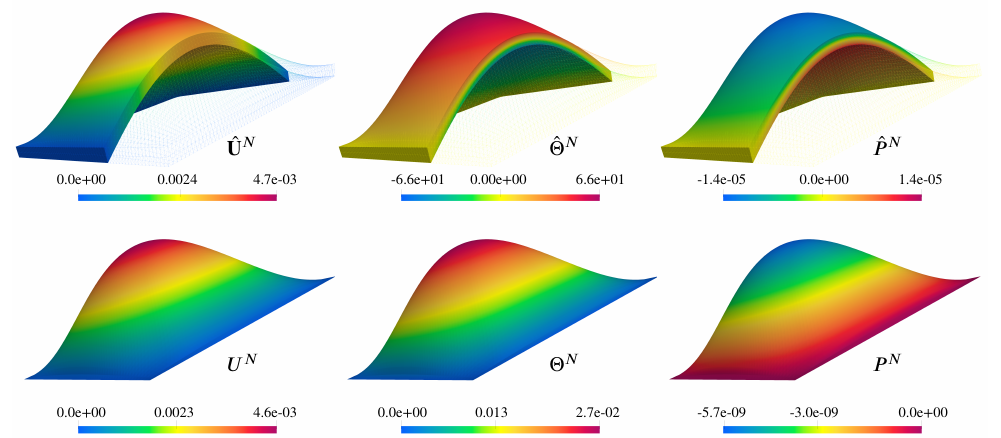}
\vspace{-0.5cm}
     \phantomcaption
  \caption{Example 1. 3D (upper) and 2D (lower) displacement , temperature, and chemical potential at final time $T$ for TED model.}  \label{fig:TEDsol}
 \end{center}
\end{figure}

\begin{figure}[ht]
\begin{center}
\includegraphics[width=\linewidth, height=0.6\linewidth]{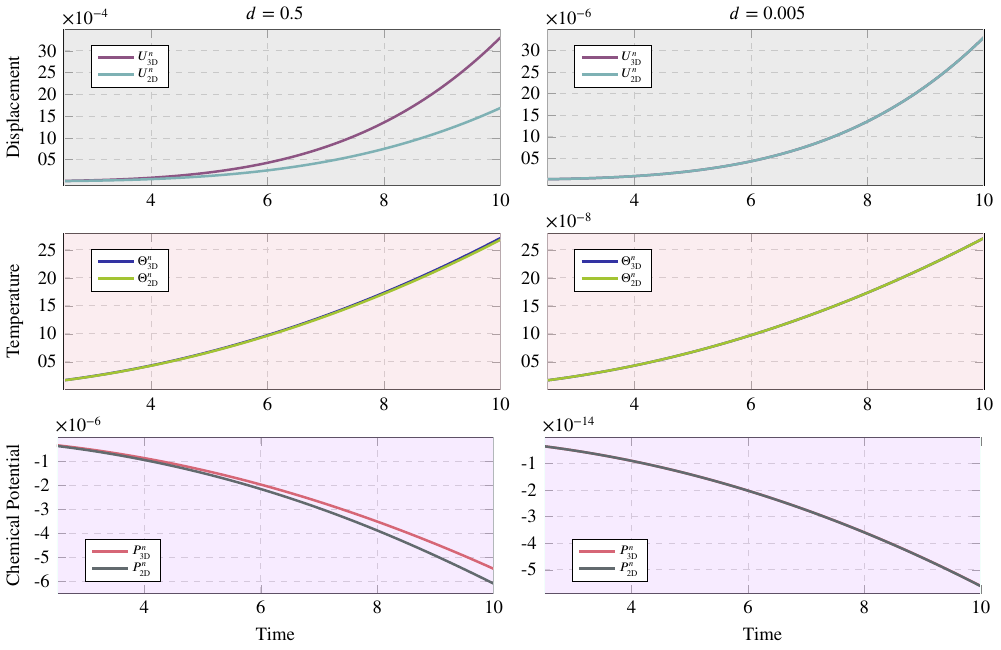}
\vspace{-0.5cm}
     \phantomcaption
   \label{fig:TED_p2}
  \caption{Example 1. 2D $(U^n_{2\rm D},\Theta^n_{2\rm D},P^n_{2\rm D})$ and 3D  $({U}^n_{3\rm D},{\Theta}^n_{3\rm D},{P}^n_{3\rm D})$ solution vs time $t_n$ with plate thickness $d=0.5$ (left) and $d=0.005$ (right) for the \ac{ted} model.}  \label{fig:TED}
 \end{center}
\end{figure}

\begin{figure}[ht]
\begin{center}
\includegraphics[width=\linewidth, height=0.6\linewidth]{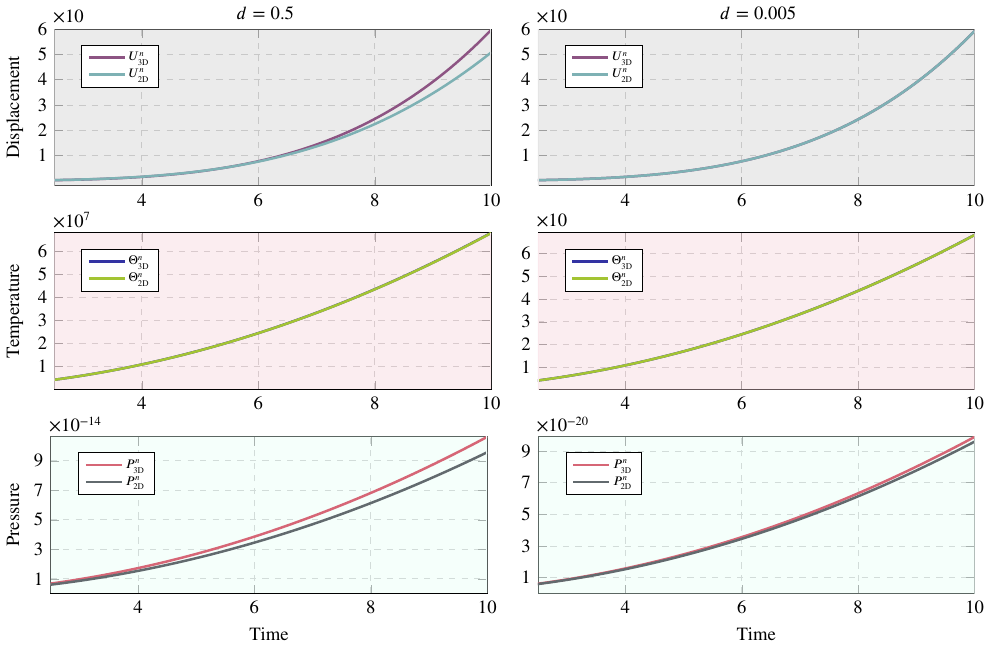}
\vspace{-0.5cm}
     \phantomcaption
  \caption{Example 1. 2D $(U^n_{2\rm D},\Theta^n_{2\rm D},P^n_{2\rm D})$ and 3D  $({U}^n_{3\rm D},{\Theta}^n_{3\rm D},{P}^n_{3\rm D})$ solution vs time $t_n$ with plate thickness $d=0.5$ (left) and $d=0.005$ (right panels) for the \ac{tpe} model.}  \label{fig:TPE}
 \end{center}
\end{figure}

To achieve this, we solve the 3D system \eqref{diffusion-3dmodel} using continuous \ac{fe} spaces: \( (\rev{\mathcal{P}_1}(\mathcal{T}))^3  \) for displacement \( \boldsymbol{u} \), and \( \rev{\mathcal{P}_1}(\mathcal{T}) \) for temperature \( \hat{\theta} \) and pressure \( \hat{p} \), with $\hat{\Omega}=[0,1] \times [0,1] \times [-d/2, d/2]$. The temporal discretization is handled by the Newmark scheme for \eqref{3D1} and by Crank--Nicolson scheme  for \eqref{3D2}-\eqref{3D3}. Homogeneous Dirichlet boundary conditions are set on all the sides except the surfaces $z=-d/2,d/2$ where the plate is assumed  traction free and  subject to zero heat/diffusion flux (in line with the theoretical discussion in Ref. \refcite{Aouadi}). In the 3D setting, the load, heat, and mass sources are defined as
\begin{align}
\mathbf{f} = \left(0, 0, t^2\sin(\pi x) \sin(\pi y)\right),\;\hat{\phi} = t xy(x - 1)(y - 1), \text{ and } 
\hat{g} = t \sin(\pi x) \sin(\pi y),\label{load-exp}
\end{align}
whereas for 2D we use equation \eqref{load4}. Initial conditions are set to zero in both 2D and 3D cases. 
The parameters used in the 3D model \eqref{diffusion-3dmodel}  assume typical values for copper plates\cite{sherief2005half}. See Table~\ref{Table-TED-constants}.

Let $T=10$, $\Delta t=1/8$, and consider the cells   $\hat{\Omega}_c=[5/64,6/64]\times [5/64,6/64]\times  [-d/2,d/2],$ and ${\Omega_c}=[5/64,6/64]\times [5/64,6/64]$.  At time $t=t_n$, we will use the following output quantities   
\begin{align*}
 U^n_{3\rm D}&:=\frac{1}{|\hat{\Omega}_c|} \int_{\hat{\Omega}_c}   \hat{U}_3  d\hat{\bx},\;\Theta^n_{3\rm D}:=\frac{1}{|\hat{\Omega}_c|} \int_{\Omega_c}z\hat{\Theta}^n\, d\hat{\bx},\text{ and } \\
 P^n_{3\rm D}&:= \frac{1}{|\hat{\Omega}_c|} \int_{\hat{\Omega}_c}z\hat{P}^n \, d\hat{\bx}\;\text{ for } \hat{\bx}=(x,y,z),\\
 U^n_{2\rm D}&:=\frac{1}{|{\Omega}_c|} \int_{{\Omega}_c}   {U}^n  d\bx,\; \Theta^n_{2\rm D}:=\frac{1}{|\Omega_c|} \int_{\Omega_c} \Theta^n \, d\bx,\text{ and } \\ P^n_{2\rm D}&:=\frac{1}{|{\Omega}_c|} \int_{{\Omega}_c}z{P}^n \, d\bx\; \text{ for } \bx=(x,y).
\end{align*}
The simulations in Figure~\ref{fig:TED} reveal that as the plate thickness \( d \) decreases (from upper to lower), the results of the 2D model approximate those of the 3D model and the solutions at the final time are plotted in Figure~\ref{fig:TEDsol}.
Furthermore, as expected, the computational efficiency is significantly improved: the 2D model requires approximately   138 (resp.   131) seconds, whereas the 3D model takes about  567 seconds (resp. 564) seconds for a plate width $d=0.5$ (resp $d=0.005)$.

\medskip 
\noindent \textbf{Thermo-poroelastic plate model  verification:}
 Motivated by Ref. \refcite{theodorakopoulos1994flexural}, in this experiment we choose a flat Berea sandstone   with  material parameters given in Table~\ref{Table-TPE-constants}, and  repeat the last experiment. 
 In 3D we consider   \eqref{poro-3dmodel} with the same load/source functions as in \eqref{load-exp}, and the transformation of source functions from 3D to 2D is given in \eqref{load-tpe}. The transformation of 3D to 2D parameters is given in Table~\ref{table-coeefients}. The quantities  
$(U^n_{3\rm D},\Theta^n_{3\rm D}, P^n_{3\rm D})$ and $(U^n_{2\rm D}, \Theta^n_{2\rm D}, P^n_{2\rm D})$ are defined similarly as in the last experiment. Moreover, we also consider $T=100$, $\Delta t=10/8$,  and the 
cells $\hat{\Omega}_c=[5/64,6/64]\times [5/64,6/64]\times  [-d/2,d/2],$ and ${\Omega_c}=[5/64,6/64]\times [5/64,6/64]$.
 The simulations in Figure~\ref{fig:TPE} reveal that as the plate thickness \( d \) decreases (from upper to bottom), the 2D model's results converge to those of the 3D model. Furthermore, the computational efficiency is significantly improved: the 2D model requires approximately  138 (resp.  85) seconds, whereas the 3D model takes about 566 (resp. 537) seconds for plate width $d=0.5$ (resp. $d=0.005$).

\subsection{Example 2: Convergence against smooth solutions}\label{subsec-num-smooth}
The theoretical results of  Section~\ref{sec:fully} are validated in this section by choosing a smooth manufactured  solution of \eqref{p2;model11}-\eqref{p2;model11}.  We consider the spatial domain $\Omega=(0,1)^2$ and time interval $[0,1]$. All model parameters are set to 1, except for $a_1 =35, a_2=40$, and $\gamma=1$ (and $\gamma=-1$) which are selected so that the condition \eqref{gamma_0} is satisfied and the results are robust with respect to $\gamma$. The   transverse load $f$, heat source $\phi$, and a total amount of mass source $g$ and the initial data $u^0$, $u^{*0}, \theta^0$ and $p^0$ are  chosen such that the exact solution of \eqref{eq:coupled} is given by
\begin{gather*}
    u(\bx,t)=\exp(5t)(x(x-1)y(y-1))^2, \\
\theta(\bx,t)=\exp(-t)\sin(\pi x)\sin(\pi y),\quad p(\bx,t)=\cos(t)\sin(\pi x)\sin(\pi y),\end{gather*}
and hence our theoretical regularity results with $\sigma=1$ (as well as the clamped boundary conditions) are satisfied. 

We construct a sequence of successively refined uniform triangular meshes $\mathcal{T}^i$ of $\Omega$ of size $h_i$ and split the time domain using the refined time step $\Delta t ={2}^{-3/2}h_i$. For each mesh refinement, we calculate errors as 
\begin{subequations}\label{num-norms}
\begin{align}
    &\norm{e_u}^{\ell^\infty}:=\underset{0 \le n \le N}{\max}{\norm{u^{n}-U^{n}}},\\
&\norm{{\nabla}e_u}^{{\ell}^\infty}\!\!:=\underset{0 \le n \le N}{\max}{\norm{\nabla(u^{n}-U^{n})}},\  \norm{\hat{e}_u}^{\ell^\infty}_h\!\!:=\underset{0 \le n \le N-1}{\max}{\norm{u^{n+1/2}-U^{n+1/2}}}_h,\label{norm1}\\
    &\norm{e_{\theta}}^{\ell^\infty}\!\!:=\underset{0 \le n \le N}{\max}{\norm{\theta^{n}-\Theta^{n}}},\  \norm{{\nabla}\hat{e}_{\theta}}^{\ell^2}\!\!:=\big(\Delta t\sum_{n=0}^{N-1}{\norm{\nabla(\theta^{n+1/2}-\Theta^{n+1/2})}}^2\big)^{1/2},
   \\
&\norm{e_{p}}^{\ell^\infty}\!\!:=\underset{0 \le n \le N}{\max}{\norm{p^{n}-P^{n}}},\ 
\norm{{\nabla}\hat{e}_{p}}^{\ell^2}\!\!:=\big(\Delta t\sum_{n=0}^{N-1}{\norm{\nabla(p^{n+1/2}-P^{n+1/2})}}^2\big)^{1/2}.\label{norm3}
\end{align}\end{subequations}
The experimental rates of convergence in space are computed as $\texttt{Rate} =$\, $\log(e_{i+1}/{e}_{i})[\log(h_{i+1}/{h}_i)]^{-1}$, where $e_{i}$ denotes a norm of the error on the mesh $\mathcal{T}^i$.  Then, by Theorem~\ref{error-estimates thm} and Corollary~\ref{corr}, the expected convergence rates are of order  $\mathcal{O}(h^\sigma)$ for $\norm{\hat{e}_u}^{\ell^\infty}_h,\norm{{\nabla}\hat{e}_{\theta}}^{\ell^2}, \norm{{\nabla}\hat{e}_{p}}^{\ell^2}$  and $\mathcal{O}(h^{2\sigma})$  for $\norm{e_u}^{\ell^\infty},\norm{{\nabla}e_u}^{\ell^\infty},\norm{{e}_{\theta}}^{\ell^\infty},\norm{{e}_{p}}^{\ell^\infty}$  norms defined in \eqref{num-norms}. Table~\ref{Table-smooth} shows the error history and convergence results for $u$, $\theta$ and $p$ and the numerical solution at the final time given in Figure~\ref{fig:TEDsmoothsol} for $\gamma=-1$. In all cases, the  numerical results are consistent with the expected theoretical results.

\begin{figure}[ht]
\begin{center}
\includegraphics[width=\linewidth, height=0.25\linewidth]{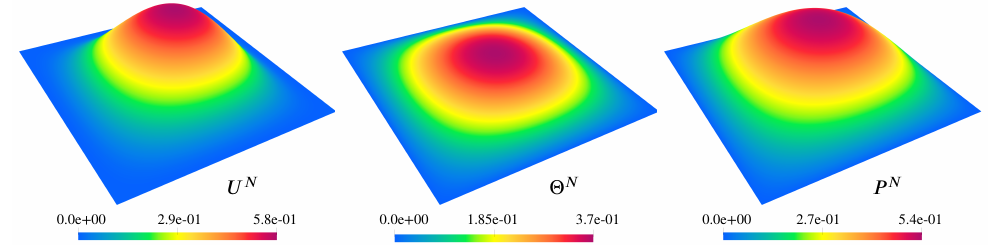}
\vspace{-0.5cm}
     \phantomcaption
  \caption{Example 2. Numerical solution shown  at final time $T$ for $\gamma=-1$.}  \label{fig:TEDsmoothsol}
\end{center}
\end{figure}
\begin{table}[hbt!]
\setlength{\tabcolsep}{4pt}
\centering
\small

\begin{tabular}{|c|A|B|B|B|B|B|B|}
\hline
\rule{0pt}{3ex}
 & \cellcolor{white}$h$ & \cellcolor{white}$\norm{e_u}^{\ell^\infty}$ & \cellcolor{white}\texttt{Rate} & \cellcolor{white}$\norm{{\nabla}e_u}^{\ell^\infty}$ & \cellcolor{white}\texttt{Rate} & \cellcolor{white}$\norm{\hat{e}_u}^{\ell^\infty}_h$ & \cellcolor{white}\texttt{Rate} \\
\hline
\multirow{14}{*}{\rotatebox{90}{\textbf{Displacement}}}
 & \multicolumn{7}{c|}{\rule{0pt}{2.5ex} $\gamma=-1$ (TED)} \\
\cline{2-8}
 & 0.3536 & 8.93e-02 & $\star$    & 4.39e-01 & $\star$  & 5.30e+00 & $\star$  \\
\cline{2-8}
 & 0.1768 & 2.99e-02 & 1.5801     & 1.48e-01 & 1.5628   & 3.36e+00 & 0.6564   \\
\cline{2-8}
 & 0.0884 & 8.14e-03 & 1.8757     & 4.13e-02 & 1.8461   & 1.84e+00 & 0.8684   \\
\cline{2-8}
 & 0.0442 & 2.07e-03 & 1.9753     & 1.06e-02 & 1.9588   & 9.54e-01 & 0.9496   \\
\cline{2-8}
 & 0.0221 & 5.12e-04 & 2.0159     & 2.64e-03 & 2.0060   & 4.84e-01 & 0.9802   \\
\cline{2-8}
 & 0.0110 & 1.07e-04 & 2.2608     & 5.77e-04 & 2.1952   & 2.43e-01 & 0.9913   \\
\cline{2-8}
 & \multicolumn{7}{c|}{\rule{0pt}{2.5ex} $\gamma=1$ (TPE)} \\
\cline{2-8}
 & 0.3536 & 8.92e-02 & $\star$    & 4.38e-01 & $\star$  & 5.30e+00 & $\star$  \\
\cline{2-8}
 & 0.1768 & 2.98e-02 & 1.5806     & 1.48e-01 & 1.5633   & 3.36e+00 & 0.6564   \\
\cline{2-8}
 & 0.0884 & 8.13e-03 & 1.8760     & 4.12e-02 & 1.8463   & 1.84e+00 & 0.8684   \\
\cline{2-8}
 & 0.0442 & 2.07e-03 & 1.9754     & 1.06e-02 & 1.9589   & 9.54e-01 & 0.9496   \\
\cline{2-8}
 & 0.0221 & 5.11e-04 & 2.0160     & 2.64e-03 & 2.0061   & 4.84e-01 & 0.9802   \\
\cline{2-8}
 & 0.0110 & 1.07e-04 & 2.2609     & 5.77e-04 & 2.1952   & 2.43e-01 & 0.9913   \\
\hline
\end{tabular}

\begin{tabular}{|c|A|C|C|C|C|D|D|D|D|}
\hline
\rule{0pt}{3ex}
 & \cellcolor{white}$h$ & \cellcolor{white}$\norm{{e}_{\theta}}^{\ell^\infty}$ & \cellcolor{white}\texttt{Rate} & \cellcolor{white}$\norm{{\nabla}\hat{e}_{\theta}}^{\ell^2}$ & \cellcolor{white}\texttt{Rate} & \cellcolor{white}$\norm{e_{p}}^{\ell^\infty}$ & \cellcolor{white}\texttt{Rate} & \cellcolor{white}$\norm{{\nabla}\hat{e}_{p}}^{\ell^2}$ & \cellcolor{white}\texttt{Rate} \\
\hline
\multirow{14}{*}{\rotatebox{90}{\textbf{Temp. \& Potential/Pressure}}}
 & \multicolumn{9}{c|}{\rule{0pt}{2.5ex} $\gamma=-1$ (TED)} \\
\cline{2-10}
 & 0.3536 & 7.91e-02 & $\star$  & 5.57e-01 & $\star$  & 7.91e-02 & $\star$  & 7.15e-01 & $\star$  \\
\cline{2-10}
 & 0.1768 & 2.11e-02 & 1.9038   & 2.86e-01 & 0.9630   & 2.11e-02 & 1.9038   & 3.69e-01 & 0.9549   \\
\cline{2-10}
 & 0.0884 & 5.38e-03 & 1.9745   & 1.43e-01 & 0.9941   & 5.38e-03 & 1.9745   & 1.86e-01 & 0.9903   \\
\cline{2-10}
 & 0.0442 & 1.35e-03 & 1.9935   & 7.17e-02 & 0.9995   & 1.35e-03 & 1.9935   & 9.30e-02 & 0.9981   \\
\cline{2-10}
 & 0.0221 & 3.38e-04 & 1.9984   & 3.58e-02 & 1.0000   & 3.38e-04 & 1.9984   & 4.65e-02 & 0.9996   \\
\cline{2-10}
 & 0.0110 & 8.45e-05 & 1.9996   & 1.79e-02 & 1.0000   & 8.45e-05 & 1.9996   & 2.32e-02 & 0.9999   \\
\cline{2-10}
 & \multicolumn{9}{c|}{\rule{0pt}{2.5ex} $\gamma=1$ (TPE)} \\
\cline{2-10}
 & 0.3536 & 7.91e-02 & $\star$  & 5.57e-01 & $\star$  & \cellcolor{aquamarine!10}7.91e-02 & \cellcolor{aquamarine!10}$\star$  & \cellcolor{aquamarine!10}5.57e-01 & \cellcolor{aquamarine!10}$\star$  \\
\cline{2-10}
 & 0.1768 & 2.11e-02 & 1.9038   & 2.86e-01 & 0.9624   & \cellcolor{aquamarine!10}2.11e-02 & \cellcolor{aquamarine!10}1.9038 & \cellcolor{aquamarine!10}2.85e-01 & \cellcolor{aquamarine!10}0.9649  \\
\cline{2-10}
 & 0.0884 & 5.38e-03 & 1.9745   & 1.43e-01 & 0.9943   & \cellcolor{aquamarine!10}5.38e-03 & \cellcolor{aquamarine!10}1.9745 & \cellcolor{aquamarine!10}1.43e-01 & \cellcolor{aquamarine!10}0.9940  \\
\cline{2-10}
 & 0.0442 & 1.35e-03 & 1.9935   & 7.17e-02 & 0.9996   & \cellcolor{aquamarine!10}1.35e-03 & \cellcolor{aquamarine!10}1.9935 & \cellcolor{aquamarine!10}7.17e-02 & \cellcolor{aquamarine!10}0.9993  \\
\cline{2-10}
 & 0.0221 & 3.38e-04 & 1.9984   & 3.59e-02 & 1.0000   & \cellcolor{aquamarine!10}3.38e-04 & \cellcolor{aquamarine!10}1.9984 & \cellcolor{aquamarine!10}3.58e-02 & \cellcolor{aquamarine!10}0.9999  \\
\cline{2-10}
 & 0.0110 & 8.45e-05 & 1.9996   & 1.79e-02 & 1.0000   & \cellcolor{aquamarine!10}8.45e-05 & \cellcolor{aquamarine!10}1.9996 & \cellcolor{aquamarine!10}1.79e-02 & \cellcolor{aquamarine!10}1.0000 \\
\hline
\end{tabular}

\caption{Example 2. Error decay with respect to mesh refinement, and convergence rates in the norms \eqref{norm1}-\eqref{norm3} with smooth exact solution. Errors and rates for displacement, temperature, and chemical potential (resp.\ pore pressure) are represented by black, red, and violet (resp.\ aquamarine) colors in the background.}
\label{Table-smooth}
\end{table}
\subsection{Example 3: Convergence for a non-convex domain}\label{subsec-num-lshape}
This example illustrates the  convergence of the proposed method even when the domain $\Omega$ is  non-convex, constituting a case where  $\sigma <1$.
Consider $\Omega=[-1,1]^2\setminus [-1,0]^2$, $T=1$, and  choose the load and source functions such that the triplet $(u,\theta,p)$ in polar coordinates is given by 
\begin{align*}
&u(r,\varphi,t)=t^2(r^2\sin^2(\varphi)-1)^2(r^2\cos^2(\varphi)^2-1)r^{1+\upsilon}G(r, \varphi+\pi/2),\\
& \theta(r,\varphi,t)= p(r,\varphi,t)=2t(r^2\sin^2(\varphi)-1)(r^2\cos^2(\varphi)-1)r^{2/3}\sin\big(2/3 (\varphi+\pi/2)
    \big), 
\end{align*}
where
\begin{align*}
G(r, \varphi) &= \Big(\frac{1}{\upsilon-1}\sin\big((\upsilon-1)\frac{3\pi}{2}\big)-\frac{1}{\upsilon+1}\sin\big((\upsilon+1)\frac{3\pi}{2}\big)\Big)\\
&\qquad \times \Big(\cos\big((\upsilon-1)\varphi\big)-\cos((\upsilon+1)\varphi)\Big)\\
&\quad-
\Big(\frac{1}{\upsilon-1}\sin\big((\upsilon-1)\varphi\big)-\frac{1}{\upsilon+1}\sin\big((\upsilon+1)\varphi\big)\Big)\\
&\quad \qquad \times \Big(\cos\big((\upsilon-1)3\pi/2\big)-\cos\big((\upsilon+1)3\pi/2\big)\Big).
\end{align*}
It is easy to check that $u \in C^\infty([0,T];H^{2+\sigma}(\Omega)\cap H^2_0(\Omega))$, $\theta, p \in C^\infty([0,T];H^{1+\sigma}(\Omega)\cap H^1_0(\Omega))$  with $\sigma =\upsilon =0.5444837$\cite{MR4796047,MR2784879}. Then, by Theorem~\ref{error-estimates thm} and Corollary~\ref{corr}, the expected convergence rates are of order  $\mathcal{O}(h^\sigma)$ for $\norm{\hat{e}_u}^{\ell^\infty}_h,\norm{{\nabla}\hat{e}_{\theta}}^{\ell^2}, \norm{{\nabla}\hat{e}_{p}}^{\ell^2}$  and $\mathcal{O}(h^{2\sigma})$  for $\norm{e_u}^{\ell^\infty},\norm{{\nabla}e_u}^{\ell^\infty},\norm{{e}_{\theta}}^{\ell^\infty},\norm{{e}_{p}}^{\ell^\infty}$  norms defined in \eqref{num-norms}. In this case, the model coefficients as well as the used norms are as in Example 2. Furthermore, we take $\Delta t=1/4$ and the experimental convergence rates are reported in Table~\ref{Table-lshape}, exhibiting the anticipated behavior. 
\begin{table}[hbt!]
\setlength{\tabcolsep}{4pt}
\centering
\small

\begin{tabular}{|c|A|B|B|B|B|B|B|}
\hline
\rule{0pt}{3ex}
 & \cellcolor{white}$h$ & \cellcolor{white}$\norm{e_u}^{\ell^\infty}$ & \cellcolor{white}\texttt{Rate} & \cellcolor{white}$\norm{{\nabla}e_u}^{\ell^\infty}$ & \cellcolor{white}\texttt{Rate} & \cellcolor{white}$\norm{\hat{e}_u}^{\ell^\infty}_h$ & \cellcolor{white}\texttt{Rate} \\
\hline
\multirow{16}{*}{\rotatebox{90}{\textbf{Displacement}}}
 & \multicolumn{7}{c|}{\rule{0pt}{2.5ex} $\gamma=-1$ (TED)} \\
\cline{2-8}
 & 0.7071 & 3.38e-01 & $\star$    & 1.25e+00 & $\star$  & 9.63e+00 & 0.0729   \\
\cline{2-8}
 & 0.3536 & 1.61e-01 & 1.0685     & 5.92e-01 & 1.0788   & 6.38e+00 & 0.5932   \\
\cline{2-8}
 & 0.1768 & 4.56e-02 & 1.8225     & 1.68e-01 & 1.8217   & 3.27e+00 & 0.9633   \\
\cline{2-8}
 & 0.0884 & 1.35e-02 & 1.7593     & 4.93e-02 & 1.7664   & 1.72e+00 & 0.9302   \\
\cline{2-8}
 & 0.0442 & 4.40e-03 & 1.6138     & 1.65e-02 & 1.5811   & 9.45e-01 & 0.8629   \\
\cline{2-8}
 & 0.0221 & 1.61e-03 & 1.4491     & 6.35e-03 & 1.3734   & 5.49e-01 & 0.7835   \\
\cline{2-8}
 & 0.0110 & 6.08e-04 & 1.4062     & 2.61e-03 & 1.2853   & 3.36e-01 & 0.7078   \\
\cline{2-8}
 & \multicolumn{7}{c|}{\rule{0pt}{2.5ex} $\gamma=1$ (TPE)} \\
\cline{2-8}
 & 0.7071 & 3.38e-01 & $\star$    & 1.25e+00 & $\star$  & 9.63e+00 & $\star$  \\
\cline{2-8}
 & 0.3536 & 1.61e-01 & 1.0688     & 5.92e-01 & 1.0791   & 6.38e+00 & 0.5932   \\
\cline{2-8}
 & 0.1768 & 4.56e-02 & 1.8228     & 1.67e-01 & 1.8219   & 3.27e+00 & 0.9634   \\
\cline{2-8}
 & 0.0884 & 1.35e-02 & 1.7594     & 4.92e-02 & 1.7665   & 1.72e+00 & 0.9302   \\
\cline{2-8}
 & 0.0442 & 4.40e-03 & 1.6138     & 1.65e-02 & 1.5811   & 9.45e-01 & 0.8629   \\
\cline{2-8}
 & 0.0221 & 1.61e-03 & 1.4491     & 6.35e-03 & 1.3734   & 5.49e-01 & 0.7835   \\
\cline{2-8}
 & 0.0110 & 6.08e-04 & 1.4065     & 2.61e-03 & 1.2854   & 3.36e-01 & 0.7078   \\
\hline
\end{tabular}

\begin{tabular}{|c|c|C|C|C|C|D|D|D|D|}
\hline
\rule{0pt}{3ex}
&$h$ & \cellcolor{white}$\norm{{e}_{\theta}}^{\ell^\infty}$ & \cellcolor{white}\texttt{Rate} & \cellcolor{white}$\norm{{\nabla}\hat{e}_{\theta}}^{\ell^2}$ & \cellcolor{white}\texttt{Rate} & \cellcolor{white}$\norm{e_{p}}^{\ell^\infty}$ & \cellcolor{white}\texttt{Rate} & \cellcolor{white}$\norm{{\nabla}\hat{e}_{p}}^{\ell^2}$ & \cellcolor{white}\texttt{Rate} \\
\hline
\multirow{16}{*}{\rotatebox{90}{\textbf{Temp. \& Potential/Pressure}}}
 & \multicolumn{9}{c|}{\rule{0pt}{2.5ex} $\gamma=-1$ (TED)} \\
\cline{2-10}
 & 0.7071  & 1.76e-01 & $\star$  & 1.04e+00 & $\star$  & 1.74e-01 & $\star$  & 1.04e+00 & $\star$  \\
\cline{2-10}
& 0.3536 & 6.05e-02 & 1.5427   & 5.04e-01 & 1.0466   & 5.78e-02 & 1.5927   & 5.02e-01 & 1.0509   \\
\cline{2-10}
 & 0.1768 & 1.74e-02 & 1.7988   & 2.56e-01 & 0.9799   & 1.65e-02 & 1.8069   & 2.55e-01 & 0.9751   \\
\cline{2-10}
 & 0.0884  & 5.27e-03 & 1.7226   & 1.38e-01 & 0.8914   & 5.02e-03 & 1.7184   & 1.38e-01 & 0.8902   \\
\cline{2-10}
 & 0.0442  & 1.76e-03 & 1.5774   & 7.72e-02 & 0.8357   & 1.69e-03 & 1.5736   & 7.72e-02 & 0.8354   \\
\cline{2-10}
 & 0.0221  & 6.65e-04 & 1.4069   & 4.45e-02 & 0.7933   & 6.34e-04 & 1.4118   & 4.45e-02 & 0.7933   \\
\cline{2-10}
 & 0.0110  & 3.03e-04 & 1.1340   & 2.63e-02 & 0.7581   & 2.83e-04 & 1.1615   & 2.63e-02 & 0.7581   \\
\cline{2-10}
 & \multicolumn{9}{c|}{\rule{0pt}{2.5ex} $\gamma=1$ (TPE)} \\
\cline{2-10}
  & 0.7071& 1.77e-01 & $\star$  & 1.04e+00 & $\star$  & \cellcolor{aquamarine!10}1.75e-01 & \cellcolor{aquamarine!10}$\star$  & \cellcolor{aquamarine!10}1.04e+00 & \cellcolor{aquamarine!10}$\star$  \\
\cline{2-10}
& 0.3536 & 6.16e-02 & 1.5218   & 5.05e-01 & 1.0447   & \cellcolor{aquamarine!10}5.89e-02 & \cellcolor{aquamarine!10}1.5715 & \cellcolor{aquamarine!10}5.03e-01 & \cellcolor{aquamarine!10}1.0491  \\
\cline{2-10}
 & 0.1768 & 1.78e-02 & 1.7958   & 2.56e-01 & 0.9820   & \cellcolor{aquamarine!10}1.69e-02 & \cellcolor{aquamarine!10}1.8036 & \cellcolor{aquamarine!10}2.55e-01 & \cellcolor{aquamarine!10}0.9771  \\
\cline{2-10}
& 0.0884  & 5.37e-03 & 1.7243   & 1.38e-01 & 0.8920   & \cellcolor{aquamarine!10}5.12e-03 & \cellcolor{aquamarine!10}1.7202 & \cellcolor{aquamarine!10}1.38e-01 & \cellcolor{aquamarine!10}0.8907  \\
\cline{2-10}
 & 0.0442  & 1.80e-03 & 1.5790   & 7.72e-02 & 0.8358   & \cellcolor{aquamarine!10}1.72e-03 & \cellcolor{aquamarine!10}1.5752 & \cellcolor{aquamarine!10}7.72e-02 & \cellcolor{aquamarine!10}0.8355  \\
\cline{2-10}
  & 0.0221   & 6.79e-04 & 1.4049   & 4.45e-02 & 0.7933   & \cellcolor{aquamarine!10}6.47e-04 & \cellcolor{aquamarine!10}1.4097 & \cellcolor{aquamarine!10}4.45e-02 & \cellcolor{aquamarine!10}0.7933  \\
\cline{2-10}
 &0.0110& 3.12e-04 & 1.1234   & 2.63e-02 & 0.7581   & \cellcolor{aquamarine!10}2.92e-04 & \cellcolor{aquamarine!10}1.1496 & \cellcolor{aquamarine!10}2.63e-02 & \cellcolor{aquamarine!10}0.7581 \\
\hline
\end{tabular}

\caption{Example 3. Error history in the norms from \eqref{norm1}--\eqref{norm3} for an $L$-shaped domain. Errors and rates for displacement, temperature, and chemical potential (resp.\ pore pressure) are represented by black, red, and violet (resp.\ aquamarine) colors in the background.}
\label{Table-lshape}
\end{table}

\medskip 
\subsection*{Acknowledgments} This work has been partially supported by  the J.C. Bose grant ANRF/JBG/2025/000209/HAA; by the Australian Research Council through the Future Fellowship   FT220100496 and  Discovery Project   DP22010316; by the Center of Advanced Study (CAS) at the Norwegian Academy of Science and Letters under the program \textit{Mathematical Challenges in Brain Mechanics}; and by the IITB-Monash Research Academy.

{\small 
\bibliographystyle{ws-m3as}
\bibliography{nry_references}
}


\appendix 
\section{Detailed proofs}\label{appendix}
\begin{proof}[\textbf{Proof of Theorem~\ref{existence;thm}}]
{The proof of existence is presented in Steps 1-4 and that of uniqueness using energy arguments in Step~5.} 

\medskip
\noindent \textit{{Step 1} (Construction of a sequence of approximate solutions).}
{It is well known that} there exists an orthogonal basis $\{w^1,w^2, \cdots \}$ (resp. $\{y^1,y^2, \cdots \}$) of $H_0^2(\Omega)$ (resp.  $H_0^1(\Omega)$)  and {this also forms an orthonormal basis of $L^2(\Omega)$\cite{MR1115235,MR990018}}. For a fixed integer $m$, we proceed to write 
\begin{equation}
 u^m(t):=\sum_{k=1}^{m}{\rm d}_m^k(t)w^k,\; \theta^m(t):=\sum_{k=1}^{m}\eta_m^k(t)y^k \text{ and } p^m(t):=\sum_{k=1}^{m}{\rm l}_m^k(t)y^k, \label{rep}
 \end{equation}
 where the coefficients ${\rm d}_m^k(t)$, $\eta_m^k(t)$ and ${\rm l}_m^k(t)$ are selected such that
\begin{subequations}\label{weak-mo}
\begin{align}
    &{\rm d}_m^k(0)=(u^0,w^k), \  {{\rm d}_m^k}'(0)+a_0\norm{\nabla w^k}^2 {{\rm d}_m^k}'(0)=(u^{*0},w^k)+a_0(\nabla u^{*0},\nabla w^k), \label{d0}\\
    &\eta_m^k(0)=(\theta^0,y^k), \ {\rm l}_m^{k}(0)=(p^0,y^k)  \label{l0}
\end{align} 
\end{subequations}
and
\begin{subequations}
\label{weak-m}
\begin{align}
& (u^m_{tt},w^k)+a_0 (\nabla u^m_{tt},\nabla w^k)+d_0(\nabla ^2 u^m, \nabla^2 w^k) -\alpha (\nabla \theta^m, \nabla w^k)-\beta (\nabla p^m, \nabla w^k)\nonumber\\
& =(f,w^k), \label{um} \\
&     a_1(\theta^m_{t}, y^k)-\gamma (p^m_{t}, y^k)+b_1(\theta^m,y^k) +c_1 (\nabla \theta^m,\nabla y^k) +\alpha (\nabla u^m_{t}, \nabla y^k)\nonumber \\
&     =(\phi,y^k),  \label{thetam} \\
&    a_2 (p^m_{t},y^k)-\gamma (\theta^m_{t},y^k)+\kappa (\nabla p^m,\nabla y^k)+\beta (\nabla u^m_{t},\nabla y^k)\nonumber \\&=(g,y^k),\label{pm}
     \end{align} 
     \end{subequations} 
     hold {for all $0 < t \le T$ and $k=1,2 \cdots ,m$}.
(Since \eqref{weak-m} forms a linear ODE system with initial conditions \eqref{weak-mo}, standard ODE theory\cite{evans} guarantees the existence of \rev{the} unique $C^2$ (resp. $C^1 $) functions 
  $({\rm d}_m^1(t),  {\rm d}_m^2(t), \cdots, {\rm d}_m^m(t))$ (resp. $(\eta_m^1(t), \eta_m^2(t), \cdots, \eta_m^m(t))$ and $({\rm l}_m^1(t),{\rm l}_m^2(t), \cdots, {\rm l}_m^m(t))$), that satisfy \eqref{weak-mo}-\eqref{weak-m} for $0\le t \le T. $)
  
\medskip
\noindent
\textit{ Step 2 (Derivation of a
priori bounds for approximate solutions)}. 
{We aim to take the limit 
$m \rightarrow \infty $ and hence shall} derive estimates that are uniform with respect to $m$.  
Multiply the equations \eqref{um}, \eqref{thetam}, and \eqref{pm} by ${{\rm d}_m^k}'(t),\eta_m^k(t)$, and ${\rm l}_m^k(t)$, respectively, {and sum} up the result    for $k=1,2,\ldots, m.$ {Then, the definitions in \eqref{rep} lead to}
\begin{align*}
 &   \frac{1}{2}\frac{\mathrm{d}}{\dt}\big(\norm{u^m_t}^2+a_0\norm{\nabla u_{t}^m}^2+d_0 \norm{\nabla^2 u^m}^2+a_1\norm{\theta^m}^2+a_2\norm{p^m}^2\big)\\
    &+b_1\norm{\theta^m}^2+c_1\norm{\nabla \theta^m}^2+\kappa\norm{\nabla p^m}^2-\gamma \frac{\mathrm{d}}{\dt}(\theta ^m,p^m)=(f,u^m_t)+(\phi, \theta^m)+(g,p^m).
\end{align*}
{An integration from $0$ to $t$ and} simple manipulations show
\begin{align}
&   \frac{1}{2}\big(\norm{u^m_t}^2+a_0\norm{\nabla u_{t}^m}^2+d_0 \norm{\nabla^2 u^m}^2+a_1\norm{\theta^m}^2+a_2\norm{p^m}^2\big) \nonumber \\
&\qquad +\int_0^t \big( b_1\norm{ \theta^m}^2+c_1\norm{\nabla \theta^m}^2+\kappa\norm{\nabla p^m}^2 \big) \ds \nonumber \\
   &
   \quad =\frac{1}{2}\big(\norm{u^m_t(0)}^2+a_0\norm{\nabla u_{t}^m(0)}^2+d_0 \norm{\nabla^2 u^m(0)}^2+a_1\norm{\theta^m(0)}^2+a_2\norm{p^m(0)}^2\big)\nonumber\\
   &\qquad
   +\gamma (\theta ^m,p^m) -\gamma (\theta^m(0)  ,p^m(0)) + \int_0^t \big((f,u^m_t)+(\phi, \theta^m)+(g,p^m)\big)\ds.\label{int}
\end{align}
\rev{Applying} Cauchy--Schwarz and Young's inequalities to the {last three terms in \eqref{int} yields}
\begin{align}
&    \gamma (\theta ^m,p^m) -\gamma (\theta^m(0)  ,p^m(0)) 
   + \int_0^t \big((f,u^m_t)+(\phi, \theta^m)+(g,p^m)\big)\ds \nonumber\\
   &\qquad 
   \le \frac{|\gamma|}{2\gamma_0}\big(\norm{\theta^m}^2+\norm{\theta^m(0)}^2\big)+\frac{|\gamma|\gamma_0}{2}\big(\norm{p^m}^2 +\norm{p^m(0)}^2\big)\nonumber\\
   & \quad\qquad 
   + \frac{1}{2} \int_0^t \big(\norm{f}^2+\norm{\phi}^2+\norm{g}^2+\norm{u^m_t}^2+\norm{\theta^m}^2+\norm{p^m}^2 \big)\ds,\label{gamma}
\end{align}
where $\gamma_0$ is defined in \eqref{gamma_0}.
Next, substitute \eqref{gamma} in \eqref{int}, to obtain
\begin{align}
   &\norm{u^m_t}^2+a_0\norm{\nabla u_{t}^m}^2+d_0 \norm{\nabla^2 u^m}^2+(a_1-|\gamma|/\gamma_0)\norm{\theta^m}^2\nonumber \\
   & \quad+(a_2-|\gamma|\gamma_0)\norm{p^m}^2
   +2\int_0^t \big( b_1\norm{ \theta^m}^2+c_1\norm{\nabla \theta^m}^2+\kappa\norm{\nabla p^m}^2 \big) \ds \nonumber \\
   & 
   \le\norm{u^m_t(0)}^2+a_0\norm{\nabla u_{t}^m(0)}^2+d_0 \norm{\nabla^2 u^m(0)}^2+(a_1+|\gamma|/\gamma_0)\norm{\theta^m(0)}^2\nonumber \\
   & \quad +(a_2+|\gamma|\gamma_0)\norm{p^m(0)}^2
+\norm{f}_{L^2(0,T;L^2(\Omega))}^2+\norm{\phi}^2_{L^2(0,T;L^2(\Omega))}+\norm{g}^2_{L^2(0,T;L^2(\Omega))}\nonumber \\
   &  \quad+ \int_0^t \big(\norm{u^m_t}^2+\norm{\theta^m}^2+\norm{p^m}^2 \big)\ds.\label{int1}
\end{align}

We next utilize \eqref{d0}-\eqref{l0} to show
\begin{align}
&\norm{u^m_t(0)}^2+a_0\norm{\nabla u_{t}^m(0)}^2+d_0 \norm{\nabla^2 u^m(0)}^2\nonumber\\
&\quad+(a_1+|\gamma|/\gamma_0)\norm{\theta^m(0)}^2+(a_2+|\gamma|\gamma_0)\norm{p^m(0)}^2\nonumber\\
&
\lesssim 
\norm{u^{*0}}^2+a_0\norm{u^{*0}}^2_{H^1(\Omega)}+d_0\norm{u^0}^2_{H^2(\Omega)}\nonumber\\
&\quad+(a_1+|\gamma|/\gamma_0)\norm{\theta^0}^2+(a_2+|\gamma|\gamma_0)\norm{p^0}^2.\label{intt2}
\end{align}
A combination of \eqref{int1}-\eqref{intt2} and an  application of Lemma~\ref{P1 gronwall} lead to the bound
\begin{align} \label{eq:aprioribounds}
&2E(u^m, \theta^m, p^m; t)=\norm{u^m_t}^2+a_0\norm{\nabla u_{t}^m}^2+d_0 \norm{\nabla^2 u^m}^2+(a_1-|\gamma|/\gamma_0)\norm{\theta^m}^2\nonumber \\
   &\quad \qquad \quad \qquad
  +(a_2-|\gamma|\gamma_0)\norm{p^m}^2 +2\int_0^t \big( b_1\norm{ \theta^m}^2+c_1\norm{\nabla \theta^m}^2+\kappa\norm{\nabla p^m}^2 \big) \ds \nonumber \\
   & \lesssim 
\norm{u^{*0}}^2+a_0\norm{u^{*0}}^2_{H^1(\Omega)}+d_0\norm{u^0}^2_{H^2(\Omega)}+(a_1+|\gamma|/\gamma_0)\norm{\theta^0}^2+(a_2+|\gamma|\gamma_0)\norm{p^0}^2\nonumber\\
&\qquad\qquad\qquad+\norm{f}_{L^2(0,T;L^2(\Omega))}^2+\norm{\phi}^2_{L^2(0,T;L^2(\Omega))}+\norm{g}^2_{L^2(0,T;L^2(\Omega))}.
\end{align}

Now, fix any $v \in H^2_0(\Omega)$ and $\psi,q \in H^1_0(\Omega)$ with $ \norm{v}_{H^2_0(\Omega)} \le 1,\; \norm{\psi}_{H^1_0(\Omega)} \le 1,$ and $ \norm{q}_{H^1_0(\Omega)} \le 1$. Write $v=v_1+v_2,$ $\psi=\psi_1+\psi_2$ and $q=q_1+q_2$,  where $v_1 \in$ span $\{w^k\}_{k=1}^m$, and both   $\psi_1,q_1 \in$ span $\{y^k\}_{k=1}^m$ with $(v_2, w^k)=(\nabla v_2, \nabla w^k)=(\psi_2,y^k)=(q_2,y^k)=0$ $(k =1,2,\cdots,m).$ Note $\norm{v_1}_{H^2_0(\Omega)} \le 1,\; \norm{\psi_1}_{H^1_0(\Omega)}\le 1$, and $\norm{q_1}_{H^1_0(\Omega)}\le 1.$ Then \eqref{rep} and \eqref{um} imply that 
\begin{align}
    \langle u^m_{tt}, v \rangle&+a_0\langle \nabla u^m_{tt},\nabla v\rangle=(u^m_{tt}, v)+a_0(\nabla u^m_{tt},\nabla v)=(u^m_{tt}, v_1)+a_0(\nabla u^m_{tt},\nabla v_1)\nonumber\\
    &\qquad=(f,v_1)-d_0(\nabla^2 u^m, \nabla^2 v_1)+\alpha (\nabla \theta^m, \nabla v_1)+\beta (\nabla p^m, \nabla v_1).\label{duality}
\end{align}
This and a Cauchy--Schwarz inequality reveal
\begin{align*}
\norm{u_{tt}^m}_{H^{-1}(\Omega)}&\lesssim| \langle u^m_{tt}, v\rangle+ 
 \langle \nabla u^m_{tt},\nabla v\rangle| \lesssim  | \langle u^m_{tt}, v\rangle+ 
 a_0\langle \nabla u^m_{tt},\nabla v\rangle| \\
 &\lesssim \norm{f}+d_0\norm{\nabla^2 u^m}+\alpha \norm{\nabla \theta^m}+ \beta \norm{\nabla p^m}.
\end{align*}
An integration from $0$ to $T$ and the bounds from \eqref{eq:aprioribounds} allow us to assert that 
\begin{align}
&\int_0^T\norm{u_{tt}^m}_{H^{-1}(\Omega)} \: \dt \nonumber\\
&\lesssim \norm{u^{*0}}^2+a_0\norm{u^{*0}}^2_{H^1(\Omega)}+d_0\norm{u^0}^2_{H^2(\Omega)} +(a_1+|\gamma|/\gamma_0)\norm{\theta^0}^2+(a_2+|\gamma|\gamma_0)\norm{p^0}^2\nonumber\\
& \quad  +\norm{f}_{L^2(0,T;L^2(\Omega))}^2+\norm{\phi}^2_{L^2(0,T;L^2(\Omega))}+\norm{g}^2_{L^2(0,T;L^2(\Omega))}.\label{uttm-h-2}
\end{align}
On the other hand, the combination of \eqref{rep} and \eqref{thetam}–\eqref{pm} with similar arguments as in  \eqref{duality}, yields 
\begin{align*}
    a_1 \langle \theta^m_{t}, \psi\rangle-\gamma \langle p^m_{t}, \psi \rangle&=a_1 ( \theta^m_{t}, \psi_1)-\gamma ( p^m_{t}, \psi_1 )\\
    & =(\phi,\psi_1)-b_1(\theta^m,\psi_1) -c_1 (\nabla \theta^m,\nabla \psi_1 ) -\alpha (\nabla u^m_{t}, \nabla y^k),   \\
    a_2 \langle p^m_{t},q \rangle -\gamma \langle \theta^m_t,q\rangle & =a_2 (p^m_{t},q_1 ) -\gamma ( \theta^m_t,q_1)\\
    & =(g,q_1)-\kappa (\nabla p^m,\nabla q_1)-\beta (\nabla u^m_{t},\nabla q_1).
\end{align*}
Applying Cauchy--Schwarz inequality again (shifting the second terms from left to right-hand side in both inequalities and using $|\gamma \langle p^m_{t}, \psi \rangle| \le |\gamma| \norm{p^m_t}_{H^{-1}(\Omega)}$ and $|\gamma \langle \theta^m_t,q\rangle| \le |\gamma| \norm{\theta^m_t}_{H^{-1}(\Omega)} $\rev{)}, we readily get 
\begin{align*}
a_1\norm{\theta^m_t}_{H^{-1}(\Omega)}&\le  |\gamma| \norm{p^m_t}_{H^{-1}(\Omega)}+C \Big( \norm{\phi}+b_1\norm{\theta^m}+c_1\norm{\nabla\theta^m} +\alpha \norm{\nabla u^m_t}\Big),\\
a_2\norm{p^m_t}_{H^{-1}(\Omega)} &\le  |\gamma| \norm{\theta^m_t}_{H^{-1}(\Omega)}+C \Big( \norm{g}+\kappa\norm{\nabla p^m} +\beta \norm{\nabla u^m_t}\Big).
\end{align*}
Next, we multiply the first equation above by ${\gamma_0}^{{1}/{2}}$, the second by ${\gamma_0}^{-{1}/{2}}$, and add the two inequalities. Again, applying integration from $0$ to $T$ and the bounds from \eqref{eq:aprioribounds},  leads to
\begin{align}
&  {\gamma_0}^{{1}/{2}}(a_1-|\gamma|/\gamma_0)\int_0^T\norm{\theta^m_t}_{H^{-1}(\Omega)} \: \dt +\gamma_0^{-{1}/{2}}(a_2-|\gamma|\gamma_0)\int_0^T\norm{p^m_t}_{H^{-1}(\Omega)  } \dt \nonumber\\
&\; \lesssim  \norm{u^{*0}}^2+a_0\norm{u^{*0}}^2_{H^1(\Omega)}+d_0\norm{u^0}^2_{H^2(\Omega)}+(a_1+\frac{|\gamma|}{\gamma_0})\norm{\theta^0}^2+(a_2+|\gamma|\gamma_0)\norm{p^0}^2\nonumber\\
&\qquad\qquad\qquad\qquad+\norm{f}_{L^2(0,T;L^2(\Omega))}^2+\norm{\phi}^2_{L^2(0,T;L^2(\Omega))}+\norm{g}^2_{L^2(0,T;L^2(\Omega))}.\label{t-p-h-1}
\end{align}

\medskip
\noindent
\textit{{Step 3} (Existence of a limit for a subsequences).}
The estimates in \eqref{eq:aprioribounds} indicate that $ \{u^{m}\}_{m=1}^\infty $ and $ \{u_t^{m}\}_{m=1}^\infty $ are bounded in the spaces $ L^\infty(0,T;H^2_0(\Omega)) $ and $ L^\infty([0,T]
;H^1_0(\Omega)) $, respectively, and both $ \{\theta^{m}\}_{m=1}^\infty $ and $ \{p^{m}\}_{m=1}^\infty $ are bounded in $ L^\infty(0,T;L^2(\Omega)) $ as well as in $ L^2(0,T;H^1_0(\Omega)) $. Moreover, the estimates in \eqref{uttm-h-2} and \eqref{t-p-h-1} reveal that  $ \{u_{tt}^{m}\}_{m=1}^\infty $, $ \{\theta_t^{m}\}_{m=1}^\infty $ and $ \{p_t^{m}\}_{m=1}^\infty $ are bounded in  $ L^2(0,T;H^{-1}(\Omega)) $.  Consequently, there exist subsequences $ \{u^{m}\}_{m=1}^\infty $, $ \{\theta^{m}\}_{m=1}^\infty $, and $ \{p^{m}\}_{m=1}^\infty $ (where relabeling is used), and some $ u \in L^\infty(0,T;H^2_0(\Omega)) $ with $ u_t \in L^\infty(0,T;H^1_0(\Omega)) $, and $u_{tt} \in L^2(0,T;H^{-1}(\Omega)) $, $ \theta, p \in L^\infty(0,T;L^2(\Omega)) \cap L^2(0,T;H^1_0(\Omega)) $, and $ \theta_t, p_t \in L^2(0,T;H^{-1}(\Omega))$ such that
\begin{subequations}\label{m-inf}
\begin{align}
    ( u^{m},u_t^{m},\theta^{m},p^{m})  &\xrightarrow{\text{weak*}}(u,u_t,\,\theta,p) \quad  \text{in}\ L^\infty\big(0,T;H_0^2(\Omega) 
 \times H^1_0(\Omega) \times (L^2(\Omega) )^2\big), \label{c1}\\
   (\theta^{m}, p^{m})  &\xrightarrow{\text{weak}} (\theta,p) \quad \qquad \; \;\;  \text{in} \ L^2 \big(0,T; (H_0^1(\Omega))^2\big),\label{c2}\\
{(u_{tt}^{m},\theta^m_t ,p^{m}_t) }&\xrightarrow{\text{weak}} (u_{tt},\theta_t,p_t)  \quad \;\; \;  \text{in} {\ L^2 \big(0,T; (H^{-1}(\Omega))^3\big)}.
\end{align}\end{subequations}
\textit{{Step 4} (Limit is a weak solution).} Now we show that $(u,\theta,p)$ satisfies \eqref{u}-\eqref{p}.
For this, we introduce 
$\widehat{\rm d}_{j_0}^k(t)\in C^2[0,T]$, $\widehat{\eta}_{j_{0}}^k(t)$ and $\widehat{\rm l}_{j_{0}}^k(t) \in C^1[0,T]$  such that $\widehat{\rm d}_{j_{0}}^k(T)=\widehat{\rm d}_{j_{0}t}^k(T)=\widehat{\eta}_{j_{0}}^k(T)=\widehat{\rm l}_{j_{0}}^k(T)=0$,  and  define 
\begin{equation}
\widehat{u}^{j_{0}}:=\sum_k^{j_{0}} \widehat{\rm d}_{j_{0}}^k(t) w^k, \; \widehat{\theta}^{j_{0}} :=\sum_k^{j_{0}} \widehat{\eta}_{j_{0}}^k(t) y^k, \text{ and }\widehat{p}^{j_{0}}:=\sum_k^{j_{0}}\widehat{\rm l}_{j_{0}}^k(t)y^k. \label{dense}
\end{equation}
Multiply \eqref{um} by $\widehat{\rm d}_{j_0}^k(t)$, \eqref{thetam} by $\widehat{\eta}_{j_{0}}^k(t)$, and \eqref{pm} by $\widehat{\rm l}_{j_{0}}^k(t)$, add the resulting    equations for $k=1,2,\dots, j_0$, and integrate by parts in $t$ from $0$ to $T$,   to obtain
   \begin{align*}
 &-\!\int_0^T\!\!(u^m_{t},\widehat{u}_t^{j_{0}})\dt-a_0 \!\int_0^T\!\!(\nabla u^m_{t},\nabla \widehat{u}_t^{j_{0}})\dt+d_0\! \int_0^T\!\! (\nabla ^2 u^m, \nabla^2 \widehat{u}^{j_{0}})\dt \\
 &\quad  - \alpha\! \int_0^T \!\!(\nabla \theta^m, \nabla \widehat{u}^{j_{0}})\dt-\beta \!\int_0^T \!\!(\nabla p^m, \nabla \widehat{u}^{j_{0}})\dt\\
     & \qquad
     =\int_0^T(f,\widehat{u}^{j_{0}}) \dt+(u^m_t({0}),\widehat{u}^{j_{0}}(0))+a_0(\nabla u^m_t({0}),\nabla \widehat{u}^{j_{0}}(0)),\\
     &-a_1\int_0^T\!\!(\theta^m, \widehat{\theta}_t^{j_{0}})\dt+ \gamma \int_0^T\!\!(p^m, \widehat{\theta}^{j_{0}}_t)\dt+ b_1\int_0^T\!\!(\theta^m,\widehat{\theta}^{j_{0}})\dt +c_1 \int_0^T\!\!(\nabla \theta^m,\nabla \widehat{\theta}^{j_{0}})\dt\\
     &\quad +\alpha \int_0^T(\nabla u^m_t, \nabla \widehat{\theta}^{j_{0}})\dt \\
     & \qquad
=\int_0^T(\phi,\widehat{\theta}^{j_{0}})\dt+a_1(\theta^m(0), \widehat{\theta}_t^{j_{0}}(0))-\gamma(p^m(0), \widehat{\theta}^{j_{0}}(0)),  \\
    &-a_2 \int_0^T(p^m,\widehat{p}_t^{j_{0}})\dt+\gamma \int_0^T(\theta^m,\widehat{p}_t^{j_{0}})\dt+\kappa \int_0^T(\nabla p^m,\nabla \widehat{p}^{j_{0}})\dt\\
    & \quad +\beta \int_0^T (\nabla u^m_t,\nabla \widehat{p}^{j_{0}})\dt\\
    &\qquad
    =\int_0^T(g,\widehat{p}^{j_{0}})\dt+a_2(p^m(0), \widehat{p}_t^{j_{0}}(0))-\gamma(\theta^m(0), \widehat{p}^{j_{0}}(0)),
    \end{align*}
where we have utilized  that $\widehat{\rm d}_{j_0}^k(t)$, $\widehat{\eta}_{j_{0}}^k(t)$ and $\widehat{\rm l}_{j_{0}}^k(t)$  are such that $\widehat{u}^{j_{0}}(T)=0$, $\widehat{\theta}^{j_{0}}(T)=0$ and $\widehat{p}^{j_{0}}(T)=0$. 

Then we invoke \eqref{c1} and \eqref{c2} to pass to the limit as $m \to \infty$ in the final system of equations. Also, since the functions  in \eqref{dense} are dense in $C^2([0,T];H^2_0(\Omega))$, $C^1([0,T];H^1_0(\Omega))$, and $C^1([0,T];H^1_0(\Omega))$, respectively; we can observe that $u, \theta, p$ satisfy \eqref{u}-\eqref{p}. Moreover,  the regularity  stated in \eqref{new-reg-weak} is guaranteed by \eqref{m-inf} and   Theorem 3, p. 287 in Ref. \refcite{evans}.
   To ensure that  \eqref{init-conds} holds we can follow verbatim   p. 384 of Ref. \refcite{evans}, and omit further details. 
   We therefore establish the existence of weak solution to   \eqref{p2;model11}-\eqref{p2;model33}, and the bounds \eqref{reg-ut} are a consequence of passing to the limit as $m$ tends to infinity in \eqref{eq:aprioribounds}, and utilizing \eqref{m-inf}.
   
\medskip
\noindent \textit{Step 5 (Uniqueness)}.
The uniqueness of solution to the coupled system \eqref{u}-\eqref{p} (under the data regularity provided in the first part of the proof above) was still an open problem as in Refs. \refcite{vishnevskii2020evolutionary,MR3471659},  and it is not trivial.  However, the uniqueness of solution to the uncoupled system -- under the same data assumptions -- can be proved using   Section 7.1.2-Theorem 4 and Section 7.2.1-Theorem 4 in Ref. \refcite{evans}. 
 To this end, we follow the approach in~Ref. \refcite{Rivera} to construct mollified test functions that possess sufficient regularity and are compactly supported in the time interval \([0, T]\).

Let us define $\rho_\varepsilon (s)=\varepsilon^{-1} \rho(\varepsilon^{-1}s)$ for $\varepsilon >0$, where $\rho(t)$ is a function in $C_0^{\infty}(\mathbb{R})$ satisfying
    \[\rho \ge 0, \; \text{supp} \rho \subset [-2,-1], \text{ and } \int_{-\infty}^\infty \rho(s)\ds=1.\]
Let us take $v \in H^{2}_0(\Omega)$, $\psi \in H^1_0(\Omega)$, and $q \in H^1_0(\Omega)$ and further denote 
    \[\tilde{v}^t(s,\bx)= \rho_\varepsilon(t-s)v(\bx),\; \tilde{\psi}^t(s,\bx)=\rho_\varepsilon(t-s) \psi(\bx), \; \tilde{q}^t(s,\bx)=\rho_\varepsilon(t-s) q(\bx),\] for $t \in [0,T]$. 
    Clearly, $\tilde{v}^t \in C^\infty([0,T];H^2_0(\Omega)), \, \tilde{\psi}^t\in C^\infty([0,T];H^1_0(\Omega)), \, \tilde{q}^t\in C^\infty([0,T];H^1_0(\Omega))$ for   $0\le t \le T.$ Substituting  $\tilde{v}^t,\;\tilde{\psi}^t$,\ and $ \tilde{q}^t$ in \eqref{u}-\eqref{p}, and noting that $\rho_\varepsilon(t)=(d/dt)\rho_\varepsilon(t)=0$ for $0 \le t \le T$, yields  
    \begin{subequations}
    \begin{align}
        (u_{\varepsilon tt}(t,\bullet), v)+a_0(\nabla u_{\varepsilon tt}(t,\bullet), \nabla v)+d_0 (\nabla^2 u_{\varepsilon }(t,\bullet), \nabla^2 v)&\nonumber \\
        -\alpha (\nabla \theta_\varepsilon(t,\bullet), \nabla v)-\beta (\nabla p_\varepsilon(t,\bullet), \nabla v)&=0,\label{uep}\\
        a_1(\theta_{\varepsilon t }(t,\bullet),\psi)-\gamma(p_{\varepsilon t}(t,\bullet),\psi)+b_1(\theta_{\varepsilon}(t,\bullet),\psi)&\nonumber \\
        +c_1(\nabla \theta_\varepsilon(t,\bullet),\nabla \psi)+\alpha (\nabla u_{\varepsilon t}(t,\bullet),\nabla\psi)&=0,\label{thetaep}\\
        a_2(p_{\varepsilon t }(t,\bullet),q)-\gamma(\theta_{\varepsilon t}(t,\bullet),q)+\kappa(\nabla p_\varepsilon(t,\bullet),\nabla q)&\nonumber \\
        +\beta (\nabla u_{\varepsilon t}(t,\bullet),\nabla q)&=0, \label{pep}
        \end{align}\end{subequations}
        for any $ 0 \le t \le T$, $v \in H^2_0(\Omega)$, $\psi \in H^1_0(\Omega)$, and $q \in H^1_0(\Omega)$, where 
        \begin{align*}u_{\varepsilon }(t,\bx) &= \int_{-\infty}^\infty \rho_\varepsilon(t-s)u(s,\bx)\ds,\; \theta_\varepsilon(t,\bx)  = \int_{-\infty}^\infty \rho_\varepsilon(t-s)\theta(s,\bx)\ds, \text{ and }\\
        p_\varepsilon (t,\bx)&=\int_{-\infty}^\infty \rho_\varepsilon(t-s)p(s,\bx)\ds.\end{align*}
      From \eqref{c1}-\eqref{c2}, it follows that $$u_{\varepsilon t } \in C^\infty ([0,T]; H^2_0(\Omega)), \;\theta_{\varepsilon } \in C^\infty ([0,T]; H^1_0(\Omega)),  \text{ and }p_{\varepsilon } \in C^\infty ([0,T]; H^1_0(\Omega)).$$  
      For all $t \in [0,T]$, we choose the test functions in \eqref{uep}-\eqref{pep}  as $v =u_{\varepsilon t } (t,\bullet), 
    \; \psi=\theta_{\varepsilon } (t,\bullet)$, and $p=p_{\varepsilon }(t,\bullet)$, and integrate the resulting equations with respect to $t$, and follow  arguments analogous to Step 2 to show 
\[ 
0 \le E(u_{\varepsilon},\theta_\varepsilon,p_\varepsilon;t) \le 0.\]
(The system \eqref{uep}-\eqref{pep} is similar to \eqref{um}-\eqref{pm} with $f=\phi= g=0$ and zero initial conditions).

\noindent Now take the limit as $ \varepsilon \rightarrow 0$ to obtain  
\[ 
    E(u,\theta,p;t) = 0,
\]
which shows, directly from  \eqref{energy}, that $u=0, \; \theta=0$, and $p=0$. This completes the proof. 
\end{proof}

\noindent{\bf Proof of Theorem~\ref{thm;regularity}}.

 \noindent Let $(\tilde{u},\tilde{\theta},\tilde{p})$ be the solution to \eqref{p2;model11}-\eqref{p2;model33} (its existence is guaranteed by Theorem~\ref{weak-sol-thm}) satisfying \eqref{u-weak}-\eqref{p-weak}, and consider the initial conditions 
\begin{align*}
&\tilde{u}(0,\bx)=u^{*0}(\bx)  \in H^3(\Omega) \cap H^2_0(\Omega) ,\; \tilde{u}_t(0,\bx)= u_{tt}(0) \in H^2(\Omega) \cap H^1_0(\Omega),\\
&\tilde{\theta}(0,\bx)=\theta_t(0)\in  H^2(\Omega) \cap H^1_0(\Omega),\;  \tilde{p}(0,\bx) = p_t(0) \in H^2(\Omega) \cap H^1_0(\Omega).
\end{align*}
We then write 
\begin{align*}
    u(t,\bx)&=u^0+\int_0^t\tilde{u}(s,\bx)\ds , \;   \theta(t,\bx)=\theta^0+\int_0^t\tilde{\theta}(s,\bx)\ds, \text{ and }  \\p(t,\bx)&=p^0+\int_0^t\tilde{p}(s,\bx)\ds ,
\end{align*}
and can readily employ similar arguments as in  Theorem~\ref{weak-sol-thm}  and \eqref{ellip-reg}-\eqref{ell-reg-sol} to obtain the bounds \eqref{reg-uttt}. Part (b) follows by analogous arguments. 
\end{document}